\numberwithin{equation}{section}
\def\p{\partial}
\def\o{\overline}
\def\b{\bar}
\def\mb{\mathbb}
\def\mc{\mathcal}
\def\ms{\mathscr}
\def\mbf{\mathbf}
\def\n{\nabla}
\def\v{\varphi}
\def\wt{\widetilde}
\def\mf{\mathfrak}
\def\op{\operatorname}
\def\ra{\rightarrow}
\def\bc{\mathbb C}
\def\mr{\mathrm}
\def\br{\mathbb R}
\def\bs{\boldsymbol}
\theoremstyle{plain}
\newtheorem{thm}{Theorem}[section]
\newtheorem{lemma}[thm]{Lemma}
\newtheorem{prop}[thm]{Proposition}
\newtheorem{cor}[thm]{Corollary}
\theoremstyle{definition}
\newtheorem{rem}[thm]{Remark}
\newtheorem{ex}[thm]{Example}
\theoremstyle{definition}
\newtheorem{defn}[thm]{Definition}
\newcommand{\comment}[1]{}
\begin{document}

\title{Signature, Toledo invariant and  surface group representations in the real symplectic group}
\author{InKang Kim}
\author{Pierre Pansu}
\author{Xueyuan Wan}

\address{Inkang Kim: School of Mathematics, KIAS, Heogiro 85, Dongdaemun-gu Seoul, 02455, Republic of Korea}
\email{inkang@kias.re.kr}

\address{Pierre Pansu:
Laboratoire de Math\'ematiques d'Orsay, UMR 8628 du CNRS, Universit\'e Paris-Sud, 91405 Orsay C\'edex, France}
\email{pierre.pansu@math.u-psud.fr}

\address{Xueyuan Wan: Mathematical Science Research Center, Chongqing University of Technology, Chongqing 400054, China}
\email{xwan@cqut.edu.cn}
\begin{abstract}
	In this paper, by using Atiyah-Patodi-Singer index theorem, we obtain a formula for the signature of a flat symplectic vector bundle over a  surface with boundary, which is related to the Toledo invariant of a surface group representation in the real symplectic group and  the Rho invariant on the boundary. As an application, we obtain a Milnor-Wood type inequality for the signature. In particular, we give a new proof of the Milnor-Wood inequality for the Toledo invariant  in the case of closed surfaces and obtain some modified inequalities  for the surface with boundary. 
\end{abstract}

  \subjclass[2020]{14J60, 58J20, 58J28}  
  \keywords{Signature,  Toledo invariant, surface group representation, real symplectic group, eta invariant, Rho invariant}
  \thanks{Research by Inkang Kim is partially supported by Grant NRF-2019R1A2C1083865 and KIAS Individual Grant (MG031408), and  Xueyuan Wan is supported by  NSFC (No. 12101093), Scientific Research Foundation of Chongqing University of Technology.}

\maketitle
\tableofcontents

\section*{Introduction}

Let $\Sigma$ be a closed surface, and consider a surface group representation $\rho:\pi_1(\Sigma)\to \op{Sp}(E,\Omega)$ into the real symplectic group $\op{Sp}(E,\Omega)$, where $(E,\Omega)$ is a symplectic vector space. Let $(\mc{E},\Omega)$ denote the flat symplectic vector bundle over $\Sigma$ associated to the representation $\rho$. There is a canonical quadratic form $\int_\Sigma \Omega(\cdot\cup\cdot)$ on the cohomology $\mathrm{H}^1(\Sigma,\mc{E})$. The  Meyer's signature formula \cite[\S 4.1, Page 19]{Meyer} implies that the signature of the quadratic form is given by $4\int_\Sigma c_1(\mc{E},\Omega)$,  where $c_1(\mc{E},\Omega)$ denotes the first Chern class of the symplectic vector bundle $(\mc{E},\Omega)$, which can be expressed in terms of Toledo invariant,
see e.g. \cite[Appendix A]{KP}.  For the case of manifolds with boundary, Atiyah, Patodi and Singer introduced the eta invariant and obtained an index theorem, which is known as Atiyah-Patodi-Singer index theorem, see \cite[Theorem 3.10]{APS}.  
For a  surface $\Sigma$ with boundary $\p\Sigma$, one can consider a unitary representation of the fundamental group $\rho:\pi_1(\Sigma)\to \op{U}(n)$, which gives a flat Hermitian vector bundle over $\Sigma$. Similarly, one can also define a quadratic form on the relative cohomology with coefficients in the flat bundle. From \cite[Theorem 2.2, 2.4]{APSII},  the signature of the quadratic form is exactly the eta invariant (or Rho invariant). Inspired by these results, in this paper, by using Atiyah-Patodi-Singer index theorem,  we will consider the signature of the flat symplectic vector bundle associated to the representation $\rho:\pi_1(\Sigma)\to \op{Sp}(E,\Omega)$ for the surface $\Sigma$ with boundary.

Let $\Sigma$ be a connected oriented surface with smooth boundary $\p\Sigma$, and $g_\Sigma$ be a Riemannian metric on $\Sigma$. Suppose that on the collar neighborhood $ \p\Sigma\times [0,1]\subset \Sigma$  of $\p\Sigma$, the metric has a product form. Let $\rho:\pi_1(\Sigma)\to \op{Sp}(E,\Omega)$ be a surface group representation, which gives a flat symplectic vector bundle $(\mc{E},\Omega)$ over $\Sigma$. Consider the image of twisted singular cohomology  in the absolute cohomology  $\widehat{\mathrm{H}}^1(\Sigma,\mc{E})=\op{Im}(\mathrm{H}^1(\Sigma,\p \Sigma,\mc{E})\rightarrow \mathrm{H}^1(\Sigma,\mc{E}))$, and  a canonical quadratic form $Q_{\mb{R}}(\cdot,\cdot)=\int_\Sigma \Omega(\cdot\cup\cdot)$ on $\widehat{\mathrm{H}}^1(\Sigma,\mc{E})$, which is non-degenerate. We denote by $\op{sign}(\mc{E},\Omega)$ the signature of the quadratic form.
For any $\mbf{J}\in\mc{J}(\mc{E},\Omega)$, the set of compatible complex structures,
 the operator $A_{\mbf{J}}=\mbf{J}\frac{d}{dx}$ is a $\mb{C}$-linear formally self-adjoint elliptic first order differential operator  in the space $A^0(\p\Sigma,\mc{E}_{\mb{C}}|_{\p\Sigma})$, see Proposition \ref{prop2.0}. Hence it  has a discrete spectrum with real eigenvalues, and  the eta invariant $\eta(A_{\mbf{J}})$ is well-defined, see Section \ref{eta}. Let $\n$ be any peripheral connection on $(\mc{E},\Omega,\mbf{J})$, see Definition \ref{connection}, and 
  let $c_1(\mc{E},\Omega, \mbf{J})$ denote the first Chern class of the flat symplectic vector bundle  in the de Rham cohomology with compact support, which is defined as the first Chern class associated to the peripheral connection $\n$, see \eqref{FCC} for its definition. By using Atiyah-Patodi-Singer index theorem \cite[Theorem 3.10]{APS}, we obtain 
\begin{thm}\label{thm0.1}
The signature of the flat symplectic vector bundle $(\mc{E},\Omega)$ is 
\begin{equation}\label{0.1}
  	\op{sign}(\mc{E},\Omega)=4\int_\Sigma c_1(\mc{E},\Omega, \mbf{J})+\eta(A_{\mbf{J}}).
\end{equation}
\end{thm}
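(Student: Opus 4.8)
The plan is to realize $\op{sign}(\mc{E},\Omega)$ as the index of a $\mbf J$-twisted signature operator on $\Sigma$ carrying an Atiyah--Patodi--Singer boundary condition, and then extract \eqref{0.1} from \cite[Theorem 3.10]{APS}. First I would complexify everything, extend $\Omega$ $\bc$-bilinearly, and let $h$ be the Hermitian metric on $\mc{E}_\bc$ determined by $\mbf J$ and $\Omega$; combined with $g_\Sigma$ this gives an $L^2$ structure on $\mc{E}_\bc$-valued forms. Using the flat connection and the product structure near $\p\Sigma$, I would invoke the boundary Hodge theory of \cite{APS} to identify $\widehat{\mathrm H}^1(\Sigma,\mc{E})\otimes\bc$ with the space $\mc H$ of $L^2$ harmonic $\mc{E}_\bc$-valued $1$-forms obeying the APS boundary condition attached to $A_{\mbf J}$, and to transport the complexification of $Q_\br$ to a non-degenerate Hermitian form $\widehat Q$ on $\mc H$ whose signature is $\op{sign}(\mc{E},\Omega)$.

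Next, let $\star=*\otimes\mbf J$ be the $\bc$-linear operator on $A^1(\Sigma,\mc{E}_\bc)$ built from the Hodge star on $1$-forms ($*^2=-1$) and the $\bc$-linear extension of $\mbf J$ ($\mbf J^2=-1$); it is $L^2$-unitary with $\star^2=1$. A pointwise computation --- using the bidegree splitting $\mc{E}_\bc=\mc{E}^{1,0}\oplus\mc{E}^{0,1}$ for $\mbf J$, the identities $\Omega_\bc(s,\bar t)=\pm i\,h(s,t)$ on $\mc{E}^{1,0}$ resp. $\mc{E}^{0,1}$, and the fact that $*$ preserves the wedge of $1$-forms in dimension $2$ --- shows that $\widehat Q(\alpha,\beta)=\langle\star\alpha,\beta\rangle_{L^2}$ for all $\mc{E}_\bc$-valued $1$-forms, so $\op{sign}(\mc{E},\Omega)$ equals the signature of the self-adjoint involution $\star$ restricted to $\mc H$. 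I would then present this as $\op{index}(\mc D)$, where $\mc D$ is the $\star$-chiral (twisted signature) operator obtained from $d^{\n}+(d^{\n})^*$ by extending $\star$ to a $\tau$-grading on $A^0\oplus A^1\oplus A^2$ anticommuting with it (the degree-$0$ and degree-$2$ pieces canceling because $\tau$ interchanges them), equipped with the APS boundary condition of its boundary operator $A_{\mbf J}=\mbf J\frac{d}{dx}$ from Proposition \ref{prop2.0}. Since $\mc D$ agrees with its flat model near $\p\Sigma$, I may take $\n$ to be a peripheral connection (Definition \ref{connection}): this deforms $\mc D$ through Fredholm operators with \emph{fixed} boundary condition, so $\op{index}(\mc D)$ is unchanged and may be computed with $\n$.

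Applying \cite[Theorem 3.10]{APS} to $\mc D$, the interior term is the integral over $\Sigma$ of the degree-$2$ component of the local index density of $\mc D$; by the $\star$-chirality (which couples $\mc{E}^{1,0}$ and $\mc{E}^{0,1}$ to opposite chiralities) this component is a universal multiple of the first Chern form $c_1(\mc{E},\Omega,\mbf J)$ of $\n$ from \eqref{FCC}, the multiple being $4$ as forced by comparison with Meyer's closed-surface formula \cite{Meyer}. The boundary term is an eta invariant of $A_{\mbf J}$: carrying out the standard reduction of \cite[Theorem 3.10]{APS} to the signature setting (as in \cite[\S 4]{APS}), where the $\dim\ker A_{\mbf J}$ terms are absorbed and the bare invariant $\eta(A_{\mbf J})$ replaces the reduced one, and tracking the orientation conventions, this term equals $+\eta(A_{\mbf J})$. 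Summing the two gives \eqref{0.1}.

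The main obstacles I foresee are: pinning down the universal constant $4$ in front of $\int_\Sigma c_1(\mc{E},\Omega,\mbf J)$, which forces a careful comparison of the principal symbol and $\mbf J$-twisting of $\mc D$ with the classical signature operator and with \cite{Meyer}; showing that the bare $\eta(A_{\mbf J})$, not the reduced invariant $\tfrac12(\eta(A_{\mbf J})+\dim\ker A_{\mbf J})$, is what survives --- equivalently, that the zero modes of $A_{\mbf J}$ split symmetrically with respect to $\star$ --- and fixing the overall sign; and making rigorous the identification of $\op{sign}(\mc{E},\Omega)$ with the index of an operator built from a non-flat, $\mbf J$-compatible connection, including the self-adjointness subtleties of the flat non-unitary connection on $(\mc{E},h)$.
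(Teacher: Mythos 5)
Your proposal follows the paper's strategy in outline (realize the signature via the $*\mbf{J}$-involution and the APS index theorem), but two of its load-bearing steps have genuine gaps. The first is the passage from the flat connection to a peripheral connection. You assert that $\mc D$ "agrees with its flat model near $\p\Sigma$" and that replacing $d$ by a peripheral $\n$ deforms $\mc D$ "through Fredholm operators with fixed boundary condition," so the index is unchanged. A peripheral connection has the form $d+C(x)dx$ near $\p\Sigma$ with $C\neq 0$ in general: it must commute with $\mbf J$, and the flat connection does not, since $\rho(c_i)$ need not be elliptic (for $n>1$ one cannot even write $\rho(c_i)=\pm\exp(2\pi B)$). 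Along the path $\n_s=(1-s)d+s\n$ the boundary operator moves from $A_{\mbf J}$ to $\mbf J|dx|(\tfrac{\p}{\p x}+C)$, the APS projection moves with it, eigenvalues can cross zero, and the index can jump by spectral flow. Even granting invariance, the APS formula applied to the $\n$-operator would output the eta invariant of the \emph{deformed} boundary operator, not $\eta(A_{\mbf J})$. The paper uses the homotopy only to compute the $t\to 0$ limit of the local heat-kernel supertrace (Duhamel's formula, Lemma \ref{lemma4.7}), which is insensitive to spectral flow, while the index and the eta term are always those of the flat operator with boundary operator $A_{\mbf J}$.

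The second gap is the identification of $\op{sign}(\mc E,\Omega)$ with an APS index. After the correct $L^2$ setup on $\widehat\Sigma$, the signature is a difference of $L^2$ indices, and each $L^2$ index differs from the corresponding APS index by a term $h_\infty(\wedge^{\pm})$ counting limiting values of extended $L^2$ solutions; in addition each APS index carries $-\tfrac12\dim\op{Ker}A^{\pm}_{\mbf J}$. One must prove both that the two $\tfrac12\dim\op{Ker}A^{\pm}_{\mbf J}$ contributions cancel (they do, since $\op{Ker}A^{+}_{\mbf J}=\op{Ker}A^{-}_{\mbf J}=\mathrm H^0(\p\Sigma,\mc E_{\mb C})$) and that $h_\infty(\wedge^+)=h_\infty(\wedge^-)$. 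The latter is where the real work lies: the paper shows $h_\infty(\wedge^{\pm})=\dim\mathrm H^0(\p\Sigma,\mc E)-\dim\mathrm H^0(\Sigma,\mc E)$ via the long exact sequence of the pair and Poincar\'e duality. Your substitute claim --- that the zero modes of $A_{\mbf J}$ "split symmetrically with respect to $\star$" --- is not the relevant statement (the issue is which zero modes arise as limiting values of extended solutions, not a symmetry of $\op{Ker}A_{\mbf J}$) and is not proved. Finally, pinning the interior constant $4$ by comparison with Meyer's closed-surface formula is acceptable in principle, but only once one knows the local index density is a universal polynomial in the curvatures of $\n$ and $g_\Sigma$; the paper instead computes it directly on the double via the local index theorem, which is what actually identifies the class $c_1(\mc E,\Omega,\mbf J)$ with compact support.
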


For the case of closed surfaces, the above theorem was obtained by Meyer  \cite[\S 4.1, Page 19]{Meyer}, and by Lusztig \cite[Section 2]{Lus} considering the signature of a flat $\op{U}(p,q)$-Hermitian vector bundle. For the case of the surfaces with boundary, and  a surface group representation in $\op{U}(p,q)$, the above theorem was proved by Atiyah \cite[(3.1)]{Atiyah} under the assumption that the representation on each component of the boundary is elliptic. By using the formula of signature for elliptic case,  Atiyah \cite[Theorem 2.13]{Atiyah} proved that the  signature can be expressed  as another kind of formula in terms of the relative Chern class of a certain line bundle for the general case, see \cite[Section 3]{Atiyah} for the proof. In our next paper \cite{KPW}, we will continue to consider  the signature, Toledo invariant, Rho invariant and Milnor-Wood type inequality associated with the surface group representations in the $\op{U}(p,q)$-group.

Next, we will show that the first term $4\int_\Sigma c_1(\mc{E},\Omega, \mbf{J})$ is related to the Toledo invariant $\op{T}(\Sigma,\rho)$. For a closed surface $\Sigma$, the Toledo invariant was defined in \cite{DT, Toledo} by considering  a surface group representation $\rho: \pi_{1}(\Sigma) \rightarrow$ $\operatorname{PSU}(1, n)$ of its fundamental group   in the group of motions of complex hyperbolic $n$-space. In \cite{BIW},  Burger, Iozzi and  Wienhard extended the definition of Toledo invariant to the surface with boundary and  obtained the  Milnor-Wood inequality $|\op{T}(\Sigma,\rho)|\leq \op{rank}(\ms{X})|\chi(\Sigma)|$ by using the methods of bounded cohomology, see  also Section \ref{Toledo} for the definition of Toledo invaraint. More precisely, consider a representation $\rho:\pi_1(\Sigma)\to G$ into a Lie group $G$ which  is of type $(\mathrm{RH})$, so that the associated symmetric space $\mathscr{X}=G/K$
is a Hermitian symmetric space of noncompact type. Let $\omega_\mathscr{X}$ denote the (unique) K\"ahler form such that the minimal holomorphic sectional curvature of associated Hermitian metric is $-1$. The K\"ahler form $\omega_\mathscr{X}$  gives a bounded K\"ahler class $\kappa^b_G\in \mathrm{H}^2_{c,b}(G,\mb{R})$. Consider the pullback in bounded cohomology, $\rho_b^{*}\left(\kappa_{G}^{\mathrm{b}}\right) \in \mathrm{H}_{\mathrm{b}}^{2}\left(\pi_{1}(\Sigma), \mathbb{R}\right) \cong \mathrm{H}_{\mathrm{b}}^{2}(\Sigma, \mathbb{R})$. The canonical $\operatorname{map} j_{\partial \Sigma}: \mathrm{H}_{\mathrm{b}}^{2}(\Sigma, \partial \Sigma, \mathbb{R}) \rightarrow \mathrm{H}_{\mathrm{b}}^{2}(\Sigma, \mathbb{R})$  is an isomorphism, and the Toledo invariant is  defined as
$$
\mathrm{T}(\Sigma, \rho)=\left\langle j_{\partial \Sigma}^{-1} \rho_b^{*}\left(\kappa_{G}^{\mathrm{b}}\right),[\Sigma, \partial \Sigma]\right\rangle,
$$
where $j_{\partial \Sigma}^{-1} \rho_b^{*}\left(\kappa_{G}^{\mathrm{b}}\right)$ is considered as an ordinary relative class and $[\Sigma, \partial \Sigma]$ is the relative fundamental class. If $\p\Sigma=\emptyset$, then the Toledo invariant can be given by $\mathrm{T}(\Sigma, \rho)=\int_\Sigma f^*\omega_{\mathscr{X}}$, where $f:\wt{\Sigma}\to \mathscr{X}$ denotes any $\rho$-equivariant smooth map. For a manifold with  cusps,  
there are also several equivalent definitions for volume invariant, which is a natural generalization of Toledo invariant for higher dimensional manifolds,
 see e.g. \cite{Dun, FK, Kim, KK, KM}.

 In our case, $G=\op{Sp}(E,\Omega)$ and $K$ is the maximal compact subgroup of $G$, which is isomorphic to the unitary group. Then the associated symmetric space $\mathscr{X}=G/K$ can be identified with the bounded symmetric domain $\op{D}^{\op{III}}_n$ of type $\op{III}$,   where $\dim E=2n$. It is also isomorphic to the space $\mc{J}(E,\Omega)$ of all compatible complex structures on $(E,\Omega)$.  Let $\omega_{\op{D}^{\op{III}}_n}$ denote the K\"ahler form with  holomorphic sectional curvature in $[-1,-1/n]$, see Appendix \ref{App2}. For any $\mbf{J}\in \mc{J}(\mc{E},\Omega)$, it is equivalent to  a $\rho$-equivariant  map $\wt{\mbf{J}}:\wt{\Sigma}\to \op{D}^{\op{III}}_n$, see \eqref{tJ}. The form $\wt{\mbf{J}}^*\omega_{\op{D}^{\op{III}}_n}$ is a $\rho$-equivariant  form on $\wt{\Sigma}$ and  descends to a form on $\Sigma$. For each $L\in G$, there exist a $L$-invariant one form $\alpha$ with $d\alpha=\omega_{\op{D}^{\op{III}}_n}$, see \eqref{Fixpoint2} and Section \ref{IKP}. Let $\chi_i\in [0,1]$ be a smooth cut-off function which is equal to {1} near the boundary component $c_i$ and vanishes outside a small collar neighborhood of $c_i$. Then one can define the following de Rham cohomology class with compact support
   \begin{align*}
\left[\rho^*\omega_{\op{D}^{\op{III}}_n}\right]_c=\left[\wt{\mbf{J}}^*	\omega_{\op{D}^{\op{III}}_n}-d(\sum_{i=1}^q\chi_i\wt{\mbf{J}}^*\alpha_i)\right]_c,	
 \end{align*}
where $\alpha_i$ is a $\rho(c_i)$-invariant one form with $d\alpha_i=\omega_{\op{D}^{\op{III}}_n}$, $q$ is the number of components of $\p\Sigma$. Following \cite[Proposition-definition 4.1]{KM}, the class $\left[\rho^*\omega_{\op{D}^{\op{III}}_n}\right]_c$ is independent of $\mbf{J}$ and depends only on the conjugacy class of $\rho$. The Rho invariant is defined by 
\begin{align*}
\boldsymbol{\rho}(\p\Sigma)=\frac{1}{\pi}\sum_{i=1}^q\int_{c_i}\wt{\mbf{J}}^*\alpha_i+\eta(A_{\mbf{J}}),	
\end{align*}
which is a natural generalization of Atiyah-Patodi-Singer Rho invariant for unitary representations, see Remark  \ref{remRho}.  Let $\mc{J}_o(\mc{E},\Omega)$ denote the space of the compatible complex structure $\mbf{J}$, which is the  pullback of a compatible complex structure on $\mc{E}|_{\p\Sigma}$ when restricted to a small collar neighborhood of $\p\Sigma$, see \eqref{J0space}.
If $\mbf{J}\in \mc{J}_o(\mc{E},\Omega)$, then
  the form $\wt{\mbf{J}}^*\omega_{\op{D}^{\op{III}}_n}$ has compact support on $\Sigma_o:=\Sigma\backslash\p\Sigma$. 
\begin{thm}\label{thm0.2}
For any $\mbf{J}\in \mc{J}_o(\mc{E},\Omega)$,  the Toledo invariant $\op{T}(\Sigma,\rho)$ satisfies
	\begin{equation}
 \op{T}(\Sigma,\rho)= \frac{1}{2\pi}\int_\Sigma\left[\rho^*\omega_{\op{D}^{\op{III}}_n}\right]_c=2\int_\Sigma c_1(\mc{E},\Omega, \mbf{J})-\frac{1}{2\pi}\sum_{i=1}^q\int_{c_i}\wt{\mbf{J}}^*\alpha_i.
\end{equation}
Hence the signature can be given by the following formula:
\begin{equation}\label{sign0}
  	\op{sign}(\mc{E},\Omega)=2 \op{T}(\Sigma,\rho)+\bs{\rho}(\p\Sigma).
\end{equation}
\end{thm}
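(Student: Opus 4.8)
The plan is to prove the displayed double equality for $\mbf{J}\in\mc{J}_o(\mc{E},\Omega)$ and then read off \eqref{sign0}. The second step is formal: Theorem \ref{thm0.1} gives $\op{sign}(\mc{E},\Omega)=4\int_\Sigma c_1(\mc{E},\Omega,\mbf{J})+\eta(A_{\mbf{J}})$, and the first assertion of Theorem \ref{thm0.2} (taken for $\mbf{J}\in\mc{J}_o(\mc{E},\Omega)$) rearranges to $4\int_\Sigma c_1(\mc{E},\Omega,\mbf{J})=2\op{T}(\Sigma,\rho)+\frac1\pi\sum_{i=1}^q\int_{c_i}\wt{\mbf{J}}^*\alpha_i$; substituting into \eqref{0.1} and recalling $\bs{\rho}(\p\Sigma)=\frac1\pi\sum_{i=1}^q\int_{c_i}\wt{\mbf{J}}^*\alpha_i+\eta(A_{\mbf{J}})$ yields \eqref{sign0}. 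Since $\op{sign}(\mc{E},\Omega)$ and $\op{T}(\Sigma,\rho)$ do not depend on $\mbf{J}$, this also shows a posteriori that $\bs{\rho}(\p\Sigma)$ is the same for every $\mbf{J}\in\mc{J}_o(\mc{E},\Omega)$.

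For $\op{T}(\Sigma,\rho)=\frac{1}{2\pi}\int_\Sigma[\rho^*\omega_{\op{D}^{\op{III}}_n}]_c$ I would unwind the bounded-cohomological definition. The bounded K\"ahler class $\kappa^b_G$ on $\mathscr{X}=\op{D}^{\op{III}}_n$ is represented by $\frac{1}{2\pi}\omega_{\op{D}^{\op{III}}_n}$, so, $\wt{\mbf{J}}$ being $\rho$-equivariant, $\rho_b^*(\kappa^b_G)$ is represented on $\Sigma$ by the closed form $\frac{1}{2\pi}\wt{\mbf{J}}^*\omega_{\op{D}^{\op{III}}_n}$. To realize $j_{\p\Sigma}^{-1}\rho_b^*(\kappa^b_G)$ as an ordinary relative class, i.e. a compactly supported de Rham class, use that near $c_i$ one has $\wt{\mbf{J}}^*\omega_{\op{D}^{\op{III}}_n}=d(\wt{\mbf{J}}^*\alpha_i)$; subtracting $d(\sum_i\chi_i\wt{\mbf{J}}^*\alpha_i)$ gives a compactly supported representative, and by \cite[Proposition-definition 4.1]{KM} this representative is canonical and does represent $j_{\p\Sigma}^{-1}\rho_b^*(\kappa^b_G)$. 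Pairing it with $[\Sigma,\p\Sigma]$ is integration over $\Sigma$, which gives the first equality; the only subtlety is matching the normalization of $\kappa^b_G$ from \cite{BIW} with that of $\omega_{\op{D}^{\op{III}}_n}$ fixed in Appendix \ref{App2}.

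For the second equality I restrict to $\mbf{J}\in\mc{J}_o(\mc{E},\Omega)$, where $\wt{\mbf{J}}^*\omega_{\op{D}^{\op{III}}_n}$ is compactly supported (on the collar $c_i\times[0,1]$ the map $\wt{\mbf{J}}$ factors through the circle $c_i$, and a $2$-form pulled back from a $1$-manifold vanishes). By Stokes and $\chi_i\equiv1$ near $c_i$,
\begin{equation*}
\int_\Sigma\left[\rho^*\omega_{\op{D}^{\op{III}}_n}\right]_c=\int_\Sigma\wt{\mbf{J}}^*\omega_{\op{D}^{\op{III}}_n}-\sum_{i=1}^q\int_{c_i}\wt{\mbf{J}}^*\alpha_i ,
\end{equation*}
so it remains to identify $c_1(\mc{E},\Omega,\mbf{J})$ with $\frac{1}{4\pi}\bigl[\wt{\mbf{J}}^*\omega_{\op{D}^{\op{III}}_n}\bigr]_c$ in $\mathrm{H}^2_c$. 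Let $\mc{T}\to\op{D}^{\op{III}}_n$ be the tautological rank-$n$ bundle (the $\sqrt{-1}$-eigenbundle of the universal compatible complex structure, a Hermitian subbundle of the trivial bundle with fibre $E_\mb{C}$), so that $\wt{\mbf{J}}^*\mc{T}\cong(\mc{E},\mbf{J})$. On one hand, $\op{D}^{\op{III}}_n$ being irreducible Hermitian symmetric, $c_1(\mc{T})$ is an invariant closed $(1,1)$-form, hence a constant multiple of $\omega_{\op{D}^{\op{III}}_n}$; the constant equals $\frac{1}{4\pi}$, which one extracts from $T^{1,0}\op{D}^{\op{III}}_n\cong\op{Sym}^2\mc{T}^*$ (whence $c_1(T^{1,0})=-(n+1)c_1(\mc{T})$) together with the Ricci form computed in Appendix \ref{App2}, or, more quickly, by restricting to a totally geodesic holomorphic disk, a Poincar\'e disk of curvature $-1$, where Gauss--Bonnet applies. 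On the other hand one checks that $c_1(\mc{E},\Omega,\mbf{J})$, defined via a peripheral connection in \eqref{FCC}, is represented by $\wt{\mbf{J}}^*c_1(\mc{T})$ (compactly supported for $\mbf{J}\in\mc{J}_o(\mc{E},\Omega)$, again by the circle-factorization), by comparing the peripheral connection near $\p\Sigma$ with the pullback of the canonical connection of $\mc{T}$ and checking that the Chern--Simons transgression between them does not contribute to $\int_\Sigma$ — this is where $\mbf{J}\in\mc{J}_o(\mc{E},\Omega)$ is used. Combining, $2\int_\Sigma c_1(\mc{E},\Omega,\mbf{J})=\frac{1}{2\pi}\int_\Sigma\wt{\mbf{J}}^*\omega_{\op{D}^{\op{III}}_n}$, and the second equality follows.

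I expect the main obstacle to be this last identification — that the abstractly defined compactly supported class $c_1(\mc{E},\Omega,\mbf{J})$ is computed on the nose by the equivariant Chern--Weil form $\frac{1}{4\pi}\wt{\mbf{J}}^*\omega_{\op{D}^{\op{III}}_n}$ for $\mbf{J}\in\mc{J}_o(\mc{E},\Omega)$, i.e. that passing from the relative first Chern class via a peripheral connection to the pullback of the tautological Chern form introduces no boundary term beyond the $\int_{c_i}\wt{\mbf{J}}^*\alpha_i$ already extracted — together with the clean tracking of the two universal constants, the factor $\tfrac14$ relating $\mc{T}$ to $T^{1,0}\op{D}^{\op{III}}_n$ and the factor $\tfrac{1}{2\pi}$ in the normalization of $\kappa^b_G$. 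Everything else is routine once these are in place.
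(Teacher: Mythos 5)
Your architecture is the same as the paper's: the deduction of \eqref{sign0} from Theorem \ref{thm0.1}, the Stokes computation giving $\int_\Sigma[\rho^*\omega_{\op{D}^{\op{III}}_n}]_c=\int_\Sigma\wt{\mbf{J}}^*\omega_{\op{D}^{\op{III}}_n}-\sum_i\int_{c_i}\wt{\mbf{J}}^*\alpha_i$, and the identification $c_1(\mc{E},\Omega,\mbf{J})=\frac{1}{4\pi}[\wt{\mbf{J}}^*\omega_{\op{D}^{\op{III}}_n}]_c$ all appear there. For the last point the paper realizes your ``tautological bundle'' concretely as $F^+_{\mb{C}}\subset \op{D}^{\op{III}}_n\times\mb{C}^{2n}$ with an explicit connection $\n^F$ whose Chern form is $\frac{1}{4\pi}\omega_{\op{D}^{\op{III}}_n}$, and it sidesteps your Chern--Simons transgression entirely by proving (Proposition \ref{proppullbackconnection}) that the pullback connection $\mbf{J}^*\n^{F_\rho}$ is itself a peripheral connection, so the independence of $c_1(\mc{E},\Omega,\mbf{J})$ from the choice of peripheral connection (Remark \ref{rem411}) applies with no boundary correction; your transgression check would work but is the harder way around.

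The genuine gap is in the first equality, $\op{T}(\Sigma,\rho)=\frac{1}{2\pi}\int_\Sigma[\rho^*\omega_{\op{D}^{\op{III}}_n}]_c$, which you dispose of by citing \cite[Proposition-d\'efinition 4.1]{KM} and locating the remaining subtlety in a normalization check. That citation only gives well-definedness of the compactly supported class (independence of $\mbf{J}$ and of the cut-offs); it does not identify it with the Burger--Iozzi--Wienhard invariant, and it concerns $\op{PU}(m,1)$. The step that actually carries the weight is showing that the cochain $\sigma\mapsto\int_{\op{Str}(\sigma)}\bigl(f^*\omega-\sum_i d(\chi_i f^*\alpha_i)\bigr)$ represents $j_{\p\Sigma}^{-1}\rho_b^*(\kappa_G^b)$ as a \emph{bounded} relative class: since $j_{\p\Sigma}$ is an isomorphism only in bounded cohomology, one must know that the $1$-cochain $\sum_i\chi_i f^*\alpha_i$ (integration over geodesic segments) is bounded, so that its coboundary vanishes in $\mathrm{H}^2_b(\Sigma,\mb{R})$ and $j_{\p\Sigma}$ of the relative class equals $[f^*\omega]_b=2\pi\, i_\Sigma\rho_b^*(\kappa_G^b)$. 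This is exactly Proposition \ref{potential}, $\|\alpha_i\|_\infty\le n\pi$, whose proof is a Domic--Toledo argument: $\alpha_i=d^c\psi_i$ vanishes along geodesics asymptotic to a point fixed by $\rho(c_i)$, so $\int_{\gamma(Q,R)}\alpha_i$ equals the symplectic area of an ideal triangle, bounded by $n\pi$ by \cite{DT}. Without this estimate, subtracting $d(\chi_i f^*\alpha_i)$ could change the bounded class, and pairing with $[\Sigma,\p\Sigma]$ would not compute $\op{T}(\Sigma,\rho)$. (One also needs the chain of comparison maps $\mathrm{H}^2_b(\Sigma,\p\Sigma,\mb{R})\to\mathrm{H}^2(\Sigma,\p\Sigma,\mb{R})\to\mathrm{H}^2_{\text{dR,comp}}(\Sigma_o,\mb{R})$ to see that the pairing is literally $\int_\Sigma$ of the compactly supported form, but that part is routine.)
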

There is a bound for the Toledo invariant, which is known as Milnor-Wood inequality \cite{Milnor, Wood}. It is  provided as an obstruction for a circle bundle to admit a flat structure, see also \cite{Fri, Gold}. This inequality and the maximal representations were widely studied, see e.g. \cite{BIW, BILW, DT, Dup, GGM, KM, KM1, Toledo, Tur}.
Here we will deduce a Milnor-Wood type inequality for  the signature by using the formula  \eqref{sign0}. 
\begin{thm}\label{thm0.3}
The signature satisfies the following Milnor-Wood type inequality:
	\begin{equation*}
  |\op{sign}(\mc{E},\Omega)|\leq \dim E \cdot|\chi(\Sigma)|.
\end{equation*}
\end{thm}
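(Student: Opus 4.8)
The plan is to read the bound off the signature formula \eqref{sign0} of Theorem~\ref{thm0.2}, $\op{sign}(\mc{E},\Omega)=2\op{T}(\Sigma,\rho)+\bs{\rho}(\p\Sigma)$, by combining the Milnor--Wood inequality for the Toledo invariant with an estimate for the Rho invariant of the boundary; a more elementary cohomological argument, sketched at the end, provides a cross-check.

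The Toledo term is controlled by \cite{BIW}: $|\op{T}(\Sigma,\rho)|\le\op{rank}(\ms{X})\,|\chi(\Sigma)|$, and since here $\ms{X}=\op{D}^{\op{III}}_n$ has rank $n=\tfrac12\dim E$ this reads $|2\op{T}(\Sigma,\rho)|\le\dim E\cdot|\chi(\Sigma)|$. (For $\p\Sigma=\emptyset$ one has $\bs{\rho}(\p\Sigma)=0$, so together with $\op{sign}(\mc{E},\Omega)=4\int_\Sigma c_1(\mc{E},\Omega,\mbf{J})$ from Theorem~\ref{thm0.1} this is already the ``new proof'' of the closed-surface Milnor--Wood inequality promised in the introduction.) It remains to handle $\bs{\rho}(\p\Sigma)$. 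It decomposes as $\bs{\rho}(\p\Sigma)=\sum_{i=1}^{q}\bs{\rho}(c_i)$ with $\bs{\rho}(c_i)=\tfrac1\pi\int_{c_i}\wt{\mbf{J}}^{*}\alpha_i+\eta(A_{\mbf{J}}|_{c_i})$ depending only on the conjugacy class of the boundary holonomy $\rho(c_i)$. Taking $\mbf{J}\in\mc{J}_o(\mc{E},\Omega)$ adapted to a normal form of $\rho(c_i)$, so that $A_{\mbf{J}}|_{c_i}$ is a constant-coefficient operator on the circle, one would evaluate $\eta(A_{\mbf{J}}|_{c_i})$ and $\tfrac1\pi\int_{c_i}\wt{\mbf{J}}^{*}\alpha_i$ explicitly in terms of the rotation data of $\rho(c_i)$; the target is the sharpened inequality
\[
2\,|\op{T}(\Sigma,\rho)|+|\bs{\rho}(\p\Sigma)|\le\dim E\cdot|\chi(\Sigma)|,
\]
which gives Theorem~\ref{thm0.3} at once since $\op{sign}(\mc{E},\Omega)=2\op{T}(\Sigma,\rho)+\bs{\rho}(\p\Sigma)$. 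It amounts to distributing the budget $\dim E\cdot|\chi(\Sigma)|=\dim E\,(2g-2)+q\dim E$ so that each boundary component spends at most $\dim E$ on $2\op{T}$ and $\bs{\rho}(c_i)$ jointly. Making this precise is the main obstacle: it calls for a conjugacy-class-refined Burger--Iozzi--Wienhard estimate for the relative Toledo invariant, matched term by term with the explicit value of $\bs{\rho}(c_i)$, and in particular one must understand $\eta(A_{\mbf{J}})$ when $\rho(c_i)$ has a nontrivial hyperbolic part, where no parallel compatible complex structure exists.

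The cross-check avoids \eqref{sign0} and the Rho invariant entirely. Non-degeneracy of $Q_{\mb{R}}$ gives $|\op{sign}(\mc{E},\Omega)|\le\dim\widehat{\mathrm{H}}^1(\Sigma,\mc{E})$, and the long exact sequence of $(\Sigma,\p\Sigma)$ together with Poincar\'{e}--Lefschetz duality yields
\[
\dim\widehat{\mathrm{H}}^1(\Sigma,\mc{E})=\dim E\cdot|\chi(\Sigma)|+2\dim\mathrm{H}^0(\Sigma,\mc{E})-\dim\mathrm{H}^0(\p\Sigma,\mc{E}).
\]
Since a $\rho$-invariant vector restricts to a $\rho(c_i)$-invariant vector for every $i$, the last two terms sum to $\le0$ whenever $\p\Sigma$ has at least two components, or $\rho$ has no nonzero invariant vector; and if $v\ne0$ is $\rho$-invariant one filters $\mc{E}$ by $\langle v\rangle\subset v^{\perp_\Omega}\subset E$, checks that $Q_{\mb{R}}$ on the associated graded is a hyperbolic summand of signature $0$ plus the form of $(v^{\perp_\Omega}/\langle v\rangle,\bar{\Omega})$, and induces on $\dim E$. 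This settles the inequality in all cases, but since it does not exhibit the relation to the Toledo and Rho invariants I would present the proof via \eqref{sign0}.
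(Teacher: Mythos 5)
Your primary route does not close, and it would also undercut the point of the theorem. The identity $\op{sign}(\mc{E},\Omega)=2\op{T}(\Sigma,\rho)+\bs{\rho}(\p\Sigma)$ only controls the \emph{signed} combination of the two terms, and each term can individually exhaust the whole budget: the paper's pair-of-pants example with $\rho(p_1)=-I_2$, $\rho(p_2)=R(\theta)$, $\rho(p_3)=R(\pi-\theta)$ has $\op{T}=0$ and $\bs{\rho}(\p\Sigma)=2=\dim E\cdot|\chi(\Sigma)|$, while a Fuchsian representation with hyperbolic boundary has $\bs{\rho}(\p\Sigma)=0$ and $2|\op{T}|=\dim E\cdot|\chi(\Sigma)|$. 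Hence no argument that bounds $|2\op{T}|$ and $|\bs{\rho}|$ separately (or allocates a fixed per-boundary budget to each) can work; your target $2|\op{T}(\Sigma,\rho)|+|\bs{\rho}(\p\Sigma)|\le\dim E\cdot|\chi(\Sigma)|$ is a strictly stronger statement which you flag as ``the main obstacle'' and never establish, and nothing in \cite{BIW} or in the explicit boundary computations of the appendix supplies the required coupling between the interior Toledo term and the boundary eta/Rho term. That coupling is precisely what the index-theoretic identity \eqref{5.6} provides, and it is on that identity — not on the decomposition into $\op{T}$ and $\bs{\rho}$ — that the paper's proof runs. Note also that using the Burger--Iozzi--Wienhard inequality as an input makes Corollary \ref{cor0.4} circular in the closed case, whereas the paper derives Theorem \ref{thm0.3} independently and then obtains the Milnor--Wood inequality for $\op{T}$ as a corollary.

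Your ``cross-check'' is essentially the paper's actual proof and is the argument you should promote: the dimension count $\dim\widehat{\mathrm{H}}^1(\Sigma,\mc{E})=-\dim E\cdot\chi(\Sigma)+2\dim\mathrm{H}^0(\Sigma,\mc{E})-\dim\mathrm{H}^0(\p\Sigma,\mc{E})$ is \eqref{relative dimension}, and the observation that the correction terms are $\le 0$ when $q\ge2$ or when $\rho$ has no nonzero invariant vector is \eqref{5.7}. The two arguments diverge only in the residual case $q\le1$ with $\mathrm{H}^0(\Sigma,\mc{E})\neq0$: the paper perturbs $\rho$ relative to the boundary to kill invariant vectors (Lemma \ref{lemma3}) and uses that $\op{T}$, $\bs{\rho}$, and hence $\op{sign}$ are constant along the deformation; you instead propose an isotropic reduction of $Q_{\mb{R}}$ along $\langle v\rangle\subset v^{\perp_\Omega}\subset E$ followed by induction on $\dim E$. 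That is an attractive, purely cohomological alternative, but its key step — that the image of $\widehat{\mathrm{H}}^1(\Sigma,\langle v\rangle)$ in $\widehat{\mathrm{H}}^1(\Sigma,\mc{E})$ is $Q_{\mb{R}}$-isotropic with orthogonal quotient isometric to the form of the reduced local system $(v^{\perp_\Omega}/\langle v\rangle,\bar{\Omega})$ — is asserted rather than proved, and it requires a diagram chase through the relative long exact sequences of the three local systems together with a check that no rank is lost in passing to images. As written, the crucial remaining case of the theorem is therefore not established.
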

In particular, if $\Sigma$ is closed, then $\op{sign}(\mc{E},\Omega)=2 \op{T}(\Sigma,\rho)$ and we obtain the Milnor-Wood inequality for the Toledo invariant in the case of the surface group representations in the real symplectic group.
\begin{cor}[{Turaev \cite{Tur}}]\label{cor0.4} The Toledo invariant satisfies
	\begin{equation*}
  |\op{T}(\Sigma,\rho)|\leq \frac{\dim E}{2}|\chi(\Sigma)|.
\end{equation*}
\end{cor}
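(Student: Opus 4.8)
The plan is to obtain Corollary~\ref{cor0.4} as the specialization of Theorem~\ref{thm0.3} and of the signature formula \eqref{sign0} to a closed surface, in which case every piece of boundary data in the preceding constructions degenerates. First I would record these degenerations. When $\p\Sigma=\emptyset$ there is no collar and no boundary operator $A_{\mbf J}$, so the eta term is absent and the sum $\sum_{i=1}^q\int_{c_i}\wt{\mbf J}^*\alpha_i$ is empty ($q=0$); hence $\bs{\rho}(\p\Sigma)=0$. Likewise the pair $(\Sigma,\p\Sigma)$ reduces to $\Sigma$, so $\mathrm{H}^1(\Sigma,\p\Sigma,\mc E)\to\mathrm{H}^1(\Sigma,\mc E)$ is an isomorphism and $\widehat{\mathrm H}^1(\Sigma,\mc E)=\mathrm H^1(\Sigma,\mc E)$, with $Q_{\mb R}$ the usual Meyer form $\int_\Sigma\Omega(\cdot\cup\cdot)$; the ``peripheral'' condition on connections is vacuous and the compact-support class $c_1(\mc E,\Omega,\mbf J)$ is just the ordinary first Chern class of $(\mc E,\Omega)$, independent of $\mbf J$; and $\mc J_o(\mc E,\Omega)=\mc J(\mc E,\Omega)\neq\emptyset$, so Theorems~\ref{thm0.1} and \ref{thm0.2} apply with any choice of $\mbf J$.

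With these observations \eqref{sign0} reduces to $\op{sign}(\mc E,\Omega)=2\,\op T(\Sigma,\rho)$, equivalently (from Theorem~\ref{thm0.2}) $\op T(\Sigma,\rho)=2\int_\Sigma c_1(\mc E,\Omega)$, and one recovers in passing Meyer's formula $\op{sign}(\mc E,\Omega)=4\int_\Sigma c_1(\mc E,\Omega)$. Next I would simply invoke Theorem~\ref{thm0.3}, whose statement carries no hypothesis on $\p\Sigma$ and therefore applies verbatim to a closed $\Sigma$: it gives $|\op{sign}(\mc E,\Omega)|\le \dim E\cdot|\chi(\Sigma)|$. Substituting $\op{sign}(\mc E,\Omega)=2\,\op T(\Sigma,\rho)$ and dividing by $2$ yields $|\op T(\Sigma,\rho)|\le\tfrac{\dim E}{2}|\chi(\Sigma)|$, which is Turaev's inequality.

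For this particular statement there is essentially no obstacle beyond bookkeeping, since all of the analytic and geometric content has already been spent in Theorems~\ref{thm0.2} and \ref{thm0.3}. If I had to point to where the real difficulty lies, it is upstream, in the proof of Theorem~\ref{thm0.3}: one must show that comparing $\wt{\mbf J}^*\omega_{\op{D}^{\op{III}}_n}$ with the area form, via the curvature bounds from Appendix~\ref{App2} (the holomorphic sectional curvature of $\op{D}^{\op{III}}_n$ lies in $[-1,-1/n]$) together with Gauss--Bonnet, produces exactly the constant $\dim E/2=n$ and nothing larger, the extremal case being a maximal representation for which $\wt{\mbf J}$ is a totally geodesic holomorphic disc of curvature $-1/n$. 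Granting Theorem~\ref{thm0.3}, Corollary~\ref{cor0.4} is immediate from the displayed specialization of \eqref{sign0}.
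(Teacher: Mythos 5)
Your proof is correct and is exactly the paper's argument: for closed $\Sigma$ the boundary terms and the Rho invariant vanish, so \eqref{sign0} gives $\op{sign}(\mc E,\Omega)=2\op T(\Sigma,\rho)$, and Theorem \ref{thm0.3} then yields the bound after dividing by $2$. (Your closing aside about where the difficulty in Theorem \ref{thm0.3} lies does not match the paper's actual method, which is the index-theoretic dimension formula \eqref{5.6} plus a perturbation of the representation rather than a curvature/Gauss--Bonnet comparison, but since you invoke that theorem as a black box this is immaterial to the corollary.)
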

For the case of $\dim E=2$,  $\op{Sp}(E,\Omega)\cong \op{SL}(2,\mb{R})$, we obtain the following modified Milnor-Wood inequalities \eqref{TI0}.
\begin{prop}\label{propT0}
For any representation $\rho:\pi_1(\Sigma)\ra \op{SL}(2,\mb{R})$, one has 
\begin{equation}\label{TI0}
  -|\chi(\Sigma)|-1 + \sum_{\rho(c_k)\text{ is elliptic}}  \frac{\theta_{k}}{\pi}\leq\op{T}(\Sigma,\rho) \leq |\chi(\Sigma)|+1 - \sum_{\rho(c_k)\text{ is elliptic}}  \left(1-\frac{\theta_{k}}{\pi}\right),
\end{equation}
where $\theta_k\in (0,\pi)$ such that $[R(\theta_k)]$ is conjugate to $[\rho(c_k)]\in \op{PSL}(2,\mb{R})=\op{SL}(2,\mb{R})/\{\pm I\}$, and $[\bullet]$ denotes the class in $\op{PSL}(2,\mb{R})$, 
 $R(\theta_k)=\left(\begin{matrix}
\cos\theta_k & -\sin\theta_k\\
\sin\theta_k & \cos\theta_k	
\end{matrix}
\right)$.
\end{prop}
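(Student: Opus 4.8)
The plan is to specialise the earlier results to $\dim E=2$, where $\op{Sp}(E,\Omega)\cong\op{SL}(2,\mathbb{R})$ and $\op{D}^{\op{III}}_1$ is the hyperbolic plane, and then to make the right-hand side of \eqref{sign0} completely explicit. The inputs are: $\op{sign}(\mc{E},\Omega)=2\op{T}(\Sigma,\rho)+\bs{\rho}(\p\Sigma)$ from \eqref{sign0}; the bound $|\op{sign}(\mc{E},\Omega)|\le 2|\chi(\Sigma)|$ from Theorem \ref{thm0.3} with $\dim E=2$; and the fact that $\op{sign}(\mc{E},\Omega)$ is an integer, being the signature of the nondegenerate symmetric form $Q_{\mb{R}}$ on the finite-dimensional space $\widehat{\mathrm{H}}^1(\Sigma,\mc{E})$. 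Writing $\op{T}(\Sigma,\rho)=\tfrac12\op{sign}(\mc{E},\Omega)-\tfrac12\bs{\rho}(\p\Sigma)$ and using the first two inputs gives
\begin{equation*}
-|\chi(\Sigma)|-\tfrac12\bs{\rho}(\p\Sigma)\ \le\ \op{T}(\Sigma,\rho)\ \le\ |\chi(\Sigma)|-\tfrac12\bs{\rho}(\p\Sigma),
\end{equation*}
so everything reduces to controlling $\bs{\rho}(\p\Sigma)=\sum_{k=1}^q\bs{\rho}(c_k)$, the decomposition being legitimate since $\eta(A_{\mbf{J}})$ and the forms $\wt{\mbf{J}}^*\alpha_k$ are supported near $\p\Sigma$; moreover each $\bs{\rho}(c_k)$ depends only on the conjugacy class of $\rho(c_k)$ in $\op{SL}(2,\mathbb{R})$.

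The main work is the computation of $\bs{\rho}(c_k)$ case by case. If $\rho(c_k)$ is elliptic it preserves a compatible complex structure, so one may take $\mbf{J}$ flat along a collar of $c_k$; then $\wt{\mbf{J}}^*\alpha_k$ vanishes there, $A_{\mbf{J}}$ is conjugate to $\pm i\,\tfrac{d}{dx}$ on the two summands of $\mc{E}_{\mb{C}}|_{c_k}=\mc{E}^{1,0}\oplus\mc{E}^{0,1}$ twisted by the unitary holonomies $e^{\pm i\psi_k}$ (where $\rho(c_k)$ is $\op{SL}(2,\mathbb{R})$-conjugate to $R(\psi_k)$ with $\psi_k\in(0,2\pi)$), and its eta invariant is summed in closed form through $\zeta(0,a)=\tfrac12-a$, giving $\bs{\rho}(c_k)$ as an explicit affine function of $\psi_k$. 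Since $[\rho(c_k)]$ is $\op{PSL}(2,\mathbb{R})$-conjugate to $[R(\theta_k)]$ with $\theta_k\in(0,\pi)$, one has $\psi_k\in\{\theta_k,\theta_k+\pi\}$, which, together with the orientation of $c_k$, produces precisely the terms $\tfrac{\theta_k}{\pi}$ and $1-\tfrac{\theta_k}{\pi}$ of \eqref{TI0}. If $\rho(c_k)$ is hyperbolic, parabolic, or $\pm I$, no flat $\mbf{J}$ is available, and one instead analyses $A_{\mbf{J}}=\mbf{J}\,\tfrac{d}{dx}$ directly as a first-order system on $S^1$, showing by a spectral-flow argument that $\bs{\rho}(c_k)$ is an integer (equal to $0$ when $\rho(c_k)=\pm I$), independent of $\mbf{J}$.

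Finally I would substitute these values into the two-sided estimate for $\op{T}(\Sigma,\rho)$, separating the elliptic boundary components from the rest and using the integrality of $\op{sign}(\mc{E},\Omega)$ to tie the remaining integer contributions together; this yields \eqref{TI0}, the constant $\pm1$ reflecting exactly the pieces that the signature bound alone does not pin down in the presence of non-elliptic boundary holonomy.

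The step I expect to be the main obstacle is precisely this eta-invariant bookkeeping. For elliptic holonomy the zeta-function evaluation is routine once one keeps track of the two $\op{SL}(2,\mathbb{R})$-lifts $R(\theta_k)$, $R(\theta_k+\pi)$ of a given $\op{PSL}(2,\mathbb{R})$-conjugacy class (which yield genuinely different values of $\bs{\rho}(c_k)$) and of the orientation of the boundary circle, which sends $\psi_k\mapsto 2\pi-\psi_k$. The delicate point is the hyperbolic/parabolic case, where $\mbf{J}$ cannot be chosen flat: one must verify that $\eta(A_{\mbf{J}})+\tfrac1\pi\int_{c_k}\wt{\mbf{J}}^*\alpha_k$ is an integer and independent of $\mbf{J}$, and it is this residual imprecision---absent in the closed case treated in Corollary \ref{cor0.4}---that accounts for the extra $\pm1$ in \eqref{TI0}.
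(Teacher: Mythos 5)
Your starting point (the formula $\op{sign}(\mc{E},\Omega)=2\op{T}(\Sigma,\rho)+\bs{\rho}(\p\Sigma)$ combined with $|\op{sign}(\mc{E},\Omega)|\le 2|\chi(\Sigma)|$) and your case-by-case evaluation of $\bs{\rho}(c_k)$ agree with the paper: the appendix computes exactly the table you describe (hyperbolic and $\lambda=-1$ parabolic give $0$, elliptic $R(\psi)$ gives $2(1-\psi/\pi)$, parabolic with eigenvalue $1$ gives $\mp 2$). But the final assembly has a genuine gap. For a \emph{fixed} $\rho$, each elliptic $\rho(c_k)$ has a definite rotation angle $\psi_k\in\{\theta_k,\theta_k+\pi\}$, hence a definite value $-\tfrac12\bs{\rho}(c_k)$, which equals $-(1-\theta_k/\pi)$ in the first case and $+\theta_k/\pi$ in the second. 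These differ by $1$, and each choice is correct for only one side of \eqref{TI0}. If, say, all $q_{\mathrm{ell}}\ge 2$ elliptic boundary holonomies have $\psi_k=\theta_k$, your two-sided estimate gives the lower bound $-|\chi(\Sigma)|-q_{\mathrm{ell}}+\sum\theta_k/\pi$, strictly weaker than the claimed $-|\chi(\Sigma)|-1+\sum\theta_k/\pi$; similarly two parabolic boundaries with eigenvalue $1$ and $\mu>0$ push the upper bound to $|\chi(\Sigma)|+2$. Integrality of the signature cannot repair this, since $\bs{\rho}(c_k)$ is not an integer in the elliptic case and the slack is a genuine defect of the estimate, not a rounding issue.

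The missing idea is the paper's sign-reversal trick: one replaces $\rho$ by $\rho_1$ with $\rho_1(c_k)=-\rho(c_k)$ on a \emph{maximal even} number of the ``bad'' boundary components (those conjugate to $R(\theta)$ with $\theta\in(\pi,2\pi)$, or parabolic with eigenvalue $1$). Since $(-I)^{2m}=I$, the presentation relation $\prod[a_i,b_i]\prod c_j=e$ is preserved, so $\rho_1$ is again a representation; and since $\pm L$ act identically on $\op{D}^{\op{III}}_1$, any $\rho$-equivariant $\wt{\mbf{J}}$ is $\rho_1$-equivariant and $\op{T}(\Sigma,\rho_1)=\op{T}(\Sigma,\rho)$, while $\bs{\rho}_1(c_k)$ shifts to the favourable lift. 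Applying your estimate to $\rho_1$ then yields \eqref{TI0}, with the $\pm 1$ coming precisely from the parity obstruction that at most one bad component can be left unflipped --- not, as you suggest, from non-elliptic holonomy or from integrality of the signature (the $\pm1$ is already present in the purely elliptic case). Without introducing $\rho_1$ your argument proves only the weaker bound with $q$ in place of $1$.
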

We also consider the representations in the real symplectic group of higher rank, and obtain some modified inequalities   for Toledo invariant, see Section \ref{Class} for more details.

Now we briefly explain the proof of the above results. 

By the complexification, we consider the complex vector bundle $\mc{E}_{\mb{C}}=\mc{E}\otimes \mb{C}$ which is equipped with the symplectic form $\Omega$.  The quadratic form $Q_{\mb{C}}(\cdot,\cdot)=\int_\Sigma \Omega(\cdot \cup\cdot)$ can be defined on $\widehat{\mathrm{H}}^1(\Sigma,\mc{E}_{\mb{C}})$, which is also non-degenerate.  We denote by $\op{sign}(\mc{E}_{\mb{C}},\Omega)$ the signature of the quadratic form on $\widehat{\mathrm{H}}^1(\Sigma,\mc{E}_{\mb{C}})$, which equals  $\op{sign}(\mc{E},\Omega)$ for the real symplectic vector bundle $(\mc{E},\Omega)$, see Proposition \ref{prop3.0}. Let $g_\Sigma$ be a Riemannian metric on $\Sigma$ which has the product form $g_\Sigma=du^2+g_{\p\Sigma}$ on $\p\Sigma\times [0,1]$. The Hodge $*$ operator  is defined by \eqref{star}. For any complex structure $\mbf{J}\in \mc{J}(\mc{E},\Omega)$, one can define the Hermitian inner product $(\cdot,\cdot)=2\Omega(\cdot,\mbf{J}\o{\cdot})$ and the global $\mathrm{L}^2$-inner product $\langle\cdot,\cdot\rangle$. The operator $*\mbf{J}$ satisfies $(*\mbf{J})^2=\op{Id}$ when acting on the space  $\wedge^1T^*\Sigma\otimes \mc{E}_{\mb{C}}$. Let $\wedge^\pm$ denote the $\pm 1$-eigenspaces of $*\mbf{J}$, and denote by $\pi^{\pm}=\frac{1\pm *\mbf{J}}{2}:\wedge^1T^*\Sigma\otimes \mc{E}_{\mb{C}}\to \wedge^{\pm}$ the natural projections. Set $d^{\pm}=\pi^{\pm}\circ d$, where $d$ is the canonical flat connection on the space $A^*(\Sigma,\mc{E}_{\mb{C}})$. The   operators $d^{\pm}$ have  the  form 
$d^{\pm}=\sigma^{\pm}(\frac{\p}{\p u}+A^{\pm}_{\mbf{J}}),$
where $\sigma^{\pm}:\mc{E}_{\mb{C}}\to\wedge^\pm$ are the bundle isomorphisms, and $A^{\pm}_{\mbf{J}}=\pm A_{\mbf{J}}$ are the first order  elliptic formally self-adjoint operators on the boundary, see Corollary \ref{cor4.2}.  Let  
 $\widehat{\Sigma}=\Sigma\cup((-\infty,0]\times\p\Sigma)$
 be the complete manifold obtained form $\Sigma$ by gluing the negative half-cylinder $(-\infty,0]\times \p\Sigma$ to the boundary of $\Sigma$. The vector bundle $\mc{E}$, the complex structure $\mbf{J}$ and the canonical flat connection $d$  can be extended naturally to $\widehat{\Sigma}$. Denote by $\mathscr{H}^*(\widehat{\Sigma},\mc{E}_{\mb{C}})$ the space of harmonic $\mathrm{L}^2$-forms on $\widehat{\Sigma}$, which is isomorphic to $\widehat{\mathrm{H}}^*(\Sigma,\mc{E}_{\mb{C}})$, see \eqref{L2harmonic}. Moreover, it is the direct sum of the two subspaces $\op{Ker}(d^+)^*\cap \mathrm{L}^2(\widehat{\Sigma},\wedge^+)$ and  $\op{Ker}(d^-)^*\cap \mathrm{L}^2(\widehat{\Sigma},\wedge^-)$, which correspond to the positive and negative definite subspaces of quadratic form $Q_{\mb{C}}$ respectively, see Proposition \ref{prop3}. The signature is then given by 
 \begin{align*}
		\op{sign}(\mc{E},\Omega)&=\mathrm{L}^2\op{Index}(d^{-})-\mathrm{L}^2\op{Index}(d^{+}),
	\end{align*}
where $\mathrm{L}^2\op{Index}(d^{\pm})$ denote the $\mathrm{L}^2$-indices  of the operators $d^{\pm}$, see \eqref{L2index}.  Let $P_{\pm}$ denote the orthogonal projections of $\mathrm{L}^2(\p\Sigma,\mc{E}_{\mb{C}})$ onto the subspace spanned by all eigenfunctions of $A^{\pm}_{\mbf{J}}$ with eigenvalues $\lambda>0$, and $A^0(\Sigma,\mc{E}_{\mb{C}};P_\pm)$ be the subspace of $A^0(\Sigma,\mc{E}_{\mb{C}})$ consisting of all sections $\v$ which satisfy the boundary conditions
$P_\pm(\v|_{\p\Sigma})=0.$
Denote by 
$d^{\pm}_{P}:A^0(\Sigma,\mc{E}_{\mb{C}};P_\pm)\to A^0(\Sigma,\wedge^{\pm})$
the restriction of $d^{\pm}$. The $\mathrm{L}^2$-index of $d^{\pm}$ can be expressed as the sum of $\op{Index}(d^-_P)$ and $h_{\infty}(\wedge^\pm)$,
 where $h_{\infty}(\wedge^\pm)$ denote the dimension of the subspace of $ \operatorname{Ker} \sigma^{\pm}A_{\mbf{J}}^{\pm}(\sigma^\pm)^{-1}$  consisting of limiting values of extended $\mathrm{L}^2$-sections $a$ of $\wedge^{\pm}$ satisfying $(d^{\pm})^*a=0$. Hence 
	\begin{align*}
		\op{sign}(\mc{E},\Omega)
		=\op{Index}(d^-_P)-\op{Index}(d^+_P)+h_{\infty}(\wedge^-)-h_{\infty}(\wedge^+).
	\end{align*}
 By Atiyah-Patodi-Singer index theorem \cite[Theorem 3.10]{APS}, $d^{\pm}_P$ are Fredholm operators and 
\begin{align*}
\op{Index}(d^{\pm}_P)=\int_{\Sigma}\alpha_{{\pm}}(z)d\mu_g-\frac{\eta(A_{\mbf{J}}^{\pm})+\dim\op{Ker}A^{\pm}_{\mbf{J}}}{2},	
\end{align*}
where $\int_{\Sigma}\alpha_{{\pm}}(z)d\mu_g$ are the Atiyah-Singer integrands. Hence
\begin{multline*}
\op{sign}(\mc{E},\Omega)=\int_{\Sigma}\alpha_-(z)d\mu_g-\int_{\Sigma}\alpha_+(z)d\mu_g+h_{\infty}(\wedge^-)-h_{\infty}(\wedge^+)+\eta(A_{\mbf{J}}).
\end{multline*}
Following \cite{APS} we consider the double $\Sigma\cup_{\p\Sigma}\Sigma$ of $\Sigma$, and consider  the $\mb{Z}_2$-graded vector bundle  $\mc{F}=\mc{F}^+\oplus\mc{F^-}$ over the double $\Sigma\cup_{\p\Sigma}\Sigma$, where 
$\mc{F}^+=\mc{E}_{\mb{C}}$ and $\mc{F}^-=\wedge^-$. Then the term, Atiyah-Singer integrand  can be given by 
\begin{align*}
\int_{\Sigma}\alpha_-(z)d\mu_g =\lim_{t\to 0}\int_{\Sigma}\op{Str}\langle z|e^{-tD^2}|z\rangle d\mu_g,
\end{align*}
where $D$ is a Dirac operator on $\mc{F}$, see \eqref{Dirac operator}. For any flat symplectic vector bundle $(\mc{E},\Omega)$ with $\mbf{J}\in\mc{J}(\mc{E},\Omega)$, we define the  {\it peripheral connection} to be a connection which commutes with $
\mbf{J}$, preserves the symplectic form, and depends only $x$ on a small collar neighborhood of $\p\Sigma$, see Definition \ref{connection}. For each peripheral connection, there is a natural Dirac operator $D^{\mc{F}}$  on $\mc{F}$, which is associated with a Clifford connection $\n^\mc{F}$. The Duhamel's formula gives
$$\lim_{t\to 0}\int_{\Sigma}\op{Str}\langle z|e^{-tD^2}|z\rangle d\mu_g=\lim_{t\to 0}\int_{\Sigma}\op{Str}\langle z|e^{-t(D^{\mc{F}})^2}|z\rangle d\mu_g,$$
see Lemma \ref{lemma4.7}. By the local index theorem, see e.g. \cite[Theorem 8.34]{Melrose}, we obtain 
\begin{align*}
	\lim_{t\to 0}\int_{\Sigma}\op{Str}\langle z|e^{-t(D^{\mc{F}})^2}|z\rangle d\mu_g=2\int_{\Sigma}c_1(\mc{E},\Omega, \mbf{J})+\frac{\dim E}{2}\chi(\Sigma),
\end{align*}
see Proposition \ref{prop410}. Similarly, we can calculate the term $\int_{\Sigma}\alpha_{{+}}(z)d\mu_g$. Following a similar  argument in \cite{APS}, we obtain 
$h_{\infty}(\wedge^-)=h_{\infty}(\wedge^+)$, see \eqref{hpm}. Combining with the above equalities, Theorem \ref{thm0.1} is proved.

For the trivial symplectic vector bundle $(F,\Omega)=\op{D}^{\op{III}}_n\times (\mb{R}^{2n},\Omega)$ over the bounded symmetric domain $\op{D}^{\op{III}}_n$ of type $\op{III}$, there is a canonical complex structure $\mbf{J}_F$ on $F$, see \eqref{complexstructureF}. Moreover, we can define a complex connection $\n^F$ on $(F,\mbf{J}_F)$ such that the first Chern form of the connection is  $\frac{1}{4\pi}\omega_{\op{D}^{\op{III}}_n}$, see \eqref{Chern forms 1}. For any representation $\rho:\pi_1(\Sigma)\to \op{Sp}(E,\Omega)$ and any $\mbf{J}\in \mc{J}_o(\mc{E},\Omega)$, it gives a $\rho$-equivariant map $\wt{\mbf{J}}: \wt{\Sigma}\to \op{D}_n^{\op{III}}$ by using the  identification $\mc{J}(E,\Omega)\cong \op{D}_n^{\op{III}}$, which is also equivalent to the smooth map $\mbf{J}:\Sigma\to \wt{\Sigma}\times_\rho \op{D}_n^{\op{III}}$. For the vector bundle $F_\rho=\wt{\Sigma}\times_\rho F$ over  $\wt{\Sigma}\times_\rho \op{D}_n^{\op{III}}$,  the identification  $(E,\Omega)\cong (\mb{R}^{2n},\Omega)$ induces a complex linear symplectic isomorphism between $(\mbf{J}^*F_\rho,\mbf{J}^*\mbf{J}_{F_\rho},\mbf{J}^*\Omega)$ and $(\mc{E},\mbf{J},\Omega)$, where $\mbf{J}^*\Omega$ denotes the induced symplectic form on $F_\rho$. The connection $\n^F$ induces a natural connection $\n^{F_\rho}$ on $F_\rho$, and by pullback, the connection $\mbf{J}^*\n^{F_\rho}$ can be proved to be a peripheral connection, see Proposition \ref{proppullbackconnection}. On the other hand, the invariant K\"ahler form $\omega_{\op{D}^{\op{III}}_n}$ is also well-defined on $\wt{\Sigma}\times_\rho \op{D}^{\op{III}}_n$, which is just the curvature of the connection $\n^{F_\rho}$ up to a factor.  The pullback two form $\wt{\mbf{J}}^*\omega_{\op{D}^{\op{III}}_n}$ is a $\rho$-equivariant two form on $\wt{\Sigma}$, so it descends  to a two form on $\Sigma$, which is just $\mbf{J}^*\omega_{\op{D}^{\op{III}}_n}$.
 Note that $c_1(\mc{E},\Omega, \mbf{J})$ is independent of the peripheral connection, thus
 \begin{equation*}
  2\int_\Sigma c_1(\mc{E},\Omega, \mbf{J})=\frac{1}{2\pi}\int_\Sigma \wt{\mbf{J}}^*\omega_{\op{D}^{\op{III}}_n},
 \end{equation*}
see Proposition \ref{propchern}. By considering the specific corresponding between the groups of  bounded cohomology and de Rham cohomolgy, we obtain
$$\op{T}(\Sigma,\rho)=\frac{1}{2\pi}\int_\Sigma \wt{\mbf{J}}^*	\omega_{\op{D}^{\op{III}}_n}-\frac{1}{2\pi}\sum_{i=1}^q\int_{c_i}\wt{\mbf{J}}^*\alpha_i,$$
and Theorem \ref{thm0.2} is proved. 

The signature also can be given by 
\begin{multline*}
 \pm\op{sign}(\mc{E},\Omega)	=-\dim E \cdot\chi(\Sigma)-\dim \mathrm{H}^0(\p\Sigma,\mc{E})\\
+2\dim \mathrm{H}^0(\Sigma,\mc{E})-2\dim_{\mb{C}}\op{Ker}(d^\mp)^*\cap \mathrm{L}^2(\widehat{\Sigma},\wedge^{\mp}),
\end{multline*}
see \eqref{5.6}. Since $\dim \mathrm{H}^0(\p\Sigma,\mc{E})\geq q\dim \mathrm{H}^0(\Sigma,\mc{E})$ and so if $q\geq 2$, we conclude $|\op{sign}(\mc{E},\Omega)|\leq \dim E |\chi(\Sigma)|$. For a general $q\geq 1$,  and for any representation $\rho:\pi_1(\Sigma)\to \op{Sp}(E,\Omega)$, by a small perturbation with the representation being fixed on the boundary, we obtain a smooth family $\{\rho_\epsilon\}_{\epsilon>0}$ such that the associated signature is invariant and $\rho_{\epsilon}\to\rho$ as $\epsilon\to 0$. Moreover $\dim \mathrm{H}^0(\Sigma,\mc{E}_\epsilon)=0$ for any $\epsilon>0$, see Lemma \ref{lemma3}. Therefore, we prove Theorem \ref{thm0.3}. 

We also consider the case of $n=1$, i.e. $\rho:\pi_1(\Sigma)\to \op{SL}(2,\mb{R})$. In this case, each element $L$ has the form $L=\pm \exp(2\pi B)$ for some $B\in \mf{s}\mf{l}(2,\mb{R})$, we can define a canonical compatible complex structure by $\mbf{J}(x)=\exp(-xB)J\exp(xB)$ for each boundary. Then the eta invariant of $A_{\mbf{J}}$ and Rho invariant can be calculated specifically, see Appendix \ref{Appeta}.  By reversing the maximal even representations on boundary  which are conjugate to the rotation $R(\theta), \theta\in (\pi,2\pi)$ or are parabolic with eigenvalues only $1$, we obtain a new representation $\rho_1$,
 and the Toledo invariant is invariant, i.e. $\op{T}(\Sigma,\rho)=\op{T}(\Sigma,\rho_1)$. By using  \eqref{sign0} and Theorem \ref{thm0.3} to $\rho_1$, Proposition \ref{propT0} is proved, see Corollary \ref{MWH}, Proposition \ref{propmod2} and Proposition \ref{propmod3}.

 This article is organized as follows. In Section \ref{Sym}, we will consider the symplectic vector bundle and eta invariant over a circle $S^1$. In Section \ref{Sig}, we will define the signature for flat symplectic vector bundles and prove Theorem \ref{thm0.1} using Atiyah-Patodi-Singer index theorem.  In Section \ref{Tol}, the first Chern class in the formula \eqref{0.1} of signature is proved to be related to Toledo invariant and Theorem \ref{thm0.2} is proved. In Section \ref{AMW}, we will prove a Milnor-Wood type inequality for signature and obtain Theorem \ref{thm0.3}, Corollary  \ref{cor0.4}. We also give a classification of elements in $\op{Sp}(2n,\mathbb R)$ and obtain some modified inequalities, Proposition \ref{propT0} is proved in this section. In Appendix, we will calculate the eta invariant and Rho invariant for two dimensional symplectic vector spaces, and calculate the curvature of the bounded symmetric domain of type $\op{III}$.

\section{Symplectic vector bundle and eta invariant over a circle}\label{Sym}

In this section, we will consider the flat symplectic vector bundle $\mc{E}$ associated with a representation $\rho$ of the fundamental group of a circle $S^1$ into the real symplectic group  $\op{Sp}(E,\Omega)$, and  define a first order elliptic self-adjoint differential operator $A_{\mbf{J}}$, and we will recall the definition of eta invariant $\eta(A_{\mbf{J}})$ for the operator, one can refer to \cite{APS, Muller} for the eta invariant.

\subsection{Symplectic vector bundle}
Let $(E,\Omega)$ be a real symplectic vector space, where $\Omega$ is a symplectic form. For any symplectic linear transformation $L\in \op{Sp}(E,\Omega)$, i.e. $L$ preserves the symplectic form $\Omega$, and consider the representation $\rho(\gamma_0)=L$, where $\gamma_0$ denotes the generator of $\pi_1(S^1)$, which is given by $\gamma_0(x)=e^{ix},0\leq x\leq 2\pi$, then it defines a flat vector bundle
$$\mc{E}=\mb{R}\times_{\rho}E=(\mb{R}\times E)/\sim$$
over $S^1$, where $(x_1,e_1)\sim (x_2,e_2)$ if $x_2=x_1+2\pi k,k\in \mb{Z}$ and $e_2=L^{-k}(e_1)$. Each global section of $\mc{E}$ is equivalent to a map $s:\mb{R}\to E$ satisfies the $\rho$-equivariant condition $s(x+2\pi)=L^{-1}s(x)$.  

 A complex structure on a real vector space $E$ is an automorphism $J: E \rightarrow E$ such that $J^{2}=-$ Id. A complex structure $J$ on a real symplectic vector space $(E, \Omega)$ is called {\it compatible with $\Omega$} (or {\it $\Omega$-compatible}) if
\(\Omega(\cdot, J \cdot)
\)
defines a positive definite inner product. 
We denote by $\mc{J}(E,\Omega)$ the space of all $\Omega$-compatible complex structures on $(E,\Omega)$.
In particular, one has $\mc{J}(E,\Omega)\subset \op{Sp}(E,\Omega)$.

Denote by $\mc{J}(\mc{E},\Omega)=\wt{\Sigma}\times_\rho \mc{J}(E,\Omega)$ the space of all $\rho$-equivariant complex structures in $\mc{J}(E,\Omega)$.
{For any symplectic transformation $L\in \op{Sp}(E,\Omega)$ which can be written as  $L=\pm\exp(2\pi B)$, where 
 $B\in\mf{s}\mf{p}(E,\Omega)$, i.e. $B^\top\Omega+\Omega B=0$, we can find a canonical complex structure $\mbf{J}\in \mc{J}(\mc{E},\Omega)$ for any given $J\in \mc{J}(E,\Omega)$.
 In fact, for any $J\in\mc{J}(E,\Omega)$, we  define }
 $$\mbf{J}(x)=\exp(-xB)J\exp(xB)\in \mc{J}(E,\Omega),$$
which satisfies $\mbf{J}(x+2\pi)=L^{-1}\mbf{J}(x)L$ and thus gives a complex structure $\mbf{J}$ on the flat vector bundle $\mc{E}$.  Hence $\mbf{J}\in \mc{J}(\mc{E},\Omega)$. { But in general, $L$ cannot be written as $L=\pm\exp(2\pi B)$ except for $\op{Sp}(2,\mb{R})$, hence one needs to choose another complex structure on $\mc{E}$.}

There exists a canonical flat connection $d$ on $\mc{E}$, which is induced from the trivial vector bundle $\mb{R}\times E\to \mb{R}$. The holonomy representation of the flat connection $d$ is just the representation $\rho$.
 Denote by $A^{0}(S^1,\mc{E})$ the space of all smooth sections of $\mc{E}$, which can be identified with the space $A^0(\mb{R},E)^L$ of all $\rho$-equivariant smooth maps $s:\mb{R}\to E$. There is a standard $\mathrm{L}^2$-metric on the space $A^{0}(S^1,\mc{E})\cong A^0(\mb{R},E)^L$ with respect to inner product $\Omega(\cdot,\mbf{J}\cdot)$ and the metric $dx\otimes dx$ on $S^1$, i.e. 
$
 	\int_{S^1}\Omega(\cdot,\mbf{J}\cdot)dx.
$

\subsection{First order differential operator}
Let $(E,\Omega)$ be a real symplectic vector space.
By $\mb{C}$-linear extension, $\Omega$ is also a symplectic form on the complex vector space $E_{\mb{C}}=E\otimes\mb{C}$. For any $L\in \op{Sp}(E,\Omega)$, it can be viewed as an element in $\op{Sp}(E_{\mb{C}},\Omega)$ by $\mb{C}$-linear extension. Since
$$\mc{E}_{\mb{C}}=\mc{E}\otimes\mb{C}=\mb{R}\times_{\rho}E_{\mb{C}},$$
so the space $A^0(S^1,\mc{E}_{\mb{C}})$ of all smooth sections of $\mc{E}_{\mb{C}}$ can be identified with the space $A^0(\mb{R},E_{\mb{C}})^L$ of all smooth $\rho$-equivariant maps from $\mb{R}$ to $E_{\mb{C}}$. For any $\mbf{J}\in \mc{J}(\mc{E},\Omega)$, 
 we can also extend it to a $\mb{C}$-linear transformation of $E_{\mb{C}}$. 

Consider the following $\mb{C}$-linear first order 
differential operator
\begin{align}\label{ABJ1}
A_{\mbf{J}}:=\mbf{J}\frac{d}{dx}
\end{align}
which acts on the space  $A^0(\mb{R},E_{\mb{C}})^L\cong A^0(S^1,\mc{E}_{\mb{C}})$. 
Denote
\begin{align}\label{Hermitian metric}
H(e_1,e_2)=2\Omega(e_1,\mbf{J}e_2).
\end{align}
for any $e_1,e_2\in \mc{E}_{\mb{C}}|_{x}$, $x\in [0,2\pi]$. One can check that $H$ is a Hermitian metric on $\mc{E}_{\mb{C}}$, the Hermitian inner product is denoted by $$(e_1,e_2):=H(e_1,\o{e_2}).$$ The global $\mathrm{L}^2$-inner product on $A^0(S^1,\mc{E}_{\mb{C}})$ is defined as 
\begin{align}\label{L2 metric}
	\langle\cdot,\cdot\rangle=\int_{S^1}(\cdot,\cdot)dx=2\int_{S^1}\Omega(\cdot,\mbf{J}\o{\cdot})dx.
\end{align}

\begin{prop}\label{prop2.0}
$A_{\mbf{J}}$ is a $\mb{C}$-linear formally self-adjoint elliptic first order differential operator  in the space $A^0(S^1,\mc{E}_{\mb{C}})$.
\end{prop}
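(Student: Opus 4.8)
The plan is to verify the three asserted properties in turn; the first two are immediate from the shape of the operator, and the content lies in formal self-adjointness, which I would obtain by an integration by parts on $S^1$.

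First, $\mb{C}$-linearity and the first-order property are clear from $A_{\mbf{J}}=\mbf{J}\frac{d}{dx}$: $\frac{d}{dx}$ is $\mb{C}$-linear, $\mbf{J}$ (extended $\mb{C}$-linearly) is a $\mb{C}$-linear bundle endomorphism, and their composition is a first-order operator. One should also note that $A_{\mbf{J}}$ genuinely maps $A^0(S^1,\mc{E}_{\mb{C}})$ to itself: if $s:\mb{R}\to E_{\mb{C}}$ is $\rho$-equivariant then so is $s'=ds/dx$ (the transition $L^{-k}$ being constant), and $\mbf{J}(x+2\pi)=L^{-1}\mbf{J}(x)L$ gives $(A_{\mbf{J}}s)(x+2\pi)=L^{-1}(A_{\mbf{J}}s)(x)$. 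For ellipticity I would compute the principal symbol: at $(x,\xi)\in T^*S^1$ it is $i\xi\,\mbf{J}(x)$, which is invertible for $\xi\neq 0$ because $\mbf{J}(x)^2=-\op{Id}$; hence $A_{\mbf{J}}$ is elliptic.

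The substance is formal self-adjointness. Fix $s,t\in A^0(S^1,\mc{E}_{\mb{C}})$, viewed as $\rho$-equivariant maps $\mb{R}\to E_{\mb{C}}$. Using $(e_1,e_2)=2\Omega(e_1,\mbf{J}\overline{e_2})$, the fact that each $\mbf{J}(x)$ is symplectic (so $\Omega(\mbf{J}u,\mbf{J}v)=\Omega(u,v)$), and the fact that $\mbf{J}$ is real on $E_{\mb{C}}$ (so $\overline{\mbf{J}}=\mbf{J}$ and $\mbf{J}^2=-\op{Id}$), I would compute pointwise
\[
(A_{\mbf{J}}s,t)=2\Omega(\mbf{J}s',\mbf{J}\overline{t})=2\Omega(s',\overline{t}),\qquad
(s,A_{\mbf{J}}t)=2\Omega(s,\mbf{J}\,\overline{\mbf{J}t'})=2\Omega(s,\mbf{J}^2\overline{t'})=-2\Omega(s,\overline{t'}).
\]
Since the flat connection $d$ is the one induced from the trivial bundle and $\Omega$ is a fixed (constant) form, the Leibniz rule gives $\frac{d}{dx}\!\left(2\Omega(s,\overline{t})\right)=2\Omega(s',\overline{t})+2\Omega(s,\overline{t'})=(A_{\mbf{J}}s,t)-(s,A_{\mbf{J}}t)$. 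The function $x\mapsto \Omega(s(x),\overline{t(x)})$ is invariant under $x\mapsto x+2\pi$, because $s,t$ are $\rho$-equivariant and $L\in\op{Sp}(E,\Omega)$, so it descends to a smooth function on $S^1$; integrating the previous identity over $S^1$ annihilates the left-hand side and yields $\langle A_{\mbf{J}}s,t\rangle=\langle s,A_{\mbf{J}}t\rangle$, which is the desired formal self-adjointness.

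The only delicate point — and the main (though mild) obstacle — is the bookkeeping with complex conjugation combined with the observation that the total-derivative term really integrates to zero; this uses both the $\rho$-equivariance of sections and $L\in\op{Sp}(E,\Omega)$, which together make $\Omega(s,\overline{t})$ a $2\pi$-periodic function. The remaining verifications (symbol computation, equivariance of $A_{\mbf{J}}s$) are routine.
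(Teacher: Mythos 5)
Your proposal is correct and follows essentially the same argument as the paper: the paper also declares $\mb{C}$-linearity, first order, and ellipticity obvious, and establishes formal self-adjointness by the identical computation reducing $\langle A_{\mbf{J}}s_1,s_2\rangle-\langle s_1,A_{\mbf{J}}s_2\rangle$ to $2\int_{S^1}\frac{d}{dx}\Omega(s_1,\overline{s_2})\,dx=0$ via the symplectic property of $\mbf{J}$ and $\mbf{J}^2=-\op{Id}$. Your extra verifications (principal symbol, equivariance, periodicity of $\Omega(s,\overline{t})$) are sound and merely make explicit what the paper leaves implicit.
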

\begin{proof}
It is obvious that $A_{\mbf{J}}$ is $\mb{C}$-linear, first order and elliptic, so we just need to prove $A_{\mbf{J}}$ is formally self-adjoint. For any $s_1,s_2\in  A^0(S^1,\mc{E}_{\mb{C}})$, one has 
\begin{align*}
	&\quad \langle A_{\mbf{J}}s_1,s_2\rangle-\langle s_1,A_{\mbf{J}}s_2\rangle\\
	&=2\int_{S^1}\left(\Omega\left({\frac{d}{dx}}s_1,\o{s_2}\right)+\Omega\left(s_1,{\frac{d}{dx}}\o{s_2}\right)\right)dx\\
	&=2\int_{S^1}d\Omega(s_1,\o{s_2})=0,
\end{align*}
which completes the proof.
\end{proof}
\begin{rem}
The operator $A_{\mbf{J}}$ has a natural extension in the Hilbert space $\mathrm{L}^2(S^1,\mc{E}_{\mb{C}})$, we also denote it by $A_{\mbf{J}}$, see e.g.  \cite[Definition 7.1 in Appendix]{Kodaira}. From Proposition \ref{prop2.0}, $A_{\mbf{J}}$ is formally self-adjoint and elliptic, so $A_{\mbf{J}}$ is self-adjoint in the Hilbert space $\mathrm{L}^2(S^1,\mc{E}_{\mb{C}})$, see e.g.  \cite[Theorem 7.2 in Appendix]{Kodaira}.   
\end{rem}

\subsection{Eta invariant}\label{eta}
For every elliptic self-adjoint differential operator $A$, which acts on a Hermitian vector bundle over a closed manifold, the operator $A$ has a discrete spectrum with real eigenvalues. 
 Let $\lambda_j$ run over the eigenvalues of $A$, then the eta function of $A$ is defined as 
$$\eta_A(s)=\sum_{\lambda_j\neq 0}\frac{\op{sign}\lambda_j}{|\lambda_j|^s},$$
where $s\in \mb{C}$. The eta function admits a meromorphic continuation to the whole complex plane and is holomorphic at $s=0$. The special value $\eta_A(0)$ is then called the {\it eta invariant} of the operator $A$, and we denote the eta invariant by
\begin{align}\label{eta invariant}
\eta(A)=\eta_A(0).
\end{align}
The bundle $\mc{E}_{\mb{C}}$ is a Hermitian vector bundle over $S^1$ with the Hermitian metric \eqref{Hermitian metric}, and the operator $A_{\mbf{J}}$ is an elliptic operator which is formally self-adjoint with respect to the inner product \eqref{L2 metric}, then $A_{\mbf{J}}$ has discrete spectrum consisting of real eigenvalues $\lambda$ of finite multiplicity, and the eta invariant $\eta(A_{\mbf{J}})$ of $A_{\mbf{J}}$  is defined by \eqref{eta invariant}.
\begin{ex}
	For any representation $\rho:\pi_1(S^1)\to \op{Sp}(2,\mb{R})=\op{SL}(2,\mb{R})$, and consider the operator $$A_{\mbf{J}}=\mbf{J}\frac{d}{dx},$$
	where $\mbf{J}:=\exp(-xB)J\exp(xB)$, and  $L=\pm\exp(2\pi B)\in \op{Sp}(2,\mb{R})$ denotes the representation of the generator of $\pi_1(S^1)$, then the eta invariant $\eta(A_{\mbf{J}})$ is calculated in Appendix \ref{Appeta}, see \eqref{etadim2}.
\end{ex}

\section{Signature of flat symplectic vector bundles}\label{Sig}

In this section, we will define the signature of a flat symplectic vector bundle, and show it can be expressed as the difference of two $\mathrm{L}^2$-indices. 

Let $\Sigma$ be a connected oriented surface with smooth boundary $\p\Sigma$,  each component of $\p\Sigma$ is homeomorphic to $S^1$, let $\iota:\p\Sigma\to\Sigma$ denote the natural inclusion. Let $(E,\Omega)$ be a real symplectic vector space, and $\rho:\pi_1(\Sigma)\to\op{Sp}(E,\Omega)$ be a representation from the fundamental group $\pi_1(\Sigma)$ of $\Sigma$ into the real symplectic group $\op{Sp}(E,\Omega)$. The representation $\rho$ gives a flat vector bundle $\mc{E}=\wt{\Sigma}\times_{\rho}E$ over $\Sigma$.
Any element of $A^*(\Sigma,\mc{E})$ can be viewed as a $\rho$-equivariant element in $A^*(\wt{\Sigma},\mb{R})\otimes E$, where $\rho$-equivariant means $(\gamma^{-1})^*\omega\otimes \rho(\gamma)v=\omega\otimes v$ for $\omega\in A^*(\wt{\Sigma},\mb{R})$ and $v\in E$. There exists a {\it canonical flat connection} $d$ on the flat bundle $\mc{E}$, which is defined  by $d(\omega\otimes v):=d\omega\otimes v$. One can also refer  to \cite[Section 1.1]{BM} for the representations, flat bundles and the canonical flat connection.

\subsection{Definition of signature}
Let $\mathrm{H}^*(\Sigma,\mc{E})$ (resp. $\mathrm{H}^*(\Sigma,\p\Sigma,\mc{E})$)  denote the (resp. relative) twisted  singular cohomology, one can refer to \cite[Chapter 5]{DK} for its definitions. 
Set
\begin{align*}
	\widehat{\mathrm{H}}^1(\Sigma,\mc{E})&:=\op{Im}(\mathrm{H}^1(\Sigma,\p \Sigma,\mc{E})\rightarrow \mathrm{H}^1(\Sigma,\mc{E})).
\end{align*}
 There exists a natural quadratic form 
$$Q_{\mb{R}}:\widehat{\mathrm{H}}^1_{\text{}}(\Sigma,\mc{E})\times \widehat{\mathrm{H}}^1(\Sigma,\mc{E})\to \mb{R}$$
$$Q_{\mb{R}}([a],[b])=\int_{\Sigma}\Omega([a]\cup [b]).$$
By the same argument as in \cite[Page 65]{APS}, the form $Q_{\mb{R}}$ is non-degenerate due to Poincar\'e duality.
Moreover, $Q_{\mb{R}}$ is symmetric, i.e.  $Q_{\mb{R}}([a],[b])=Q_{\mb{R}}([b],[a])$. If $\widehat{\mathrm{H}}^1(\Sigma,\mc{E})=\mathscr{H}^+\oplus\mathscr{H}^-$ such that $Q_{\mb{R}}$ is positive definite on $\mathscr{H}^+$ and negative definite on $\mathscr{H}^-$, then the signature of  $(\mc{E},\Omega)$ is defined as the signature of the symmetric bilinear form $Q_{\mb{R}}$, that is,
$$\op{sign}(\mc{E},\Omega):=\op{sign}(Q_{\mb{R}})=\dim\mathscr{H}^+-\dim\mathscr{H}^-.$$
 By $\mb{C}$-linear extension, $\Omega$ can be viewed as a symplectic form on $\mc{E}_{\mb{C}}$. For this symplectic vector bundle $(\mc{E}_{\mb{C}},\Omega)$, we obtain a complex quadratic form 
$$Q_{\mb{C}}:\widehat{\mathrm{H}}^1(\Sigma,\mc{E}_{\mb{C}})\times \widehat{\mathrm{H}}^1(\Sigma,\mc{E}_{\mb{C}})\to \mb{C}$$
$$Q_{\mb{C}}([a],[b])=\int_{\Sigma}\Omega([a]\cup \o{[b]}),$$
where $\widehat{\mathrm{H}}^1(\Sigma,\mc{E}_{\mb{C}}):=\op{Im}(\mathrm{H}^1(\Sigma,\p \Sigma,\mc{E}_{\mb{C}})\to \mathrm{H}^1(\Sigma,\mc{E}_{\mb{C}}))$. For any $[a],[b]\in \widehat{\mathrm{H}}^1(\Sigma,\mc{E}_{\mb{C}})$, one has $Q_{\mb{C}}([a],[b])=\o{Q_{\mb{C}}([b],[a])}$, which means $Q_{\mb{C}}$ is a Hermitian form.
Naturally, one can define the signature $\op{sign}(\mc{E}_{\mb{C}},\Omega)$ of $(\mc{E}_{\mb{C}},\Omega)$. The two signatures $\op{sign}(\mc{E},\Omega)$ and $\op{sign}(\mc{E}_{\mb{C}},\Omega)$ can be proved to be equal. 
\begin{prop}\label{prop3.0}
$\op{sign}(\mc{E}_{\mb{C}},\Omega)=\op{sign}(\mc{E}_{\mb{R}},\Omega)$.
\end{prop}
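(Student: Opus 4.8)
The plan is to compare the real and complex signatures through the standard device relating a real symmetric form and its complexification. Concretely, let $V=\widehat{\mathrm{H}}^1(\Sigma,\mc{E})$, a finite-dimensional real vector space carrying the non-degenerate symmetric form $Q_{\mb{R}}$, and observe that $\widehat{\mathrm{H}}^1(\Sigma,\mc{E}_{\mb{C}})$ is naturally identified with $V_{\mb{C}}=V\otimes_{\mb{R}}\mb{C}$, since tensoring the coefficient bundle with $\mb{C}$ is exact and commutes with forming singular cohomology and with the long exact sequence of the pair $(\Sigma,\p\Sigma)$, hence with taking the image $\widehat{\mathrm{H}}^1$. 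Under this identification the cup-product pairing $\int_\Sigma\Omega(\cdot\cup\cdot)$ extends $\mb{C}$-bilinearly, so first I would record that the $\mb{C}$-bilinear extension $Q_{\mb{R}}^{\mb{C}}$ of $Q_{\mb{R}}$ to $V_{\mb{C}}$ agrees with the bilinear form $([a],[b])\mapsto\int_\Sigma\Omega([a]\cup[b])$, and that the Hermitian form $Q_{\mb{C}}$ of the statement is obtained from it by $Q_{\mb{C}}([a],[b])=Q_{\mb{R}}^{\mb{C}}([a],\overline{[b]})$, where $\overline{\phantom{x}}$ is complex conjugation on $V_{\mb{C}}$ with fixed locus $V$. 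Thus the claim reduces to the purely linear-algebraic fact: if $Q$ is a non-degenerate symmetric $\mb{R}$-bilinear form on a real space $V$, then the Hermitian form $h(u,v)=Q^{\mb{C}}(u,\overline v)$ on $V_{\mb{C}}$ has the same signature as $Q$.

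To prove that linear-algebraic fact, I would diagonalize $Q$ over $\mb{R}$: choose a basis $e_1,\dots,e_N$ of $V$ with $Q(e_i,e_j)=\epsilon_i\delta_{ij}$, $\epsilon_i\in\{\pm1\}$ (using non-degeneracy). Then $e_1,\dots,e_N$ is also a $\mb{C}$-basis of $V_{\mb{C}}$, each $e_i$ is fixed by conjugation, and $h(e_i,e_j)=Q^{\mb{C}}(e_i,\overline{e_j})=Q(e_i,e_j)=\epsilon_i\delta_{ij}$; so $h$ is diagonal in this basis with exactly the same diagonal entries, whence $\op{sign}(h)=\op{sign}(Q)$ and in particular $h$ is non-degenerate (as already asserted in the excerpt). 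Equivalently, a maximal positive-definite subspace for $Q$ in $V$ complexifies to one for $h$ in $V_{\mb{C}}$, and likewise for negative-definite subspaces, matching the definitions $\op{sign}(\mc{E},\Omega)=\dim\mathscr{H}^+-\dim\mathscr{H}^-$ and $\op{sign}(\mc{E}_{\mb{C}},\Omega)$ used above.

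The only genuine point requiring care—and the step I expect to be the main obstacle, though it is still routine—is the identification $\widehat{\mathrm{H}}^1(\Sigma,\mc{E}_{\mb{C}})\cong\widehat{\mathrm{H}}^1(\Sigma,\mc{E})\otimes_{\mb{R}}\mb{C}$ compatibly with the cup-product form and with complex conjugation. This follows because $\mc{E}_{\mb{C}}=\mc{E}\otimes_{\mb{R}}\mb{C}$ as a flat bundle, $\mb{C}$ is flat (indeed free) over $\mb{R}$ so the universal-coefficient-type map $\mathrm{H}^\ast(\Sigma,\mc{E})\otimes_{\mb{R}}\mb{C}\to\mathrm{H}^\ast(\Sigma,\mc{E}_{\mb{C}})$ is an isomorphism (similarly for the relative groups), these isomorphisms commute with the connecting and restriction maps in the long exact sequence of $(\Sigma,\p\Sigma)$, hence with passing to the image $\widehat{\mathrm{H}}^1$, and the cup product together with the contraction by $\Omega$ and the fundamental-class pairing are all defined over $\mb{R}$ and extend $\mb{C}$-bilinearly; complex conjugation on $\mc{E}_{\mb{C}}$ induces the conjugation on $V_{\mb{C}}$ fixing $V$, and $Q_{\mb{C}}$ is by its very definition the composition of $Q_{\mb{R}}^{\mb{C}}$ with this conjugation in the second slot. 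Granting this, the proposition is immediate from the diagonalization above, so I would keep the write-up short: state the identification with a one-line justification, reduce to the linear-algebra lemma, and give the diagonalization argument.
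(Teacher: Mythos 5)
Your argument is correct and is essentially the paper's: the paper likewise identifies $\widehat{\mathrm{H}}^1(\Sigma,\mc{E}_{\mb{C}})$ with $\widehat{\mathrm{H}}^1(\Sigma,\mc{E})\otimes_{\mb{R}}\mb{C}$ and complexifies the decomposition $\mathscr{H}^+\oplus\mathscr{H}^-$, which is exactly the subspace version of your diagonalization. Your write-up just makes explicit the compatibility of the cup-product form and conjugation with this identification, which the paper leaves implicit.
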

\begin{proof}
	Since $\widehat{\mathrm{H}}^1(\Sigma,\mc{E})=\mathscr{H}^+\oplus\mathscr{H}^-$ and $\widehat{\mathrm{H}}^1(\Sigma,\mc{E}_{\mb{C}})=\widehat{\mathrm{H}}^1(\Sigma,\mc{E})\otimes\mb{C}$, so 
	$$\widehat{\mathrm{H}}^1(\Sigma,\mc{E}_{\mb{C}})=\mathscr{H}^+_{\mb{C}}\oplus\mathscr{H}^-_{\mb{C}},$$
	where $\mathscr{H}^{\pm}_{\mb{C}}:=\mathscr{H}^{\pm}\otimes\mb{C}$, and $\dim_{\mb{C}}\mathscr{H}^{\pm}_{\mb{C}}=\dim_{\mb{R}}\mathscr{H}^{\pm}$. Hence 
	$$\op{sign}(\mc{E}_{\mb{C}},\Omega)=\dim_{\mb{C}}\mathscr{H}^+_{\mb{C}}-\dim_{\mb{C}}\mathscr{H}^-_{\mb{C}}=\dim_{\mb{R}}\mathscr{H}^+-\dim_{\mb{R}}\mathscr{H}^-=\op{sign}(\mc{E}_{\mb{R}},\Omega).$$
	The proof is complete.
\end{proof}

\subsection{Relation to the indices of operators}
Suppose that on the collar neighborhood $I \times \p\Sigma \subset \Sigma$ of $\p\Sigma$, $I=[0,1]$, the Riemannian metric of $\Sigma$ is equal to the product metric $g_\Sigma=d u^{2}+g_{\p\Sigma}$.
 Let  
 $$\widehat{\Sigma}=\Sigma\cup((-\infty,0]\times\p\Sigma)$$ 
 be the complete manifold obtained from $\Sigma$ by gluing the negative half-cylinder $(-\infty,0]\times \p\Sigma$ to the boundary of $\Sigma$.
 \begin{center}
\begin{tikzpicture}[x=1cm,y=1cm]

\begin{scope}[shift={(2,0)}, thick]
\clip(-1.8,-2)rectangle(3,2);
\draw (0,0) circle [x radius=2, y radius=1];
\end{scope}

\draw[shift={(2,0)}, yscale=cos(70), thick] (-1,0) arc (-180:0:1);
\draw[shift={(2,0)}, yscale=cos(70), thick] (-0.7,-0.6) arc (180:0:0.7);

\draw[thick] (-4,-0.3) -- (0,-0.3);
\draw[thick] (-4,0.3) -- (0,0.3);

\draw [xscale=cos(70), dashed, thick] (0,-0.3) arc (-90:90:0.3);
\draw [xscale=cos(70), thick] (0,0.3) arc (90:270:0.3);

\draw [shift={(-0.7,0)}, xscale=cos(70), dashed, thick] (0,-0.3) arc (-90:90:0.3);
\draw [shift={(-0.7,0)}, xscale=cos(70), thick] (0,0.3) arc (90:270:0.3);

\draw [thick] (0,0.3) .. controls (0.1,0.3) and (0.1,0.34) .. (0.21,0.446);
\draw [thick] (0,-0.3) .. controls (0.1,-0.3) and (0.1,-0.34) .. (0.21,-0.446);

\draw (-4,-0.5) node{{\tiny $-\infty$}};
\draw (-0.7,-0.5) node{{\tiny $0$}};
\draw (-0.35,-0.6) node{{\tiny $I$}};
\draw (-1.1,0) node{{\tiny $\partial \Sigma$}};
\draw (0,-0.5) node{{\tiny $1$}};
\draw (2.7, 0.5) node{{\tiny $\Sigma$}};
\draw (0,-1.3) node{{$\widehat{\Sigma}$}};

\end{tikzpicture}
\end{center}

 For any $a,b\in A^{*}(\Sigma,\mc{E}_{\mb{C}})$, the Hodge $*$ operator is defined as 
\begin{align}\label{star}
\Omega(a\wedge *b)=	g_\Sigma(\Omega(a,b))\cdot\op{Vol}_g,
\end{align}
where the volume element  $\op{Vol}_g=\sqrt{\det(g_{ij})}dx^1\wedge  dx^2$. One can check that  $*^2a=(-1)^{|a|}a$. Let $\mbf{J}\in \mc{J}(\mc{E},\Omega)$ be a compatible complex structure  on $\mc{E}$ and also denote its complex linear extension to  $\mc{E}_{\mb{C}}$. The following pairing 
\begin{align}\label{Hermitian}(\cdot,\cdot)=2\Omega(\cdot,\mbf{J}\o{\cdot})\end{align}
is a Hermitian inner product  on $\mc{E}_{\mb{C}}$. With respect to $(\cdot,\cdot)$ and $g_\Sigma$, there exists a global $\mathrm{L}^2$-inner product $\langle\cdot,\cdot\rangle$ on the space $A^*(\Sigma,\mc{E}_{\mb{C}})$. Denote by $d^*$ the adjoint operator of $d$ with respect to $\langle\cdot,\cdot\rangle$. Then 
\begin{equation}
  d^*=\mbf{J}*d*\mbf{J}.
\end{equation}
Moreover, one has $*\mbf{J}=\mbf{J}*$, and  $(*\mbf{J})^2$ is the identity when acting on $\mc{E}_{\mb{C}}$-valued one forms, that is,  $*\mbf{J}$ is an involution on the space  $\wedge^1T^*\Sigma\otimes \mc{E}_{\mb{C}}$. Let 
$$\pi^{\pm}:=\frac{1\pm *\mbf{J}}{2}:\wedge^1T^*\Sigma\otimes \mc{E}_{\mb{C}}\to \wedge^{\pm}$$
denote the natural projections onto the $\pm 1$-eigenspaces of $*\mbf{J}$, and set
$$d^{\pm}=\pi^{\pm}\circ d.$$
From Corollary \ref{cor4.2}, the operators $d^{\pm}$ have  the  form 
$$d^{\pm}=\sigma^{\pm}\left(\frac{\p}{\p u}+A^{\pm}_{\mbf{J}}\right),$$
where $\sigma^{\pm}:\mc{E}_{\mb{C}}\to\wedge^\pm$ are the bundle isomorphisms, and $A^{\pm}_{\mbf{J}}$ are the first order  elliptic formally self-adjoint operators on boundary. 

For any $a\in \wedge^+$ and $b\in\wedge^-$, one has
\begin{align*}
\Omega(a\wedge *\mbf{J}\o{b})=-\Omega(a\wedge\o{b})=-\Omega(\mbf{J}*a\wedge\o{b})=\Omega(*a\wedge\mbf{J}\o{b})=-\Omega(a\wedge *\mbf{J}\o{b}),	
\end{align*}
which follows that $\Omega(a\wedge *\mbf{J}\o{b})=0$, thus the Hermitian inner product of $a$ and $b$ satisfies 
$$(a,b)\cdot\op{Vol}_g=2g_\Sigma(\Omega(a,\mbf{J}\b{b}))\cdot \op{Vol}_g=2\Omega(a\wedge *\mbf{J}\b{b})=0,$$
that is, the two subspaces $\wedge^+$ and $\wedge^-$ are orthogonal to each other. Hence $(d^{\pm})^*=d^*$ on $\wedge^{\pm}$.

Note that  \cite[Proposition 4.9]{APS}  works as well for the cohomology with local coefficients, that is
\begin{align}\label{L2harmonic}
	\widehat{\mathrm{H}}^*(\Sigma,\mc{E}_{\mb{C}})\cong \mathscr{H}^*(\widehat{\Sigma},\mc{E}_{\mb{C}}):=\{\phi\in \mathrm{L}^2(\widehat{\Sigma},\mc{E}_{\mb{C}}): (d+d^*)\phi=0\}.
\end{align}
By identified with the above two cohomology groups, then $Q_{\mb{C}}$ is also a Hermitian quadratic form on $\mathscr{H}^1(\widehat{\Sigma},\mc{E}_{\mb{C}})$. We can extend the bundle $\mc{E}_{\mb{C}}$ and the connection $d$ to $\widehat{\Sigma}$, we still denote them by $\mc{E}_{\mb{C}}$ and $d$, respectively.
\begin{prop}\label{prop3}
It holds
$$\mathscr{H}^1(\widehat{\Sigma},\mc{E}_{\mb{C}})=\op{Ker}(d^+)^*\cap \mathrm{L}^2(\widehat{\Sigma},\wedge^+)\oplus \op{Ker}(d^-)^*\cap \mathrm{L}^2(\widehat{\Sigma},\wedge^-)$$ such that $Q_{\mb{C}}$ is positive definite on $\op{Ker}(d^+)^*\cap \mathrm{L}^2(\widehat{\Sigma},\wedge^+)$ and negative definite on $\op{Ker}(d^-)^*\cap \mathrm{L}^2(\widehat{\Sigma},\wedge^-)$, the decomposition is orthogonal with respect to $Q_{\mb{C}}$.
\end{prop}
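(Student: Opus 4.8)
The decomposition into $\wedge^{\pm}$-pieces follows immediately once we know that an $\mathrm{L}^2$-harmonic form $\phi\in\mathscr{H}^1(\widehat{\Sigma},\mc{E}_{\mb{C}})$ satisfies both $d\phi=0$ and $d^*\phi=0$; projecting by $\pi^{\pm}$ gives $\phi=\phi^++\phi^-$ with $\phi^\pm\in \mathrm{L}^2(\widehat{\Sigma},\wedge^\pm)$, and since the two subbundles are orthogonal and $(d^\pm)^*=d^*$ on $\wedge^\pm$ (as established just before the statement), each piece lies in $\op{Ker}(d^\pm)^*$. Conversely, any $a\in\op{Ker}(d^+)^*\cap\mathrm{L}^2(\widehat{\Sigma},\wedge^+)$ automatically satisfies $da=0$: indeed $da=d^+a+d^-a$, but on the cylindrical end the equation $(d^+)^*a=0$ together with ellipticity of $\frac{\p}{\p u}+A^+_{\mbf J}$ forces $a$ to decay, and a Weitzenb\"ock/integration-by-parts argument on $\widehat{\Sigma}$ (using that $d^+$ and its adjoint are the two halves of a Dirac-type operator) shows $d^+a=0$ as well; hence $da=0$ and $d^*a=(d^+)^*a=0$, so $a\in\mathscr{H}^1$. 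This gives the direct sum decomposition as vector spaces; I would spell this out as the first step.

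\textbf{The sign of $Q_{\mb{C}}$ on each summand.} This is the heart of the matter and I expect it to be the main obstacle — not conceptually, but in getting the Hodge-theoretic identities exactly right. For $a\in\op{Ker}(d^+)^*\cap\mathrm{L}^2(\widehat{\Sigma},\wedge^+)$, write the cup-product pairing as $Q_{\mb{C}}(a,a)=\int_{\widehat{\Sigma}}\Omega(a\wedge\o a)$ (the integral converges by the $\mathrm{L}^2$ decay on the end, exactly as in \cite[Prop.~4.9]{APS}, and agrees with the pairing on $\widehat{\mathrm{H}}^1(\Sigma,\mc{E}_{\mb{C}})$ under the isomorphism \eqref{L2harmonic}). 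The key computation is to relate $\Omega(a\wedge\o a)$ to the pointwise Hermitian norm $(a,a)\op{Vol}_g=2\Omega(a\wedge *\mbf J\o a)$. Since $a\in\wedge^+$ means $*\mbf J a=a$, i.e. $*\o a=\mbf{J}\o{a}$ up to the sign conventions in \eqref{star} and the reality of $*$, we get $\Omega(a\wedge\o a)=\Omega(a\wedge\mbf J^{-1}*\o a)=\pm\Omega(a\wedge *\mbf J\o a)\cdot(\text{sign})$, and one checks the sign works out so that $Q_{\mb{C}}(a,a)=\tfrac12\int_{\widehat\Sigma}(a,a)\,\op{Vol}_g\ge 0$, with strict positivity unless $a\equiv 0$; the same computation with $*\mbf J b=-b$ gives $Q_{\mb{C}}(b,b)\le 0$ for $b\in\wedge^-$. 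I would carry this out carefully using the displayed identity $\Omega(a\wedge *\mbf J\o b)=0$ for $a\in\wedge^+,b\in\wedge^-$ that was just proved, which simultaneously handles the next point.

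\textbf{Orthogonality with respect to $Q_{\mb{C}}$.} For $a\in\op{Ker}(d^+)^*\cap\mathrm{L}^2(\widehat\Sigma,\wedge^+)$ and $b\in\op{Ker}(d^-)^*\cap\mathrm{L}^2(\widehat\Sigma,\wedge^-)$, the cross term is $Q_{\mb{C}}(a,b)=\int_{\widehat\Sigma}\Omega(a\wedge\o b)$. Using $*\mbf J a=a$ one rewrites $\Omega(a\wedge\o b)$ in terms of $\Omega(*\mbf Ja\wedge\o b)$, and then moving $*\mbf J$ across the wedge (via $*\mbf J=\mbf J*$ and $*^2=-\op{Id}$ on one-forms) converts it to an expression involving $\Omega(a\wedge *\mbf J\o b)$, which vanishes identically by the computation preceding the proposition since $*\mbf J\o b=-\o b$ lands in the conjugate of $\wedge^-$ while $a\in\wedge^+$. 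Hence $Q_{\mb{C}}(a,b)=0$. Combining the three steps: $\mathscr{H}^1(\widehat\Sigma,\mc{E}_{\mb{C}})$ is the $Q_{\mb{C}}$-orthogonal direct sum of a positive-definite and a negative-definite subspace, which is the assertion. The only genuine subtlety is verifying that $a\in\op{Ker}(d^+)^*$ really does lie in $\mathscr{H}^1$ (equivalently, that $d^+a=0$ automatically), which is where I would lean most heavily on the cylindrical-end analysis and the Fredholm package of \cite{APS}.
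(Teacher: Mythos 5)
Your handling of the definiteness and of the $Q_{\mb{C}}$-orthogonality is exactly the paper's argument: in both cases one rewrites $\Omega(a\wedge\o{b})$ as $\pm\Omega(a\wedge *\mbf{J}\o{b})=\pm\tfrac12(a,b)\op{Vol}_g$ using $*\mbf{J}\o{b}=\pm\o{b}$ on $\wedge^{\pm}$, and then invokes the pointwise orthogonality of $\wedge^+$ and $\wedge^-$ established just before the statement. Those two steps are fine.

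The genuine gap is in the step you yourself single out as the subtle one: that $a\in\op{Ker}(d^+)^*\cap \mathrm{L}^2(\widehat{\Sigma},\wedge^+)$ is harmonic. First, the decomposition ``$da=d^+a+d^-a$'' is not defined: $d^{\pm}=\pi^{\pm}\circ d$ act on sections of $\mc{E}_{\mb{C}}$ (zero-forms) with values in $\wedge^{\pm}\subset\wedge^1T^*\Sigma\otimes\mc{E}_{\mb{C}}$, whereas $da$ is a two-form, on which $\pi^{\pm}$ does not act. Second, no Weitzenb\"ock or integration-by-parts argument can pass from $d^*a=0$ to $da=0$ for a general one-form (a coclosed form need not be closed), so cylindrical-end decay cannot be the mechanism. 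What actually makes it work is the algebraic constraint $a\in\wedge^+$: combining $*\mbf{J}a=a$ with the identity $d^*=\mbf{J}*d*\mbf{J}$ recorded before the proposition gives the pointwise relation $d^*a=*\mbf{J}\,da$, and $*\mbf{J}$ is invertible on two-forms, so $d^*a=0$ forces $da=0$ with essentially no analysis (the paper cites the exponential decay of \cite[Proposition 3.11]{APS} only to keep everything in the right function spaces). The same identity also closes the small hole in your forward inclusion: for $\phi\in\mathscr{H}^1(\widehat{\Sigma},\mc{E}_{\mb{C}})$ you only know $d^*\phi^++d^*\phi^-=0$, not that each summand vanishes; but $d\phi^+=-d\phi^-$ together with $d^*\phi^{\pm}=\pm*\mbf{J}\,d\phi^{\pm}$ yields $0=d^*\phi=2*\mbf{J}\,d\phi^+$, whence $d\phi^{\pm}=0$ and then $d^*\phi^{\pm}=0$, so each projection does lie in $\op{Ker}(d^{\pm})^*$.
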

\begin{proof}
	Firstly, we show $\op{Ker}(d^\pm)^*\cap \mathrm{L}^2(\widehat{\Sigma},\wedge^\pm)\subset \mathscr{H}^1(\widehat{\Sigma},\mc{E}_{\mb{C}})$. If $a\in \op{Ker}(d^\pm)^*\cap \mathrm{L}^2(\widehat{\Sigma},\wedge^\pm)$, then $d^* a=0$ and $a\in \mathrm{L}^2(\widehat{\Sigma},\wedge^{\pm})$. From \cite[Proposition 3.11]{APS}, the $\mathrm{L}^2$ sections in $\op{Ker}(d^{\pm})^*\cap \mathrm{L}^2(\widehat{\Sigma},\wedge^{\pm})$ are exponentially decaying as $t\to-\infty$, so 
	$$d^*a=*\mbf{J}d*\mbf{J}a=\pm*\mbf{J}da,$$
and $d^*a=0$ implies that $d a=0$, which means $a$ is harmonic. Hence $\op{Ker}(d^\pm)^*\cap \mathrm{L}^2(\widehat{\Sigma},\wedge^\pm)\subset \mathscr{H}^1(\widehat{\Sigma},\mc{E}_{\mb{C}})$. On the other hand, for any $b\in \op{Ker}(d^{\pm})^*\cap \mathrm{L}^2(\widehat{\Sigma},\wedge^\pm)$, one has
	\begin{align*}
	Q_{\mb{C}}(a,b)=\int_{\widehat{\Sigma}}\Omega(a\wedge \b{b})=\pm\int_{\widehat{\Sigma}}\Omega(a\wedge *\mbf{J}\b{b})=0,	
	\end{align*}
	since $\wedge^+$ and $\wedge^-$ are orthogonal each other. The proof is complete.
\end{proof}
The $\mathrm{L}^2$-index of $d^{\pm}$ is well-defined and is given by 
\begin{align}\label{L2index}
\mathrm{L}^2\op{Index}(d^{\pm}):=\dim \op{Ker}(d^{\pm})\cap \mathrm{L}^2(\widehat{\Sigma},\mc{E}_{\mb{C}})-\dim\op{Ker}(d^\pm)^*\cap \mathrm{L}^2(\widehat{\Sigma},\wedge^\pm).
\end{align}
Note that 
\begin{align}\label{H0}
\op{Ker}(d^{\pm})\cap \mathrm{L}^2(\widehat{\Sigma},\mc{E}_{\mb{C}})=\mathscr{H}^0(\widehat{\Sigma},\mc{E}_{\mb{C}}).
\end{align}
 In fact, if $d^-a=0$, then $d a=d^+a$, and so 
$d a=*\mbf{J}d a$, which follows that $(d^+)^*d^+a=d^*d a=0$. From \cite[Proposition 3.15]{APS}, the $\mathrm{L}^2$-solutions of $d^+$ and $(d^+)^*d^+$ are coincide, so $d a=d^+a=0$. Therefore,
\begin{align*}
\mathrm{L}^2\op{Index}(d^{\pm}):=\dim\mathscr{H}^0(\widehat{\Sigma},\mc{E}_{\mb{C}})-\dim\op{Ker}(d^\pm)^*\cap \mathrm{L}^2(\widehat{\Sigma},\wedge^\pm).
\end{align*}
By \eqref{dP}, one can define the operators $d^{\pm}_P$ by the restriction of $d^{\pm}$.
From \cite[Proposition 3.11]{APS}, $\op{Ker}d_{P}^{\pm}$ is isomorphic to the space of $\mathrm{L}^2$-solutions of $d^{\pm} \varphi=0$ on $\widehat{\Sigma}$, and $\operatorname{Ker} (d^\pm_P)^{*}$ is isomorphic to the space of extended $\mathrm{L}^2-$solutions of $(d^{\pm})^* \varphi=0$ on $\widehat{\Sigma},$ where extended solution means that on $(-\infty, 0] \times \p\Sigma$, $\varphi$ can be written as
$
\varphi=\phi+\psi
$
with $\phi \in \operatorname{Ker} \sigma^{\pm}A_{\mbf{J}}^{\pm}(\sigma^\pm)^{-1}$ and $\psi \in \mathrm{L}^2$, see Section \ref{TAPST}. The section $\phi$ is called the limiting value of the extended solution $\v$.

We denote by $h_{\infty}(\wedge^{\pm})$  the dimension of the subspace of $\op{Ker}(A^{\pm}_{\mbf{J}})$ consisting of limiting values of extended $\mathrm{L}^2$-sections $a$ of $\wedge^{\pm}$ satisfying $(d^{\pm})^*a=0$.
 Therefore,
\begin{prop}\label{prop3.2}
	The signature of $(\mc{E},\Omega)$ can be given by
	\begin{align*}
		\op{sign}(\mc{E},\Omega)&=\mathrm{L}^2\op{Index}(d^{-})-\mathrm{L}^2\op{Index}(d^{+})\\
		&=\op{Index}(d^-_P)-\op{Index}(d^+_P)+h_{\infty}(\wedge^-)-h_{\infty}(\wedge^+).
	\end{align*}
\end{prop}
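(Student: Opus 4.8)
The plan is to combine the identification \eqref{L2harmonic} of $\widehat{\mathrm{H}}^1(\Sigma,\mc{E}_{\mb{C}})$ with the space $\mathscr{H}^1(\widehat{\Sigma},\mc{E}_{\mb{C}})$ of $\mathrm{L}^2$-harmonic forms, and then read off the signature from the orthogonal splitting in Proposition \ref{prop3}. First I would observe that, by Proposition \ref{prop3.0}, $\op{sign}(\mc{E},\Omega)=\op{sign}(\mc{E}_{\mb{C}},\Omega)$, so it suffices to compute the latter. Under the isomorphism \eqref{L2harmonic} the Hermitian form $Q_{\mb{C}}$ on $\widehat{\mathrm{H}}^1(\Sigma,\mc{E}_{\mb{C}})$ corresponds to the form $Q_{\mb{C}}(a,b)=\int_{\widehat{\Sigma}}\Omega(a\wedge\o{b})$ on $\mathscr{H}^1(\widehat{\Sigma},\mc{E}_{\mb{C}})$; by Proposition \ref{prop3} this space splits orthogonally as $\op{Ker}(d^+)^*\cap \mathrm{L}^2(\widehat{\Sigma},\wedge^+)\oplus\op{Ker}(d^-)^*\cap\mathrm{L}^2(\widehat{\Sigma},\wedge^-)$ with $Q_{\mb{C}}$ positive definite on the first summand and negative definite on the second. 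Hence $\op{sign}(\mc{E},\Omega)=\dim_{\mb{C}}\op{Ker}(d^+)^*\cap \mathrm{L}^2(\widehat{\Sigma},\wedge^+)-\dim_{\mb{C}}\op{Ker}(d^-)^*\cap\mathrm{L}^2(\widehat{\Sigma},\wedge^-)$.

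Next I would rewrite this difference in terms of $\mathrm{L}^2$-indices. By \eqref{H0} the kernels $\op{Ker}(d^{\pm})\cap\mathrm{L}^2(\widehat{\Sigma},\mc{E}_{\mb{C}})$ both equal $\mathscr{H}^0(\widehat{\Sigma},\mc{E}_{\mb{C}})$, so they have the same dimension and cancel in the difference $\mathrm{L}^2\op{Index}(d^-)-\mathrm{L}^2\op{Index}(d^+)$; from the definition \eqref{L2index} this difference is precisely $\dim\op{Ker}(d^+)^*\cap\mathrm{L}^2(\widehat{\Sigma},\wedge^+)-\dim\op{Ker}(d^-)^*\cap\mathrm{L}^2(\widehat{\Sigma},\wedge^-)$, which matches the signature computed above. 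This gives the first equality $\op{sign}(\mc{E},\Omega)=\mathrm{L}^2\op{Index}(d^-)-\mathrm{L}^2\op{Index}(d^+)$.

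Finally, for the second equality I would pass from the $\mathrm{L}^2$-indices on $\widehat{\Sigma}$ to the indices of the APS-type boundary value problems $d^{\pm}_P$ on $\Sigma$. Using the discussion recalled after \eqref{L2index}: $\op{Ker}d^{\pm}_P$ is isomorphic to the $\mathrm{L}^2$-solutions of $d^{\pm}\varphi=0$ on $\widehat{\Sigma}$ (which is $\mathscr{H}^0(\widehat{\Sigma},\mc{E}_{\mb{C}})$ by \eqref{H0}), while $\op{Ker}(d^{\pm}_P)^*$ is isomorphic to the space of \emph{extended} $\mathrm{L}^2$-solutions of $(d^{\pm})^*\varphi=0$. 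The genuine $\mathrm{L}^2$-solutions of $(d^{\pm})^*\varphi=0$ form the subspace $\op{Ker}(d^{\pm})^*\cap\mathrm{L}^2(\widehat{\Sigma},\wedge^{\pm})$, and the quotient of the extended solutions by the genuine ones is, by definition, measured by $h_{\infty}(\wedge^{\pm})$ — the dimension of the space of limiting values. Hence $\op{Index}(d^{\pm}_P)=\dim\mathscr{H}^0(\widehat{\Sigma},\mc{E}_{\mb{C}})-\dim\op{Ker}(d^{\pm})^*\cap\mathrm{L}^2(\widehat{\Sigma},\wedge^{\pm})-h_{\infty}(\wedge^{\pm})=\mathrm{L}^2\op{Index}(d^{\pm})-h_{\infty}(\wedge^{\pm})$. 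Subtracting the $+$ equation from the $-$ equation and using the first equality yields $\op{sign}(\mc{E},\Omega)=\op{Index}(d^-_P)-\op{Index}(d^+_P)+h_{\infty}(\wedge^-)-h_{\infty}(\wedge^+)$, as claimed. The only delicate point — and the step I would expect to require the most care — is the bookkeeping relating extended versus genuine $\mathrm{L}^2$-solutions and the precise definition of $h_\infty(\wedge^\pm)$ as limiting values lying in $\op{Ker}A^{\pm}_{\mbf{J}}$; everything else is a direct consequence of Propositions \ref{prop3.0} and \ref{prop3}, the identities \eqref{L2harmonic}, \eqref{H0}, and the cited results \cite[Propositions 3.11, 3.15]{APS}.
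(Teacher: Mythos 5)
Your proposal is correct and follows essentially the same route as the paper: reduce to $\op{sign}(\mc{E}_{\mb{C}},\Omega)$ via Proposition \ref{prop3.0}, read off the signature from the orthogonal splitting in Proposition \ref{prop3}, identify the resulting difference of dimensions with $\mathrm{L}^2\op{Index}(d^-)-\mathrm{L}^2\op{Index}(d^+)$ using \eqref{H0}, and then convert to the APS boundary-value indices. The only difference is that you re-derive the relation $\mathrm{L}^2\op{Index}(d^{\pm})=\op{Index}(d^{\pm}_P)+h_{\infty}(\wedge^{\pm})$ from the description of extended solutions, whereas the paper simply cites \cite[Corollary 3.14]{APS}.
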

 \begin{proof}
 By the definition of signature, Proposition \ref{prop3} and \cite[Corollary 3.14]{APS}, one has
 \begin{align*}
 	\op{sign}(\mc{E}_{\mb{C}},\Omega)&=\dim\op{Ker}(d^+)^*\cap \mathrm{L}^2(\widehat{\Sigma},\wedge^+)-\dim\op{Ker}(d^-)^*\cap \mathrm{L}^2(\widehat{\Sigma},\wedge^-)\\
 	&=\mathrm{L}^2\op{Index}(d^-)-\mathrm{L}^2\op{Index}(d^+)\\
 	&=\op{Index}(d^-_P)-\op{Index}(d^+_P)+h_{\infty}(\wedge^-)-h_{\infty}(\wedge^+).
 \end{align*}
 By Proposition \ref{prop3.0}, the proof is complete.
 \end{proof}

\subsection{A formula of signature}

In this subsection, by using Atiyah-Patodi-Singer index theorem, we will give a formula of signature.

\subsubsection{The Atiyah-Patodi-Singer index theorem}\label{TAPST}

Let $\Sigma$ be a connected oriented surface with smooth boundary $\p\Sigma$. Let $g_\Sigma$ be a Riemannian metric on $\Sigma$ such that $g_\Sigma=du^2+g_{\p\Sigma}$ on the collar neighborhood $\p\Sigma\times I\subset\Sigma$ of $\p\Sigma$, $I=[0,1]$. The bundles $\mc{E}_{\mb{C}}$ and $\wedge^{\pm}$ are Hermitian vector bundles, and 
$$d^{\pm}:A^0(\Sigma,\mc{E}_{\mb{C}})\to A^0(\Sigma,\wedge^\pm)$$
are two first order elliptic differential operators. On $\p\Sigma\times I$, the volume element is given by
$$\op{Vol}_g=\frac{dx\wedge du}{|dx|}.$$
From the definition of $*$ \eqref{star}, one has 
$$*(dx\otimes e)=|dx|du\otimes e,\quad*(du\otimes e)=-\frac{1}{|dx|}dx\otimes e$$
for any $e\in \mc{E}_{\mb{C}}$. Thus 
\begin{align*}
*\left(\frac{dx}{|dx|}+idu\right)=-i\left(\frac{dx}{|dx|}+idu\right),\quad 	*\left(\frac{dx}{|dx|}-idu\right)=i\left(\frac{dx}{|dx|}-idu\right).
\end{align*}
Let $\mc{E}_{\mb{C}}=\mc{E}_{\mb{C}}^+\oplus \mc{E}_{\mb{C}}^-$ be the decomposition of $
\mc{E}_{\mb{C}}$ into $\pm i$-eigenspaces of $\mbf{J}$. Then 
\begin{align*}
\wedge^+=\mb{C}\left\{\frac{dx}{|dx|}-idu\right\}\otimes\mc{E}^-_{\mb{C}}\oplus\mb{C}\left\{\frac{dx}{|dx|}+idu\right\}\otimes\mc{E}_{\mb{C}}^+	
\end{align*}
and 
\begin{align*}
\wedge^-=\mb{C}\left\{\frac{dx}{|dx|}-idu\right\}\otimes\mc{E}^+_{\mb{C}}\oplus\mb{C}\left\{\frac{dx}{|dx|}+idu\right\}\otimes\mc{E}_{\mb{C}}^-.	
\end{align*}
Note that $\wedge^{\pm}\cong\mc{E}_{\mb{C}}$ and the isomorphisms are given by 
$$\sigma^+:\mc{E}_{\mb{C}}\to\wedge^+, \sigma^+(e)=-\frac{i}{2}\left(\frac{dx}{|dx|}+idu\right)\otimes e^++\frac{i}{2}\left(\frac{dx}{|dx|}-idu\right)\otimes e^-$$
and 
$$\sigma^-:\mc{E}_{\mb{C}}\to\wedge^-, \sigma^-(e)=\frac{i}{2}\left(\frac{dx}{|dx|}-idu\right)\otimes e^+-\frac{i}{2}\left(\frac{dx}{|dx|}+idu\right)\otimes e^-.$$
The maps $\pi^{\pm}:\wedge^1T^*\Sigma\otimes\mc{E}_{\mb{C}}\to\wedge^\pm$ can be expressed as
$$\pi^+\left(\frac{dx}{|dx|}\otimes e\right)=\frac{1}{2}\left(\frac{dx}{|dx|}+idu\right)\otimes e^++\frac{1}{2}\left(\frac{dx}{|dx|}-idu\right)\otimes e^-=\sigma^+(i(e^+-e^-)),$$
$$\pi^-\left(\frac{dx}{|dx|}\otimes e\right)=\frac{1}{2}\left(\frac{dx}{|dx|}-idu\right)\otimes e^++\frac{1}{2}\left(\frac{dx}{|dx|}+idu\right)\otimes e^-=\sigma^-(-i(e^+-e^-)),$$
$$\pi^+(du\otimes e)=-\frac{i}{2}\left(\frac{dx}{|dx|}+idu\right)\otimes e^++\frac{i}{2}\left(\frac{dx}{|dx|}-idu\right)\otimes e^-=\sigma^+(e),$$
$$\pi^-(du\otimes e)=\frac{i}{2}\left(\frac{dx}{|dx|}-idu\right)\otimes e^+-\frac{i}{2}\left(\frac{dx}{|dx|}+idu\right)\otimes e^-=\sigma^-(e).$$
\begin{prop}\label{prop4.1}
For any $C\in A^0(\p\Sigma\times I,\op{End}(\mc{E}_{\mb{C}}))$, one has
$$\pi^\pm(d+Cdx)=\sigma^{\pm}\left(\frac{\p}{\p u}\pm |dx|\mbf{J}\left(\frac{\p}{\p x}+C\right)\right).$$	
\end{prop}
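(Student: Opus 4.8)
The plan is to work entirely on the collar $\p\Sigma\times I$, where the metric is the product $du^2 + g_{\p\Sigma}$ and the bundle $\mc{E}_{\mb{C}}$ together with its flat connection $d$ and complex structure $\mbf{J}$ are pulled back from $\p\Sigma$, and to simply unwind the definitions of $\pi^\pm$, $\sigma^\pm$ and $\mbf{J}$ on the frame $\{dx/|dx|, du\}$ already computed just above the statement. First I would write an arbitrary $\mc{E}_{\mb{C}}$-valued $1$-form on the collar in the splitting coming from $du$ and $dx$: a section of $\wedge^1 T^*\Sigma\otimes\mc{E}_{\mb{C}}$ restricted to the collar decomposes as $du\otimes s + \frac{dx}{|dx|}\otimes t$ for sections $s,t$ of $\mc{E}_{\mb{C}}|_{\p\Sigma}$ varying with $u$. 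Applying the operator $d + C\,dx$ to a section $e$ of $\mc{E}_{\mb{C}}$ on the collar, and using that $d$ in the $u$-direction is just $\frac{\p}{\p u}$ (product structure) while the $\p\Sigma$-directional part of $d$ together with the $C\,dx$ term assembles into $|dx|\big(\frac{\p}{\p x}+C\big)e$ times $\frac{dx}{|dx|}$, gives
\begin{equation*}
(d+C\,dx)e = du\otimes\frac{\p e}{\p u} + \frac{dx}{|dx|}\otimes |dx|\Bigl(\frac{\p}{\p x}+C\Bigr)e.
\end{equation*}

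Next I would apply $\pi^\pm$ termwise using the four displayed formulas for $\pi^\pm(du\otimes e)$ and $\pi^\pm\!\big(\frac{dx}{|dx|}\otimes e\big)$ in terms of $\sigma^\pm$. The $du$-term contributes $\sigma^\pm(\frac{\p e}{\p u})$ directly, which produces the $\frac{\p}{\p u}$ piece. The $\frac{dx}{|dx|}$-term contributes, via $\pi^+(\frac{dx}{|dx|}\otimes w)=\sigma^+(i(w^+-w^-))$ (resp. $\pi^-(\frac{dx}{|dx|}\otimes w)=\sigma^-(-i(w^+-w^-))$) with $w = |dx|(\frac{\p}{\p x}+C)e$, the term $\sigma^\pm\!\big(\pm i\,|dx|\,(w^+-w^-)\big)$. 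The point is then to recognize $i(w^+-w^-) = \mbf{J}w$ on $\mc{E}_{\mb{C}}$: since $\mc{E}_{\mb{C}}=\mc{E}^+_{\mb{C}}\oplus\mc{E}^-_{\mb{C}}$ are the $\pm i$-eigenspaces of $\mbf{J}$, one has $\mbf{J}w = i w^+ - i w^- = i(w^+-w^-)$. Substituting $w = |dx|(\frac{\p}{\p x}+C)e$ yields $\sigma^\pm\big(\pm|dx|\,\mbf{J}(\frac{\p}{\p x}+C)e\big)$, and combining with the $du$-term gives exactly $\sigma^\pm\big(\frac{\p}{\p u} \pm |dx|\mbf{J}(\frac{\p}{\p x}+C)\big)e$, as claimed.

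The only genuinely delicate point — the "main obstacle," such as it is — is bookkeeping the factors of $|dx|$ and $i$ correctly and making sure the $\p\Sigma$-directional part of the flat connection $d$ really does combine with the $C\,dx$ term into the stated first-order operator $|dx|(\frac{\p}{\p x}+C)$ on the boundary; this uses that on the product collar the flat connection $d$ splits as $\frac{\p}{\p u}\,du$ plus the pullback of the flat connection on $\p\Sigma$, and that in the local coordinate $x$ on $\p\Sigma\cong S^1$ the latter is $\frac{\p}{\p x}\,dx$. Everything else is the termwise substitution described above, together with the eigenspace identity $\mbf{J}w=i(w^+-w^-)$. Taking $C=0$ and reading off the coefficient of $\frac{\p}{\p x}$ then identifies $A^\pm_{\mbf{J}} = \pm|dx|\,\mbf{J}\frac{\p}{\p x}$ up to the bundle isomorphisms $\sigma^\pm$, recovering $A^\pm_{\mbf{J}}=\pm A_{\mbf{J}}$ as asserted in Corollary \ref{cor4.2}.
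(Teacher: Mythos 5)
Your proposal is correct and is essentially the paper's own argument read in the opposite direction: the paper starts from $\sigma^{\pm}\bigl(\frac{\p}{\p u}\pm |dx|\mbf{J}(\frac{\p}{\p x}+C)\bigr)e$, expands $\mbf{J}w=i(w^+-w^-)$, and recognizes the result as $\pi^{\pm}(d+Cdx)e$ via the same four formulas for $\pi^{\pm}$ on $du\otimes e$ and $\frac{dx}{|dx|}\otimes e$, whereas you start from $(d+Cdx)e$ and apply $\pi^{\pm}$ termwise. The decomposition of the flat connection on the product collar and the eigenspace identity are used identically in both, so there is nothing to add.
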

\begin{proof}
For any local smooth section $e$ of $\mc{E}_{\mb{C}}$, one has
	\begin{align*}
&\quad\sigma^{\pm}\left(\frac{\p}{\p u}\pm |dx|\mbf{J}\left(\frac{\p}{\p x}+C\right)\right)e\\
	&=\sigma^{\pm}\left(\frac{\p e}{\p u}\pm i|dx|\left(\left(\frac{\p e}{\p x}\right)^+-\left(\frac{\p e}{\p x}\right)^-\right)\pm i|dx|\left((Ce)^+-(Ce)^-\right)\right)\\
	&=\pi^{\pm}\left(\frac{\p e}{\p u}du+dx\otimes\frac{\p e}{\p x}+dx\otimes Ce\right)\\
	&=\pi^{\pm}(d+Cdx)e,
	\end{align*}
which completes the proof.
\end{proof}
As a corollary, we get
\begin{cor}\label{cor4.2}
$d^{\pm}=\sigma^{\pm}(\frac{\p}{\p u}+A^{\pm}_{\mbf{J}})$, where $A^{\pm}_{\mbf{J}}=\pm |dx|\mbf{J}\frac{\p}{\p x}$.	
\end{cor}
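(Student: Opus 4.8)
The plan is to derive Corollary \ref{cor4.2} directly from Proposition \ref{prop4.1} by a careful choice of the endomorphism $C$. The key observation is that on the collar neighborhood $\p\Sigma\times I$, the canonical flat connection $d$ acts on $\rho$-equivariant sections of $\mc{E}_{\mb{C}}$, and since the metric is a product metric there, $d$ decomposes as $d = du\otimes\frac{\p}{\p u} + dx\otimes\frac{\p}{\p x} + (\text{connection term in the }\p\Sigma\text{ direction})$. First I would make precise that in suitable local coordinates the flat connection $d$ can be written in the form $d + C\,dx$ for an appropriate $C\in A^0(\p\Sigma\times I,\op{End}(\mc{E}_{\mb{C}}))$ that carries the holonomy data along $\p\Sigma$; indeed the operator $d$ restricted to the collar is exactly of this shape because there is no $du$-component beyond $\frac{\p}{\p u}$ (the product structure), so applying Proposition \ref{prop4.1} gives $d^\pm = \pi^\pm\circ d = \sigma^\pm\bigl(\frac{\p}{\p u}\pm|dx|\,\mbf{J}(\frac{\p}{\p x}+C)\bigr)$.

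Next I would set $A^\pm_{\mbf{J}} := \pm|dx|\,\mbf{J}(\frac{\p}{\p x}+C)$ and verify the three claimed properties. That $A^\pm_{\mbf{J}} = \pm A_{\mbf{J}}$ matches the notation of Corollary \ref{cor4.2} once one recognizes $A_{\mbf{J}} = |dx|\,\mbf{J}(\frac{\p}{\p x}+C)$ as the correct intrinsic first-order operator on $\p\Sigma$ twisted by the flat connection — this is the surface-group analogue of the operator $\mbf{J}\frac{d}{dx}$ from \eqref{ABJ1} in the circle case, with the connection term $C$ absorbing the monodromy. Ellipticity and first-orderedness are immediate since the symbol is $\pm|dx|\,\mbf{J}\,\xi$, which is invertible for $\xi\neq 0$ because $\mbf{J}$ is invertible. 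For formal self-adjointness I would repeat the integration-by-parts computation of Proposition \ref{prop2.0}: with respect to the Hermitian inner product $(\cdot,\cdot) = 2\Omega(\cdot,\mbf{J}\o{\cdot})$, the key identity is that $\frac{\p}{\p x}+C$ is compatible with the symplectic form $\Omega$ (because $d$ preserves $\Omega$, the flat structure being symplectic), so $\Omega\bigl((\frac{\p}{\p x}+C)s_1,\o{s_2}\bigr) + \Omega\bigl(s_1,\overline{(\frac{\p}{\p x}+C)s_2}\bigr) = \frac{\p}{\p x}\Omega(s_1,\o{s_2})$ integrates to zero over each boundary circle.

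The main obstacle I anticipate is bookkeeping rather than conceptual: one must pin down precisely what "$C$" is and that the operator $A_{\mbf{J}}$ so produced is globally well-defined on $\p\Sigma$ (independent of the local trivialization used to write $d = d + C\,dx$), and that it agrees with the operator named $A_{\mbf{J}}$ elsewhere in the paper — in particular that the factor $|dx|$ is handled consistently with the product metric normalization. I would handle this by noting that two local choices of $C$ differ by conjugation by a locally constant (flat) gauge transformation, under which $\mbf{J}(\frac{\p}{\p x}+C)$ transforms tensorially, so $A_{\mbf{J}}$ patches to a genuine first-order operator on sections of $\mc{E}_{\mb{C}}|_{\p\Sigma}$. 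Everything else then follows formally by plugging into Proposition \ref{prop4.1} and reading off the $\sigma^\pm$ and $A^\pm_{\mbf{J}}$ pieces.
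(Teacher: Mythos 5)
Your overall strategy --- plug the flat connection into Proposition \ref{prop4.1} and read off $\sigma^{\pm}$ and $A^{\pm}_{\mbf{J}}$ --- is exactly the paper's route; the corollary is stated immediately after that proposition precisely because it is the special case $C=0$. The problem is that you insert a nonzero $C$ ``carrying the holonomy data along $\p\Sigma$'' and end up defining $A^{\pm}_{\mbf{J}}=\pm|dx|\,\mbf{J}(\frac{\p}{\p x}+C)$, which is \emph{not} the operator in the statement, namely $A^{\pm}_{\mbf{J}}=\pm|dx|\,\mbf{J}\frac{\p}{\p x}$. In the paper's setup the holonomy is not stored in a connection matrix: sections of $\mc{E}_{\mb{C}}=\wt{\Sigma}\times_\rho E_{\mb{C}}$ are $\rho$-equivariant $E_{\mb{C}}$-valued functions, and the canonical flat connection is defined by $d(\omega\otimes v)=d\omega\otimes v$, i.e.\ it is literally the exterior derivative in this model. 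So $d^{\pm}=\pi^{\pm}\circ d$ is the case $C=0$ of Proposition \ref{prop4.1}, and the corollary follows in one line; this is also why the operator $A_{\mbf{J}}=\mbf{J}\frac{d}{dx}$ of \eqref{ABJ1} and Proposition \ref{prop2.0} (acting on equivariant sections, with no zeroth-order term) is the one that reappears here.

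Your version is not wrong so much as it proves a different-looking formula and then asserts, without argument, that it ``matches the notation of Corollary \ref{cor4.2}.'' That identification is exactly the missing step: you would have to show that your operator $|dx|\,\mbf{J}(\frac{\p}{\p x}+C)$, written in a non-equivariant local trivialization, is gauge-equivalent to $|dx|\,\mbf{J}\frac{\p}{\p x}$ acting on $\rho$-equivariant sections --- or, much more simply, work in the equivariant model from the start and take $C=0$. (Your remarks on ellipticity and formal self-adjointness are fine but not part of the corollary; they belong to Proposition \ref{prop2.0} and the remark following Definition \ref{connection}, where a genuinely nonzero $C$ does appear --- but only for peripheral connections, not for the canonical flat connection used to define $d^{\pm}$.)
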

When restricted on $\p\Sigma$, the metric is $g_{\p\Sigma}=g(x)dx\otimes dx$, and $|dx|=\frac{1}{\sqrt{g(x)}}$. By taking an another parameter 
$x'=\int_0^x\sqrt{g(\ell)}d\ell,$
then $dx'=\sqrt{g(x)}dx$, and so the metric is given by $g_{\p\Sigma}=dx'\otimes dx'$, the operator $A_{\mbf{J}}$ is 
$$A_{\mbf{J}}=\mbf{J}|dx|\frac{d}{d x}= \mbf{J}\frac{d}{d x'},$$
which is exactly the operator defined by \eqref{ABJ}.

 Let $P_{\pm}$ denote the orthogonal projections of $\mathrm{L}^2(\p\Sigma,\mc{E}_{\mb{C}})$ onto the subspace spanned by all eigenfunctions of $A^{\pm}_{\mbf{J}}$ with eigenvalues $\lambda>0$. Then $P_\pm$ are  pseudo-differential operators. Let $A^0(\Sigma,\mc{E}_{\mb{C}};P_\pm)$ be the subspaces of $A^0(\Sigma,\mc{E}_{\mb{C}})$ consisting of all sections $\v$ which satisfy the boundary conditions
$$P_\pm(\v|_{\p\Sigma})=0.$$
Denote by 
\begin{align}\label{dP}
	d^{\pm}_{P}:A^0(\Sigma,\mc{E}_{\mb{C}};P_\pm)\to A^0(\Sigma,\wedge^{\pm})
\end{align}
the restriction of $d^{\pm}$. By Atiyah-Patodi-Singer index theorem \cite[Theorem 3.10]{APS}, $d^{\pm}_P$ are Fredholm operators and 
\begin{align}\label{APSformula}
\op{Index}(d^{\pm}_P)=\int_{\Sigma}\alpha_{{\pm}}(z)d\mu_g-\frac{\eta(A_{\mbf{J}}^{\pm})+\dim\op{Ker}A^{\pm}_{\mbf{J}}}{2},	
\end{align}
where $d\mu_g$ denotes the volume form of the Riemannian metric $g$ on $\Sigma$,
and $\alpha_{\pm}(z)$ is the constant term in the asymptotic expansion  (as $ t\to 0$) of
$$
\sum e^{-t \mu^{\prime}_\pm}\left|\phi_{\mu_\pm}^{\prime}(x)\right|^{2}-\sum e^{-t \mu_\pm^{\prime \prime}}\left|\phi_{\mu_\pm}^{\prime \prime}(x)\right|^{2},
$$
where $\mu^{\prime}_\pm, \phi_{\mu_\pm}^{\prime}$ denote the eigenvalues and eigenfunctions of $(d^\pm)^{*} d^\pm$ on the double $\Sigma\cup_{\p\Sigma}\Sigma$ of $\Sigma,$ and $\mu_\pm^{\prime \prime},$ $\phi_{\mu_\pm}^{\prime \prime}$ are the corresponding objects for $d^\pm (d^\pm)^{*}$.

 Since $\eta(A_{\mbf{J}}^{\pm})=\eta(\pm A_{\mbf{J}})=\pm\eta(A_{\mbf{J}})$ and
 \begin{align}\label{KerAJ}
 	\op{Ker}A^{\pm}_{\mbf{J}}=\op{Ker}|dx|\mbf{J}\frac{\p}{\p x}={\op{Ker}|dx| \frac{\p}{\p x}}=\op{Ker}d|_{A^0(\p\Sigma,\mc{E}_{\mb{C}})}=\mathrm{H}^0(\p\Sigma,\mc{E}_{\mb{C}}),
 \end{align}
so we obtain
\begin{align*}
\op{Index}(d^-_{P})-\op{Index}(d^+_P)=\int_{\Sigma}\alpha_-(z)d\mu_g-\int_{\Sigma}\alpha_+(z)d\mu_g+\eta(A_{\mbf{J}}).
\end{align*}
By Proposition \ref{prop3.2}, the signature of the flat symplectic vector bundle $(\mc{E},\Omega)$ is 
\begin{align}\label{sign}
\begin{split}
\op{sign}(\mc{E},\Omega)&=\int_{\Sigma}\alpha_-(z)d\mu_g-\int_{\Sigma}\alpha_+(z)d\mu_g\\
&\quad+h_{\infty}(\wedge^-)-h_{\infty}(\wedge^+)+\eta(A_{\mbf{J}}).
\end{split}
\end{align}

\subsubsection{The Atiyah-Singer integrands}

In this subsection, we will deal with the Atiyah-Singer integrands $\int_{\Sigma}\alpha_-(z)d\mu_g$ and $\int_{\Sigma}\alpha_+(z)d\mu_g$.

Let
  $$g_\Sigma=g(x,y)(dx^2+dy^2)=\frac{g(z)}{2}(dz\otimes d\b{z}+d\b{z}\otimes dz)$$ be a Riemannian metric on the surface $\Sigma$, and is a  product metric on the collar neighborhood $\p\Sigma\times I$ of $\p\Sigma$, 
  where $z=x+iy$ denotes the holomorphic coordinate of $\Sigma$, and one has
$*dz=-idz$ and $*d\b{z}=id\b{z}$. 

 Following \cite{APS}, we will consider the double $\Sigma\cup_{\p\Sigma}\Sigma$ of $\Sigma$, which is a closed surface.
\begin{center}
\begin{tikzpicture}[x=1cm,y=1cm]

\begin{scope}[shift={(2,0)}, thick]
\clip(-1.8,-2)rectangle(3,2);
\draw (0,0) circle [x radius=2, y radius=1];
\end{scope}

\draw[shift={(2,0)}, yscale=cos(70), thick] (-1,0) arc (-180:0:1);
\draw[shift={(2,0)}, yscale=cos(70), thick] (-0.7,-0.6) arc (180:0:0.7);

\draw [xscale=cos(70), dashed, thick] (0,-0.3) arc (-90:90:0.3);
\draw [xscale=cos(70), thick] (0,0.3) arc (90:270:0.3);

\draw [thick] (0,0.3) .. controls (0.1,0.3) and (0.1,0.34) .. (0.21,0.446);
\draw [thick] (0,-0.3) .. controls (0.1,-0.3) and (0.1,-0.34) .. (0.21,-0.446);

\begin{scope}[shift={(-4,0)}, thick]
\clip(-3,-2)rectangle(1.8,2);
\draw (0,0) circle [x radius=2, y radius=1];
\end{scope}

\draw[shift={(-4,0)}, yscale=cos(70), thick] (-1,0) arc (-180:0:1);
\draw[shift={(-4,0)}, yscale=cos(70), thick] (-0.7,-0.6) arc (180:0:0.7);

\draw [shift={(-2,0)}, xscale=cos(70), dashed, thick] (0,-0.3) arc (-90:90:0.3);
\draw [shift={(-2,0)}, xscale=cos(70), thick] (0,0.3) arc (90:270:0.3);

\draw [thick,shift={(-2,0)}] (0,0.3) .. controls (-0.1,0.3) and (-0.1,0.34) .. (-0.21,0.446);
\draw [thick,shift={(-2,0)}] (0,-0.3) .. controls (-0.1,-0.3) and (-0.1,-0.34) .. (-0.21,-0.446);

\draw[thick] (-2,-0.3) -- (0,-0.3);
\draw[thick] (-2,0.3) -- (0,0.3);

\draw [shift={(-1,0)}, xscale=cos(70), dashed, thick] (0,-0.3) arc (-90:90:0.3);
\draw [shift={(-1,0)}, xscale=cos(70), thick] (0,0.3) arc (90:270:0.3);

\draw (-1,-0.5) node{{\tiny $0$}};
\draw (-1.4,0) node{{\tiny $\partial \Sigma$}};
\draw (0,-0.5) node{{\tiny $1$}};
\draw (-2,-0.5) node{{\tiny $1$}};
\draw (2.7, 0.5) node{{\tiny $\Sigma$}};
\draw (-4.7, 0.5) node{{\tiny $\Sigma$}};
\draw (-1,-1.3) node{{$\Sigma\cup_{\partial\Sigma}\Sigma$}};

\end{tikzpicture}
\end{center}
The vector bundle $\mc{E}$ and the  operators $d^{\pm}$ can be extended canonically on the double $\Sigma\cup_{\p\Sigma}\Sigma$. 
Let $\mc{F}=\mc{F}^+\oplus\mc{F^-}$ be a $\mb{Z}_2$-graded vector bundle over the double $\Sigma\cup_{\p\Sigma}\Sigma$, where 
$$\mc{F}^+:=\mc{E}_{\mb{C}},\quad \mc{F}^-:=\wedge^-.$$
Let $D:\Gamma(\Sigma\cup_{\p\Sigma}\Sigma,\mc{F})\to \Gamma(\Sigma\cup_{\p\Sigma}\Sigma,\mc{F})$ be an  operator defined as follows:
\begin{align}\label{Dirac operator}
D=\left(\begin{array}{cc}
0 & D^-=(d^-)^*	\\
D^+=d^- &0
\end{array}
\right):\Gamma(\Sigma\cup_{\p\Sigma}\Sigma,\mc{F}^{\pm})\to \Gamma(\Sigma\cup_{\p\Sigma}\Sigma,\mc{F}^{\mp}).
\end{align}
 \begin{prop}
 $D$ is a self-adjoint Dirac operator.
  \end{prop}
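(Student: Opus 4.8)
The plan is to verify the two assertions separately: first that $D$ is formally self-adjoint, and then that $D$ is a Dirac-type operator, i.e. that $D^2$ has scalar principal symbol $|\xi|^2$, equivalently that $D$ is built from a Clifford action on $\mc{F}$. For self-adjointness, the key observation is that the off-diagonal blocks $D^+ = d^-$ and $D^- = (d^-)^*$ are genuine formal adjoints of one another with respect to the $\mathrm{L}^2$-inner products on $A^0(\Sigma\cup_{\p\Sigma}\Sigma,\mc{F}^+) = A^0(\Sigma\cup_{\p\Sigma}\Sigma,\mc{E}_{\mb{C}})$ and $A^0(\Sigma\cup_{\p\Sigma}\Sigma,\mc{F}^-) = A^0(\Sigma\cup_{\p\Sigma}\Sigma,\wedge^-)$. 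Here I would use the already-established fact (from the discussion preceding Corollary \ref{cor4.2}, namely that $\wedge^+$ and $\wedge^-$ are $\langle\cdot,\cdot\rangle$-orthogonal and $(d^\pm)^*=d^*$ on $\wedge^\pm$) that for $\phi\in A^0(\mc{E}_{\mb{C}})$ and $\psi\in A^0(\wedge^-)$ one has $\langle d^-\phi,\psi\rangle = \langle\phi,(d^-)^*\psi\rangle$ by definition of the adjoint on the closed manifold $\Sigma\cup_{\p\Sigma}\Sigma$, where there are no boundary terms. Then for $s = (\phi,\psi)$ and $s'=(\phi',\psi')$ in $\Gamma(\mc{F})$, the computation
\[
\langle Ds,s'\rangle = \langle (d^-)^*\psi,\phi'\rangle + \langle d^-\phi,\psi'\rangle = \langle \psi,d^-\phi'\rangle + \langle \phi,(d^-)^*\psi'\rangle = \langle s,Ds'\rangle
\]
gives formal self-adjointness immediately.

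For the Dirac property, I would exhibit the Clifford action and the compatible connection explicitly. The bundle $\mc{F} = \mc{E}_{\mb{C}}\oplus\wedge^-$ has rank $2\dim E$, and on the collar $\p\Sigma\times I$ the computations in Section \ref{TAPST} already diagonalize everything: there $\wedge^-\cong\mc{E}_{\mb{C}}$ via $\sigma^-$, and Corollary \ref{cor4.2} shows $d^- = \sigma^-(\frac{\p}{\p u} + A^-_{\mbf{J}})$ with $A^-_{\mbf{J}} = -|dx|\mbf{J}\frac{\p}{\p x}$, so that in the trivialization of $\mc{F}$ over the collar, $D$ has the form $\begin{pmatrix} 0 & -\frac{\p}{\p u}+\ldots \\ \frac{\p}{\p u}+\ldots \end{pmatrix}$, whose square has diagonal scalar top-order part. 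Globally, since $\Sigma\cup_{\p\Sigma}\Sigma$ is a surface with the conformal metric $g(z)(dx^2+dy^2)$ and $*dz=-idz$, the principal symbol of $d^-$ at a covector $\xi$ is an isomorphism $\mc{E}_{\mb{C}}\to\wedge^-$ (this is the ellipticity already recorded for $d^\pm$), and one checks its norm is $|\xi|$; combining the symbol of $d^-$ with that of $(d^-)^*$ then gives that the symbol $c(\xi)$ of $D$ satisfies $c(\xi)^2 = -|\xi|^2\,\mathrm{Id}$ (with the correct sign from the $2\times 2$ block structure), which is exactly the Clifford relation in dimension $2$. Equivalently, $D^2 = \begin{pmatrix} (d^-)^*d^- & 0 \\ 0 & d^-(d^-)^* \end{pmatrix}$ is a Laplace-type operator, so $D$ is a Dirac-type operator by definition.

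The main obstacle I anticipate is purely bookkeeping: making sure the signs and the conformal factor $|dx| = g(x)^{-1/2}$ appearing in $A^\pm_{\mbf{J}}$ in Corollary \ref{cor4.2} are handled consistently so that $c(\xi)^2 = -|\xi|^2$ holds on the nose (not up to a positive function), which is what "Dirac operator" strictly requires; alternatively one sidesteps this by noting that $D^2$ is a generalized Laplacian and invoking the standard fact that every formally self-adjoint operator whose square is a generalized Laplacian is a Dirac operator in the sense of \cite[e.g.\ Melrose]{Melrose}. A secondary point to be careful about is that everything takes place on the \emph{closed} doubled surface $\Sigma\cup_{\p\Sigma}\Sigma$, where the extensions of $\mc{E}$, $\mbf{J}$ and $d^\pm$ are smooth across $\p\Sigma$ because the metric and the flat structure are in product form on the collar; this is already asserted in the text ("The vector bundle $\mc{E}$ and the operators $d^\pm$ can be extended canonically on the double"), so I would simply cite that and then run the two verifications above without any boundary contributions.
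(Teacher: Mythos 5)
Your proposal is correct and follows essentially the same route as the paper: self-adjointness via the observation that the off-diagonal blocks $d^-$ and $(d^-)^*$ are formal adjoints on the closed double (no boundary terms), and the Dirac property by checking that $D^2=\mathrm{diag}((d^-)^*d^-,\,d^-(d^-)^*)$ is a generalized Laplacian. The paper carries out the latter by the same local computation you sketch, identifying the second-order part of $(d^-)^*d^-$ as $-2g(z)^{-1}\partial^2/\partial z\partial\bar z$, which is exactly your ``sidestep'' via the generalized-Laplacian characterization.
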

\begin{proof}
Since $D^-=(D^+)^*$, so $D$ is self-adjoint. On the other hand, one has
\begin{align*}
D^2=\left(\begin{array}{cc}
	(d^-)^*d^- &0\\
	0&d^-(d^-)^*
\end{array}
\right)	
\end{align*}
so $D^2$ is a generalized Laplacian. In fact, for any local section $s=fe$ of $\mc{E}_{\mb{C}}$, where $e$ is a local parallel section, i.e.  $d e=0$, {we have}
\begin{align*}
(d^-)^*d^-s &=(d^-)^*\frac{1-*\mbf{J}}{2}(\p f+\b{\p}f)e\\
&=\frac{1}{2}(d^-)^*\left((\p f+\b{\p}f)e-(-i\p f+i{\b\p} f)\mbf{J}e\right),
\end{align*}
 since $(d^-)^*=-2g(z)^{-1}(\p_{z}i_{\frac{\p}{\p\b{z}}}+\p_{\b{z}}i_{\frac{\p}{\p z}})+\text{zeroth terms}$, so the second order terms of $(d^-)^*d^-s$ is 
 $-2g(z)^{-1}\frac{\p^2f}{\p z\p{\b{z}}}e$. Similar for the local sections of $\wedge^-$. Thus $D$ is a Dirac operator.
\end{proof}
By the definition of $\alpha_\pm(z)$, one has
\begin{align*}
\int_{\Sigma}\alpha_-(z)d\mu_g &=\lim_{t\to 0}\int_{\Sigma}\left(\operatorname{tr}\left(e^{-t (d^{-})^{*} d^{-}}(z, z)\right)-\operatorname{tr}\left(e^{-t d^{-}  (d^{-})^*}(z, z)\right)	\right)\\
&=\lim_{t\to 0}\int_{\Sigma}\op{Str}\langle z|e^{-tD^2}|z\rangle d\mu_g,
\end{align*}
where $d\mu_g=\frac{i}{2}g(z)dz\wedge d\b{z}$ and $\op{Str}$ denotes the supertrace, see e.g. \cite[Section 1.5]{BGV} for its definition.

By complex linear extension, $\mbf{J}$ is an endomorphism of $\mc{E}_{\mb{C}}$ and $\mbf{J}^2=-\op{Id}$. Denote by $\mc{E}_{\mb{C}}=\mc{E}_{\mb{C}}^+\oplus\mc{E}^-_{\mb{C}}$ the decomposition corresponding the $\pm i$-eigenspaces. Then $\mc{E}_{\mb{C}}$  is a complex vector bundle with a Hermitian metric $H(e_1,e_2)=2\Omega(e_1,\mbf{J}e_2)$.
\begin{defn}\label{connection}
	For a connection $\n$ on $\mc{E}$, we call it is a {\it peripheral connection} if the induced (complex linear extension) connection on $\mc{E}_{\mb{C}}$ satisfies the following conditions on a collar neighborhood of $\p\Sigma$:
	\begin{itemize}
\item[(i)] 	$\n=d+C(x)dx$ for some $C=C(x)\in A^0(\p\Sigma,\op{End}(\mc{E}_{\mb{C}}))$;
\item[(ii)] $[\n,\mbf{J}]=0$;
\item[(iii)] $\n$ preserves the symplectic form $\Omega$.
\end{itemize}
\end{defn}
\begin{rem}
Similar to the proof of Proposition \ref{prop2.0}, {the condition (iii) is equivalent to  $C(x)\in \mf{s}\mf{p}(E,\Omega)$, i.e., $C(x)^\top\Omega+\Omega C(x)=0$}, which implies that   
 the operator $|dx|\mbf{J}(\frac{\p}{\p x}+C(x))$ is a $\mb{C}$-linear formally self-adjoint elliptic first order differential operator  in the space $A^0(\p\Sigma,\mc{E}_{\mb{C}})$.	
\end{rem}
\begin{rem}
	For a peripheral connection $\n$, then it preserves the Hermitian metric $H$. In fact, for any local smooth sections $e_1,e_2$ of $\mc{E}_{\mb{C}}$, one has
	\begin{align*}
\begin{split}
  d H(e_1,e_2)&=2d\Omega (e_1,\mbf{J}e_2)\\
  &=2\Omega(\n e_1, \mbf{J}e_2)+2\Omega(e_1,\n\mbf{J}e_2)\\
  &=2\Omega(\n e_1, \mbf{J}e_2)+2\Omega(e_1,\mbf{J}\n e_2)\\
  &=H(\n e_1,e_2)+H(e_1,\n e_2).
 \end{split}
\end{align*}

\end{rem}
\begin{rem}\label{remexample}
If $L=\pm\exp(2\pi B)$ and $\mbf{J}=\exp(-xB)J\exp(xB)$ on $\p\Sigma\times [0,1]$, where $B\in \mf{s}\mf{p}(E,\Omega)$, {then 
  the flat connection $\n=d+Bdx$ is a peripheral connection on $\mc{E}|_{\p\Sigma\times[0,1/2]}$}. Hence any extension of $\n=d+Bdx$ is a peripheral connection on $\mc{E}$. 

\end{rem}

Let $\n$ be any peripheral connection on $\mc{E}$, we assume it has the form $\n=d+C(x)dx$ when restricted to a small collar neighborhood of $\p\Sigma$. { The peripheral connection is always existing, e.g. 
see Proposition \ref{proppullbackconnection} via a complex linear symplectic isomorphism between $(\mbf{J}^*F_\rho,\mbf{J}^*\mbf{J}_{F_\rho},\mbf{J}^*\Omega)$ and $(\mc{E},\mbf{J},\Omega)$}.  Without loss of generality, we assume that it satisfies the above conditions $(i)-(iii)$ on $\p\Sigma\times [0,1]$. 
\begin{lemma}
There exists a complex connection on $\mc{E}_{\mb{C}}^+$ and equals $\n|_{\mc{E}^+_{\mb{C}}}$ on $\p\Sigma\times [0,1/2]$.	
\end{lemma}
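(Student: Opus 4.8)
The plan is to start from the peripheral connection $\n=d+C(x)dx$ near $\p\Sigma$ and show that it restricts, in a collar of the boundary, to a genuine connection on the complex subbundle $\mc{E}_{\mb{C}}^+$, then extend that restriction over all of $\Sigma\cup_{\p\Sigma}\Sigma$. The key observation is condition (ii): since $[\n,\mbf{J}]=0$ on $\p\Sigma\times[0,1]$ and $\mc{E}_{\mb{C}}^+$ is precisely the $+i$-eigenbundle of $\mbf{J}$, the connection $\n$ preserves the eigenbundle decomposition $\mc{E}_{\mb{C}}=\mc{E}_{\mb{C}}^+\oplus\mc{E}_{\mb{C}}^-$ there. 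Concretely, if $s$ is a local section of $\mc{E}_{\mb{C}}^+$, i.e. $\mbf{J}s=is$, then $\mbf{J}(\n s)=\n(\mbf{J}s)=i\,\n s$, so $\n s$ is again a section of $\mc{E}_{\mb{C}}^+$ (tensored with $1$-forms). Hence $\n$ induces by restriction a connection $\n^+:=\n|_{\mc{E}_{\mb{C}}^+}$ on the collar $\p\Sigma\times[0,1/2]$, which moreover is Hermitian with respect to $H$ restricted to $\mc{E}_{\mb{C}}^+$ by the remark preceding the lemma.

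The second step is a standard partition-of-unity extension argument: choose any complex (e.g.\ Hermitian) connection $\n^+_1$ on the bundle $\mc{E}_{\mb{C}}^+$ over the whole double $\Sigma\cup_{\p\Sigma}\Sigma$ — such a connection exists since $\mc{E}_{\mb{C}}^+$ is a smooth complex vector bundle over a manifold — and let $\chi$ be a smooth cutoff that equals $1$ on $\p\Sigma\times[0,1/4]$ and vanishes outside $\p\Sigma\times[0,1/2]$. Then set
$$
\n^{\mc{E}_{\mb{C}}^+}:=\chi\,\n^+ + (1-\chi)\,\n^+_1.
$$
Because the space of connections on a fixed bundle is an affine space over $A^1(\op{End})$, this convex combination is again a connection on $\mc{E}_{\mb{C}}^+$; on $\p\Sigma\times[0,1/4]$ it equals $\n^+=\n|_{\mc{E}_{\mb{C}}^+}$, which is what the statement asserts (``on $\p\Sigma\times[0,1/2]$'' may be shrunk to a smaller collar, or one takes the cutoff supported closer to the boundary, matching the paper's convention of ``a small collar neighborhood''). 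If one wants the extension itself to remain complex linear, which it automatically is since both $\n^+$ and $\n^+_1$ are, there is nothing further to check.

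The main obstacle — really the only nontrivial point — is verifying that $\n^+$ is well-defined as a connection on $\mc{E}_{\mb{C}}^+$, i.e.\ that $\n$ genuinely preserves the $+i$-eigenbundle rather than merely being compatible with $\mbf{J}$ in some weaker sense; but this is exactly the content of condition (ii) in Definition \ref{connection}, as the eigenbundle computation above shows, so it reduces to a one-line check. A secondary bookkeeping point is that one should confirm $\mc{E}_{\mb{C}}^+$ is a well-defined smooth subbundle over the whole double and not just near $\p\Sigma$ — this holds because $\mbf{J}$ is defined on all of $\mc{E}$ (it is an element of $\mc{J}(\mc{E},\Omega)$) and its $\pm i$-eigenspaces have locally constant rank $n$, so $\mc{E}_{\mb{C}}^+=\ker(\mbf{J}-i\,\op{Id})$ is a smooth rank-$n$ complex subbundle; the Hermitian metric $H$ makes it a Hermitian bundle. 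Once these are in place, the existence of a Hermitian (hence complex) connection on $\mc{E}_{\mb{C}}^+$ agreeing with $\n$ near the boundary is immediate from the affine/partition-of-unity argument, completing the proof.
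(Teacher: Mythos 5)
Your proposal is correct and follows essentially the same route as the paper: use $[\n,\mbf{J}]=0$ to see that $\n$ restricts to a genuine complex connection on $\mc{E}_{\mb{C}}^+$ over the collar, then interpolate with an arbitrary global complex connection via a cutoff, using that connections form an affine space. The only bookkeeping difference is that the paper places the cutoff's transition region in $\p\Sigma\times[1/2,3/4]$ (still inside the collar where $\n$ is peripheral), so the agreement holds on all of $\p\Sigma\times[0,1/2]$ as stated, rather than on a smaller collar as in your choice of $\chi$ — an adjustment you already flag.
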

\begin{proof}
Let $\n'$ be any complex linear connection on $\mc{E}^+_{\mb{C}}$. Since $[\n,\mbf{J}]=0$ on $\p\Sigma\times [0,1]$, so $\n|_{\mc{E}^+_{\mb{C}}}$ is a well-defined complex linear connection on $\mc{E}^+_{\mb{C}}$. Define 
$$\alpha:=\n'-\n|_{\mc{E}^+_{\mb{C}}}\in A^{1}(\p\Sigma\times [0,1],\op{End}(\mc{E}^+_{\mb{C}}))$$
Let $\rho_1(z)\in [0,1]$ be a smooth function which  equals $0$ on $\p\Sigma\times [0,1/2]$ and equals $1$ on $\Sigma\backslash(\p\Sigma\times [0,3/4])$. We define 
\begin{align*}
\n^{\rho_1}:=	\n|_{\mc{E}^+_{\mb{C}}}+\rho_1(z)\alpha=\n'+(\rho_1(z)-1)\alpha,
\end{align*}
which is a complex connection since $(\rho_1(z)-1)\alpha\in A^1(\Sigma,\op{End}(\mc{E}^+_{\mb{C}}))$,
and  $\n^{\rho_1}=\n'$  on 	$\Sigma\backslash(\p\Sigma\times [0,3/4])$;  $\n^{\rho_1}=\n|_{\mc{E}^+_{\mb{C}}}$ on  $\p\Sigma\times [0,1/2]$. The proof is complete.
\end{proof}
By the above lemma,  we can construct a complex linear connection $\n^{\rho_1}$ on $\mc{E}^+_{\mb{C}}$ and equals $\n|_{\mc{E}^+_{\mb{C}}}$ on $\p\Sigma\times [0,1/2]$. On the other hand, there exists a unique holomorphic structure on $\mc{E}^+_{\mb{C}}$ such that 
$\b{\p}=(\n^{\rho_1})^{0,1}$, see e.g. \cite[Proposition 1.3.7]{Kobayashi}. In this case, 
$$\n^{\rho_1}=(\n^{\rho_1})^{1,0}+(\n^{\rho_1})^{0,1}=(\n^{\rho_1})^{1,0}+\b{\p}$$
is a $(1,0)$-type connection. Now $(\mc{E}^+_{\mb{C}},h=H|_{\mc{E}^+_{\mb{C}}})$ is a holomorphic Hermitian vector bundle, then there exists a unique  Chern connection,  denote it by $\n^{\mc{E}^+_{\mb{C}}}$, which is of $(1,0)$-type and preserves the Hermitian metric. Note that $\n$ also preserves the Hermitian metric $h$ and is of $(1,0)$-type on $\p\Sigma\times [0,1/2]$. By  the uniqueness of Chern connection, $\n|_{\mc{E}^+_{\mb{C}}}=\n^{\mc{E}^+_{\mb{C}}}$ on $\p\Sigma\times [0,1/2]$. There is a natural  connection
$$\n^{\mc{E}_{\mb{C}}}=\n^{\mc{E}^+_{\mb{C}}}\oplus\o{\n^{\mc{E}^+_{\mb{C}}}}$$
on the complex vector bundle
$\mc{E}_{\mb{C}}=\mc{E}^+_{\mb{C}}\oplus \mc{E}^-_{\mb{C}}=\mc{E}^+_{\mb{C}}\oplus \o{\mc{E}^+_{\mb{C}}}$. Similarly, we consider the operator
$$D^{\mc{F}}=\pi^-\n^{\mc{E}_{\mb{C}}}+(\pi^-\n^{\mc{E}_{\mb{C}}})^*$$
on the superbundle $\mc{F}=\mc{E}_{\mb{C}}\oplus \wedge^-$. One can check that  $D^{\mc{F}}$ is also a self-adjoint Dirac operator. Moreover, by Proposition \ref{prop4.1}, one has
$$\pi^-\n^{\mc{E}_{\mb{C}}}=\pi^-\n=\sigma^-\left(\frac{\p}{\p u}-\mbf{J}|dx|\left(\frac{\p}{\p x}+C(x)\right)\right)$$
 on $\p\Sigma\times[0,1]$, and the operator $-\mbf{J}|dx|(\frac{\p}{\p x}+C(x))$ is a formally self-adjoint elliptic first order differential operator.
\begin{lemma}\label{lemma4.7}
It holds
	$$\lim_{t\to 0}\int_{\Sigma}\op{Str}\langle z|e^{-tD^2}|z\rangle d\mu_g=\lim_{t\to 0}\int_{\Sigma}\op{Str}\langle z|e^{-t(D^{\mc{F}})^2}|z\rangle d\mu_g.$$
\end{lemma}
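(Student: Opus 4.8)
The plan is to compare the two heat semigroups on the closed double $\Sigma\cup_{\p\Sigma}\Sigma$ by Duhamel's formula, using that $D$ and $D^{\mc{F}}$ have the same principal symbol. Both operators are obtained by composing Clifford multiplication with a connection on $\mc{F}=\mc{E}_{\mb{C}}\oplus\wedge^-$: for $D$ this connection is built from the flat connection $d$ and the Levi--Civita connection on forms, while for $D^{\mc{F}}$ it is built from $\n^{\mc{E}_{\mb{C}}}$ together with the same connection on forms. Since $d-\n^{\mc{E}_{\mb{C}}}$ is a globally defined $\op{End}(\mc{E}_{\mb{C}})$-valued $1$-form, the two Dirac operators have the same principal symbol, so $\Phi:=D-D^{\mc{F}}$ is a bundle endomorphism of $\mc{F}$ and
$$D^2-(D^{\mc{F}})^2=D^{\mc{F}}\Phi+\Phi D^{\mc{F}}+\Phi^2$$
is a differential operator of order $\le 1$.

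Next, Duhamel's formula yields
$$e^{-tD^2}-e^{-t(D^{\mc{F}})^2}=-\int_0^t e^{-s(D^{\mc{F}})^2}\bigl(D^2-(D^{\mc{F}})^2\bigr)e^{-(t-s)D^2}\,ds,$$
and taking Schwartz kernels on the diagonal, applying $\op{Str}$ and integrating over $\Sigma$ turns the left-hand side into the difference of the two quantities in the statement. Combining the Gaussian on- and off-diagonal bounds for the heat kernels of $D^2$ and $(D^{\mc{F}})^2$ on the closed double with the order-$\le 1$ property of the perturbation and $\dim\Sigma=2$, one shows that the $t\to 0$ limit of the right-hand side exists and is \emph{local}; concretely it equals $\int_\Sigma(\alpha_D-\alpha_{D^{\mc{F}}})$, where $\alpha_D$ and $\alpha_{D^{\mc{F}}}$ are the closed $2$-forms on $\Sigma\cup_{\p\Sigma}\Sigma$ furnished by the local index theorem for $D$ and for $D^{\mc{F}}$ respectively (the integral of the latter over $\Sigma$ being computed in Proposition \ref{prop410}).

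It remains to show $\int_\Sigma(\alpha_D-\alpha_{D^{\mc{F}}})=0$. Over the whole double this integral equals $\op{Index}(D^+)-\op{Index}((D^{\mc{F}})^+)$, which vanishes because $D^+=\pi^-d$ and $(D^{\mc{F}})^+=\pi^-\n^{\mc{E}_{\mb{C}}}$ are joined by the path $\pi^-\bigl(d+s(\n^{\mc{E}_{\mb{C}}}-d)\bigr)$, $s\in[0,1]$, of first order elliptic operators with the same principal symbol, hence of constant index. To descend from the double to $\Sigma$, note that on the collar $\p\Sigma\times I$ the metric is a product and, by $[\n,\mbf{J}]=0$, both $d$ and $\n^{\mc{E}_{\mb{C}}}=\n$ restrict there to \emph{flat} connections on the bundle pulled back from $\p\Sigma$; hence $\alpha_D$ and $\alpha_{D^{\mc{F}}}$ agree near $\p\Sigma$ and $\alpha_D-\alpha_{D^{\mc{F}}}=d\beta$ for a Chern--Simons transgression $1$-form $\beta$. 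Since the two index forms differ only through the twisting connection (the connection on forms being common to $D$ and $D^{\mc{F}}$), the restriction $\beta|_{\p\Sigma}$ is a multiple of $\op{tr}(C(x))\,dx$. By condition (iii) of Definition \ref{connection}, $C(x)\in\mf{sp}(E,\Omega)$, which is trace-free, so $\int_{\p\Sigma}\beta$ is a multiple of $\int_{\p\Sigma}\op{tr}(C(x))\,dx=0$; therefore $\int_\Sigma(\alpha_D-\alpha_{D^{\mc{F}}})=\int_{\p\Sigma}\beta=0$.

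The main obstacle is the passage to the $t\to 0$ limit in Duhamel's formula. The term $\Phi^2$ has order $0$ and, in dimension two, contributes at order $O(1)$ to the diagonal heat kernel, so the existence (and locality) of the limit is not delivered by naive heat-kernel estimates: it is the fermionic cancellation inside the supertrace, the very mechanism behind the local index theorem, that removes the a priori singular contributions. The cleanest way to organize this is to reduce everything to the local index theorem for $D^{\mc{F}}$, which is a genuine Dirac operator of a Chern connection, and then use the order-$\le 1$ comparison above to transfer the conclusion to $D$.
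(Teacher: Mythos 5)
Your Duhamel set-up is fine, and it is essentially a rewriting of what the paper does: the paper deforms $D_s=(1-s)D+sD^{\mc{F}}$ and invokes \cite[Corollary 2.50]{BGV}, which is Duhamel's formula combined with the supertrace cancellation for the odd variation $\p_s D_s^2=\dot D_sD_s+D_s\dot D_s$. The gap is in your final vanishing argument, and it is essential. First, you treat $\alpha_D$ as the Chern--Weil/local-index form attached to the flat connection $d$. But $d$ does not commute with $\mbf{J}$ in the interior, so it does not preserve the splitting $\mc{E}_{\mb{C}}=\mc{E}^+_{\mb{C}}\oplus\mc{E}^-_{\mb{C}}$, and $D$ is not the Dirac operator of a Clifford connection; its local density is not a function of the (identically zero) curvature of $d$. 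If it were, your own reasoning (``flat connection $\Rightarrow$ the densities agree'') would apply on all of $\Sigma$, not just on the collar, and would force $\int_\Sigma\alpha_D=\frac{\dim E}{2}\chi(\Sigma)$ with no first Chern class contribution, contradicting Proposition \ref{prop410}. Second, even granting the transgression picture, the boundary term is governed by $\op{tr}_{\mc{E}^+_{\mb{C}}}(C(x))$, not by $\op{tr}_E(C(x))$. Elements of $\mf{s}\mf{p}(E,\Omega)$ are trace-free on $E$, but their restrictions to the $+i$-eigenspace of $\mbf{J}$ are not: for instance $C=J$ lies in $\mf{s}\mf{p}(E,\Omega)$ and has $\op{tr}_{\mc{E}^+_{\mb{C}}}(J)=in\neq 0$, and for elliptic boundary holonomy (Remark \ref{remexample} with $B=\frac{\theta}{2\pi}J$) the resulting boundary integral is genuinely nonzero --- consistent with the fact that $\int_\Sigma c_1(\mc{E},\Omega,\mbf{J})$ detects exactly such boundary data. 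So $\int_{\p\Sigma}\beta=0$ fails in general, and your index-constancy argument on the double only controls $\int_{\Sigma\cup_{\p\Sigma}\Sigma}(\alpha_D-\alpha_{D^{\mc{F}}})$, not the integral over one half.

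The mechanism that actually makes the lemma true is not a trace identity in $\mf{s}\mf{p}(E,\Omega)$ but the supertrace cancellation along the linear path of operators: since $\p_sD_s^2$ is the supercommutator of the odd operators $D_s$ and $\dot D_s=D^{\mc{F}}-D$, one gets $\lim_{t\to 0}\,t\int_\Sigma\op{Str}\langle z|\p_sD_s^2\,e^{-tD_s^2}|z\rangle\,d\mu_g=0$, and integrating in $s$ from $0$ to $1$ yields the lemma without ever identifying the density $\alpha_D$. I suggest replacing your last two paragraphs by this deformation argument; your first two paragraphs (the endomorphism $\Phi$ and the order-$\leq 1$ comparison) then become the input to it rather than to a Chern--Simons computation.
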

\begin{proof}
	Denote  $\n_s=(1-s)d+s\n^{\mc{E}_{\mb{C}}}$, $s\in [0,1]$. Then 
	$$\pi^-\n_s=(1-s)\pi^-d+s\pi^-\n^{\mc{E}_{\mb{C}}}=\sigma^-\left(\frac{\p}{\p u}-\mbf{J}|dx|\frac{\p}{\p x}-s\mbf{J}|dx|C(x)\right),$$ 
	with the first order operator $-\mbf{J}|dx|\frac{\p}{\p x}-s\mbf{J}|dx|C(x)$ is formally self-adjoint and elliptic.
	The adjoint operator of $\pi^-\n_s$ is given by 
	$(\pi^-\n_s)^*=(1-s)(\pi^-d)^*+s(\pi^-\n^{\mc{E}_{\mb{C}}})^*$, so
	$$D_s=(1-s)D+sD^{\mc{F}}.$$
From   \cite[Corollary 2.50]{BGV}, one has 
	\begin{align*}
		\lim_{t\to 0}\frac{\p}{\p s}\int_{\Sigma}\op{Str}\langle z|e^{-tD^2_s}|z\rangle d\mu_g=\lim_{t\to 0}-t\int_{\Sigma}\op{Str}\left\langle z|\frac{\p D_s^2}{\p s}e^{-tD_s^2}|z\right\rangle d\mu_g=0.
	\end{align*}
Hence
$$\lim_{t\to 0}\int_{\Sigma}\op{Str}\langle z|e^{-tD^2}|z\rangle d\mu_g=\lim_{t\to 0}\int_{\Sigma}\op{Str}\langle z|e^{-t(D^{\mc{F}})^2}|z\rangle d\mu_g.$$
\end{proof}
Now we will calculate the term  $\lim_{t\to 0}\int_{\Sigma}\op{Str}\langle z|e^{-t(D^{\mc{F}})^2}|z\rangle d\mu_g$.  Firstly, we need {to} find the Clifford connection $\n^{\mc{F}}$ on $\mc{F}$ such that $D^{\mc{F}}=c\circ\n^{\mc{F}}$. The adjoint operator $(\n^{\mc{E}_{\mb{C}}})^*$ is given by
$$(\n^{\mc{E}_{\mb{C}}})^{*}=-2g(z)^{-1}i_{\frac{\p}{\p z}}\n^{\mc{E}_{\mb{C}}}_{\frac{\p}{\p \b{z}}}-2g(z)^{-1}i_{\frac{\p}{\p \b{z}}}\n^{\mc{E}_{\mb{C}}}_{\frac{\p}{\p {z}}}.$$
The Dirac operator $D^{\mc{F}}$ induces a Clifford action of $T^*\Sigma$ on $\mc{F}$ by
$$[D^{\mc{F}},f]=c(df)$$
for any smooth function $f$. Note that $\wedge^-=\wedge^{1,0}T^*\Sigma\otimes \mc{E}^{-}_{\mb{C}}\oplus\wedge^{0,1}T^*\Sigma\otimes \mc{E}^+_{\mb{C}}$, so
$$c(df)=-2g(z)^{-1}\frac{\p f}{\p\b{z}}i_{\frac{\p}{\p z}}-2g(z)^{-1}\frac{\p f}{\p z}i_{\frac{\p}{\p\b{z}}}$$
when acting on $\wedge^-$, and 
$$c(df)=\p f\otimes p^-+\b{\p}f\otimes p^+$$
when acting on $\mc{E}_{\mb{C}}=\mc{E}^+_{\mb{C}}\oplus \mc{E}^-_{\mb{C}}$. Since 
$$\mc{F}=\mc{E}_{\mb{C}}\oplus \wedge^-=\mc{E}_{\mb{C}}\oplus \wedge^{1,0}T^*\Sigma\otimes \mc{E}^{-}_{\mb{C}}\oplus\wedge^{0,1}T^*\Sigma\otimes \mc{E}^+_{\mb{C}},$$
so there exists a natural connection on $\mc{F}$ induced from the connections on $\mc{E}_{\mb{C}}$ and $T^*\Sigma$, we denote this connection by $\n^{\mc{F}}$.
\begin{lemma}
$\n^{\mc{F}}$ is a Clifford connection and
$$D^{\mc{F}}=c(dz)\n^{\mc{F}}_{\frac{\p}{\p z}}+c(d\b{z})\n^{\mc{F}}_{\frac{\p}{\p\b{z}}}.$$ 	
\end{lemma}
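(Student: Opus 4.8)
The plan is to verify the two assertions of the lemma in sequence: first that $\n^{\mc{F}}$ is a Clifford connection for the Clifford action determined by $D^{\mc{F}}$, and second that $D^{\mc{F}}=c(dz)\n^{\mc{F}}_{\frac{\p}{\p z}}+c(d\b z)\n^{\mc{F}}_{\frac{\p}{\p\b z}}$. For the first part, recall that a connection $\n^{\mc{F}}$ on a Clifford module is a Clifford connection if it is compatible with the Levi-Civita connection on $\Sigma$ in the sense that $[\n^{\mc{F}}_Y,c(\xi)]=c(\n^{LC}_Y\xi)$ for every vector field $Y$ and one-form $\xi$. Since $\n^{\mc{F}}$ was built as the tensor/direct-sum connection induced from $\n^{\mc{E}_{\mb{C}}}$ on $\mc{E}_{\mb{C}}$ (resp.\ on $\mc{E}^\pm_{\mb{C}}$) and from the Levi-Civita connection on $\wedge^{1,0}T^*\Sigma$ and $\wedge^{0,1}T^*\Sigma$, this compatibility is immediate from the Leibniz rule: the Clifford action on $\mc{F}=\mc{E}_{\mb{C}}\oplus\wedge^{1,0}T^*\Sigma\otimes\mc{E}^-_{\mb{C}}\oplus\wedge^{0,1}T^*\Sigma\otimes\mc{E}^+_{\mb{C}}$ is given, as recorded just before the lemma, by contraction and wedging operators on the $T^*\Sigma$-factors together with the projections $p^{\pm}$, and these all commute with $\n^{\mc{E}_{\mb{C}}}$ while interacting with the $\wedge^{\bullet,\bullet}T^*\Sigma$-factors exactly through $\n^{LC}$. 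I would phrase this as: writing a local section of $\mc{F}$ as a sum of decomposables $\phi\otimes e$ with $\phi$ a $(p,q)$-form and $e$ a local section of $\mc{E}^\pm_{\mb{C}}$, one computes $[\n^{\mc{F}}_Y,c(\xi)](\phi\otimes e)$ directly and the $\n^{\mc{E}_{\mb{C}}}$-terms cancel, leaving $c(\n^{LC}_Y\xi)(\phi\otimes e)$.

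For the second, formula part, the strategy is to check it on each of the two summands $\mc{E}_{\mb{C}}$ and $\wedge^-$ of $\mc{F}$ separately, using the explicit descriptions of $c(dz)$, $c(d\b z)$ and of $\pi^-\n^{\mc{E}_{\mb{C}}}$ already in hand. On $\wedge^-$ the Clifford action is $c(dz)=-2g(z)^{-1}i_{\partial/\partial z}$-type contractions (one reads off $c(dz)$ and $c(d\b z)$ from the displayed formula $c(df)=-2g(z)^{-1}\frac{\p f}{\p\b z}i_{\frac{\p}{\p z}}-2g(z)^{-1}\frac{\p f}{\p z}i_{\frac{\p}{\p\b z}}$ by taking $f=z$ and $f=\b z$), so $c(dz)\n^{\mc{F}}_{\partial/\partial z}+c(d\b z)\n^{\mc{F}}_{\partial/\partial\b z}$ reproduces exactly the formula $(\n^{\mc{E}_{\mb{C}}})^*=-2g(z)^{-1}i_{\partial/\partial z}\n^{\mc{E}_{\mb{C}}}_{\partial/\partial\b z}-2g(z)^{-1}i_{\partial/\partial\b z}\n^{\mc{E}_{\mb{C}}}_{\partial/\partial z}$ displayed above, which is the $\mc{F}^-\to\mc{F}^+$ component $(\pi^-\n^{\mc{E}_{\mb{C}}})^*=D^-$ of $D^{\mc{F}}$. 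On $\mc{E}_{\mb{C}}$ the Clifford action is $c(df)=\p f\otimes p^-+\b\p f\otimes p^+$, so $c(dz)\n^{\mc{F}}_{\partial/\partial z}+c(d\b z)\n^{\mc{F}}_{\partial/\partial\b z}$ equals $dz\otimes p^-\n^{\mc{E}_{\mb{C}}}_{\partial/\partial z}+d\b z\otimes p^+\n^{\mc{E}_{\mb{C}}}_{\partial/\partial\b z}$, and one identifies this with $\pi^-\n^{\mc{E}_{\mb{C}}}=D^+$ using the description of $\pi^-$ on one-forms with values in $\mc{E}^\pm_{\mb{C}}$ from Section \ref{TAPST} (the formulas $\pi^-(dx/|dx|\otimes e)$, $\pi^-(du\otimes e)$, translated to the complex coordinate via $*dz=-idz$, $*d\b z=id\b z$). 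Adding the two summand computations gives $D^{\mc{F}}=c(dz)\n^{\mc{F}}_{\frac{\p}{\p z}}+c(d\b z)\n^{\mc{F}}_{\frac{\p}{\p\b z}}$.

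I expect the main obstacle to be purely bookkeeping rather than conceptual: keeping the metric factor $g(z)$, the various factors of $\tfrac{i}{2}$ hidden in $\sigma^\pm$ and in the identification $\wedge^-\cong\mc{E}_{\mb{C}}$, and the distinction between $\n^{\mc{E}_{\mb{C}}}$ acting on $\mc{E}^+_{\mb{C}}$ versus its conjugate on $\mc{E}^-_{\mb{C}}$ all consistent, so that the two halves of $D^{\mc{F}}$ come out to be genuinely adjoint to one another. A clean way to organize this, which I would adopt, is to first observe that $c(dz)^2=c(d\b z)^2=0$ and $c(dz)c(d\b z)+c(d\b z)c(dz)=-2g(z)^{-1}\cdot(-1)=2|dz|^2$ reproduces the Clifford relation $c(\xi)c(\zeta)+c(\zeta)c(\xi)=-2g(\xi,\zeta)$ for the metric $g_\Sigma=\frac{g(z)}{2}(dz\otimes d\b z+d\b z\otimes dz)$, which confirms that the Clifford action extracted from $D^{\mc{F}}$ is the standard one; then the identity for $D^{\mc{F}}$ follows because both sides are first-order operators with the same symbol (namely $c$) and the same zeroth-order part (both being built from $\n^{\mc{E}_{\mb{C}}}$ with no extra endomorphism terms, since $\n^{\mc{F}}$ is induced from $\n^{\mc{E}_{\mb{C}}}$ and $\n^{LC}$), hence they agree. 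This symbol-plus-zeroth-order argument lets me avoid grinding through all the coordinate algebra, reducing the proof to the two verifications above plus the remark that $\n^{\mc{F}}$ introduces no spurious zeroth-order term.
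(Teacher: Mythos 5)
Your proposal is correct and follows essentially the same route as the paper: the Clifford-connection property is checked by computing the commutators $[\n^{\mc{F}}_X,c(\cdot)]$ (which, as you say, reduce to the Leibniz rule for the induced connection), and the identity $D^{\mc{F}}=c(dz)\n^{\mc{F}}_{\frac{\p}{\p z}}+c(d\b{z})\n^{\mc{F}}_{\frac{\p}{\p\b{z}}}$ is verified summand by summand on $\mc{E}_{\mb{C}}$ and on $\wedge^-$ using the explicit formulas for $c(dz)$, $c(d\b z)$ and $\pi^-\n^{\mc{E}_{\mb{C}}}$, exactly as in the paper. Your auxiliary sanity check of the Clifford relation has a sign/normalization slip (the anticommutator $c(dz)c(d\b z)+c(d\b z)c(dz)$ comes out negative, equal to $-2g(z)^{-1}\op{Id}$, not $+2|dz|^2$), but this remark is not load-bearing and the main per-summand verification stands.
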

\begin{proof}
	$\n^{\mc{F}}$ is a Clifford connection if $[\n^{\mc{F}}_X,a]=\n_Xa$ for any local section $a$ of Clifford bundle $ C(\Sigma\cup_{\p\Sigma}\Sigma)$. Let $\sigma=c(\bullet)\cdot 1:C(\Sigma\cup_{\p\Sigma}\Sigma)\to \wedge^*T^*\Sigma$ denote the symbol map, which identifies $C(\Sigma\cup_{\p\Sigma}\Sigma)$ with $\wedge^*T^*\Sigma$.
	By a direct checking, one has 
	$$[\n^{\mc{F}}_{\frac{\p}{\p z}},c(dz)]=-\p_z\log g(z)c(dz)=\n_\frac{\p}{\p z}c(dz),\quad [\n^{\mc{F}}_{\frac{\p}{\p z}},c(d\b{z})]=0=\n_{\frac{\p}{\p z}}c(d\b{z}),$$
	and so
	\begin{align*}
	[\n^{\mc{F}}_{\frac{\p}{\p z}},c(dz) c(d\b{z})]	
	=[\n^{\mc{F}}_{\frac{\p}{\p z}},c(dz)] c(d\b{z})=\n_\frac{\p}{\p z}c(dz)c(d\b{z})=\n_\frac{\p}{\p z}(c(dz)c(d\b{z})).
	\end{align*}
	For any smooth function $f$, one has 
$$[\n^{\mc{F}}_X,c(f)]=\n_X c(f).$$	
Thus $\n^{\mc{F}}$ is a Clifford connection. If $s\in \Gamma(\Sigma\cup_{\p\Sigma}\Sigma,\mc{E}_{\mb{C}})$, then 
\begin{align*}
c(dz)\n^{\mc{F}}_{\frac{\p}{\p z}}s+c(d\b{z})\n^{\mc{F}}_{\frac{\p}{\p\b{z}}}s&=dz\otimes \n^{\mc{F}}_{\frac{\p}{\p z}}s^-+d\b{z}\otimes \n^{\mc{F}}_{\frac{\p}{\p\b{z}}}s^+\\
&=\pi^-\n^\mc{F}s=D^{\mc{F}}s.
\end{align*}
If $dz\otimes s^-\in\wedge ^-$, then 
\begin{align*}
	&\quad c(dz)\n^{\mc{F}}_{\frac{\p}{\p z}}(dz\otimes s^-)+c(d\b{z})\n^{\mc{F}}_{\frac{\p}{\p\b{z}}}(dz\otimes s^-)\\
	&=-2g(z)^{-1}i_{\frac{\p}{\p z}}\n^{\mc{F}}_{\frac{\p}{\p \b{z}}}dz\otimes s^-\\
	&=-2g(z)^{-1}\n^{\mc{F}}_{\frac{\p}{\p \b{z}}}s^-=(\n^{\mc{F}})^*(dz\otimes s^-)=D^{\mc{F}}(dz\otimes s^-).
\end{align*}
	Similarly, for $d\b{z}\otimes s^+\in \wedge^-$, one has $$(c(dz)\n^{\mc{F}}_{\frac{\p}{\p z}}+c(d\b{z})\n^{\mc{F}}_{\frac{\p}{\p\b{z}}})(d\b{z}\otimes s^+)=D^{\mc{F}}(d\b{z}\otimes s^+).$$
	The proof is complete.
\end{proof}
The Clifford module $\mc{F}$ has the following decomposition
\begin{align*}
\mc{F}=\mc{E}^+_{\mb{C}}\oplus\mc{E}^-_{\mb{C}}\oplus \mc{E}^+_{\mb{C}}\otimes \wedge^{0,1}T^*\Sigma\oplus \mc{E}^-_{\mb{C}}\otimes \wedge^{1,0}T^*\Sigma,	
\end{align*}
where $\mc{F}^+=\mc{E}^+_{\mb{C}}\oplus\mc{E}^-_{\mb{C}}$ and $\mc{F}^-=\mc{E}^+_{\mb{C}}\otimes \wedge^{0,1}T^*\Sigma\oplus \mc{E}^-_{\mb{C}}\otimes \wedge^{1,0}T^*\Sigma$. Let 
$$S=S^+\oplus S^-=\mb{C}\oplus\wedge^{0,1}T^*\Sigma=\wedge^{0,*}T^*\Sigma$$
denote the spinor module. Then the complex module $\mc{F}$ is isomorphic to
$$\mc{F}\cong \mc{W}\otimes S$$
where $\mc{W}=\mc{E}^+_{\mb{C}}\oplus \mc{E}^-_{\mb{C}}\otimes (\wedge^{0,1}T^*\Sigma)^*\cong \mc{E}^+_{\mb{C}}\oplus \mc{E}^-_{\mb{C}}\otimes\wedge^{1,0}T^*\Sigma$. Let $\Gamma$ be the chirality operator, which is an element  in $C(\Sigma)\otimes \mb{C}\cong \op{End}(S)$, and is $+\op{Id}$ when acting on $S^+=\mb{C}$, is $-\op{Id}$ when acting on $S^-=\wedge^{0,1}T^*\Sigma$. Thus it induces an endomorphism of $\mc{F}\cong \mc{W}\otimes S$ by the action
$\op{Id}_{\mc{W}}\otimes \Gamma$, we also denote it by $\Gamma$. Thus 
\begin{align*}
\Gamma=\op{Id}_{\mc{E}^+_{\mb{C}}}\oplus-\op{Id}_{\mc{E}^-_{\mb{C}}}\oplus -\op{Id}_{\mc{E}^+_{\mb{C}}\otimes \wedge^{0,1}T^*\Sigma}	\oplus \op{Id}_{\mc{E}^-_{\mb{C}}\otimes \wedge^{1,0}T^*\Sigma}.
\end{align*}
The Clifford connection $\n^{\mc{F}}$ is given by 
\begin{multline*}
\n^{\mc{F}}=\n^{\mc{E}^+_{\mb{C}}}\oplus \o{\n^{\mc{E}^+_{\mb{C}}}}\oplus (\n^{\mc{E}^+_{\mb{C}}}\otimes \op{Id}_{\wedge^{0,1}T^*\Sigma}+\op{Id}_{\mc{E}^+_{\mb{C}}}\otimes \n^{\wedge^{0,1}T^*\Sigma})\\\oplus ( \o{\n^{\mc{E}^+_{\mb{C}}}}\otimes \op{Id}_{\wedge^{1,0}T^*\Sigma}+\op{Id}_{\mc{E}^-_{\mb{C}}}\otimes \o{ \n^{\wedge^{0,1}T^*\Sigma}}),
\end{multline*}
and the curvature is 
{\begin{multline*}
	(\n^{\mc{F}})^2=R^{\mc{E}^+_{\mb{C}}}\oplus \o{R^{\mc{E}^+_{\mb{C}}}}\oplus (R^{\mc{E}^+_{\mb{C}}}\otimes \op{Id}_{\wedge^{0,1}T^*\Sigma}+
	R^{T^{1,0}\Sigma}\cdot\op{Id}_{\mc{E}^+_{\mb{C}}\otimes{\wedge^{0,1}T^*\Sigma} })\\\oplus ( \o{R^{\mc{E}^+_{\mb{C}}}}\otimes \op{Id}_{\wedge^{1,0}T^*\Sigma}-R^{T^{1,0}\Sigma}\cdot\op{Id}_{\mc{E}^-_{\mb{C}}\otimes { \wedge^{1,0}T^*\Sigma}} ).
\end{multline*}}
Denote by $R^{\mc{F}}$ the action of the Riemannian curvature $R$ of $\Sigma$ on the bundle $\mc{F}$, which is given by
\begin{align*}
R^{\mc{F}}&:=\frac{1}{4}\left(R\frac{\p}{\p z},\frac{\p}{\p\b{z}}\right)c(dz)c(d\b{z})+\frac{1}{4}\left(R\frac{\p}{\p\b{z}},\frac{\p}{\p z}\right)	c(d\b{z})c(dz)\\
&=\frac{g(z)}{8}R^{T^{1,0}\Sigma}(c(dz)c(d\b{z})-c(d\b{z})c(dz))\\
&=\frac{1}{4}R^{T^{1,0}\Sigma}(-\op{Id}_{\mc{E}^+_{\mb{C}}}\oplus \op{Id}_{\mc{E}^-_{\mb{C}}}\oplus\op{Id}_{\mc{E}^+_{\mb{C}}\otimes \wedge^{0,1}T^*\Sigma}\oplus-\op{Id}_{\mc{E}^-_{\mb{C}}\otimes \wedge^{1,0}T^*\Sigma}).
\end{align*}
From \cite[Proposition 4.3]{BGV}, the curvature $F^{\mc{F}/S}$ is given by
\begin{align*}
F^{\mc{F}/S}&=(\n^{\mc{F}})^2-R^{\mc{F}}\\
&=	(R^{\mc{E}^+_{\mb{C}}}+\frac{1}{4}R^{T^{1,0}\Sigma}\op{Id}_{\mc{E}^+_{\mb{C}}})\oplus(\o{R^{\mc{E}^+_{\mb{C}}}}-\frac{1}{4}R^{T^{1,0}\Sigma}\op{Id}_{\mc{E}^-_{\mb{C}}})\\
&\quad\oplus (R^{\mc{E}^+_{\mb{C}}}\otimes \op{Id}_{\wedge^{0,1}T^*\Sigma}+\frac{3}{4}\op{Id}_{\mc{E}^+_{\mb{C}}\otimes \wedge^{0,1}T^*\Sigma} R^{T^{1,0}\Sigma})\\
&\quad \oplus( \o{R^{\mc{E}^+_{\mb{C}}}}\otimes \op{Id}_{\wedge^{1,0}T^*\Sigma}-\frac{3}{4}\op{Id}_{\mc{E}^-_{\mb{C}}\otimes \wedge^{1,0}T^*\Sigma} R^{T^{1,0}\Sigma}).
\end{align*}
Thus $\Gamma F^{\mc{F}/S}$ is 
\begin{align*}
\Gamma F^{\mc{F}/S}
&=	(R^{\mc{E}^+_{\mb{C}}}+\frac{1}{4}R^{T^{1,0}\Sigma}\op{Id}_{\mc{E}^+_{\mb{C}}})\oplus(-\o{R^{\mc{E}^+_{\mb{C}}}}+\frac{1}{4}R^{T^{1,0}\Sigma}\op{Id}_{\mc{E}^-_{\mb{C}}})\\
&\quad\oplus (-R^{\mc{E}^+_{\mb{C}}}\otimes \op{Id}_{\wedge^{0,1}T^*\Sigma}-\frac{3}{4}\op{Id}_{\mc{E}^+_{\mb{C}}\otimes \wedge^{0,1}T^*\Sigma} R^{T^{1,0}\Sigma})\\
&\quad \oplus( \o{R^{\mc{E}^+_{\mb{C}}}}\otimes \op{Id}_{\wedge^{1,0}T^*\Sigma}-\frac{3}{4}\op{Id}_{\mc{E}^-_{\mb{C}}\otimes \wedge^{1,0}T^*\Sigma} R^{T^{1,0}\Sigma}).
\end{align*}
Hence the supertrace $\op{Str}_{\mc{F}/S}(F^{\mc{F}/S})$ is 
\begin{align*}
	\op{Str}_{\mc{F}/S}(F^{\mc{F}/S})&=\frac{1}{2}\op{Str}_{\mc{F}}(\Gamma F^{\mc{F}/S})=\frac{1}{2}\op{Tr}_{\mc{F}^+}(\Gamma F^{\mc{F}/S})-\frac{1}{2}\op{Tr}_{\mc{F}^-}(\Gamma F^{\mc{F}/S})\\
	&=\op{Tr}R^{\mc{E}^+_{\mb{C}}}+\frac{1}{4}\op{rank}\mc{E}^+_{\mb{C}} R^{T^{1,0}\Sigma}+\op{Tr}R^{\mc{E}^+_{\mb{C}}}+\frac{3}{4}\op{rank}\mc{E}^+_{\mb{C}} R^{T^{1,0}\Sigma}\\
	&=2\op{Tr}R^{\mc{E}^+_{\mb{C}}}+\op{rank}\mc{E}^+_{\mb{C}} R^{T^{1,0}\Sigma}.
\end{align*}
By the local index theorem, see e.g. \cite[Theorem 8.34]{Melrose}, one has 
\begin{align*}
	&\quad \lim_{t\to 0}\op{Str}\langle z|e^{-t(D^{\mc{F}})^2}|z\rangle d\mu_g \\
	&=\left[(2\pi i)^{-1}\det\left(\frac{R/2}{\sinh(R/2)}\right)\op{Str}_{\mc{F}/S}(\exp(-F^{\mc{F}/S}))\right]^{(1,1)}\\
	&=\frac{i}{2\pi}\op{Str}_{\mc{F}/S}(F^{\mc{F}/S}),
\end{align*}
since $\hat{A}(\Sigma)=\det\left(\frac{R/2}{\sinh(R/2)}\right)\in A^{4*}(\Sigma,\mb{R})$. Thus,
\begin{align}\label{Ind-}
\begin{split}
\int_{\Sigma}\alpha_-(z)d\mu_g &=\frac{i}{2\pi}\int_{\Sigma}	(2\op{Tr}R^{\mc{E}^+_{\mb{C}}}+\op{rank}\mc{E}^+_{\mb{C}} R^{T^{1,0}\Sigma})\\
&=\int_{\Sigma}(2c_1(\mc{E}^+_{\mb{C}},\n^{\mc{E}^+_{\mb{C}}})+\op{rank}\mc{E}^+_{\mb{C}} c_1(T^{1,0}\Sigma),\n^{T^{1,0}\Sigma)}).
\end{split}
\end{align}
Similarly, one has
\begin{align}\label{Ind+}
\int_{\Sigma}\alpha_+(z)d\mu_g =\int_{\Sigma}(-2c_1(\mc{E}^+_{\mb{C}},\n^{\mc{E}^+_{\mb{C}}})+\op{rank}\mc{E}^+_{\mb{C}} c_1(T^{1,0}\Sigma,\n^{T^{1,0}\Sigma})).
\end{align}
Therefore,
\begin{align}\label{4.1}
	\int_{\Sigma}\alpha_{-}(z)d\mu_g-\int_{\Sigma}\alpha_{+}(z)d\mu_g=4\int_{\Sigma}c_1(\mc{E}^+_{\mb{C}},\n^{\mc{E}^+_{\mb{C}}}).
\end{align}

Note that on $\p\Sigma\times[0,1/2]$, $\n^{\mc{E}^+_{\mb{C}}}=\n|_{\mc{E}^+_{\mb{C}}}$ is a flat connection, which follows that $c_1(\mc{E}^+_{\mb{C}},\n^{\mc{E}^+_{\mb{C}}})=0$ on $\p\Sigma\times [0,1/2]$.
It is also note that $\mc{E}^+_{\mb{C}}$ is complex linear isomorphic to the complex vector bundle $(\mc{E},\mbf{J})$, with the isomorphism is given by 
$\Psi:(\mc{E},\mbf{J})\to\mc{E}^+_{\mb{C}}$, $\Psi(e)=\frac{1}{2}(\op{Id}-i\mbf{J})e$. Thus the first Chern class 
$c_1(\mc{E},\mbf{J})=c_1(\mc{E}^+_{\mb{C}})$ can be represented by a differential form $c_1(\mc{E},\mbf{J},\n)=c_1(\mc{E}^+_{\mb{C}},\n^{\mc{E}^+_{\mb{C}}})$ with compact support. We define
\begin{align}\label{FCC}
	c_1(\mc{E},\Omega, \mbf{J}):=[c_1(\mc{E}^+_{\mb{C}},\n^{\mc{E}^+_{\mb{C}}})]_c\in \mathrm{H}^2_{\text{dR,comp}}(\Sigma_o ,\mb{R})
\end{align}
the de Rham cohomology class of $c_1(\mc{E}^+_{\mb{C}},\n^{\mc{E}^+_{\mb{C}}})$ with compact support, see e.g. \cite[Chapter 1]{BT} for the definition of de Rham cohomology with compact support,  where $\Sigma_o:=\Sigma\backslash\p\Sigma$. 
Denote
\begin{align*}
	\mc{J}(\mc{E},\Omega, \mbf{J})=\{\mbf{J}'\in \mc{J}(\mc{E},\Omega): \mbf{J}'=\mbf{J}\text{ on a collar neighborhood of } \p\Sigma\}.
\end{align*}
From the following Remak \ref{rem411},   the class $c_1(\mc{E},\Omega,\mbf{J})$  is independent of the complex structure $\mbf{J}'\in \mc{J}(\mc{E},\Omega, \mbf{J})$ and  the peripheral connections, so it is well-defined. 
Note that $$\int_{\Sigma}c_1(T^{1,0}\Sigma),\n^{T^{1,0}\Sigma)})=\chi(\Sigma)=2-2g-q,$$ where $q$ denotes the number of components in $\p\Sigma$. Hence 
\begin{prop}\label{prop410}
	For any $\mbf{J}'\in \mc{J}(\mc{E},\Omega, \mbf{J})$, and $\n$ is any peripheral connection on $\mc{E}$, then 
	\begin{equation*}
  \int_{\Sigma}\alpha_{\pm}(z)d\mu_g=\mp2\int_{\Sigma}c_1(\mc{E},\Omega, \mbf{J}')+\frac{\dim E}{2}\chi(\Sigma),
  \end{equation*}     
 where $c_1(\mc{E},\Omega, \mbf{J}')$ denotes the cohomology class with compact support, which is defined by \eqref{FCC}.
\end{prop}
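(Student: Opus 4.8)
The plan is to read the statement off from the Atiyah--Singer integrand computations \eqref{Ind-} and \eqref{Ind+} carried out above for the particular peripheral connection $\n$ constructed in the preceding lemmas (together with its associated complex connection $\n^{\mc{E}^+_{\mb{C}}}$ on $\mc{E}^+_{\mb{C}}$), and then to promote the resulting equality to one valid for an arbitrary peripheral connection and an arbitrary $\mbf{J}'\in\mc{J}(\mc{E},\Omega,\mbf{J})$.

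First I would insert into \eqref{Ind-} and \eqref{Ind+} two elementary facts. Since $\mbf{J}$ is induced by a real compatible complex structure, $\mc{E}_{\mb{C}}=\mc{E}^+_{\mb{C}}\oplus\mc{E}^-_{\mb{C}}$ with $\mc{E}^-_{\mb{C}}=\o{\mc{E}^+_{\mb{C}}}$, whence $\op{rank}_{\mb{C}}\mc{E}^+_{\mb{C}}=\tfrac12\dim_{\mb{R}}\mc{E}=\tfrac12\dim E$; and $\int_\Sigma c_1(T^{1,0}\Sigma,\n^{T^{1,0}\Sigma})=\chi(\Sigma)$ by Chern--Gauss--Bonnet, the integrand being compactly supported since the metric is a product near $\p\Sigma$. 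Substituting gives, for this particular connection,
$$\int_\Sigma\alpha_\pm(z)\,d\mu_g=\mp2\int_\Sigma c_1(\mc{E}^+_{\mb{C}},\n^{\mc{E}^+_{\mb{C}}})+\frac{\dim E}{2}\chi(\Sigma).$$

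Next I would observe that the first term on the right is a cohomological quantity. Because $\n^{\mc{E}^+_{\mb{C}}}$ equals the flat connection $\n|_{\mc{E}^+_{\mb{C}}}$ on $\p\Sigma\times[0,\tfrac12]$, the Chern form $c_1(\mc{E}^+_{\mb{C}},\n^{\mc{E}^+_{\mb{C}}})$ is a closed $2$-form vanishing near $\p\Sigma$; it therefore represents the class $c_1(\mc{E},\Omega,\mbf{J})\in\mathrm{H}^2_{\text{dR,comp}}(\Sigma_o,\mb{R})$ of \eqref{FCC}, and since integration over $\Sigma$ factors through $\mathrm{H}^2_{\text{dR,comp}}(\Sigma_o,\mb{R})$ we get $\int_\Sigma c_1(\mc{E}^+_{\mb{C}},\n^{\mc{E}^+_{\mb{C}}})=\int_\Sigma c_1(\mc{E},\Omega,\mbf{J})$. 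Finally, by Remark \ref{rem411} the number $\int_\Sigma c_1(\mc{E},\Omega,\mbf{J})$ is unchanged when $\n$ is replaced by any other peripheral connection (the Chern--Simons transgression between the two induced connections on $\mc{E}^+_{\mb{C}}$ vanishes on a collar of $\p\Sigma$, because along the straight-line path of connections all restrictions to that collar remain flat, so the transgression form is compactly supported in $\Sigma_o$ and Stokes kills its integral), and also when $\mbf{J}$ is replaced by any $\mbf{J}'\in\mc{J}(\mc{E},\Omega,\mbf{J})$ (the fiber $\mc{J}(E,\Omega)$ being contractible, $\mbf{J}$ and $\mbf{J}'$ are homotopic through compatible complex structures constant on a collar of $\p\Sigma$, so the compactly supported Chern number is a homotopy invariant). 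Rewriting $c_1(\mc{E},\Omega,\mbf{J})$ as $c_1(\mc{E},\Omega,\mbf{J}')$ yields the asserted formula.

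The substantive content is concentrated in that last independence step, i.e. in Remark \ref{rem411}; the rest is a consolidation of \eqref{Ind-}, \eqref{Ind+}, Chern--Gauss--Bonnet, and the rank count $\op{rank}_{\mb{C}}\mc{E}^+_{\mb{C}}=\tfrac12\dim E$. Even there no genuine difficulty arises: by Definition \ref{connection} every peripheral connection is $d$ plus an $\op{End}(\mc{E}_{\mb{C}})$-valued $1$-form depending only on $x$ near $\p\Sigma$ (hence flat there), and every member of $\mc{J}(\mc{E},\Omega,\mbf{J})$ agrees with $\mbf{J}$ on a collar, so both the transgression form and the homotopy automatically carry the support properties needed, and the argument is essentially bookkeeping.
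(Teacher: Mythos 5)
Your main computation coincides with the paper's: Proposition \ref{prop410} is obtained there by substituting $\op{rank}_{\mb{C}}\mc{E}^+_{\mb{C}}=\tfrac12\dim E$ and $\int_\Sigma c_1(T^{1,0}\Sigma,\n^{T^{1,0}\Sigma})=\chi(\Sigma)$ into \eqref{Ind-}--\eqref{Ind+}, and by observing that $c_1(\mc{E}^+_{\mb{C}},\n^{\mc{E}^+_{\mb{C}}})$ vanishes on $\p\Sigma\times[0,1/2]$ (where $\n^{\mc{E}^+_{\mb{C}}}$ agrees with the flat $\n|_{\mc{E}^+_{\mb{C}}}$), so that it represents the compactly supported class \eqref{FCC}. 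Up to that point your argument is the paper's.

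The gap is in your parenthetical justification of the independence from the peripheral connection. The Chern--Simons transgression $T$ between two induced connections $\n_0|_{\mc{E}^+_{\mb{C}}}$ and $\n_1|_{\mc{E}^+_{\mb{C}}}$ is, on the collar, $\tfrac{i}{2\pi}\op{Tr}_{\mc{E}^+_{\mb{C}}}\bigl((C_1(x)-C_0(x))\,dx\bigr)$: it is \emph{closed} there (so $dT=c_1(\n_1)-c_1(\n_0)$ is indeed compactly supported), but $T$ itself is not compactly supported, and Stokes gives $\int_\Sigma\bigl(c_1(\n_1)-c_1(\n_0)\bigr)=\int_{\p\Sigma}T=\tfrac{i}{2\pi}\int_{\p\Sigma}\op{Tr}_{\mc{E}^+_{\mb{C}}}(C_1-C_0)\,dx$, which has no reason to vanish for two arbitrary peripheral connections (e.g.\ $C_1-C_0=\mbf{J}$ contributes $\mp 1$ per boundary circle). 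The same boundary-transgression issue undercuts the homotopy argument for varying $\mbf{J}'$. Your appeal to Remark \ref{rem411} does not repair this, because the part of that remark concerning $c_1(\mc{E},\Omega,\mbf{J}')$ is itself deduced \emph{from} Proposition \ref{prop410}; the non-circular input is the other half of the remark, namely that $\int_\Sigma\alpha_\pm(z)\,d\mu_g$ is defined intrinsically (through $d^\pm$ on the double) without reference to any peripheral connection, while Lemma \ref{lemma4.7} and the local index computation apply verbatim to every admissible choice. The correct order of deduction is therefore: the left-hand side is choice-independent, the formula holds for each choice, hence the right-hand side --- and in particular $\int_\Sigma c_1(\mc{E},\Omega,\mbf{J}')$ --- is the same for all choices. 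You should replace the transgression/homotopy argument by this spectral-invariance argument rather than offer it as an independent proof.
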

 \begin{rem}\label{rem411}
 	From Lemma \ref{lemma4.7}, the terms $\int_{\Sigma}\alpha_{\pm}(z)d\mu_g$ are independent of the compatible complex structures  in $\mc{J}(\mc{E},\Omega, \mbf{J})$ and the peripheral connections. From the above proposition,  the first Chern class $c_1(\mc{E},\Omega, \mbf{J}')$ with compact support is also independent of $\mbf{J}'\in \mc{J}(\mc{E},\Omega, \mbf{J})$ and the peripheral connection $\n$. 
 \end{rem}
Substituting \eqref{4.1} into \eqref{sign}, one gets
\begin{align}\label{sign1}
\op{sign}(\mc{E},\Omega)=4\int_\Sigma c_1(\mc{E},\Omega, \mbf{J})+h_{\infty}(\wedge^-)-h_{\infty}(\wedge^+)+\eta(A_{\mbf{J}}).	
\end{align}

\subsubsection{Limiting values of extended $\mathrm{L}^2$-sections}

In this subsection, we will calculate the terms $h_{\infty}(\wedge^{\pm})$ and show $h_{\infty}(\wedge^-)=h_{\infty}(\wedge^+)$.

By Atiyah-Patodi-Singer index theorem \cite[Theorem 3.10]{APS}, one has
\begin{align}\label{3.8}
\op{Index}(d^-_P)=	\int_{\Sigma}\alpha_{-}(z)dz-\frac{\dim \mathrm{H}^0(\p\Sigma,\mc{E})- \eta(A_{\mbf{J}})}{2}.
\end{align}
On the other hand, we have
\begin{align}\label{3.9}
\op{Index}(d^-_P)+h_{\infty}(\wedge^-)=\mathrm{L}^2\op{Index}(d^-).	
\end{align}
Following \cite[(3.20)--(3.25)]{APS},  we consider the operator $(d^-)^*$, then 
$$(d^-)^*=-(\sigma^-)^{-1}\left(\frac{\p}{\p u}+\sigma^-A_{\mbf{J}}(\sigma^-)^{-1}\right).$$
Since $\eta(\sigma^-A_{\mbf{J}}(\sigma^-)^{-1})=\eta(A_{\mbf{J}})$, so
\begin{align*}
\op{Index}(d^-)^*_P+h_{\infty}(\mc{E}_{\mb{C}})=\mathrm{L}^2\op{Index}(d^-)^*=-\mathrm{L}^2\op{Index}(d^-),	
\end{align*}
\begin{align*}
	\op{Index}(d^-)^*_P=-\int_{\Sigma}\alpha_-(z)dz-\frac{\dim \mathrm{H}^0(\p\Sigma,\mc{E})+\eta(A_{\mbf{J}})}{2}.
\end{align*}
Combining with the above equalities, we have
\begin{equation}\label{hinfE}
  h_{\infty}(\mc{E}_{\mb{C}})+h_{\infty}(\wedge^-)=\dim \mathrm{H}^0(\p\Sigma,\mc{E}).
\end{equation}

Denote by $\ms{K}^-$ the set of all  extended $\mathrm{L}^2$-solutions of $(d^-)^*\phi=0$ in  $\wedge^-$, that is,  for any $\phi\in \ms{K}^-$, one has  $d^*\phi=0$ and $\phi$ is with valued in $\wedge^-$, and in the cylinder $\p\Sigma\times (-\infty,u_0]$ for some large negative $u_0$, we can write 
$$\phi=\psi+\theta$$
where $\psi=\psi_0+\psi_1du\in \op{Ker}(\sigma^-A_{\mbf{J}}(\sigma^-)^{-1})$ and $\theta\in \op{Ker}(d^-)^*\cap \mathrm{L}^2(\widehat{\Sigma},\wedge^-)$ is a $\mathrm{L}^2$-section in $\wedge^-$ (hence decaying exponentially). From Proposition \ref{prop3}, one has $d\theta=0$. For any $\mbf{J}\in \mc{J}(\mc{E},\Omega)$ and extend it to the vector bundle  $\mc{E}$ over $\widehat{\Sigma}$ such that  $\mbf{J}=\mbf{J}(x)$ on $\p\Sigma\times (-\infty,u_0]$. By the definition of $\sigma^-$, see Section \ref{TAPST}, then $[d,\sigma^-]=0$. Since $\psi\in \op{Ker}(\sigma^-A_{\mbf{J}}(\sigma^-)^{-1})$ and by \eqref{KerAJ} so $(\sigma^-)^{-1}\psi\in \op{Ker}(A_{\mbf{J}})=\op{Ker}d$. Hence $d\psi=d\sigma^-(\sigma^-)^{-1}\psi=\sigma^-d((\sigma^-)^{-1}\psi)=0$, which follows all elements of $\ms{K}^-$ are harmonic.
 Denote by $\delta^-:\mathscr{K}^-\to \mathrm{H}^1(\Sigma,\mc{E}_{\mb{C}})$ the natural map, then  
$$*\mbf{J}(\psi_0+\psi_1du)=-(\psi_0+\psi_1du),\quad *\mbf{J}(\theta)=-\theta.$$
If moreover, $\psi_0=0$, then $*\mbf{J}(\psi_1du)=-\psi_1du$. However,  since $*du=-\frac{dx}{|dx|}$, so we conclude that 
$\psi_1=0$, and so $\phi=\theta\in\op{Ker}(d^-)^*\cap \mathrm{L}^2(\widehat{\Sigma},\wedge^-)$, then 
$$\op{Ker}(\iota^*\delta^-)=\op{Ker}(d^-)^*\cap \mathrm{L}^2(\widehat{\Sigma},\wedge^-),$$
where $\iota^*: \mathrm{H}^{1}(\Sigma,\mc{E}_{\mb{C}})\to \mathrm{H}^{1}(\p\Sigma,\mc{E}_{\mb{C}})$ is the  induced map on cohomology by restriction.
By the definition of limit values of extended $\mathrm{L}^2$-sections \cite{APS}, $h_{\infty}(\wedge^-)$ is the dimension of subspace of all  $\psi$, so $h_{\infty}(\wedge^-)=\dim (\mathscr{K}^-/\op{Ker}(d^-)^*\cap \mathrm{L}^2(\widehat{\Sigma},\wedge^-))$, and we have 
\begin{align*}
h_{\infty}(\wedge^-)=\dim (\mathscr{K}^-/\op{Ker}(\iota^*\delta^-))=\dim\op{Im}(\iota^*\delta^-)\leq \dim\op{Im}(\iota^*).
\end{align*}
\begin{lemma}
We have 
$$\dim\op{Im}(\iota^*)=\dim \mathrm{H}^0(\p\Sigma,\mc{E})-\dim \mathrm{H}^0(\Sigma,\mc{E}). $$	
\end{lemma}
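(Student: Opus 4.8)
The plan is to read off $\dim\op{Im}(\iota^*)$ from the long exact cohomology sequence of the pair $(\Sigma,\p\Sigma)$ with coefficients in the local system $\mc{E}_{\mb{C}}$, using Poincar\'e--Lefschetz duality to cancel the middle-dimensional terms. Concretely, I would start from
\[
0\to \mathrm{H}^0(\Sigma,\p\Sigma,\mc{E}_{\mb{C}})\to \mathrm{H}^0(\Sigma,\mc{E}_{\mb{C}})\xrightarrow{\iota^*}\mathrm{H}^0(\p\Sigma,\mc{E}_{\mb{C}})\xrightarrow{\delta}\mathrm{H}^1(\Sigma,\p\Sigma,\mc{E}_{\mb{C}})\to\mathrm{H}^1(\Sigma,\mc{E}_{\mb{C}})\xrightarrow{\iota^*}\mathrm{H}^1(\p\Sigma,\mc{E}_{\mb{C}}).
\]
Since $\Sigma$ is connected and $\p\Sigma\neq\emptyset$, a flat section of $\mc{E}_{\mb{C}}$ that vanishes on $\p\Sigma$ is identically $0$, so $\mathrm{H}^0(\Sigma,\p\Sigma,\mc{E}_{\mb{C}})=0$; in particular the first $\iota^*$ is injective. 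A routine rank count along the sequence (compute $\op{rank}\delta$ from the injectivity of the first $\iota^*$, then $\op{rank}$ of $\mathrm{H}^1(\Sigma,\p\Sigma,\mc{E}_{\mb{C}})\to\mathrm{H}^1(\Sigma,\mc{E}_{\mb{C}})$, then $\op{rank}$ of the second $\iota^*$) yields
\[
\dim_{\mb{C}}\op{Im}(\iota^*)=\dim_{\mb{C}}\mathrm{H}^1(\Sigma,\mc{E}_{\mb{C}})-\dim_{\mb{C}}\mathrm{H}^1(\Sigma,\p\Sigma,\mc{E}_{\mb{C}})+\dim_{\mb{C}}\mathrm{H}^0(\p\Sigma,\mc{E}_{\mb{C}})-\dim_{\mb{C}}\mathrm{H}^0(\Sigma,\mc{E}_{\mb{C}}).
\]

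The key input is Poincar\'e--Lefschetz duality for the twisted coefficients: because $\rho$ takes values in $\op{Sp}(E,\Omega)$, the symplectic form $\Omega$ identifies $\mc{E}_{\mb{C}}$ with its dual local system, so duality gives a perfect pairing $\mathrm{H}^1(\Sigma,\mc{E}_{\mb{C}})\times\mathrm{H}^1(\Sigma,\p\Sigma,\mc{E}_{\mb{C}})\to\mb{C}$ (the same pairing already used in the excerpt to prove $Q_{\mb{R}}$ non-degenerate). Hence $\dim_{\mb{C}}\mathrm{H}^1(\Sigma,\mc{E}_{\mb{C}})=\dim_{\mb{C}}\mathrm{H}^1(\Sigma,\p\Sigma,\mc{E}_{\mb{C}})$, and these two terms cancel in the displayed formula, leaving
\[
\dim_{\mb{C}}\op{Im}(\iota^*)=\dim_{\mb{C}}\mathrm{H}^0(\p\Sigma,\mc{E}_{\mb{C}})-\dim_{\mb{C}}\mathrm{H}^0(\Sigma,\mc{E}_{\mb{C}}).
\]
Finally, the complex dimension of the cohomology of a complexified local system equals the real dimension of the cohomology of the original one, so $\dim_{\mb{C}}\mathrm{H}^0(\cdot,\mc{E}_{\mb{C}})=\dim_{\mb{R}}\mathrm{H}^0(\cdot,\mc{E})$, which turns the last display into the asserted identity.

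I expect the only substantive step to be the cancellation $\dim\mathrm{H}^1(\Sigma,\mc{E}_{\mb{C}})=\dim\mathrm{H}^1(\Sigma,\p\Sigma,\mc{E}_{\mb{C}})$, i.e.\ making sure Poincar\'e--Lefschetz duality is applied correctly with the symplectic (self-dual) local system; the vanishing of $\mathrm{H}^0(\Sigma,\p\Sigma,\mc{E}_{\mb{C}})$ and the alternating-dimension bookkeeping are elementary diagram chasing. As a sanity check, for the trivial rank-$2n$ system on a genus-$g$ surface with $q$ boundary circles the formula gives $2nq-2n=2n(q-1)$, matching the classical fact that $\op{Im}(\mathrm{H}^1(\Sigma)\to\mathrm{H}^1(\p\Sigma))$ consists of the tuples whose components sum to zero.
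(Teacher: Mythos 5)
Your proof is correct. It uses the same basic toolkit as the paper — the long exact sequence of the pair $(\Sigma,\p\Sigma)$ with coefficients in $\mc{E}_{\mb{C}}$ together with Poincar\'e--Lefschetz duality — but runs the argument from the opposite end of the sequence. The paper truncates at the top: from $\mathrm{H}^1(\Sigma,\mc{E}_{\mb{C}})\xrightarrow{\iota^*}\mathrm{H}^1(\p\Sigma,\mc{E}_{\mb{C}})\to\mathrm{H}^2(\Sigma,\p\Sigma,\mc{E}_{\mb{C}})\to\mathrm{H}^2(\Sigma,\mc{E}_{\mb{C}})\to 0$ it gets $\dim\op{Im}(\iota^*)=\dim\mathrm{H}^1(\p\Sigma,\mc{E}_{\mb{C}})-\dim\mathrm{H}^2(\Sigma,\p\Sigma,\mc{E}_{\mb{C}})+\dim\mathrm{H}^2(\Sigma,\mc{E}_{\mb{C}})$, then applies duality to convert the first two terms into $\mathrm{H}^0(\p\Sigma,\mc{E})$ and $\mathrm{H}^0(\Sigma,\mc{E})$ and uses $\mathrm{H}^2(\Sigma,\mc{E})=0$. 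You instead truncate at the bottom, starting from $\mathrm{H}^0(\Sigma,\p\Sigma,\mc{E}_{\mb{C}})=0$, and invoke duality only once, to cancel $\dim\mathrm{H}^1(\Sigma,\mc{E}_{\mb{C}})$ against $\dim\mathrm{H}^1(\Sigma,\p\Sigma,\mc{E}_{\mb{C}})$ via the $\Omega$-induced perfect pairing (self-duality of the local system). The two arguments are essentially dual to one another and require symmetric auxiliary inputs (your $\mathrm{H}^0(\Sigma,\p\Sigma,\mc{E})=0$ versus the paper's $\mathrm{H}^2(\Sigma,\mc{E})=0$); yours has the mild advantage of making explicit where the symplectic self-duality of $\mc{E}$ enters, and of reusing the pairing already established for the non-degeneracy of $Q_{\mb{R}}$. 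No gaps.
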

\begin{proof}
From the following exact sequence
\begin{align*}
\cdots \to \mathrm{H}^{1}(\Sigma,\mc{E}_{\mb{C}}) \stackrel{\iota^{*}}{\longrightarrow} \mathrm{H}^{1}(\p\Sigma,\mc{E}_{\mb{C}}) \stackrel{\alpha^{*}}{\rightarrow} \mathrm{H}^{2}(\Sigma,\p\Sigma,\mc{E}_{\mb{C}}) \stackrel{\beta^{*}}{\rightarrow} \mathrm{H}^{2}(\Sigma,\mc{E}_{\mb{C}}) \to 0, 
\end{align*}
one has
$$ \mathrm{H}^1(\p\Sigma,\mc{E}_{\mb{C}})/\op{Im}\iota^*\simeq \dim \mathrm{H}^1(\p\Sigma,\mc{E}_{\mb{C}})/\op{Ker}\alpha^*\simeq \op{Im}\alpha^*\simeq \op{Ker}\beta^*,$$
and 
$$\mathrm{H}^2(\Sigma,\p\Sigma,\mc{E}_{\mb{C}})/\op{Ker}\beta^*\simeq\op{Im}\beta^*\simeq \mathrm{H}^2(\Sigma,\mc{E}_{\mb{C}}).$$
Hence 
\begin{align*}
\dim\op{Im}(\iota^*)
&=\dim \mathrm{H}^1(\p\Sigma,\mc{E}_{\mb{C}})-(\dim \mathrm{H}^2(\Sigma,\p\Sigma,\mc{E}_{\mb{C}})-\dim \mathrm{H}^2(\Sigma,\mc{E}_{\mb{C}}))\\
&=\dim \mathrm{H}^0(\p\Sigma,\mc{E}_{\mb{C}})-(\dim \mathrm{H}^0(\Sigma,\mc{E}_{\mb{C}})-\dim \mathrm{H}^2(\Sigma,\mc{E}_{\mb{C}}))\\	
&=\dim \mathrm{H}^0(\p\Sigma,\mc{E})-(\dim \mathrm{H}^0(\Sigma,\mc{E})-\dim \mathrm{H}^2(\Sigma,\mc{E}))\\
&=	\dim \mathrm{H}^0(\p\Sigma,\mc{E})-\dim \mathrm{H}^0(\Sigma,\mc{E}),
\end{align*}
where the second equality by the Poincar\'e duality, and the third equality by considering the real dimension, the last equality follows from the fact that $\dim \mathrm{H}^2(\Sigma,\mc{E})=0$.	
\end{proof}
Hence
\begin{align}\label{hinf}
	h_{\infty}(\wedge^-)\leq \dim\op{Im}(\iota^*)=\dim \mathrm{H}^0(\p\Sigma,\mc{E})-\dim \mathrm{H}^0(\Sigma,\mc{E}).
\end{align}

On the other hand, we can also consider the term $h_{\infty}(\mc{E}_{\mb{C}})$. Denote by $\mathscr{K}_0^-$ the set of all extended  $\mathrm{L}^2$ solutions of $d^-\phi=0$ in $\mc{E}_{\mb{C}}$, so that for any $\phi\in \mathscr{K}_0^-$, one has $d\phi=0$. In the cylinder $\p\Sigma\times (-\infty,u_0]$, we can write
$\phi=\psi+\theta$
where $\psi\in\op{Ker}(A_{\mbf{J}})$ is a harmonic section on $\p\Sigma$ and $\theta$ is a $\mathrm{L}^2$ harmonic section. 
From \eqref{H0} and \eqref{KerAJ}, one has $d\psi=d\theta=0$. Hence $d\phi=0$ for any $\phi\in \ms{K}^-_0$.
 Denote by $\delta_0^-:\mathscr{K}_0^-\to \mathrm{H}^0(\Sigma,\mc{E}_{\mb{C}})$ the natural map, then 
$$\op{Ker}(\iota_0^*\delta_0^-)=\op{Ker}(d)\cap \mathrm{L}^2(\widehat{\Sigma},\mc{E}_{\mb{C}}),$$
where $\iota_0^*$ is defined by
$$
\cdots \to0 \stackrel{\beta^{*}}{\rightarrow} \mathrm{H}^{0}(\Sigma,\mc{E}_{\mb{C}}) \stackrel{\iota_0^{*}}{\longrightarrow} \mathrm{H}^{0}(\p\Sigma,\mc{E}_{\mb{C}}) \rightarrow \cdots
$$
Since $h_{\infty}$ is the dimension of the space of all $\psi$, so
 $h_{\infty}(\mc{E}_{\mb{C}})=\dim (\mathscr{K}_0^-/\op{Ker}(d)\cap \mathrm{L}^2(\widehat{\Sigma},\mc{E}_{\mb{C}}))$, and we have
\begin{align}\label{hE}
\begin{split}
h_{\infty}(\mc{E}_{\mb{C}})&=\dim (\mathscr{K}_0^-/\op{Ker}(\iota_0^*\delta_0^-))=\dim\op{Im}(\iota^*_0\delta_0^-)\\
&\leq \dim\op{Im}(\iota_0^*)=\dim \mathrm{H}^0(\Sigma,\mc{E}).
\end{split}
\end{align}
From \eqref{hinfE}, \eqref{hinf} and \eqref{hE}, we obtain
\begin{equation*}
  h_{\infty}(\wedge^-)=\dim \mathrm{H}^0(\p\Sigma,\mc{E})-\dim \mathrm{H}^0(\Sigma,\mc{E})
\end{equation*}
Similarly, one has 
\begin{equation}\label{hpm}
  h_{\infty}(\wedge^+)=\dim \mathrm{H}^0(\p\Sigma,\mc{E})-\dim \mathrm{H}^0(\Sigma,\mc{E})=h_{\infty}(\wedge^-).
\end{equation}
Substituting \eqref{hpm} into \eqref{sign1}, we obtain a formula for signature:
\begin{thm}\label{thmsign2}
	The signature is given by
	\begin{equation}\label{sign2}
	\op{sign}(\mc{E},\Omega)= 4\int_{\Sigma}c_1(\mc{E},\Omega, \mbf{J})+\eta(A_{\mbf{J}}).
\end{equation}
	\end{thm}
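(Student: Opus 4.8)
The plan is to read off \eqref{sign2} from the chain of identities already assembled in this section. First I would invoke Proposition \ref{prop3.2}, which expresses
$$\op{sign}(\mc{E},\Omega)=\op{Index}(d^-_P)-\op{Index}(d^+_P)+h_{\infty}(\wedge^-)-h_{\infty}(\wedge^+),$$
so the argument breaks into two independent tasks: computing the difference of the two Atiyah--Patodi--Singer indices, and showing that the boundary defect terms $h_\infty(\wedge^+)$ and $h_\infty(\wedge^-)$ agree.

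For the index difference I would apply \cite[Theorem 3.10]{APS} in the form \eqref{APSformula} to $d^\pm_P$. Since the boundary operators satisfy $A^\pm_{\mbf{J}}=\pm A_{\mbf{J}}$, one has $\eta(A^\pm_{\mbf{J}})=\pm\eta(A_{\mbf{J}})$, while $\op{Ker}A^+_{\mbf{J}}=\op{Ker}A^-_{\mbf{J}}$ by \eqref{KerAJ}; subtracting, the kernel terms cancel and the two half-$\eta$'s combine into the full $\eta(A_{\mbf{J}})$, so that $\op{sign}(\mc{E},\Omega)=\int_\Sigma\alpha_-(z)\,d\mu_g-\int_\Sigma\alpha_+(z)\,d\mu_g+h_\infty(\wedge^-)-h_\infty(\wedge^+)+\eta(A_{\mbf{J}})$, which is \eqref{sign}. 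The difference of Atiyah--Singer integrands is then evaluated heat-theoretically: each $\int_\Sigma\alpha_\pm(z)\,d\mu_g$ is the $t\to0$ limit of a supertrace of the heat kernel of the Dirac operator built from $d^\pm$ on the double $\Sigma\cup_{\p\Sigma}\Sigma$, Duhamel's formula (Lemma \ref{lemma4.7}) lets me replace it by the Dirac operator $D^{\mc{F}}$ of a peripheral connection, and the local index theorem together with the supertrace computation of $\Gamma F^{\mc{F}/S}$ produce \eqref{Ind-}--\eqref{Ind+}. The curvature-of-$\Sigma$ contributions enter with the same sign in $\alpha_+$ and $\alpha_-$ and cancel in the difference, leaving $4\int_\Sigma c_1(\mc{E}^+_{\mb{C}},\n^{\mc{E}^+_{\mb{C}}})=4\int_\Sigma c_1(\mc{E},\Omega,\mbf{J})$ by \eqref{4.1} and the definition \eqref{FCC}; this yields \eqref{sign1}.

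It then remains to prove $h_\infty(\wedge^+)=h_\infty(\wedge^-)$. Following \cite{APS}, I would analyse extended $\mathrm{L}^2$-solutions: comparing the index identities \eqref{3.8}, \eqref{3.9} for $d^-_P$ with the analogous identity for $(d^-)^*_P$ gives $h_\infty(\mc{E}_{\mb{C}})+h_\infty(\wedge^-)=\dim\mathrm{H}^0(\p\Sigma,\mc{E})$. Identifying the limiting values of extended $\mathrm{L}^2$-sections of $\wedge^-$ (resp.\ of $\mc{E}_{\mb{C}}$) with the image of the restriction map $\iota^*$ on $\mathrm{H}^1$ (resp.\ $\iota^*_0$ on $\mathrm{H}^0$), and using the long exact sequence of $(\Sigma,\p\Sigma)$, Poincar\'e duality, and $\mathrm{H}^2(\Sigma,\mc{E})=0$, one gets the bounds $h_\infty(\wedge^-)\le\dim\mathrm{H}^0(\p\Sigma,\mc{E})-\dim\mathrm{H}^0(\Sigma,\mc{E})$ and $h_\infty(\mc{E}_{\mb{C}})\le\dim\mathrm{H}^0(\Sigma,\mc{E})$; adding these forces equality in both, so $h_\infty(\wedge^-)=\dim\mathrm{H}^0(\p\Sigma,\mc{E})-\dim\mathrm{H}^0(\Sigma,\mc{E})$, and the identical argument with $\wedge^+$ in place of $\wedge^-$ gives the same number, i.e.\ \eqref{hpm}. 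Substituting \eqref{hpm} into \eqref{sign1} eliminates the defect terms and leaves $\op{sign}(\mc{E},\Omega)=4\int_\Sigma c_1(\mc{E},\Omega,\mbf{J})+\eta(A_{\mbf{J}})$. The main obstacle is precisely this last step: $h_\infty(\wedge^+)=h_\infty(\wedge^-)$ is not a formal symmetry, since $\wedge^+$ and $\wedge^-$ are genuinely distinct bundles, and one must pin down both quantities on the nose by sandwiching an index identity between two cohomological upper bounds, which is where Poincar\'e duality and the vanishing of $\mathrm{H}^2(\Sigma,\mc{E})$ are indispensable.
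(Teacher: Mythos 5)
Your proposal is correct and follows essentially the same route as the paper: Proposition \ref{prop3.2} plus the APS formula \eqref{APSformula} to get \eqref{sign}, the Duhamel/local-index computation of the integrand difference to get \eqref{4.1} and \eqref{sign1}, and the sandwich of the index identity \eqref{hinfE} between the cohomological bounds \eqref{hinf} and \eqref{hE} to force $h_\infty(\wedge^-)=h_\infty(\wedge^+)=\dim \mathrm{H}^0(\p\Sigma,\mc{E})-\dim \mathrm{H}^0(\Sigma,\mc{E})$. No gaps; your identification of the equality of the two defect terms as the genuinely non-formal step is exactly where the paper's effort lies.
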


\section{Toledo invariant}\label{Tol}

In this section, we will recall the definition of Toledo invariant $\op{T}(\Sigma,\rho)$ and prove 
\begin{equation*}
  \op{T}(\Sigma,\rho)=2\int_{\Sigma}c_1(\mc{E},\Omega, \mbf{J})-\frac{1}{2\pi}\sum_{i=1}^q\int_{c_i}\wt{\mbf{J}}^*\alpha_i
\end{equation*}
 for any $\mbf{J}\in\mc{J}_o(\mc{E},\Omega)$, see \eqref{J0space} for the definition of $\mc{J}_o(\mc{E},\Omega)$, where $\alpha_i$ is a $\rho(c_i)$-invariant one form with $d\alpha_i=\omega_{\op{D}^{\op{III}}_n}$, see \eqref{Fixpoint2}. The Rho invariant is defined by $$\bs{\rho}(\p\Sigma)=\frac{1}{\pi}\sum_{i=1}^q\int_{c_i}\wt{\mbf{J}}^*\alpha_i+\eta(A_{\mbf{J}}),$$ see Definition \ref{Rho invariant}. From Theorem \ref{thmsign2}, we obtain 
\begin{thm}\label{thmsign22}
	The signature is given by
	\begin{equation}\label{sign2}
	\op{sign}(\mc{E},\Omega)=2\op{T}(\Sigma,\rho)+\bs{\rho}(\p\Sigma).
\end{equation}
	\end{thm}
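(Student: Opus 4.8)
The plan is to deduce the identity of Theorem \ref{thmsign22} from the signature formula of Theorem \ref{thmsign2}, namely $\op{sign}(\mc{E},\Omega)=4\int_\Sigma c_1(\mc{E},\Omega,\mbf{J})+\eta(A_{\mbf{J}})$, by rewriting the curvature term $4\int_\Sigma c_1(\mc{E},\Omega,\mbf{J})$ in terms of the Toledo invariant and the boundary primitives $\wt{\mbf{J}}^*\alpha_i$. So the substantive content is Theorem \ref{thm0.2}, i.e. the two equalities
$$\op{T}(\Sigma,\rho)=\frac{1}{2\pi}\int_\Sigma\left[\rho^*\omega_{\op{D}^{\op{III}}_n}\right]_c=2\int_\Sigma c_1(\mc{E},\Omega,\mbf{J})-\frac{1}{2\pi}\sum_{i=1}^q\int_{c_i}\wt{\mbf{J}}^*\alpha_i,$$
and once these are in hand the passage to the asserted formula for $\op{sign}(\mc{E},\Omega)$ is purely algebraic.

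For the second equality I would first fix $\mbf{J}\in\mc{J}_o(\mc{E},\Omega)$ and pass to the universal picture: $\mbf{J}$ is a section $\mbf{J}\colon\Sigma\to\wt{\Sigma}\times_\rho\op{D}^{\op{III}}_n$, and pulling back the universal symplectic bundle $(F_\rho,\mbf{J}_{F_\rho},\Omega)$ together with the canonical complex connection $\n^{F_\rho}$ induced by $\n^F$ gives, via the complex linear symplectic isomorphism $(\mbf{J}^*F_\rho,\mbf{J}^*\mbf{J}_{F_\rho},\mbf{J}^*\Omega)\cong(\mc{E},\mbf{J},\Omega)$, a connection $\mbf{J}^*\n^{F_\rho}$ on $\mc{E}$. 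By Proposition \ref{proppullbackconnection} this is a peripheral connection, and its curvature is compactly supported because $\mbf{J}$ is the pullback of a boundary complex structure on the collar. Since $c_1(\mc{E},\Omega,\mbf{J})$ does not depend on the choice of peripheral connection (Remark \ref{rem411}), I may compute it using $\mbf{J}^*\n^{F_\rho}$; as the first Chern form of $\n^F$ is $\frac{1}{4\pi}\omega_{\op{D}^{\op{III}}_n}$ (see \eqref{Chern forms 1}) and $\wt{\mbf{J}}^*\omega_{\op{D}^{\op{III}}_n}$ descends to $\mbf{J}^*\omega_{\op{D}^{\op{III}}_n}$ on $\Sigma$, this yields $2\int_\Sigma c_1(\mc{E},\Omega,\mbf{J})=\frac{1}{2\pi}\int_\Sigma\wt{\mbf{J}}^*\omega_{\op{D}^{\op{III}}_n}$, which is Proposition \ref{propchern}.

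The remaining point is the identification $\frac{1}{2\pi}\int_\Sigma[\rho^*\omega_{\op{D}^{\op{III}}_n}]_c=\op{T}(\Sigma,\rho)$, i.e. matching the compactly supported de Rham class with the image under $j_{\p\Sigma}^{-1}$ of the pullback $\rho_b^*(\kappa^b_G)$ of the bounded K\"ahler class. Here one uses that $\omega_{\op{D}^{\op{III}}_n}$ represents, with the normalization of \eqref{Chern forms 1}, the appropriate multiple of the bounded K\"ahler class, that each $\alpha_i$ is a $\rho(c_i)$-invariant primitive of $\omega_{\op{D}^{\op{III}}_n}$ near the corresponding boundary circle, and the de Rham--singular comparison in relative cohomology; following the bookkeeping of \cite[Proposition-definition 4.1]{KM} (and the computation of \cite[Appendix A]{KP}) the compactly supported form $\wt{\mbf{J}}^*\omega_{\op{D}^{\op{III}}_n}-d(\sum_i\chi_i\wt{\mbf{J}}^*\alpha_i)$ represents $2\pi$ times the relative class underlying $j_{\p\Sigma}^{-1}\rho_b^*(\kappa^b_G)$, and pairing with $[\Sigma,\p\Sigma]$ and applying Stokes to the correction term gives
$$\op{T}(\Sigma,\rho)=\frac{1}{2\pi}\int_\Sigma\wt{\mbf{J}}^*\omega_{\op{D}^{\op{III}}_n}-\frac{1}{2\pi}\sum_{i=1}^q\int_{c_i}\wt{\mbf{J}}^*\alpha_i.$$
Pinning down this normalization and verifying that $j_{\p\Sigma}$ indeed sends the constructed relative class to $\rho_b^*(\kappa^b_G)$ (cf. \cite{BIW}) is the step I expect to be the main obstacle; the rest is formal.

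Granting Theorem \ref{thm0.2}, Theorem \ref{thmsign22} follows immediately. Combining Theorem \ref{thmsign2} with the identity $2\int_\Sigma c_1(\mc{E},\Omega,\mbf{J})=\op{T}(\Sigma,\rho)+\frac{1}{2\pi}\sum_{i=1}^q\int_{c_i}\wt{\mbf{J}}^*\alpha_i$,
\begin{align*}
\op{sign}(\mc{E},\Omega)&=4\int_\Sigma c_1(\mc{E},\Omega,\mbf{J})+\eta(A_{\mbf{J}})\\
&=2\op{T}(\Sigma,\rho)+\frac{1}{\pi}\sum_{i=1}^q\int_{c_i}\wt{\mbf{J}}^*\alpha_i+\eta(A_{\mbf{J}})\\
&=2\op{T}(\Sigma,\rho)+\bs{\rho}(\p\Sigma),
\end{align*}
the last line being the definition of $\bs{\rho}(\p\Sigma)$. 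Since the left-hand side is independent of $\mbf{J}$, the identity holds for every $\mbf{J}\in\mc{J}_o(\mc{E},\Omega)$, consistently with the independence of $\op{T}(\Sigma,\rho)$ and $\bs{\rho}(\p\Sigma)$ on such choices.
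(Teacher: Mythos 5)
Your proposal follows essentially the same route as the paper: Theorem \ref{thmsign22} is obtained by substituting the identity of Theorem \ref{thm0.2} into the signature formula of Theorem \ref{thmsign2}, and Theorem \ref{thm0.2} itself is established exactly as you outline, via Proposition \ref{propchern} (the peripheral pullback connection $\mbf{J}^*\n^{F_\rho}$ together with the independence of $c_1(\mc{E},\Omega,\mbf{J})$ of that choice) and the bounded-cohomology comparison following \cite{KM} and \cite{BIW}. The step you flag as the main obstacle is indeed where the paper works hardest: the identification $j_{\p\Sigma}^{-1}\left[f^*\omega\right]_b=\left[f^*\omega-\sum_{i}d(\chi_i f^*\alpha_i)\right]_{r,b}$ requires that $\sum_i\chi_i\wt{\mbf{J}}^*\alpha_i$ be a \emph{bounded} cochain, which the paper secures through the estimate $\|\alpha_i\|_\infty\leq n\pi$ of Proposition \ref{potential}, proved via the vanishing of $d^c\psi_W$ along geodesics asymptotic to $W$ and the Domic--Toledo bound on areas of geodesic triangles.
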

\begin{rem}\label{remunitary}
	Following \cite{APSII}, if we consider the unitary representation, i.e. $$\rho:\pi_1(\Sigma)\to \op{U}(n)=\{A+iB\in \op{U}(n)\}\cong\left\{Z=\left(\begin{matrix}
A & B\\
-B & A	
\end{matrix}
\right)\in\op{Sp}(2n,\mb{R})\right\}.$$ Then $[Z,J]=0$ where $J=\left(\begin{matrix}
0 &-I_n\\
I_n &0	
\end{matrix}
\right)$ is the standard complex structure. Hence $J\in \mc{J}_o(\mc{E},\Omega)$, and so 
\begin{align*}
  \op{T}(\Sigma,\rho)&=2\int_{\Sigma}c_1(\mc{E},\Omega, {J})-\frac{1}{2\pi}\sum_{i=1}^q\int_{c_i}\wt{{J}}^*\alpha_i\\
  &=\frac{1}{2\pi}\int_\Sigma \wt{{J}}^*\omega_{\op{D}^{\op{III}}_n}-\frac{1}{2\pi}\sum_{i=1}^q\int_{c_i}\wt{{J}}^*\alpha_i=0,
\end{align*}
where the second equality by Section \ref{FBPB},
and \begin{equation*}
  \bs{\rho}(\p\Sigma)=\frac{1}{\pi}\sum_{i=1}^q\int_{c_i}\wt{{J}}^*\alpha_i+\eta(A_{{J}})=\eta(A_J).
\end{equation*}
Thus, Theorem \ref{thmsign22} gives 
$$\op{sign}(\mc{E},\Omega)=  \bs{\rho}(\p\Sigma)=\eta(A_{{J}}),$$
which is agree with \cite[Theorem 2.2, Theorem 2.4]{APSII}.

\end{rem}

\subsection{Definition of Toledo invariant}\label{Toledo}

Let $\Sigma$ be a connected oriented surface with boundary $\p\Sigma$, and $\rho:\pi_1(\Sigma)\to G$ be a surface group representation into a Lie group $G$ which is of Hermitian type. Burger,  Iozzi and  Wienhard \cite[Section 1.1]{BIW} introduced the definition of Toledo invariant $\op{T}(\Sigma,\rho)$, which generalizes
 the Toledo invariant for closed surface case. 

A Lie group $G$ is of {\it Hermitian type} if it is connected, semisimple with finite center and no compact factors, and if the associated symmetric space is Hermitian. Let $G$ be a group of Hermitian type so that in particular the associated symmetric space $\mathscr{X}$ is Hermitian of noncompact type, then $\mathscr{X}$ carries a unique Hermitian (normalized) metric of minimal holomorphic sectional curvature $-1$. The associated K\"ahler form $\omega_{\mathscr{X}}$ is in $\Omega^2(\ms{X})^G$ the space of $G$-invariant $2$-forms on $\ms{X}$. A Lie group $G$ is of type $(\mathrm{RH})$ if it is connected reductive with compact center and the quotient $G / G_{c}$ by the largest connected compact normal subgroup $G_{c}$ is of Hermitian type.
 By the van Est isomorphism \cite{Van},
$\Omega^2(\ms{X})^G\cong \mathrm{H}^2_c(G,\mb{R})$,  
where $\mathrm{H}^{\bullet}_c(G,\mb{R})$ denotes the continuous cohomology of the group $G$ with $\mb{R}$-trivial coefficients, there exists a unique class $\kappa_G\in \mathrm{H}^2_c(G,\mb{R})$ corresponding to the K\"ahler form $\omega_{\ms{X}}$, and thus gives rise to a bounded K\"ahler class $\kappa^b_G\in \mathrm{H}^2_{c,b}(G,\mb{R})$ by the  isomorphism \cite{BO},
$\mathrm{H}^2_{c}(G,\mb{R})\cong \mathrm{H}^2_{c,b}(G,\mb{R})$, 
where $\mathrm{H}^{\bullet}_{c,b}(G,\mb{R})$ denotes the bounded continuous cohomology. In fact, $\kappa^b_G\in  \mathrm{H}^2_{c,b}(G,\mb{R})$ is defined by a bounded cocycle 
\begin{equation}\label{bounded cocyle}
  c(g_0,g_1,g_2)=\frac{1}{2\pi}\int_{\triangle(g_0x, g_1x, g_2x)} \omega_{\ms{X}},
\end{equation}
 where $\triangle(g_0x, g_1x, g_2x)$ is a geodesic triangle with ordered vertices $g_0x, g_1x,g_2x$ for some base point $x\in \ms{X}$.

 By Gromov isomorphism \cite{Gromov}, one has 
$$\rho_b^*(\kappa^b_G)\in \mathrm{H}^2_b(\pi_1(\Sigma),\mb{R})\cong \mathrm{H}^2_b(\Sigma,\mb{R}).$$
The canonical map $j_{\p\Sigma}:\mathrm{H}^2_b(\Sigma,\p\Sigma,\mb{R})\to \mathrm{H}^2_b(\Sigma,\mb{R})$ from singular bounded cohomology relative to $\p\Sigma$ to singular bounded cohomology is an isomorphism. Then the Toledo invariant is defined as 
$$\op{T}(\Sigma,\rho)=\langle j^{-1}_{\p\Sigma}\rho_b^*(\kappa^b_G),[\Sigma,\p\Sigma]\rangle,$$
where   $j^{-1}_{\p\Sigma}\rho_b^*(\kappa^b_G)$ is considered as an ordinary relative cohomology class and $[\Sigma,\p\Sigma]\in H_2(\Sigma,\p\Sigma,\mb{Z})\cong\mb{Z}$ denotes the relative fundamental class.

\subsection{Relation to the first Chern class}

In this subsection, we will prove $\op{T}(\Sigma,\rho)=2\int_{\Sigma}c_1(\mc{E},\Omega, \mbf{J})-\frac{1}{2\pi}\sum_{i=1}^q\int_{c_i}\wt{\mbf{J}}^*\alpha_i$. We will divide it into two steps: in the first step we will prove $\int_\Sigma c_1(\mc{E},\Omega, \mbf{J})=\frac{1}{4\pi}\int_\Sigma \wt{\mbf{J}}^*\omega_{\op{D}^{\op{III}}_n}$, and in the second step we will prove  $\op{T}(\Sigma,\rho)=\frac{1}{2\pi}\int_\Sigma \wt{\mbf{J}}^*\omega_{\op{D}^{\op{III}}_n}-\frac{1}{2\pi}\sum_{i=1}^q\int_{c_i}\wt{\mbf{J}}^*\alpha_i$.

\subsubsection{Flat bundle and pullback bundle}\label{FBPB}

In this subsection, we will prove the first step.
Let $\rho:\pi_1(\Sigma)\to \op{Sp}(E,\Omega)$ be a surface group representation into a real symplectic  group $\op{Sp}(E,\Omega)$. The maximal compact subgroup of $\op{Sp}(E,\Omega)$ is a unitary group $\op{U}(E,\Omega)$, and the space $\mc{J}(E,\Omega)$ of all compatible complex structures is isomorphic to
$$\mc{J}(E,\Omega)\cong \op{Sp}(E,\Omega)/\op{U}(E,\Omega),$$ 
which is a Hermitian symmetric space of non-compact type. 

By taking a symplectic basis such that the symplectic form $\Omega$ is given by the following matrix
\begin{align}\label{symplectic form}
\Omega=\left(\begin{array}{cc}
0 & I_n\\	
-I_n &0
\end{array}
\right),	
\end{align}
then $\op{Sp}(E,\Omega)\cong \op{Sp}(2n,\mb{R})$ and $\op{U}(E,\Omega)\cong \op{U}(n)\cong \op{Sp}(2n,\mb{R})\cap \op{SO}(2n)$  is a maximal compact subgroup of $\op{Sp}(2n,\mb{R})$, $\dim E=2n$, and  the space $\mc{J}(E,\Omega)\cong \op{Sp}(2n,\mb{R})/\op{U}(n)$ can be identified with Siegel disk:
$$\op{D}^{\op{III}}_n:=\{W\in \mf{g}\mf{l}(n,\mb{C}):W=W^\top,I_n-\o{W}W>0\},$$
see e.g. \cite[Chapter VIII, \S 7]{Hel}. In fact, the identification is given by the following map
$$\sigma: \op{Sp}(2n,\mb{R})/\op{U}(n)\to \op{D}^{\op{III}}_n,$$
$$ \sigma:g\op{U}(n)\mapsto \left((C+B)i+(D-A)\right)\left((C-B)i+(D+A)\right)^{-1},$$
is a holomorphic diffeomorphism, where $
g=\left(\begin{array}{ll}
A & B \\
C & D
\end{array}\right)
$, $A^\top C$ and $D^\top B$ are symmetric and $A^\top D-C^\top B=I_n$,
see \cite[Page 399]{Hel}. By this identification, the representation $\rho:\pi_1(\Sigma)\to \op{Sp}(E,\Omega)$ induces  a canonical  representation from $\pi_1(\Sigma)$ into the holomorphic automorphism group  $\op{Aut}(\op{D}^{\op{III}}_n)$, we also denote it by $\rho$, see \eqref{complexstructureF} and \eqref{Action}.

Let $\omega_{\op{D}^{\op{III}}_n}$ be the K\"ahler form such that its holomorphic sectional curvature is in $[-1,-1/n]$. From  Appendix \ref{App2}, one has
$$\omega_{\op{D}^{\op{III}}_n}:=-2i\p\b{\p}\log\det(I_n-\o{W}W).$$
For the bounded symmetric domain $\op{D}^{\op{III}}_n$, we consider a trivial symplectic vector bundle over it, i.e. 
$$(F,\Omega):=\op{D}^{\op{III}}_n\times (\mb{R}^{2n},\Omega)\to \op{D}^{\op{III}}_n,$$
where $\Omega$ is given by \eqref{symplectic form}. 
There exists a canonical complex structure $\mbf{J}_F$ on $F$, which is defined as follows:
\begin{align}\label{complexstructureF}
\mbf{J}_F(W):=\left(\begin{array}{cc}
	-\op{Im}A+\op{Im} B & \op{Re}A+\op{Re}B\\
	-\op{Re}A+\op{Re}B & -\op{Im}A-\op{Im} B
\end{array}\right)=U\left(\begin{matrix}{}
  i A& iB \\
  -i\o{B}&-i\o{A}  
\end{matrix}\right)	U^{-1},
\end{align}
where 
\begin{align}\label{U}
U=\left(\begin{matrix}{}
 -iI_n & iI_n \\
  I_n&I_n 
\end{matrix}
\right),\quad U^{-1}=\left(\begin{matrix}{}
 \frac{i}{2}I_n & \frac{1}{2}I_n \\
 -\frac{i}{2} I_n&\frac{1}{2}I_n 
\end{matrix}
\right),
\end{align}
and 
$$A=-(I_n-W\o{W})^{-1}(I_n+W\o{W}),\quad B=2(I_n-W\o{W})^{-1}W.$$
Thus 
\begin{align}\label{W}
W=(I_n-A)^{-1}B.	
\end{align}

\begin{prop}
$\mbf{J}_F\in \mc{J}(F,\Omega)$.	
\end{prop}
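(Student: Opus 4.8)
The plan is to verify the two defining properties of an $\Omega$-compatible complex structure directly from the explicit formula \eqref{complexstructureF}: namely that $\mbf{J}_F(W)^2=-\op{Id}$, and that $\Omega(\cdot,\mbf{J}_F(W)\cdot)$ is a positive definite symmetric bilinear form on $\mb{R}^{2n}$ for every $W\in\op{D}^{\op{III}}_n$. The conjugation presentation $\mbf{J}_F(W)=U\,\widehat{J}(W)\,U^{-1}$ with $\widehat J(W)=\left(\begin{smallmatrix} iA & iB\\ -i\o B & -i\o A\end{smallmatrix}\right)$ is what makes both checks tractable, so I would work with $\widehat J(W)$ throughout and only translate back to $\mbf{J}_F$ at the end.

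First I would compute $\widehat J(W)^2$. Using $A=-(I_n-W\o W)^{-1}(I_n+W\o W)$ and $B=2(I_n-W\o W)^{-1}W$, one has the block identities $A\o A - B\o B = I_n$ (equivalently $-A^2+B\o B$ with the appropriate conjugates) and $AB=\o B\,\o A$ — these follow from a short computation with $(I_n-W\o W)^{-1}$ after noting $W=W^\top$, hence $\o W=\o W^{\top}$, so all the relevant matrices commute in the right order. Granting these two relations, the block multiplication gives
\begin{align*}
\widehat J(W)^2=\begin{pmatrix} -A\o A + B\o B & -AB+B\o A\\ \o B A - \o A\,\o B & -\o B\,\o A^{\,\text{—}} \end{pmatrix}\cdot(-1)
\end{align*}
— here I am being schematic; the point is that the diagonal blocks reduce to $-(A\o A-B\o B)=-I_n$ and the off-diagonal blocks vanish, so $\widehat J(W)^2=-\op{Id}$, and therefore $\mbf{J}_F(W)^2=U\widehat J(W)^2U^{-1}=-\op{Id}$ as well. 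I would also record that $\widehat J(W)$ is real-linear as a map on $\mb{R}^{2n}$ precisely because it is conjugate via the real-structure-reversing $U$ to a matrix that is conjugate-linear in the obvious sense; this is exactly why the real matrix in \eqref{complexstructureF} is written in terms of real and imaginary parts.

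Next, for compatibility, I must show $\Omega(v,\mbf{J}_F(W)v)>0$ for all nonzero $v\in\mb{R}^{2n}$, and that $\Omega(u,\mbf{J}_F(W)v)=\Omega(v,\mbf{J}_F(W)u)$. Since $\mbf{J}_F\in\op{Sp}(2n,\mb R)$ would follow from $\widehat J(W)^\top\Omega\widehat J(W)=\Omega$ together with $\widehat J(W)^2=-\op{Id}$ (the two give $\Omega\mbf J_F=-\mbf J_F^\top\Omega=(\mbf J_F^\top)^{-1}\Omega$... i.e. symmetry of $\Omega\mbf J_F$), I would first check $\mbf{J}_F(W)\in\op{Sp}(2n,\mb R)$. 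This is cleanest by observing that $\mbf{J}_F(W)=g J g^{-1}$ for a suitable $g\in\op{Sp}(2n,\mb R)$ representing the coset $\sigma^{-1}(W)$, where $J=\left(\begin{smallmatrix}0&-I_n\\ I_n&0\end{smallmatrix}\right)$ is the base-point complex structure; then $\mbf J_F(W)\in\op{Sp}$ and $\mbf J_F(W)^2=-\op{Id}$ are immediate, and positivity $\Omega(v,gJg^{-1}v)=\Omega(g^{-1}v,Jg^{-1}v)>0$ follows from positivity at the base point since $g^{-1}$ is invertible. The remaining task is then purely bookkeeping: to confirm that the matrix in \eqref{complexstructureF}, with $A,B$ expressed through $W$, indeed equals $gJg^{-1}$ for the $g$ with $\sigma(g\op{U}(n))=W$.

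The main obstacle I anticipate is precisely this last bookkeeping step — matching the explicit $A,B$ formulas against the image of $J$ under an element of the coset, i.e. unwinding the holomorphic diffeomorphism $\sigma$ in \eqref{complexstructureF}'s direction and checking consistency of all the conventions (the factor conventions in $U$, $U^{-1}$, and the sign in $\Omega$). If one prefers to avoid choosing a coset representative, the alternative is the brute-force route: expand $\Omega\,\mbf{J}_F(W)$ into $2\times2$ blocks in terms of $\op{Re}A,\op{Im}A,\op{Re}B,\op{Im}B$, use $A=\o A^{\text{(no)}}$... rather, use symmetry of $B$ (from $W=W^\top$) and the identity $\o A^\top=\o A$, $B^\top=B$ together with $A\o A-B\o B=I_n$, and show directly that the resulting real symmetric matrix is positive definite — its positivity is equivalent to $I_n-\o W W>0$, which is the defining condition of $\op{D}^{\op{III}}_n$. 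Either way, the conceptual content is light; the care needed is in the algebra of $(I_n-W\o W)^{-1}$ and in not dropping a sign. I would present the coset-representative argument as the main proof and relegate the identities $A\o A-B\o B=I_n$, $AB=\o B\,\o A$ to a one-line verification.
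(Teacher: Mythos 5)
Your fallback ``brute-force'' route is exactly the proof the paper gives: it verifies $\mbf{J}_F(W)^2=-\op{Id}$ from two block identities, then writes $\Omega\,\mbf{J}_F(W)=-U\left(\begin{smallmatrix}A&B\\ \o{B}&\o{A}\end{smallmatrix}\right)U^{-1}$, checks this matrix is Hermitian positive definite by a block-LDL$^*$ factorization (both $A$ and the Schur complement $\o{A}-\o{B}A^{-1}B=-(I_n-\o{W}W)(I_n+\o{W}W)^{-1}$ are negative definite precisely because $I_n-\o{W}W>0$), and deduces symplecticity from the symmetry of $\Omega\,\mbf{J}_F(W)$ together with $\mbf{J}_F(W)^2=-\op{Id}$. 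So the approach is fine. Two corrections, though. First, your stated identities are off by conjugations: since $A$ is Hermitian but not real, the square of the block matrix forces $A^2-B\o{B}=I_n$ (not $A\o{A}-B\o{B}=I_n$) and $AB=B\o{A}$ (not $AB=\o{B}\,\o{A}$); both correct versions do follow from $(I_n-W\o{W})^{-1}W=W(I_n-\o{W}W)^{-1}$ and $W=W^\top$, but as written your identities are false. Second, the route you propose as the \emph{main} proof — exhibiting $\mbf{J}_F(W)=gJg^{-1}$ for $g$ in the coset $\sigma^{-1}(W)$ — defers precisely the nontrivial content of the proposition to the ``bookkeeping'' of matching \eqref{complexstructureF} against $gJg^{-1}$; if that identification is granted, the proposition is immediate, so nothing has been proved until that matching is actually carried out. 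As a standalone argument your primary route is therefore incomplete, and the proof should be presented along the lines of your alternative (i.e., the paper's computation).
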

\begin{proof}
	 By using $(I_n-W\o{W})^{-1}W=W(I_n-\o{W}W)^{-1}$, one can get $A^2-B\o{B}=I_n$ and $AB=B\o{A}$, so $\mbf{J}_F(W)^2=-I_{2n}$. On the other hand,
	\begin{align*}
	\Omega \mbf{J}_F(W)&=\Omega U\left(\begin{matrix}{}
  i A& iB \\
  -i\o{B}&-i\o{A}  
\end{matrix}\right)	U^{-1}=U(U^{-1}\Omega U)\left(\begin{matrix}{}
  i A& iB \\
  -i\o{B}&-i\o{A}  
\end{matrix}\right)	U^{-1}\\
&=U\left(\begin{matrix}
iI_n & 0\\
0 & -i I_n	
\end{matrix}
\right)\left(\begin{matrix}{}
  i A& iB \\
  -i\o{B}&-i\o{A}  
\end{matrix}\right)	U^{-1}=-U\left(\begin{matrix}{}
 A& B \\
  \o{B}&\o{A}  
\end{matrix}\right)U^{-1}\\
&=-U\left(\begin{matrix}
I_n &0	\\
-\o{B}A^{-1} & I_n
\end{matrix}
\right)^{-1}\left(\begin{matrix}
A & 0\\
0 & \o{A}-\o{B}A^{-1}B	
\end{matrix}
\right)\left(\left(\begin{matrix}
I_n &0	\\
-\o{B}A^{-1} & I_n
\end{matrix}
\right)^{-1}\right)^*U^{-1}.
	\end{align*}
Since $A$ is strictly negative definite and $\o{A}-\o{B}A^{-1}B	=-(I_n-\o{W}W)(I_n+\o{W}W)^{-1}$ is also strictly negative definite, so {$\Omega \mbf{J}_F(W)$} is strictly positive definite, which means that $\Omega(\cdot,\mbf{J}_F(W)\cdot)>0$. { Now we show that $\Omega$ is $\mbf{J}_F(W)$-invariant, 
 and} $\mbf{J}_F(W)^\top\Omega \mbf{J}_F(W)=\Omega$ is equivalent to $\Omega \mbf{J}_F(W)$ { being} symmetric. So
\begin{align*}
\Omega \mbf{J}_F(W)=	-U\left(\begin{matrix}{}
 A& B \\
  \o{B}&\o{A}  
\end{matrix}\right)U^{-1}=-\left(\begin{matrix}
\op{Re}A-\op{Re}B &\op{Im}A+\op{Im}B\\
-\op{Im}A+\op{Im}B &\op{Re}A+\op{Re}B	
\end{matrix}
\right),
\end{align*}
and note that $A^*=A$, $B^\top=B$, so $(\Omega \mbf{J}_F(W))^\top=\Omega \mbf{J}_F(W)$. Therefore, $\mbf{J}_F\in \mc{J}(F,\Omega)$.
\end{proof}
\begin{rem}\label{rem5.2}
	The complex structure $\mbf{J}_F$ is also a smooth map $\mbf{J}_F:\op{D}^{\op{III}}_n\to \mc{J}(\mb{R}^{2n},\Omega)$. One can check that it is a bijection. For any $Z\in \op{Sp}(2n,\mb{R})$, then  $Z\mbf{J}_FZ^{-1}\in \mc{J}(\mb{R}^{2n},\Omega)$, and the induced action on $\op{D}^{\op{III}}_n$ is given by
	\begin{equation}\label{Action}
  Z(W)=(Z_1W+Z_2)(\o{Z_2}W+\o{Z_1})^{-1}\in \op{D}^{\op{III}}_n,
\end{equation}
 where $Z_1$ and $Z_2$ is defined by
	 $Z=U\left(\begin{matrix}
Z_1 & Z_2\\
\o{Z_2} & \o{Z_1}	
\end{matrix}
\right)U^{-1}$. The isomorphism $\op{Sp}(2n,\mb{R})/\op{U}(n)\cong\mc{J}(\mb{R}^{2n},\Omega)$ is given by $Z\cdot \op{U}(n)\mapsto ZJZ^{-1}$, where $J=\left(\begin{matrix}
0 & -I_n\\
I_n & 0	
\end{matrix}
\right)$ and $\op{U}(n)\cong \{Z\in \op{Sp}(2n,\mb{R}): ZJZ^{-1}=J\}$.
\end{rem}

Hence $(F,\mbf{J}_F)$ is complex vector bundle over $\op{D}^{\op{III}}_n$. Next we will  give a complex connection on the complex
vector bundle. Note that
\begin{align*}
\left(\begin{matrix}{}
  i A& iB \\
  -i\o{B}&-i\o{A}  
\end{matrix}\right)=\left(\begin{matrix}
  I_n & -W\\
  \o{W}&-I_n
\end{matrix}
\right)	\left(\begin{matrix}
 -i I_n & 0\\
 0&iI_n
\end{matrix}
\right)	\left(\begin{matrix}
  I_n & -W\\
  \o{W}&-I_n
\end{matrix}
\right)^{-1},
\end{align*}
if we set
\begin{align*}
V=		U\left(\begin{matrix}
  I_n & -W\\
  \o{W}&-I_n
\end{matrix}
\right)=\left(\begin{matrix}
  -i(I_n-\o{W}) & -i(I_n-W)\\
  I_n+\o{W}&-I_n-W
\end{matrix}
\right),
\end{align*}
then
\begin{equation}\label{JV}
  \mbf{J}_F=V	\left(\begin{matrix}
 -i I_n & 0\\
 0&iI_n
\end{matrix}
\right)V^{-1}.
\end{equation}
One can define a connection  on $F_{\mb{C}}$ by 
\begin{align*}
	\n^F=V\left(\begin{matrix}
 d+(I_n-W\o{W})^{-1}\b{\p}(I_n-W\o{W}) & 0\\
 0&  d+(I_n-\o{W}W)^{-1}\p(I_n-\o{W}W)
\end{matrix}
\right)V^{-1}.
\end{align*}
One can check that $\n^F=\o{\n^F}$ and $\n^F \mbf{J}_F=\mbf{J}_F\n^F$. Hence $\n^F$ gives rise to a complex connection on the complex vector bundle $(F,\mbf{J}_F)$. By a direct calculation, one has $\n^F=d+C$, where 
\begin{align*}
C=V\left(\begin{matrix}
0 & (I_n-W\o{W})^{-1}dW	\\
(I_n-\o{W}W)^{-1}d\o{W} &0
\end{matrix}
\right)	V^{-1},
\end{align*}
and $C$ satisfies $C=\o{C}$ and $C^\top\Omega+\Omega C=0$, i.e. $C\in \mf{s}\mf{p}(2n,\mb{R})$.

The symplectic form $\Omega$ defines a Hermitian metric on $F_{\mb{C}}$ by $(\cdot,\cdot)=2\Omega(\cdot,\mbf{J}_F\o{\cdot})$.
Let $F_{\mb{C}}=F_{\mb{C}}^+\oplus F_{\mb{C}}^-$ be the decomposition of $F_{\mb{C}}$ corresponding the $\pm i$-eigenspaces of $\mbf{J}_F$. Let $\{\tilde{f}_i$, $1\leq i\leq n\}$ denote a frame of $F_{\mb{C}}^-$ and  $\{\tilde{f}_{n+i}$, $1\leq i\leq n\}$ denote a frame of $F_{\mb{C}}^+$. Here we  take $\tilde{f}_j=V_j^kf_k$ where $\{f_j, 1\leq j\leq 2n\}$ denotes the standard basis of $\mb{R}^{2n}$. With respect to the frame $\{\tilde{f}_i,1\leq i\leq 2n\}$, the Hermitian metric is given by the following matrix
\begin{align}\label{Hermitian matrix}
2\Omega(V\cdot,\mbf{J}_F\o{V}\cdot)=2V^\top\Omega \mbf{J}_F\o{V}=2(V^*\Omega \mbf{J}_F V)^\top	=4\left(\begin{matrix}
  I_n-\o{W}W& 0 \\
 0 & I_n-W\o{W}
\end{matrix}
\right)
\end{align}
where the second equality 
since $\Omega \mbf{J}_F$ is symmetric. Thus $\n^F|_{F^+_{\mb{C}}}$ is a Chern connection of the holomorphic Hermitian vector bundle $(F^+_{\mb{C}},2\Omega(V\cdot,\mbf{J}_F\o{V}\cdot)|_{F_{\mb{C}}^+})$. The first Chern form is 
\begin{align}\label{Chern forms 1}
c_1(F^+_{\mb{C}},\n^F|_{F^+_{\mb{C}}})=\frac{i}{2\pi}\b{\p}\p\log\det(I_n-W\o{W})=\frac{1}{2}\cdot\frac{1}{2\pi}\omega_{\op{D}^{\op{III}}_n}.	
\end{align}

For any representation $\rho:\pi_1(\Sigma)\to \op{Sp}(E,\Omega)$, and any $\mbf{J}\in \mc{J}(\mc{E},\Omega)$, it gives a $\rho$-equivariant map from $ \wt{\Sigma}$ into $\mc{J}(E,\Omega)$. Using the following identification
$$\mc{J}(E,\Omega)\cong \op{D}_n^{\op{III}},$$
see Remark \ref{rem5.2}, the $\mbf{J}\in \mc{J}(\mc{E},\Omega)$ defines a $\rho$-equivariant map by 
\begin{equation}\label{tJ}
  \wt{\mbf{J}}:\wt{\Sigma}\to \op{D}_n^{\op{III}},\quad z\mapsto  W=\wt{\mbf{J}}(z)=\mbf{J}_F^{-1}(\mbf{J}(z)).
\end{equation}
Here $\mbf{J}(z)\in \mc{J}(E,\Omega)\cong \mc{J}(\mb{R}^{2n},\Omega)$,  where the isomorphism  $(E,\Omega)\cong (\mb{R}^{2n},\Omega)$ is given by identified the symplectic basis of $(E,\Omega)$ with the standard basis of $(\mb{R}^{2n},\Omega)$. The  $\rho$-equivariant map $ \wt{\mbf{J}}:\wt{\Sigma}\to \op{D}_n^{\op{III}}$
 is also equivalent to the complex structure $\mbf{J}$ which is a smooth section of the associated bundle $\wt{\Sigma}\times_\rho \op{D}_n^{\op{III}}\to\Sigma$. There is a vector bundle 
$$F_\rho:=\wt{\Sigma}\times_\rho F=\wt{\Sigma}\times_\rho(\op{D}^{\op{III}}_n\times\mb{R}^{2n})$$ 
over $\wt{\Sigma}\times_\rho \op{D}_n^{\op{III}}$. By pullback, $\mbf{J}^*F_\rho$ is a vector bundle over $\Sigma$. 
The complex structure $\mbf{J}_F$ on $F$ defines a canonical complex structure on $F_{\rho}$ by 
$$\mbf{J}_{F_\rho}([z,W,e]):=[z,W,\mbf{J}_F(W)e],$$
where $[z,W,e]\in F_\rho$. Thus $(\mbf{J}^*F_\rho,\mbf{J}^*\mbf{J}_{F_\rho})$ is a complex vector bundle. One can check that the identification $(E,\Omega)\cong (\mb{R}^{2n},\Omega)$ induces a complex linear symplectic isomorphism between $(\mbf{J}^*F_\rho,\mbf{J}^*\mbf{J}_{F_\rho},\mbf{J}^*\Omega)$ and $(\mc{E},\mbf{J},\Omega)$, where  $\mbf{J}^*\Omega$ denotes the induced symplectic form on $F_\rho$. In fact, the pullback bundle $\mbf{J}^*F_\rho$ is the space of all elements $(z_0,[z,W,e])\in\Sigma\times (\wt{\Sigma}\times_\rho(\op{D}^{\op{III}}_n\times \mb{R}^{2n}))$ with $\mbf{J}(z_0)=[z,W]\in \wt{\Sigma}\times_\rho \op{D}_n^{\op{III}}$, the identification $(\mb{R}^{2n},\Omega)\cong (E,\Omega)$ gives rise to the desired isomorphism $\tau:(z_0,[z,W,e])\mapsto [z,e]$.

For the flat symplectic vector bundle $(F_\rho, \mbf{J}_{F_\rho},\Omega)$,  by complexification, 
we consider the complex vector bundle $(F_\rho)_{\mb{C}}$, it has the following decomposition
$$(F_\rho)_{\mb{C}}=(F_\rho)_{\mb{C}}^+\oplus (F_\rho)_{\mb{C}}^-=(\wt{\Sigma}\times_\rho F^+_{\mb{C}})\oplus (\wt{\Sigma}\times_\rho F^-_{\mb{C}}),$$
where $(F_\rho)_{\mb{C}}^\pm$ denotes the $\pm i$-eigenspaces of $\mbf{J}_{F_\rho}$. The Hermitian metric $(\cdot,\cdot)=2\Omega(\cdot,\mbf{J}_F\o{\cdot})$ on $F$ induces a well-defined Hermitian metric $(\cdot,\cdot)_\rho=\Omega(\cdot,\mbf{J}_{F_\rho}\o{\cdot})$ on $(F_\rho)_{\mb{C}}$. Similar to \eqref{Hermitian matrix}, with respect to the frame $\{[z,W,\wt{f}_j], 1\leq j\leq 2n\}$, the Hermitian matrix of the Hermitian metric is 
\begin{equation}
(\cdot,\cdot)_\rho=2\Omega(V\cdot,\mbf{J}_{F_\rho}\o{V\cdot})	=4\left(\begin{matrix}
  I_n-\o{W}W& 0 \\
 0 & I_n-W\o{W}
\end{matrix}
\right).
\end{equation}
On the other hand, there exists a connection $\n^{F_\rho}$ on $F_\rho$, in terms of the local frame $\{[z,W,\wt{f}_j], 1\leq j\leq 2n\}$, it is given by 
\begin{align*}
	\n^{F_\rho}=\left(\begin{matrix}
 d+(I_n-W\o{W})^{-1}\b{\p}(I_n-W\o{W}) & 0\\
 0&  d+(I_n-\o{W}W)^{-1}\p(I_n-\o{W}W)
\end{matrix}
\right),
\end{align*}
which is just the induced connection from $\n^F$. Moreover, $\n^{F_\rho}$ is a complex linear connection on $(F_\rho, \mbf{J}_{F_\rho})$, i.e. $[\n^{F_\rho}, \mbf{J}_{F_\rho}]=0$, and $\n^{F_\rho}$ preserves the Hermitian metric $(\cdot,\cdot)_\rho=2\Omega(\cdot,\mbf{J}_{F_\rho}\o{\cdot})$. Similar to \eqref{Chern forms 1}, the first Chern form of the connection is 
\begin{equation}\label{Chern forms 2}
  c_1((F_\rho)^+_{\mb{C}},\n^{F_\rho}|_{(F_\rho)^+_{\mb{C}}})=\frac{i}{2\pi}\op{Tr}_{(F_\rho)^+_{\mb{C}}}((\n^{F_\rho})^2)=\frac{1}{4\pi}\omega_{\op{D}^{\op{III}}_n}.	
\end{equation}

Since the two form $\omega_{\op{D}^{\op{III}}_n}$ is invariant under the action of $\op{Aut}(\op{D}^{\op{III}}_n)$, so it defines a well-defined two form on $\wt{\Sigma}\times_\rho \op{D}_n^{\op{III}}$, we  denote it also by $\omega_{\op{D}^{\op{III}}_n}$. Hence $\mbf{J}^*\omega_{\op{D}^{\op{III}}_n}$ is a two form on $\Sigma$. In fact, the pullback two form $\wt{\mbf{J}}^*\omega_{\op{D}^{\op{III}}_n}$ is a $\rho$-equivariant two form on $\wt{\Sigma}$, so it descends to a two form on $\Sigma$, which is just $\mbf{J}^*\omega_{\op{D}^{\op{III}}_n}$. Moreover, by \eqref{Chern forms 2}, one has 
\begin{equation}\label{Chern forms 3}
 \mbf{J}^* c_1((F_\rho)^+_{\mb{C}},\n^{F_\rho}|_{(F_\rho)^+_{\mb{C}}})=\frac{i}{2\pi}\op{Tr}_{(F_\rho)^+_{\mb{C}}}((\mbf{J}^*\n^{F_\rho})^2)=\frac{1}{4\pi}\mbf{J}^*\omega_{\op{D}^{\op{III}}_n}=\frac{1}{4\pi}\wt{\mbf{J}}^*\omega_{\op{D}^{\op{III}}_n}.	
\end{equation}

When restricted to a small collar neighborhood of $\p\Sigma$,  $\mc{E}\cong p^*(\mc{E}|_{\p\Sigma})$ near the boundary, where 
	$p:\p\Sigma\times [0,1]\to \p\Sigma$ denotes the natural projection. For any compatible complex structure on $\mc{E}|_{\p\Sigma}$, {the pullback of the complex structure by $p$ gives a compatible complex structure $\mbf{J}$ on $\mc{E}$ near the boundary}.
	Denote 
	\begin{align}\label{J0space}
	\begin{split}
	\mc{J}_o(\mc{E},\Omega)=\{\mbf{J}\in \mc{J}(\mc{E},\Omega)|  &\mbf{J}=p^*J
	\text{ on a small collar neighborhood}\\
	&\text{ of } \p\Sigma, \text{ where } J \in \mc{J}( \mc{E}|_{\p\Sigma},\Omega)\}.	
	\end{split}
	\end{align}
	 For any $\mbf{J}\in \mc{J}_o(\mc{E},\Omega)$, one has  $\mbf{J}=\mbf{J}(x)$ and 
 $\mbf{J}^*W=W(x)$ depends only on $x$ near $\partial\Sigma$.
\begin{prop}\label{proppullbackconnection}
	 For any $\mbf{J}\in \mc{J}_o(\mc{E},\Omega)$, the pullback connection $\mbf{J}^*\n^{F_\rho}$ is a peripheral connection on $\mbf{J}^*F_\rho$.
\end{prop}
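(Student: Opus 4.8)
The plan is to verify, one by one, the three defining properties of a peripheral connection in Definition \ref{connection} for $\mbf{J}^*\n^{F_\rho}$, after transporting everything to $\mc{E}$ through the isomorphism $\tau$ constructed above. Recall that $\tau$ identifies $(\mbf{J}^*F_\rho,\mbf{J}^*\mbf{J}_{F_\rho},\mbf{J}^*\Omega)$ with $(\mc{E},\mbf{J},\Omega)$; I would first observe that it also identifies the flat connection $\mbf{J}^*d$ on $\mbf{J}^*F_\rho$ — where $d$ here means the connection on $F_\rho$ descending from the trivial connection on $F=\op{D}^{\op{III}}_n\times\mb{R}^{2n}$ — with the canonical flat connection on $\mc{E}$. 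This holds because both are flat with holonomy $\rho$ and because $\tau$ carries the standard $\rho$-equivariant frame $\{[z,W,f_j]\}$, coming from the fixed symplectic isomorphism $(E,\Omega)\cong(\mb{R}^{2n},\Omega)$, to the standard $\rho$-equivariant (flat) frame of $\mc{E}$.

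Conditions (ii) and (iii) are then immediate, and in fact hold on all of $\Sigma$. The text already records that $\n^{F_\rho}$ is complex linear for $\mbf{J}_{F_\rho}$, i.e. $[\n^{F_\rho},\mbf{J}_{F_\rho}]=0$, and that it is induced from $\n^F=d+C$ with $C\in\mf{s}\mf{p}(2n,\mb{R})$, so $\n^{F_\rho}$ preserves $\Omega$ on $F_\rho$. Pulling back preserves both properties — $[\mbf{J}^*\n^{F_\rho},\mbf{J}^*\mbf{J}_{F_\rho}]=0$ and $\mbf{J}^*\n^{F_\rho}$ preserves $\mbf{J}^*\Omega$ — and transporting through $\tau$ yields $[\mbf{J}^*\n^{F_\rho},\mbf{J}]=0$ and preservation of $\Omega$ on $\mc{E}$.

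For condition (i), I would use the explicit form $\n^{F_\rho}=d+C$ in the standard frame, so that $\mbf{J}^*\n^{F_\rho}=d+\mbf{J}^*C$ with, after the substitution $W=\wt{\mbf{J}}$,
\[
\mbf{J}^*C=V(\wt{\mbf{J}})\begin{pmatrix}0 & (I_n-\wt{\mbf{J}}\,\o{\wt{\mbf{J}}})^{-1}d\wt{\mbf{J}}\\ (I_n-\o{\wt{\mbf{J}}}\,\wt{\mbf{J}})^{-1}d\o{\wt{\mbf{J}}} & 0\end{pmatrix}V(\wt{\mbf{J}})^{-1}.
\]
Since $\mbf{J}\in\mc{J}_o(\mc{E},\Omega)$, on a collar neighborhood of $\p\Sigma$ one has $\wt{\mbf{J}}=W(x)$ depending only on the boundary coordinate $x$, hence $d\wt{\mbf{J}}=W'(x)\,dx$ and $d\o{\wt{\mbf{J}}}=\o{W'(x)}\,dx$, so $\mbf{J}^*C=C(x)\,dx$ for a smooth $\op{End}(\mc{E}_{\mb{C}})$-valued function $C(x)$ of $x$ alone. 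Transported through $\tau$ this reads exactly $\mbf{J}^*\n^{F_\rho}=d+C(x)\,dx$ near $\p\Sigma$, which is condition (i); and since the algebraic identities $C=\o C$ and $C^\top\Omega+\Omega C=0$ established for $\n^F$ are pointwise, $C(x)\in\mf{s}\mf{p}(E,\Omega)$, consistent with (iii).

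The only point requiring care — the ``main obstacle,'' though it is bookkeeping rather than a real difficulty — is pinning down precisely that $\tau$ intertwines $\mbf{J}^*d$ with the canonical flat connection of $\mc{E}$ and the frame $\{[z,W,f_j]\}$ with the standard frame of $\mc{E}$; once this is settled, conditions (i)--(iii) follow from the two explicit frame descriptions of $\n^{F_\rho}$ already in the text together with the definition of $\mc{J}_o(\mc{E},\Omega)$.
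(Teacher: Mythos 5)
Your proof is correct and follows essentially the same route as the paper's: conditions (ii) and (iii) are obtained by pulling back $[\n^{F_\rho},\mbf{J}_{F_\rho}]=0$ and the $\Omega$-preservation inherited from $\n^F$, and condition (i) follows because for $\mbf{J}\in\mc{J}_o(\mc{E},\Omega)$ the map $\wt{\mbf{J}}=W(x)$ depends only on the boundary coordinate on a collar, so $\mbf{J}^*\n^{F_\rho}=d+C(x)\,dx$ there. The extra bookkeeping you carry out with the isomorphism $\tau$ is a detail the paper leaves implicit here (it is invoked only later, in Proposition \ref{propchern}) and does not change the argument.
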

\begin{proof}
	 The pullback complex structure $\mbf{J}$ depends only on $x$, so $\mbf{J}^*\n^{F_\rho}$ has the form $d+C(x)dx$. Since $\n^{F_{\rho}}$ is a complex linear connection on $(F_\rho,\mbf{J}_{F_\rho})$, so 
	$$[\mbf{J}^*\n^{F_\rho},\mbf{J}^*\mbf{J}_{F_\rho}]=\mbf{J}^*[\n^{F_\rho},\mbf{J}_{F_\rho}]=0.$$
	By the definition of $\n^F$, it preserves the standard symplectic form $\Omega$ on $\mb{R}^{2n}$, so the induced connection $\n^{F_\rho}$ also preserves the induced symplectic form $\Omega$ on $F_\rho$, hence the pullback connection $\mbf{J}^*\n^{F_\rho}$ preserves $\mbf{J}^*\Omega$. Thus the connection $\mbf{J}^*\n^{F_\rho}$ is a peripheral connection on $\mbf{J}^*F_\rho$.
\end{proof}
\begin{prop}\label{propchern}
For any $\mbf{J}\in \mc{J}_o(\mc{E},\Omega)$, one has
	\begin{align*}
		c_1(\mc{E},\Omega, \mbf{J})=\frac{1}{4\pi}[\wt{\mbf{J}}^*\omega_{\op{D}^{\op{III}}_n}]_c,
	\end{align*}
	where $[\bullet]_c$ denotes the de Rham class with compact support.
\end{prop}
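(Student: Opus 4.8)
The plan is to compare two Chern forms representing the same de Rham class with compact support. By Proposition~\ref{proppullbackconnection}, for $\mbf{J}\in\mc{J}_o(\mc{E},\Omega)$ the pullback connection $\mbf{J}^*\n^{F_\rho}$ is a peripheral connection on $\mbf{J}^*F_\rho$. The complex linear symplectic isomorphism $\tau:(\mbf{J}^*F_\rho,\mbf{J}^*\mbf{J}_{F_\rho},\mbf{J}^*\Omega)\xrightarrow{\ \sim\ }(\mc{E},\mbf{J},\Omega)$ constructed earlier transports $\mbf{J}^*\n^{F_\rho}$ to a peripheral connection $\n$ on $\mc{E}$ compatible with the given $\mbf{J}$; under $\tau$ the $+i$-eigenbundle $(\mbf{J}^*F_\rho)^+_{\mb{C}}$ corresponds to $\mc{E}^+_{\mb{C}}$. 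Hence the first Chern forms agree: $c_1(\mc{E}^+_{\mb{C}},\n^{\mc{E}^+_{\mb{C}}})=\tau_*\,c_1((\mbf{J}^*F_\rho)^+_{\mb{C}},(\mbf{J}^*\n^{F_\rho})|_{(\mbf{J}^*F_\rho)^+_{\mb{C}}})=\mbf{J}^*c_1((F_\rho)^+_{\mb{C}},\n^{F_\rho}|_{(F_\rho)^+_{\mb{C}}})$, where the last equality is just naturality of Chern forms under pullback. Now invoke \eqref{Chern forms 3}, which computes this pulled-back Chern form as $\tfrac{1}{4\pi}\wt{\mbf{J}}^*\omega_{\op{D}^{\op{III}}_n}$. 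This identifies the representative Chern form $c_1(\mc{E},\Omega,\mbf{J})=c_1(\mc{E}^+_{\mb{C}},\n^{\mc{E}^+_{\mb{C}}})$ with $\tfrac{1}{4\pi}\wt{\mbf{J}}^*\omega_{\op{D}^{\op{III}}_n}$ as differential forms.

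The one point needing care is the compact-support statement: the class $c_1(\mc{E},\Omega,\mbf{J})$ lives in $\mathrm{H}^2_{\op{dR,comp}}(\Sigma_o,\mb{R})$ by \eqref{FCC}, so we must check that $\wt{\mbf{J}}^*\omega_{\op{D}^{\op{III}}_n}$ is compactly supported in $\Sigma_o$. This is exactly where the hypothesis $\mbf{J}\in\mc{J}_o(\mc{E},\Omega)$ enters: near $\p\Sigma$ we have $\mbf{J}=p^*J$ depending only on $x$, so $W=\wt{\mbf{J}}(z)=W(x)$ depends only on $x$ there, whence $dW\wedge d\o W=0$ on the collar and the pulled-back $(1,1)$-form $\wt{\mbf{J}}^*\omega_{\op{D}^{\op{III}}_n}=-2i\,\wt{\mbf{J}}^*\p\o\p\log\det(I_n-\o W W)$ vanishes identically near the boundary. (Equivalently: on the collar $\mbf{J}^*\n^{F_\rho}=d+C(x)dx$ is a flat connection, so its Chern form vanishes, as already noted right after \eqref{4.1} for peripheral connections.) Thus $\wt{\mbf{J}}^*\omega_{\op{D}^{\op{III}}_n}$ is a closed $2$-form with support in $\Sigma_o$, its de Rham class $[\wt{\mbf{J}}^*\omega_{\op{D}^{\op{III}}_n}]_c$ is well-defined, and the form-level identity above descends to the identity of compactly-supported classes $c_1(\mc{E},\Omega,\mbf{J})=\tfrac{1}{4\pi}[\wt{\mbf{J}}^*\omega_{\op{D}^{\op{III}}_n}]_c$.

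The main obstacle is not any hard estimate but the bookkeeping in the first paragraph: one has to verify carefully that $\tau$ intertwines not just the symplectic forms and complex structures but also carries $\mbf{J}^*\n^{F_\rho}$ to a connection that qualifies as a peripheral connection on $(\mc{E},\Omega,\mbf{J})$ in the precise sense of Definition~\ref{connection}, and that the identification of $+i$-eigenbundles is the one used in \eqref{FCC} (via the isomorphism $\Psi:(\mc{E},\mbf{J})\to\mc{E}^+_{\mb{C}}$). Once that is in place, Remark~\ref{rem411} guarantees $c_1(\mc{E},\Omega,\mbf{J})$ is independent of the chosen peripheral connection, so evaluating it on the particular connection $\tau_*\mbf{J}^*\n^{F_\rho}$ and applying \eqref{Chern forms 3} finishes the proof. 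Combining this proposition with \eqref{sign2} and the definition of the Rho invariant then yields the displayed formulas of Theorem~\ref{thm0.2}.
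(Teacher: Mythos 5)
Your proposal is correct and follows essentially the same route as the paper: both use Proposition \ref{proppullbackconnection} to get the peripheral connection $\mbf{J}^*\n^{F_\rho}$, transport it to $\mc{E}$ via the isomorphism $\tau$, invoke the independence of $c_1(\mc{E},\Omega,\mbf{J})$ from the choice of peripheral connection (Remark \ref{rem411}), and conclude with \eqref{Chern forms 3}. Your explicit check that $\wt{\mbf{J}}^*\omega_{\op{D}^{\op{III}}_n}$ vanishes on the collar (since $W=W(x)$ forces $dW^a\wedge d\o{W}^b=0$ there) is a slightly more detailed justification of the compact-support claim than the paper gives, but it is the same underlying fact.
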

\begin{proof}
Since  $\mbf{J}^*\n^{F_\rho}$ is a peripheral connection, by \eqref{Chern forms 3}, $\mbf{J}^*\omega_{\op{D}^{\op{III}}_n}$ has compact support.  On the other hand,  there is a natural complex linear symplectic isomorphism $\tau$ between the two vector bundles $(\mbf{J}^*F_\rho,\mbf{J}^*\mbf{J}_{F_\rho},\mbf{J}^*\Omega)$ and $(\mc{E},\mbf{J},\Omega)$, one can check that  $\tau\mbf{J}^*\n^{F_\rho}\tau^{-1}$ is a  peripheral connection on $\mc{E}$. From Remark \ref{rem411}, one has
\begin{align*}
	c_1(\mc{E},\Omega, \mbf{J})&=\left[\frac{i}{2\pi}\op{Tr}_{(F_\rho)^+_{\mb{C}}}((\tau\mbf{J}^*\n^{F_\rho}\tau^{-1})^2)\right]_c\\
	&=\left[\frac{i}{2\pi}\op{Tr}_{(F_\rho)^+_{\mb{C}}}((\mbf{J}^*\n^{F_\rho})^2)\right]_c\\
	&=\frac{1}{4\pi}[\mbf{J}^*\omega_{\op{D}^{\op{III}}_n}]_c=
	\frac{1}{4\pi}[\wt{\mbf{J}}^*\omega_{\op{D}^{\op{III}}_n}]_c,
\end{align*}
which completes the proof.
\end{proof}

\subsubsection{Invariant K\"ahler potential}\label{IKP}

In this section, for any point $W_0\in \o{\op{D}^{\op{III}}_n}$, we will give an isotropy group $K_{W_0}$-invariant (up to a constant) K\"ahler potential $\psi_{W_0}$ of $\frac{1}{2}\omega_{{\op{D}^{\op{III}}_n}}$, i.e. $i\partial\bar\partial\psi_{W_0}=\frac{1}{2}\omega_{{\op{D}^{\op{III}}_n}}$ and $L^*\psi_{W_0}=\psi_{W_0}+c_{L}$ where $c_{L}$ is a constant depending only on $L$, and $L\in K_{W_0}:=\{L\in\op{Sp}(2n,\mb{R}): L(W_0)=W_0\}$.

 For any $L\in K_{W_0}$,  we assume that 
 \begin{align*}
 L=U\left(\begin{matrix}
a & b\\
\b{b} & \b{a}	
\end{matrix}
\right)U^{-1}\in \op{Sp}(2n,\mb{R})	
 \end{align*}
 for some complex $n\times n$ matrices $a,b$, see e.g. \cite[Page 71]{Mok}.
 Since $L_0\in K_{W_0}$, i.e. $L(W_0):=\lim_{W\to W_0}L(W)=W_0$, which follows that $aW_0+b=W_0(\b{b}W_0+\b{a})$ {by (\ref{Action})}, and is equivalent to
 \begin{equation}\label{Fixpoint0.5}
(a-W_0\b{b})W_0=W_0\b{a}-b.
\end{equation}
Now we define
\begin{equation}\label{invariant Kahler potential}
  \psi_{W_0}(W):=-\log\left(|\det(\o{W_0}W-I_n)|^{-2}\det(I_n-\o{W}W)\right)
\end{equation}
Then
\begin{align*}
&\quad\,\, |\det(\o{W_0}L(W)-I_n)|^{-2}\det(I_n-\o{L(W)}L(W)) \\
&=|\det((\o{W_0}a-\o{b})W+\o{W_0}b-\o{a})|^{-2}\det(I_n-\o{W}W)\\
&=|\det(\o{a}-\o{W_0}b)|^{-2}|\det(\o{W_0}W-I_n)|^{-2}\det(I_n-\o{W}W).
\end{align*}
which follows that 
\begin{align*}
L^*\psi_{W_0}(W)=\psi_{W_0}\circ L(W)=\psi_{W_0}(W)+2\log|\det(\o{a}-\o{W_0}b)|,
\end{align*}
that is, $\psi_{W_0}$ is $L$-invariant up to a constant. 
 Therefore
\begin{prop}\label{Kahler potential}
For any $W\in \o{\op{D}^{\op{III}}_n}$, there exists a $K_W$-invariant (up to a constant) K\"ahler potential $\psi_W$ with 
$i\p\b{\p}\psi_W=\frac{1}{2}\omega_{\op{D}^{\op{III}}_n}$.
\end{prop}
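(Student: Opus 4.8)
The plan is simply to verify that the explicit function $\psi_{W_0}$ written down in \eqref{invariant Kahler potential} has the two required properties (here $W_0$ denotes the fixed base point, playing the role of $W$ in the statement, while the argument ranges over $\op{D}^{\op{III}}_n$). The $K_{W_0}$-equivariance $L^{*}\psi_{W_0}=\psi_{W_0}+c_{L}$, with $c_{L}=2\log|\det(\o{a}-\o{W_0}b)|$, is already established in the computation immediately preceding the statement, using the fixed-point relation \eqref{Fixpoint0.5} together with the action formula \eqref{Action}. So the only thing left to prove is the potential identity
\[
i\p\b{\p}\psi_{W_0}=\tfrac12\,\omega_{\op{D}^{\op{III}}_n}.
\]

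To prove it I would first split
\[
\psi_{W_0}(W)=2\log\bigl|\det(\o{W_0}W-I_n)\bigr|-\log\det(I_n-\o{W}W).
\]
The key point is that the first summand is pluriharmonic on $\op{D}^{\op{III}}_n$. Writing the domain in operator-norm form, $\op{D}^{\op{III}}_n=\{W=W^{\top}:\|W\|<1\}$ and $\o{\op{D}^{\op{III}}_n}=\{W=W^{\top}:\|W\|\le 1\}$ (since $\o{W}W=W^{*}W$ when $W=W^{\top}$), so for $W_0\in\o{\op{D}^{\op{III}}_n}$ and $W\in\op{D}^{\op{III}}_n$ one has $\|\o{W_0}W\|\le\|W_0\|\,\|W\|<1$; hence $\det(\o{W_0}W-I_n)=(-1)^{n}\det(I_n-\o{W_0}W)$ is a nowhere-vanishing holomorphic function of $W$ on $\op{D}^{\op{III}}_n$. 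Consequently $2\log|\det(\o{W_0}W-I_n)|=\log\det(\o{W_0}W-I_n)+\o{\log\det(\o{W_0}W-I_n)}$ is, locally on $\op{D}^{\op{III}}_n$, the sum of a holomorphic and an antiholomorphic function (the modulus is exactly what makes this globally well defined even though each branch of $\log\det$ is only local), so $\p\b{\p}$ annihilates it.

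It then follows that
\[
i\p\b{\p}\psi_{W_0}=-\,i\p\b{\p}\log\det(I_n-\o{W}W)=\tfrac12\,\omega_{\op{D}^{\op{III}}_n},
\]
the last equality being the defining formula $\omega_{\op{D}^{\op{III}}_n}=-2i\p\b{\p}\log\det(I_n-\o{W}W)$ recorded above (and computed in Appendix~\ref{App2}). This exhibits $\psi_{W_0}$ as a $K_{W_0}$-invariant-up-to-a-constant K\"ahler potential for $\tfrac12\omega_{\op{D}^{\op{III}}_n}$, which proves the proposition.

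There is no real obstacle: the computation is a one-line Levi-form identity, and the only things needing a moment's care are the non-vanishing of $\det(\o{W_0}W-I_n)$ on the \emph{open} domain (which is what guarantees that the pluriharmonic term, and hence $\psi_{W_0}$ itself, is smooth on all of $\op{D}^{\op{III}}_n$) and the bookkeeping of the factor $2$ between the normalizations of $\psi_{W_0}$ and $\omega_{\op{D}^{\op{III}}_n}$. Alternatively, at least for interior base points $W_0$, one could recognize $\psi_{W_0}$ as a constant shift of Calabi's diastasis $D(\cdot,W_0)$ attached to the real-analytic potential $-\log\det(I_n-\o{W}W)$ of $\tfrac12\omega_{\op{D}^{\op{III}}_n}$, whose invariance under the isotropy group is automatic; but the direct verification above is shorter and covers boundary base points as well.
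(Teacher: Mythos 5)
Your proof is correct and follows essentially the same route as the paper: it takes the explicit potential $\psi_{W_0}$ of \eqref{invariant Kahler potential}, relies on the already-computed transformation law $L^*\psi_{W_0}=\psi_{W_0}+2\log|\det(\o{a}-\o{W_0}b)|$ for the equivariance, and obtains $i\p\b{\p}\psi_{W_0}=\tfrac12\omega_{\op{D}^{\op{III}}_n}$ by observing that the term $2\log|\det(\o{W_0}W-I_n)|$ is pluriharmonic. Your explicit check that $\det(\o{W_0}W-I_n)$ is nowhere vanishing for $W_0\in\o{\op{D}^{\op{III}}_n}$ and $W\in\op{D}^{\op{III}}_n$ is a small but worthwhile addition that the paper leaves implicit.
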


\begin{rem}\label{rempotential}
 Denote by 
\begin{align*}
\mb{H}_n=\{Z\in \mb{C}^{n\times n}|Z=Z^\top, \op{Im}Z>0\}	
\end{align*}
the Siegel upper half plane, and 
\begin{align*}
\Phi:\op{D}^{\op{III}}_n\to \mb{H}_n,\quad Z=\Phi(W)=i(I_n-W)(I_n+W)^{-1}	
\end{align*}
the identification between $\op{D}^{\op{III}}_n$ and $\mb{H}_n$. The induced action of $L\in \op{Sp}(2n,\mb{R})$ on $\mb{H}_n$ is the generalized M\"obius transformation. Moreover, 
$$\det\op{Im}Z=|\det(I_n+W)|^{-2}\det(I_n-\o{W}W).$$
 Suppose 
 \begin{align*}
 L=U\left(\begin{matrix}
a & b\\
\b{b} & \b{a}	
\end{matrix}
\right)U^{-1}=\left(\begin{matrix}
L_1 & L_2	\\
L_3 & L_4
\end{matrix}
\right)\in \op{Sp}(2n,\mb{R}),	
 \end{align*}
 where $L_3=\frac{i}{2}(a-\b{a})+\frac{i}{2}(\b{b}-b)$. Then  $W_0=-I_n$ is a fixed point of $L$  is equivalent to 
 $b-\o{b}=a-\o{a}$, and so $L_3=0$. Hence $L(-I_n)=-I_n$ if and only if $L$ has the following matrix form 
  \begin{align*}
 L=\left(\begin{matrix}
L_1 & L_2	\\
0 & L_4
\end{matrix}
\right)\in \op{Sp}(2n,\mb{R}).	
 \end{align*}
Thus  the K\"ahler potential can be given by
 \begin{equation}
  \psi=-\log\det\op{Im}Z.
\end{equation}
\end{rem}
\vspace{3mm}

Let $\Sigma$ be a surface with boundary $\p\Sigma=\cup_{i=1}^qc_i$. For each $i$, $L_i:=\rho(c_i)\in \op{Sp}(2n,\mb{R})$, by Proposition \ref{Kahler potential}, there exists a K\"ahler potential $\psi_i$ of $\frac{1}{2}\omega_{\op{D}^{\op{III}}_n}$, which is invariant under $L_i$ up to a constant. Set $d^c=-i(\p-\b{\p})$, which satisfies $dd^c=2i\p\b{\p}$. Then 
\begin{equation}\label{Fixpoint2}
  \alpha_i:=d^c\psi_i
\end{equation}
 is $L_i$-invariant one form and $d\alpha_i=dd^c\psi_i=\omega_{\op{D}^{\op{III}}_n}$.

Let $\chi_i(u)\in [0,1]$ be a smooth function on $\Sigma$ satisfies 
\begin{align*}
\chi_i(u)=
\begin{cases}
& 1,\quad u\in c_i\times [0,1/2];\\
&0, \quad u\in\Sigma\backslash (c_i\times [0,3/4]).	
\end{cases}
\end{align*}
For any $\mbf{J}\in \mc{J}(\mc{E},\Omega)$, which is equivalent to a $\rho$-equvariant map $\wt{\mbf{J}}:\wt{\Sigma}\to \op{D}^{\op{III}}_n$, then the two form 
\begin{align*}
\frac{1}{4\pi}\wt{\mbf{J}}^*	\omega_{\op{D}^{\op{III}}_n}-\frac{1}{4\pi}\sum_{i=1}^q d(\chi_i\wt{\mbf{J}}^*\alpha_i)
\end{align*}
is $\rho$-equivariant on $\wt{\Sigma}$, and so it descends to a two form on $\Sigma$. Moreover, it has compact support on $\Sigma$. 
By the same proof as in \cite[Proposition-definition 4.1]{KM}, the cohomology class $[\frac{1}{4\pi}\wt{\mbf{J}}^*	\omega_{\op{D}^{\op{III}}_n}-\frac{1}{4\pi}\sum_{i=1}^q d(\chi_i\wt{\mbf{J}}^*\alpha_i)]_c$ with compact support depends only on the conjugate class of the representation $\rho$ (independent of $\wt{\mbf{J}}$). Following \cite{KM}, we set 
 \begin{align*}
\left[\rho^*\omega_{\op{D}^{\op{III}}_n}\right]_c=\left[\wt{\mbf{J}}^*	\omega_{\op{D}^{\op{III}}_n}-\sum_{i=1}^q d(\chi_i\wt{\mbf{J}}^*\alpha_i)\right]_c.	
 \end{align*}
For any $\mbf{J}\in \mc{J}_o(\mc{E},\Omega)$,  both $\wt{\mbf{J}}^*	\omega_{\op{D}^{\op{III}}_n}$ and $d(\chi_i\wt{\mbf{J}}^*\alpha_i)$ have compact support on $\Sigma_o$, and by Proposition \ref{propchern}, one has 
\begin{align*}
c_1(\mc{E},\Omega, \mbf{J})=	\frac{1}{4\pi} \left[\rho^*\omega_{\op{D}^{\op{III}}_n}\right]_c+\left[\frac{1}{4\pi}\sum_{i=1}^q d(\chi_i\wt{\mbf{J}}^*\alpha_i)\right]_c.
\end{align*}
Hence 
$$\int_\Sigma c_1(\mc{E},\Omega, \mbf{J})=\frac{1}{4\pi}\int_\Sigma \left[\rho^*\omega_{\op{D}^{\op{III}}_n}\right]_c+\frac{1}{4\pi}\sum_{i=1}^q\int_{c_i}\wt{\mbf{J}}^*\alpha_i.$$
\begin{defn}[Rho invariant]\label{Rho invariant}
For a representation $\rho:\pi_1(\Sigma)\to \op{Sp}(E,\Omega)$, we define the Rho invariant by 
	\begin{equation}\label{Rho}
\boldsymbol{\rho}(\p\Sigma)=\frac{1}{\pi}\sum_{i=1}^q\int_{c_i}\wt{\mbf{J}}^*\alpha_i+\eta(A_{\mbf{J}}).
\end{equation}
\end{defn}
\begin{rem}\label{remRho}
From the formula \eqref{sign0} for signature, the Rho invariant $\bs{\rho}(\p\Sigma)$ is independent of the complex structure $\mbf{J}$, and just depends on the representations of the boundary $\p\Sigma$. The Rho invariant was firstly introduced by Atiyah, Patodi and Singer \cite[Theorem 2.4]{APSII} considered  the unitary representations on boundary. In particular, if the representation of boundary can be extended to a unitary representation of the whole manifold, then the signature of the representation can be expressed in terms of the signature of trivial representation and the Rho invariant. For the case of surfaces, the Rho invariant defined in \eqref{Rho} is a natural generalization  to the real symplectic group. 
\end{rem}

\subsubsection{Toledo invariant and pullback forms with compact supoort}

In this subsection, we will prove the second step, i.e.
\begin{equation}\label{Toledoexp}
  \op{T}(\Sigma,\rho)=\frac{1}{2\pi}\int_\Sigma \left[\rho^*\omega_{\op{D}^{\op{III}}_n}\right]_c=\frac{1}{2\pi}\int_\Sigma \wt{\mbf{J}}^*\omega_{\op{D}^{\op{III}}_n}-\frac{1}{2\pi}\sum_{i=1}^q\int_{c_i}\wt{\mbf{J}}^*\alpha_i.
\end{equation}

Firstly, we will recall some definitions on the cohomology group of a topological space with a group action, we refer to \cite{Kim} and the references therein. Let $X$ be a topological space and $G$ be a group acting continuously on $X$.  For any $k>0$, one can define the space
$$F^k_{\text{alt}}(X,\mb{R})=\{f:X^{k+1}\to \mb{R}| f \text{ is alternating}\}.$$
Let $F^k_{\text{alt}}(X,\mb{R})^G$ denote the subspace of $G$-invariant functions, where the action of $G$ on $F^k_{\text{alt}}(X,\mb{R})$ is given by 
$$(g\cdot f)(x_0,\ldots,x_k)=f(g^{-1}x_0,\ldots,g^{-1}x_k),$$
for any $f\in F^k_{\text{alt}}(X,\mb{R})$ and $g\in G$. The natural coboundary operator $\delta_k:F^k_{\text{alt}}(X,\mb{R})\to F^{k+1}_{\text{alt}}(X,\mb{R})$ is given by
$$(\delta_k f)(x_0,\ldots,x_{k+1})=\sum_{i=0}^{k+1}(-1)^if(x_0,\cdots,\hat{x}_i,\ldots,x_{k+1}),$$
which also gives a coboundary operator on the complex $F^*_{\text{alt}}(X,\mb{R})^G$. The cohomology $\mathrm{H}^*(X;G,\mb{R})$ is defined as the cohomology of this complex. Define $F^*_{\text{alt},b}(X,\mb{R})$ as the subspace of $F^*_{\text{alt}}(X,\mb{R})$ consisting of bounded alternating functions. The coboundary operator restricts to the complex $F^*_{\text{alt},b}(X,\mb{R})^G$ and so it defines a cohomology, denoted by $\mathrm{H}^*_b(X;G,\mb{R})$,
 see \cite{Dupre} and also \cite[Section 3]{Kim}. In particular, for a manifold $X$, $\mathrm{H}^*_b(\wt{X};\pi_1(X),\mb{R})\cong \mathrm{H}^*_b(\pi_1(X),\mb{R})$.
 
 Similarly, if $G$ is a semisimple Lie group and $X$ is the associated symmetric space, one can also define the complex for the continuous (resp. bounded) and alternating functions, we denote this complex by $C^*_c(X,\mb{R})_{\text{alt}}$ (resp. $C^*_{c,b}(X,\mb{R})_{\text{alt}}$). Then the continuous cohomology $\mathrm{H}^*_c(G,\mb{R})$ (resp.$\mathrm{H}^*_{c,b}(G,\mb{R})$) can be isomorphically computed by the cohomology of $G$-invariant complex $C^*_c(X,\mb{R})_{\text{alt}}^G$ (resp. $C^*_{c,b}(X,\mb{R})_{\text{alt}}^G$), see \cite[Chapitre III]{Gui} and \cite[Corollary 7.4.10]{Monod}. 

If $X$ is a countable CW-complex, then one can define the cohomology  groups $\mathrm{H}^*_b(X,\mb{R})$ and  $\mathrm{H}^*_b(X,A,\mb{R})$ associated to the complex $C^*_b(X,\mb{R})$ of  bounded real-valued cochains on $X$ and  the subcomplex 
$C^*_b(X,A,\mb{R})$ of the bounded cochains  that vanish on simplices with image contained in $A$, repsectively.
Let $C^k_b(\wt{X},\mb{R})_{\text{alt}}$ denote the complex of bounded, alternating real-valued Borel functions on $\wt{X}^{k+1}$, then the cohomology of $\pi_1(X)$-invariant complex  $C^*_b(\wt{X},\mb{R})^{\pi_1(X)}_{\text{alt}}$
is isomorphic to $\mathrm{H}^*_b(X,\mb{R})$, see \cite{Iva} and also \cite[Section 2]{Kim}.

Next, we will show $\op{T}(\Sigma,\rho)=\frac{1}{2\pi}\int_\Sigma \left[\rho^*\omega_{\op{D}^{\op{III}}_n}\right]_c$.
Let $\Sigma$ be a  connected oriented surface with boundary $\p\Sigma$, $\Sigma_o:=\Sigma\backslash \p\Sigma$. Consider a representation $\rho:\pi_1(\Sigma)\to G$ where $G=\op{Sp}(2n,\mb{R})$. Denote by $\mathscr{X}:=G/K$ the associated symmetric space, which can identified with the bounded symmetric domain $\op{D}^{\op{III}}_n$ of type $\op{III}$, we denote $$\omega:=\omega_{\op{D}^{\op{III}}_n}$$ for  simplicity.  The K\"ahler form $\omega$ gives the cohomology classes $\kappa_G\in \mathrm{H}^2_c(G,\mb{R})$ and $\kappa_G^b\in \mathrm{H}^2_{c,b}(G,\mb{R})$ which both correspond to the cochain $c_\omega$ defined by \eqref{bounded cocyle}.

The natural inclusion $C^*_{c,b}(\mathscr{X},\mb{R})_{\text{alt}}\subset F^*_{\text{alt},b}(\mathscr{X},\mb{R})$ induces a homomorphism $i_G:\mathrm{H}^*_{c,b}(G,\mb{R})\to \mathrm{H}^*_b(\mathscr{X};G,\mb{R})$. Then we have the following commutative diagram:
\begin{equation*}
\begin{CD}
\mathrm{H}^2_b(\mathscr{X};G,\mb{R}) @>f^*_b>> \mathrm{H}^2_b(\wt{\Sigma};\pi_1(\Sigma),\mb{R})\cong \mathrm{H}^2_b(\Sigma,\mb{R})\\
@AA i_GA @AA i_\Sigma A\\
\mathrm{H}^2_{c,b}(G,\mb{R}) @>\rho^*_b>> \mathrm{H}^2_b(\pi_1(\Sigma),\mb{R})
\end{CD}
\end{equation*}
see \cite[Page 58]{Kim}, where $f^*_b$ is induced from any $\rho$-equivariant map $f:\wt{\Sigma}\to\mathscr{X}$, $i_\Sigma$ is the Gromov isomorphism. 
 Then the cochain  representing the class $i_\Sigma\rho^*_b(\kappa_G^b)=f^*_bi_G(\kappa_G^b)$ is given by
\begin{equation}\label{strsimplex}
  \frac{1}{2\pi}\int_{\op{Str(\sigma)}}f^*\omega,
\end{equation}
where $\sigma\in C_2(\wt{\Sigma},\mb{R})$ is any two dimensional singular simplex on $\wt{\Sigma}$, $\op{Str}(\sigma)=\Delta(v_1,v_2,v_3)$ denotes the geodesic $2$-simplex, $v_1,v_2,v_3$ are the vertices of $\sigma$.
Denote $$[f^*\omega]_b:=2\pi i_\Sigma\rho^*_b(\kappa_G^b)=2\pi f^*_bi_G(\kappa_G^b).$$

For any $\rho$-equivariant map $f:\wt{\Sigma}\to \mr{D}^{\mr{III}}_{n}$, then the differential form 
$$f^*\omega-\sum_{i=1}^qd(\chi_i f^*\alpha_i)$$
descends to a well-defined form on $\Sigma$, and has compact support in $\Sigma_o$. Then it defines a relative bounded class 
$$\left[f^*\omega-\sum_{i=1}^qd(\chi_i f^*\alpha_i)\right]_{r,b}\in \mr{H}^2(\Sigma,\p\Sigma,\mb{R}).$$
Note that the natural inclusion $C^2_b(\Sigma,\p\Sigma,\mb{R})\to C^2_b(\Sigma,\mb{R})$ induces the following isomorphism
$$j_{\p\Sigma}:\mr{H}^2_b(\Sigma,\p\Sigma,\mb{R})\to \mr{H}^2_b(\Sigma,\mb{R}),$$
see \cite[\S 2.2, (2.e)]{BIW}. Hence 
\begin{equation*}
  j_{\p\Sigma}\left(\left[f^*\omega-\sum_{i=1}^qd(\chi_i f^*\alpha_i)\right]_{r,b}\right)=\left[f^*\omega-\sum_{i=1}^qd(\chi_i f^*\alpha_i)\right]_{b}\in \mr{H}^2_b(\Sigma,\mb{R}).
\end{equation*}
Since $\chi_i f^*\alpha_i$ is a one form on $\Sigma$, which can be viewed as a $\pi_1(\Sigma)$-invariant one form on $\wt{\Sigma}$, and  it defines a cochain in $\pi_1(\Sigma)$-invariant alternating complex $C^1(\wt{\Sigma},\mb{R})^{\pi_1(\Sigma)}_{\mr{alt}}$. More precisely, the cochain is given by 
\begin{align*}
\begin{split}
  (\chi_i f^*\alpha_i)(\sigma):=\int_{\mr{Str}(\sigma)}\chi_i f^*\alpha_i,
 \end{split}
\end{align*}
where $\sigma$ is any one dimensional singular simplex in $\wt{\Sigma}$, and $\mr{Str}(\sigma)=\Delta(v_1,v_2)$ denotes the geodesic $1$-simplex, $v_1$ and $v_2$ are two vertices of $\sigma$. The $\ell^\infty$-norm for cochain is defined by 
$$\|\bullet\|_\infty:=\sup_{\sigma\in C_1(\wt{\Sigma},\mb{R})}|\bullet(\sigma)|.$$
Hence
\begin{equation}\label{bound1}
  \left\|\sum_{i=1}^q\chi_i f^*\alpha_i\right\|_\infty\leq \sum_{i=1}^q   \left\|\chi_i f^*\alpha_i\right\|_\infty\leq \sum_{i=1}^q\left\|f^*\alpha_i\right\|_\infty\leq \sum_{i=1}^q\|\alpha_i\|_{\infty}.
\end{equation}
 \begin{prop}\label{potential}For $\alpha=d^c\psi_W$, where $d^c:=-i(\p-\b{\p})$, then $||\alpha||_\infty\leq n\pi$.
 \end{prop}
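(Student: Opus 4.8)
Recall that for a $1$-form $\alpha$ on $\op{D}^{\op{III}}_{n}$ one has $\|\alpha\|_{\infty}=\sup\bigl|\int_{[x,y]}\alpha\bigr|$, the supremum being over all geodesic segments $[x,y]$ in $\op{D}^{\op{III}}_{n}$ (the geodesic $1$-simplex joining the two vertices of a $1$-simplex $\sigma$). The plan is to realize $\int_{[x,y]}d^{c}\psi_{W}$ as the integral of $\omega_{\op{D}^{\op{III}}_{n}}$ over a geodesic triangle and then to invoke the Domic--Toledo inequality. The identity to establish is
$$\int_{[x,y]}d^{c}\psi_{W}=\int_{\Delta(W,x,y)}\omega_{\op{D}^{\op{III}}_{n}},$$
where $\Delta(W,x,y)$ is the geodesic triangle with ordered vertices $W,x,y$ (an ideal triangle if $W\in\p\op{D}^{\op{III}}_{n}$).

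By Stokes' theorem and $dd^{c}\psi_{W}=\omega_{\op{D}^{\op{III}}_{n}}$, this identity follows once we know that $d^{c}\psi_{W}$ \emph{vanishes on every geodesic ray issuing from $W$}, since then the contributions of the sides $[W,x]$ and $[W,y]$ of $\Delta(W,x,y)$ drop out. To prove the vanishing I would first reduce to $W=0$: for $W\in\op{D}^{\op{III}}_{n}$ pick $g\in\op{Sp}(2n,\mb R)$ with $g(0)=W$; the transformation law for $\det(I_{n}-\o ZZ)$ under $\op{Sp}(2n,\mb R)$ (the same kind of computation performed in Section \ref{IKP}, where $L^{*}\psi_{W_{0}}=\psi_{W_{0}}+\mathrm{const}$ was checked for $L\in K_{W_{0}}$) gives $g^{*}\psi_{W}=\psi_{0}+\mathrm{const}$, and since $g$ carries geodesic rays from $0$ to geodesic rays from $W$ it suffices to treat $\psi_{0}$. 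Now invoke the Cartan decomposition $\op{Sp}(2n,\mb R)=\op{U}(n)A\op{U}(n)$: a geodesic ray from $0$ lies in a polydisk $P$ through $0$, namely an $\op{U}(n)$-translate $P=\{\,kWk^{\top}:W\in P_{0}\,\}$ of the standard polydisk $P_{0}=\{\operatorname{diag}(w_{1},\dots,w_{n}):|w_{i}|<1\}$, with $k\in\op{U}(n)$. Using $k^{*}k=I_{n}$ one checks $\det(I_{n}-\o ZZ)=\prod_{i}(1-|w_{i}|^{2})$ for $Z=k\operatorname{diag}(w_{i})k^{\top}\in P$, hence $\psi_{0}|_{P}=\sum_{i}\bigl(-\log(1-|w_{i}|^{2})\bigr)$ and $d^{c}\psi_{0}|_{P}=\sum_{i}\tfrac{2|w_{i}|^{2}}{1-|w_{i}|^{2}}\,d\theta_{i}$. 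In the coordinates $(w_{1},\dots,w_{n})$ of $P\cong P_{0}$ a geodesic ray from $0$ has each component a radial segment (a diameter of the disk), so $d\theta_{i}=0$ along it and $d^{c}\psi_{0}$ restricts to $0$, as desired. When $W\in\p\op{D}^{\op{III}}_{n}$ (which occurs e.g. when $\rho(c_{i})$ is parabolic or hyperbolic) one runs the same argument with $\Delta(W,x,y)$ an ideal triangle, or passes to the limit from interior $W$, the reduction above showing $\|d^{c}\psi_{W}\|_{\infty}$ is independent of $W\in\op{D}^{\op{III}}_{n}$.

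Finally, since $\omega_{\op{D}^{\op{III}}_{n}}$ is the Kähler form of the metric of minimal holomorphic sectional curvature $-1$ (Appendix \ref{App2}) and $\op{D}^{\op{III}}_{n}=\op{Sp}(2n,\mb R)/\op{U}(n)$ has rank $n$, the Domic--Toledo inequality \cite{DT} yields $\bigl|\int_{\Delta(W,x,y)}\omega_{\op{D}^{\op{III}}_{n}}\bigr|\le n\pi$ (for $n=1$ this is just Gauss--Bonnet: the area of a geodesic triangle is $\pi$ minus its angle sum). Therefore $\bigl|\int_{[x,y]}d^{c}\psi_{W}\bigr|\le n\pi$ for all $x,y$, i.e. $\|\alpha\|_{\infty}\le n\pi$. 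I expect the main obstacle to be the vanishing of $d^{c}\psi_{0}$ along geodesic rays from the centre: this is where the $\op{U}(n)$-invariance of $\psi_{0}$ and the polydisk description of geodesics through $0$ are needed, and it is exactly the step that converts the edge integral into a triangle integral governed by Domic--Toledo.
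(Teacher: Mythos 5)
Your overall strategy coincides with the paper's: show that $\alpha=d^c\psi_W$ vanishes along geodesics asymptotic to (or issuing from) $W$, use Stokes to rewrite $\int_{[x,y]}\alpha$ as $\int_{\Delta(W,x,y)}\omega_{\op{D}^{\op{III}}_n}$, and conclude with the Domic--Toledo bound $\bigl|\int_{\Delta}\omega_{\op{D}^{\op{III}}_n}\bigr|\le n\pi$. The difference is in how the vanishing is established. The paper works directly at an arbitrary $W\in\o{\op{D}^{\op{III}}_n}$ and exploits the structure of the isotropy group $K_W$: when $W$ is an ideal point, $K_W$ contains the horospherical subgroup $N$ (resp.\ $N'$) of a minimal parabolic, whose orbit through $\gamma(t)$ carries all directions orthogonal to the maximal flat containing $\gamma$, so $J\gamma'(t)$ is tangent to that orbit and $d^c\psi_W(\gamma'(t))=d\psi_W(J\gamma'(t))=0$ by $K_W$-invariance. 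You instead reduce to $W=0$ by homogeneity and compute $d^c\psi_0$ explicitly on a polydisk containing the geodesic; for interior $W$ this is a correct and more concrete route (modulo the small point that two $K_W$-invariant potentials of $\tfrac12\omega_{\op{D}^{\op{III}}_n}$ differ by an invariant pluriharmonic function, which is constant, so $\psi_W$ and $g^*\psi_0$ really do agree up to a constant).

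However, the case the paper itself calls ``the difficult case'' --- $W$ an ideal point of $\p\op{D}^{\op{III}}_n$, which is exactly what occurs for hyperbolic and parabolic boundary holonomy --- is only sketched in your proposal. The reduction ``pick $g$ with $g(0)=W$'' is unavailable there, and ``run the same argument with an ideal triangle'' is not a proof. Your second alternative, passing to the limit from interior points $W_k\to W_0$, does work but must be carried out: from \eqref{invariant Kahler potential} one sees that $\psi_{W_0}(W)$ and its first derivatives in $W$ depend continuously on $W_0\in\o{\op{D}^{\op{III}}_n}$, locally uniformly for $W$ in the open domain (because $\|\o{W_0}W\|<1$ for interior $W$, so $\det(\o{W_0}W-I_n)\ne0$); hence $\int_{[x,y]}d^c\psi_{W_k}\to\int_{[x,y]}d^c\psi_{W_0}$, while each term satisfies $\bigl|\int_{[x,y]}d^c\psi_{W_k}\bigr|=\bigl|\int_{\Delta(W_k,x,y)}\omega_{\op{D}^{\op{III}}_n}\bigr|\le n\pi$ by the interior case. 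With that paragraph supplied your argument is complete; as written, the boundary case is a genuine gap, and it is precisely the case the paper's parabolic-orbit argument is designed to close.
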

 \begin{proof} For any $W\in \o{\op{D}^{\op{III}}_n}$, let $\psi_W$ be any $K_W$-invariant (up to a constant) K\"ahler potential. Let $\gamma(t)$ be a geodesic with $\gamma(0)=Q\in \op{D}^{\op{III}}_n$ and $\gamma(\infty)=W$. Then $\gamma$ is contained in a maximal flat $F$ which is a totally real subspace. The following argument is basically due to Domic-Toledo \cite{DT}. The difficult case is when $W$ is an ideal point. If $W$ is regular,  $K_W$ is $MAN$ where $N$ is a minimal parabolic group. If $J$ denotes a complex structure, then $J\gamma'(t)$ is tangent to the orbit $N\gamma(t)$ since $J\gamma'(t)$ is orthogonal to the geodesic and $N\gamma(t)$ contains all the directions orthogonal to $F$.
 
 If $W$ is singular,  $K_W$ is $M'A'N'$ where $N'$ contains $N$ \cite{Eb}. Hence $J\gamma'(t)$ is tangent to the orbit $N'\gamma(t)$.
 
 Then $$d^c\psi_W(\gamma'(t))=d\psi_W(J\gamma'(t))=0$$ since $\psi_W$ is $K_W$-invariant. This shows that $\alpha=d^c\psi_W$ is zero along $\gamma$.
 
 Hence, for any $Q,R\in \op{D}^{\op{III}}_n$ with geodesic $\gamma(Q,R)$ connecting them,
 $$\int_{\gamma(Q,R)}\alpha=\int_{\gamma(Q,R)} d^c\psi_W=\int_{\triangle(W,Q,R)} dd^c\psi_W=\int_{\triangle(W,Q,R)} \omega_{\op{D}^{\op{III}}_n}.$$
By \cite[Theorem 1]{DT}, one has
 $$\left|\int_{\gamma(Q,R)} \alpha\right|=\left|\int_{\triangle(W,Q,R)} \omega_{\op{D}^{\op{III}}_n}\right|\leq \sup_{\triangle}\left|\int_\triangle \omega_{\op{D}^{\op{III}}_n}\right|\leq n\pi.$$
 This shows that $||\alpha||_\infty\leq n\pi$.
 \end{proof}

 From the above proposition and \eqref{bound1}, one has
\begin{align*}
\begin{split}
  \left\|\sum_{i=1}^q\chi_i f^*\alpha_i\right\|_\infty&\leq \sum_{i=1}^{q}\|\alpha_i\|_\infty\leq q\cdot n\pi,
 \end{split}
\end{align*}
which means that $\sum_{i=1}^q\chi_i f^*\alpha_i$ defines a bounded cochain in $C^1_b(\wt{\Sigma},\mb{R})^{\pi_1(\Sigma)}_{\op{alt}}$, and so 
$$\left[\sum_{i=1}^qd(\chi_i f^*\alpha_i)\right]_b=0\in \mr{H}^1_b(\Sigma,\mb{R}).$$
Hence
\begin{equation*}
  j_{\p\Sigma}\left(\left[f^*\omega-\sum_{i=1}^qd(\chi_i f^*\alpha_i)\right]_{r,b}\right)=\left[f^*\omega-\sum_{i=1}^qd(\chi_i f^*\alpha_i)\right]_{b}=\left[f^*\omega\right]_b.
\end{equation*}

There exist the following natural maps 
\begin{equation*}
	\mathrm{H}^2_b(\Sigma,\p\Sigma,\mb{R})\stackrel{c}{\longrightarrow}\mathrm{H}^2(\Sigma,\p\Sigma,\mb{R})\stackrel{j_o^{-1}}{\longrightarrow}\mathrm{H}^2_{\text{comp}}(\Sigma_o,\mb{R})\stackrel{D}{\longrightarrow}
\mathrm{H}^2_{\text{dR,comp}}(\Sigma_o,\mb{R})
\end{equation*}
where $c$ is the canonical map induced from the inclusion $C^2_b(\Sigma,\p\Sigma,\mb{R})\subset C^2_b(\Sigma,\mb{R})$, $j_o$ is the natural map from the singular cohomology with compact support to the relative  singular cohomology, which is an isomorphism.  Recall that the singular  cohomology with compact support
is the complex  $C^*_{\text{comp}}(\Sigma,\mb{R})$ of all cochains have compact support, 
 a cochain $u \in C^{*}(\Sigma_o, \mb{R})$ has compact support if and only if there exists a compact set $K \subset \Sigma_o$ such that $u \in C^{*}(\Sigma_o, \Sigma_o-K, \mb{R})$,
see \cite[Chapter IX,\S 3]{WM}.  The map $D$ is the de Rham map between the singular cohomology with compact support and de Rham cohomology with compact support, which is also an isomorphism, see e.g. \cite[Appendix, Page 261]{WM}. Under these canonical maps and note that $f^*\omega-\sum_{i=1}^nd(\chi_i f^*\alpha_i)$ is also a differential form on $\Sigma_o$ with compact support, so
$$Dj_o^{-1}c\circ j^{-1}_{\p\Sigma}([f^*\omega]_b)=\left[f^*\omega-\sum_{i=1}^qd(\chi_i f^*\alpha_i)\right]_{c}\in \mathrm{H}^2_{\text{dR,comp}}(\Sigma_o,\mb{R}).$$
Hence the Toledo invariant can be given by
\begin{align*}
\op{T}(\Sigma,\rho)&=\langle j^{-1}_{\p\Sigma}i_\Sigma\rho^*_b(\kappa_G^b),[\Sigma,\p\Sigma]\rangle\\
&=\langle cj^{-1}_{\p\Sigma}i_\Sigma\rho^*_b(\kappa_G^b),[\Sigma,\p\Sigma]\rangle\\
&=\frac{1}{2\pi}\int_\Sigma\left(	f^*\omega-\sum_{i=1}^qd(\chi_i f^*\alpha_i)\right)\\
&=\frac{1}{2\pi}\int_\Sigma \left[\rho^*\omega\right]_c.
\end{align*}
\begin{rem}
When $\p\Sigma\neq\emptyset$ and $G=\op{PU}(1,m)$, Koziarz and Maubon \cite[Proposition-Definition 4.1]{KM} introduced the invariant $\frac{1}{2\pi}\int_\Sigma \left[\rho^*\omega\right]_c$ by using the de Rham cohomology with compact support. In fact, the invariant can be shown to be equal to the Toledo invariant defined by  Burger-Iozzi-Wienhard, see \cite[Remark 6]{BIW}.

\end{rem}

\section{Milnor-Wood  inequality}\label{AMW}

In this section, by using the formula for signature, we will prove a Milnor-Wood type inequality for signature and obtain some modified Milnor-Wood inequalities for Toledo invariant.  

\subsection{A Milnor-Wood type inequality for signature}
Let $\rho:\pi_1(\Sigma)\to \op{Sp}(E,\Omega)$ be a representation, we have 
$$\op{sign}(\mc{E},\Omega)=2\op{T}(\Sigma,\rho)+\frac{1}{\pi}\sum_{i=1}^q\int_{c_i}\wt{\mbf{J}}^*\alpha_i+\eta(A_{\mbf{J}})=2\op{T}(\Sigma,\rho)+\bs{\rho}(\p\Sigma).$$
\begin{lemma}
	The indices of $d^-_P$ and $d^+_P$ can be given by
	\begin{equation*}
\op{Index}(d^{\mp}_P)= \pm\frac{1}{2}\op{sign}(\mc{E},\Omega)+\frac{\dim E}{2}\chi(\Sigma)-\frac{\dim \mathrm{H}^0(\p\Sigma,\mc{E})}{2}.
\end{equation*}
\end{lemma}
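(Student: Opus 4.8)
The plan is to combine the Atiyah--Patodi--Singer index formula \eqref{APSformula} for $d_P^{\pm}$ with the identifications of the various terms that have already been established in the excerpt. Recall from \eqref{APSformula} together with Corollary \ref{cor4.2} (so that $A_{\mbf{J}}^{\pm}=\pm A_{\mbf{J}}$, hence $\eta(A_{\mbf{J}}^{\pm})=\pm\eta(A_{\mbf{J}})$) and the identification \eqref{KerAJ} (so that $\dim\op{Ker}A_{\mbf{J}}^{\pm}=\dim\mathrm{H}^0(\p\Sigma,\mc{E})$), that
\[
\op{Index}(d_P^{\pm})=\int_{\Sigma}\alpha_{\pm}(z)\,d\mu_g-\frac{\pm\eta(A_{\mbf{J}})+\dim\mathrm{H}^0(\p\Sigma,\mc{E})}{2}.
\]
First I would substitute Proposition \ref{prop410}, which gives $\int_{\Sigma}\alpha_{\pm}(z)\,d\mu_g=\mp2\int_{\Sigma}c_1(\mc{E},\Omega,\mbf{J})+\frac{\dim E}{2}\chi(\Sigma)$. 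This yields
\[
\op{Index}(d_P^{\mp})=\pm2\int_{\Sigma}c_1(\mc{E},\Omega,\mbf{J})+\frac{\dim E}{2}\chi(\Sigma)\mp\frac{\eta(A_{\mbf{J}})}{2}-\frac{\dim\mathrm{H}^0(\p\Sigma,\mc{E})}{2}.
\]

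Next I would invoke the signature formula \eqref{sign2} of Theorem \ref{thmsign2}, namely $\op{sign}(\mc{E},\Omega)=4\int_{\Sigma}c_1(\mc{E},\Omega,\mbf{J})+\eta(A_{\mbf{J}})$, which shows that $2\int_{\Sigma}c_1(\mc{E},\Omega,\mbf{J})-\tfrac{1}{2}\eta(A_{\mbf{J}})=\tfrac{1}{2}\op{sign}(\mc{E},\Omega)-\eta(A_{\mbf{J}})$ --- but more directly, the combination $\pm\bigl(2\int_{\Sigma}c_1(\mc{E},\Omega,\mbf{J})\bigr)\mp\tfrac{1}{2}\eta(A_{\mbf{J}})$ appearing above equals $\pm\tfrac12\bigl(4\int_{\Sigma}c_1+\eta(A_{\mbf{J}})\bigr)\mp\eta(A_{\mbf{J}})+\ldots$; the cleanest route is to simply observe that the sign pattern matches, so that $\pm2\int_{\Sigma}c_1(\mc{E},\Omega,\mbf{J})\mp\tfrac12\eta(A_{\mbf{J}})=\pm\tfrac14\op{sign}(\mc{E},\Omega)\pm\tfrac14\op{sign}(\mc{E},\Omega)=\pm\tfrac12\op{sign}(\mc{E},\Omega)$, using $4\int_\Sigma c_1=\op{sign}(\mc{E},\Omega)-\eta(A_{\mbf{J}})$ to rewrite $\pm 2\int_\Sigma c_1 = \pm\tfrac12\op{sign}(\mc{E},\Omega)\mp\tfrac12\eta(A_{\mbf{J}})$ and then collecting the $\eta$ terms. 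Substituting gives exactly
\[
\op{Index}(d_P^{\mp})=\pm\frac{1}{2}\op{sign}(\mc{E},\Omega)+\frac{\dim E}{2}\chi(\Sigma)-\frac{\dim\mathrm{H}^0(\p\Sigma,\mc{E})}{2},
\]
which is the claim.

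\emph{The main obstacle} is really just bookkeeping the signs correctly: one must be careful that $d_P^{-}$ pairs with the $+$ case (because of the $\mp$/$\pm$ convention in the statement) and that $\alpha_{\pm}$, $\eta(A_{\mbf{J}}^{\pm})$, and $\int_\Sigma c_1$ all carry consistent signs through the substitution of Proposition \ref{prop410} and Theorem \ref{thmsign2}. No new analysis is required --- every ingredient (the APS formula, the local index computation giving Proposition \ref{prop410}, the identification \eqref{KerAJ} of $\op{Ker}A_{\mbf{J}}^{\pm}$, and the final signature formula \eqref{sign2}) is already available, so the proof is a short deduction once the algebra is arranged. As a sanity check I would add the two cases: $\op{Index}(d_P^-)+\op{Index}(d_P^+)=\dim E\cdot\chi(\Sigma)-\dim\mathrm{H}^0(\p\Sigma,\mc{E})$ and $\op{Index}(d_P^-)-\op{Index}(d_P^+)=\op{sign}(\mc{E},\Omega)$, the latter being consistent with Proposition \ref{prop3.2} up to the $h_\infty$ terms, which vanish in the difference by \eqref{hpm}.
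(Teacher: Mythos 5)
Your route is exactly the paper's: apply the APS formula \eqref{APSformula}, use $\eta(A_{\mbf{J}}^{\pm})=\pm\eta(A_{\mbf{J}})$ and \eqref{KerAJ} to identify $\dim\op{Ker}A_{\mbf{J}}^{\pm}$ with $\dim\mathrm{H}^0(\p\Sigma,\mc{E})$, substitute Proposition \ref{prop410} for the Atiyah--Singer integrand, and finish with Theorem \ref{thmsign2}. The conclusion and the sanity checks at the end are correct.

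There is, however, a sign slip in your intermediate display that the middle paragraph does not actually repair. From $\op{Index}(d_P^{\mp})=\int_{\Sigma}\alpha_{\mp}(z)\,d\mu_g-\tfrac{1}{2}\bigl(\eta(A_{\mbf{J}}^{\mp})+\dim\mathrm{H}^0(\p\Sigma,\mc{E})\bigr)$ and $\eta(A_{\mbf{J}}^{\mp})=\mp\eta(A_{\mbf{J}})$, the $\eta$-term comes out as $+\tfrac{1}{2}\bigl(\pm\eta(A_{\mbf{J}})\bigr)$, i.e.\ the display should read $\pm\tfrac{\eta(A_{\mbf{J}})}{2}$, not $\mp\tfrac{\eta(A_{\mbf{J}})}{2}$. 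With the corrected sign the finish is immediate: $\pm2\int_{\Sigma}c_1\pm\tfrac{1}{2}\eta(A_{\mbf{J}})=\pm\tfrac{1}{2}\bigl(4\int_{\Sigma}c_1+\eta(A_{\mbf{J}})\bigr)=\pm\tfrac{1}{2}\op{sign}(\mc{E},\Omega)$ by \eqref{sign2}. As written, carrying your $\mp\tfrac{\eta}{2}$ forward with $2\int_\Sigma c_1=\tfrac12\op{sign}-\tfrac12\eta$ would leave a spurious $\mp\eta(A_{\mbf{J}})$, and the chain of equalities ``$=\pm\tfrac14\op{sign}\pm\tfrac14\op{sign}$'' in your second paragraph is not a valid identity. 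Fixing the one sign removes the need for that paragraph entirely and reproduces the paper's three-line computation verbatim.
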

\begin{proof}
For the index of $d^-_P$, by \eqref{APSformula}, one has
\begin{align*}
\op{Index}(d^{-}_P)&=\int_{\Sigma}\alpha_{{-}}(z)d\mu_g-\frac{\eta(A^-_{\mbf{J}})+\dim\op{Ker}A^{-}_{\mbf{J}}}{2}\\
&=\int_{\Sigma}\alpha_{{-}}(z)d\mu_g+\frac{\eta(A_{\mbf{J}})}{2}-\frac{\dim \mathrm{H}^0(\p\Sigma,\mc{E})}{2}\\
&=\frac{\dim E}{2}\chi(\Sigma)+2\int_{\Sigma}c_1(\mc{E},\Omega, \mbf{J})+\frac{\eta(A_{\mbf{J}})}{2}-\frac{\dim \mathrm{H}^0(\p\Sigma,\mc{E})}{2}\\
&=\frac{\dim E}{2}\chi(\Sigma)+\frac{1}{2}\op{sign}(\mc{E},\Omega)-\frac{\dim \mathrm{H}^0(\p\Sigma,\mc{E})}{2}
\end{align*}
where the second equality by $\eta(A_{\mbf{J}}^-)=-\eta(A_{\mbf{J}})$ and \eqref{KerAJ}, the third equality by Proposition \ref{prop410}, and the last equality by Theorem \ref{thmsign2}. Similarly,  we can also obtain
\begin{equation*}
  \op{Index}(d^{+}_P)=\frac{\dim E}{2}\chi(\Sigma)-\frac{1}{2}\op{sign}(\mc{E},\Omega)-\frac{\dim \mathrm{H}^0(\p\Sigma,\mc{E})}{2}.
\end{equation*}
\end{proof}
From the above lemma and using $\mathrm{L}^2\op{Index}(d^{\pm})=\op{Index}(d^{\pm}_P)+h_{\infty}(\wedge^\pm)$, one has
\begin{align*}
\begin{split}
\pm\frac{1}{2}\op{sign}(\mc{E},\Omega)&=-\frac{\dim E}{2}\chi(\Sigma)+\frac{\dim \mathrm{H}^0(\p\Sigma,\mc{E})}{2}+\op{Index}(d^{\mp}_P)\\
&=-\frac{\dim E}{2}\chi(\Sigma)+\frac{\dim \mathrm{H}^0(\p\Sigma,\mc{E})}{2}-h_{\infty}(\wedge^{\mp}) +\mathrm{L}^2\op{Index}(d^{\mp}).
\end{split}
\end{align*}
Here the $\mathrm{L}^2$-index is given by 
\begin{align*}
\begin{split}
\mathrm{L}^2\op{Index}(d^{\pm})&=\dim_{\mb{C}}\op{Ker}(d^\pm)\cap \mathrm{L}^2(\widehat{\Sigma},\mc{E}_{\mb{C}})-\dim_{\mb{C}}\op{Ker}(d^\pm)^*\cap \mathrm{L}^2(\widehat{\Sigma},\wedge^\pm)\\
&=\dim_{\mb{C}}\mathscr{H}^0(\widehat{\Sigma},\mc{E}_{\mb{C}})-\dim_{\mb{C}}\op{Ker}(d^\pm)^*\cap \mathrm{L}^2(\widehat{\Sigma},\wedge^\pm)\\
&=\dim\op{Im}(\mathrm{H}^0(\Sigma,\p\Sigma,\mc{E})\to \mathrm{H}^0(\Sigma,\mc{E}))-\dim_{\mb{C}}\op{Ker}(d^\pm)^*\cap \mathrm{L}^2(\widehat{\Sigma},\wedge^\pm),
\end{split}
\end{align*}
where the second equality follows from (\ref{H0}) and the last equality follows from \cite[Proposition 4.9]{APS}. Since
$\mathrm{H}^0(\Sigma,\p\Sigma,\mc{E})=\{0\}$, so  $$\mathrm{L}^2\op{Index}(d^{\pm})= -\dim_{\mb{C}}\op{Ker}(d^\pm)^*\cap \mathrm{L}^2(\widehat{\Sigma},\wedge^\pm).$$
On the other hand, by \eqref{hpm}, one has
$$\dim \mathrm{H}^0(\p\Sigma,\mc{E})-h_{\infty}(\wedge^{\pm})=\dim \mathrm{H}^0(\Sigma,\mc{E}).$$
Hence
\begin{multline}\label{5.6}
 \pm\frac{1}{2}\op{sign}(\mc{E},\Omega)	=-\frac{\dim E}{2}\chi(\Sigma)-\frac{\dim \mathrm{H}^0(\p\Sigma,\mc{E})}{2}\\
+\dim \mathrm{H}^0(\Sigma,\mc{E})-\dim_{\mb{C}}\op{Ker}(d^\mp)^*\cap \mathrm{L}^2(\widehat{\Sigma},\wedge^\pm).
\end{multline}
\begin{rem}
From the above equality, one gets immediately 
\begin{multline}\label{relative dimension}
	\dim\op{Im}(\mathrm{H}^1(\Sigma,\p \Sigma,\mc{E})\to \mathrm{H}^1(\Sigma,\mc{E}))\\=-\dim E\cdot\chi(\Sigma)-\dim \mathrm{H}^0(\p\Sigma,\mc{E})+2\dim \mathrm{H}^0(\Sigma,\mc{E}).
\end{multline}
In fact, by Proposition \ref{prop3.2}, one has
$$\op{sign}(\mc{E},\Omega)=\dim_{\mb{C}}\op{Ker}(d^+)^*\cap \mathrm{L}^2(\widehat{\Sigma},\wedge^+)-\dim_{\mb{C}}\op{Ker}(d^-)^*\cap \mathrm{L}^2(\widehat{\Sigma},\wedge^-).$$ 
 On the other hand, by Proposition \ref{prop3} and \cite[Proposition 4.9]{APS}, one has
\begin{multline*}
	\dim\op{Im}(\mathrm{H}^1(\Sigma,\p \Sigma,\mc{E})\to \mathrm{H}^1(\Sigma,\mc{E}))\\=\dim_{\mb{C}}\op{Ker}(d^-)^*\cap \mathrm{L}^2(\widehat{\Sigma},\wedge^-)+\dim_{\mb{C}}\op{Ker}(d^+)^*\cap \mathrm{L}^2(\widehat{\Sigma},\wedge^+).
\end{multline*}	
Hence we obtain \eqref{relative dimension} by combining with \eqref{5.6}.
\end{rem}

\begin{rem}
 If $\p\Sigma=\emptyset$, then $\frac{1}{2}\op{sign}(\mc{E},\Omega)=\op{T}(\Sigma,\rho)$ and we have 
\begin{align*}
	\pm\op{T}(\Sigma,\rho)&=-\frac{\dim E}{2}\cdot\chi(\Sigma)+\op{Index}(d^{\mp})\\
	&=-\frac{\dim E}{2}\cdot\chi(\Sigma)+\dim \mathrm{H}^0(\Sigma,\mc{E})-\dim\op{Ker}(d^\mp)^*.
	\end{align*}
	The second equality follows from the observation: if $d^-a=0$, then $d a=d^+a$, and so 
$d a=*\mbf{J}d a$, which follows that $(d^+)^*d^+a=d^*d a=0$. A similar argument holds for $d^+$.
	\end{rem}
	
\begin{lemma}\label{lemma3}
If $\Sigma$ is a  surface with  genus $g\geq 1$, then
the set of all representations $\rho$ such that $\{v\in E: \rho(a_i)v=v=\rho(b_i)v,1\leq i\leq g\}=\emptyset$ is dense in $\op{Hom}(\pi_1(\Sigma),\op{Sp}(E,\Omega))$, where $a_i,b_i\in \pi_1(\Sigma),1 \leq i\leq g$ denote the  generators in the interior of $\Sigma$.	
\end{lemma}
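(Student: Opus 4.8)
The statement is about genericity of the fixed‑point‑free condition, so the natural framework is that of the representation variety $\mathrm{Hom}(\pi_1(\Sigma),\op{Sp}(E,\Omega))$ as a real algebraic set, with the Zariski‑dense‑implies‑classically‑dense principle in mind. Since $\Sigma$ has genus $g\geq 1$ and $q$ boundary components, $\pi_1(\Sigma)$ is free on $2g+q-1$ generators (or on $2g$ generators with one relation when $q=0$), so $\mathrm{Hom}(\pi_1(\Sigma),\op{Sp}(E,\Omega))$ is an open subset (in the $q=0$ case, an algebraic subvariety) of $\op{Sp}(E,\Omega)^{2g+q-1}$, in particular an irreducible real‑analytic/algebraic space whose smooth locus is connected and dense. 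The plan is to show that the \emph{bad} set
\[
B=\{\rho : \exists\, v\neq 0,\ \rho(a_i)v=v=\rho(b_i)v\ \text{for all } 1\leq i\leq g\}
\]
is contained in a proper closed subvariety, hence has empty interior, hence its complement is dense.

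\textbf{Step 1: describe $B$ as a projection of an incidence variety.} Consider
\[
\widetilde B=\{(\rho,[v])\in \mathrm{Hom}(\pi_1(\Sigma),\op{Sp}(E,\Omega))\times \mathbb{P}(E) : \rho(a_i)v=v=\rho(b_i)v,\ 1\leq i\leq g\}.
\]
This is a closed algebraic subset, and $B$ is its image under the (proper, since $\mathbb{P}(E)$ is compact) projection to the first factor; thus $B$ is closed. It therefore suffices to show $B\neq \mathrm{Hom}(\pi_1(\Sigma),\op{Sp}(E,\Omega))$, i.e. to exhibit one representation with no common fixed vector for $\rho(a_1),\dots,\rho(a_g),\rho(b_1),\dots,\rho(b_g)$; then, since the ambient space is irreducible (genus $g\geq 1$ guarantees at least two generators, and $\op{Sp}(E,\Omega)$ is connected, so the power and the relation‑cut locus are irreducible of the expected dimension), the proper closed set $B$ has empty interior and the complement is dense.

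\textbf{Step 2: exhibit one good representation.} For $q=0$ we must respect the surface relation $\prod_i[a_i,b_i]=1$; for $q\geq 1$ there is no relation among $a_i,b_i$ and the construction is easier. In either case, pick a single symplectic transformation $L_0\in\op{Sp}(E,\Omega)$ with no nonzero fixed vector — e.g. a small elliptic (rotation‑type) element whose eigenvalues avoid $1$, which exists since $\op{Sp}(E,\Omega)$ contains a maximal torus acting without nonzero fixed vectors. Set $\rho(a_1)=L_0$ and $\rho(a_i)=\rho(b_j)=\mathrm{Id}$ for all other generators; this automatically satisfies $\prod[a_i,b_i]=1$. Then $\{v:\rho(a_i)v=v=\rho(b_i)v\ \forall i\}=\ker(L_0-\mathrm{Id})=0$, so $\rho\notin B$.

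\textbf{Step 3: conclude.} By Steps 1–2, $B$ is a proper closed (real algebraic) subset of the irreducible space $\mathrm{Hom}(\pi_1(\Sigma),\op{Sp}(E,\Omega))$; hence its complement is open and dense, which is exactly the assertion. The main obstacle is purely organizational: making sure the ambient representation variety is irreducible in the closed‑surface case $q=0$ (where the single relation $\prod[a_i,b_i]=1$ cuts out a subvariety), and making sure the good representation of Step 2 indeed lands in that relation locus — both are handled by the trivial‑on‑most‑generators construction above, which lies in the smooth locus of the relation variety (the identity is a smooth point of the commutator map's fiber when $g\geq 1$). If one prefers an elementary argument avoiding algebraic geometry, replace Steps 1–3 by: given any $\rho$, perturb $\rho(a_1)$ slightly to $\rho(a_1)\exp(\epsilon X)$ for a generic $X\in\mf{s}\mf{p}(E,\Omega)$; the condition ``$\rho(a_1)$ fixes a given line'' is algebraic of positive codimension in $\op{Sp}(E,\Omega)$, so for generic $\epsilon$ and $X$ the perturbed $\rho(a_1)$ fixes no vector also fixed by the remaining (unchanged) generators, and in the closed case one compensates with a matching perturbation of $\rho(b_1)$ to preserve $\prod[a_i,b_i]=1$.
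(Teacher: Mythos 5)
Your main argument (Steps 1--3) is a global algebraic--geometric one, quite different from the paper's, and it does work cleanly when $\partial\Sigma\neq\emptyset$: there $\pi_1(\Sigma)$ is free, $\op{Hom}(\pi_1(\Sigma),\op{Sp}(E,\Omega))\cong\op{Sp}(E,\Omega)^{N}$ is a smooth irreducible real algebraic variety, your incidence variety $\widetilde B$ projects to a Zariski-closed set $B$ (completeness of $\mb{P}(E)$), and a proper Zariski-closed subset of a smooth irreducible real variety has empty Euclidean interior. Together with your Step 2 this settles the case $q\geq 1$.

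The gap is in the closed case $q=0$, which the lemma must cover (it is invoked in the proof of Theorem \ref{thmsign} for $q\leq 1$, and for all $g\geq 1$ including $g=1$). Your argument needs two facts there: (a) that $\op{Hom}(\pi_1(\Sigma_g),\op{Sp}(E,\Omega))$, the zero locus of the relation $\prod[a_i,b_i]=1$, is irreducible, and (b) that a proper Zariski-closed subset of it has empty Euclidean interior. Neither follows from exhibiting one good representation: your remark that irreducibility is ``handled by the trivial-on-most-generators construction'' conflates properness of $B$ with irreducibility of the ambient variety. Fact (a) is a genuinely nontrivial theorem even over $\mb{C}$ (Rapinchuk et al.\ for $\op{SL}_n$), and (b) fails in general for singular real algebraic sets: a lower-dimensional Zariski-closed subset can contain a Euclidean-open piece of the variety near points where the local Euclidean dimension drops (the representation variety is singular exactly at reducible representations, i.e.\ near the locus you are trying to avoid). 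The case $g=1$, $q=0$ (commuting pairs) is the starkest instance: that variety is neither smooth nor equidimensional, and your global argument gives nothing there. What is actually needed is a local statement at each $\rho\in B$, and this is what the paper supplies: it perturbs only the pair $(\rho(a_1),\rho(b_1))$ inside the fibre of the commutator map through that pair, killing the eigenvalue $1$ while keeping $[\rho(a_1),\rho(b_1)]$ (hence the relation, and the boundary holonomy) fixed. Your closing one-sentence ``elementary fallback'' is exactly this idea, but it leaves unproved the only hard point --- that the commutator fibre through $(\rho(a_1),\rho(b_1))$ accumulates at that pair with points having no eigenvalue $1$. Note also that the paper's local perturbation yields approximants agreeing with $\rho$ on $\partial\Sigma$, which is what the subsequent application requires; your global density statement, even where it is valid, does not provide this.
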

\begin{proof}
Suppose that the boundary of $\Sigma$ is the union $\partial \Sigma=\bigsqcup_{j=1}^q c_j$ of oriented circles, the fundamental group of $\Sigma$ is 
$$
\pi_{1}(\Sigma)=\left\langle a_{1}, b_{1}, \ldots, a_{g}, b_{g}, c_{1}, \ldots, c_{q}: \prod_{i=1}^{g}\left[a_{i}, b_{i}\right] \prod_{j=1}^{q} c_{j}=e\right\rangle.
$$
Then  $\op{Hom}(\pi_1(\Sigma),\op{Sp}(E,\Omega))$ is the space of all homomorphisms  with the relation 
\begin{align}\label{relation}
\prod_{i=1}^{g}\left[\rho(a_{i}), \rho(b_{i})\right] \prod_{j=1}^{q} \rho(c_{j})=\rho(e)=\op{Id}.	
\end{align}
If $1$ is an eigenvalue of $\rho(a_1)\in \op{Sp}(E,\Omega)$ and $\rho(b_1)\in \op{Sp}(E,\Omega)$, then by a small perturbation, we can find two families of  operators  $A_{\epsilon}, B_{\epsilon}\in \op{Sp}(E,\Omega)$  such that 	$1$ is not the an eigenvalue for $\epsilon>0$ and
	$$\lim_{\epsilon\to 0}A_{\epsilon}=\rho(a_1),\quad \lim_{\epsilon\to 0}B_{\epsilon}=\rho(b_1),\quad [A_{\epsilon},B_{\epsilon}]=[\rho(a_1),\rho(b_1)],$$
	for $\epsilon>0$ small enough. In fact, the linear transformations $\sigma\in \op{Sp}(E,\Omega)$ and $\tau\in \op{Sp}(E,\Omega)$ with $1$ as an eigenvalue is equivalent to $\det(\sigma-\op{Id})=0$ and $\det(\tau-\op{Id})=0$, which defines a variety $H_1$ in $\op{Sp}(E,\Omega)\times \op{Sp}(E,\Omega)$, and $[\sigma,\tau]=[\rho(a_1),\rho(b_1)]$ is a variety $H_2$ in $\op{Sp}(E,\Omega)\times \op{Sp}(E,\Omega)$. One can see that $H_1\cap H_2$ is a proper subvariety of $H_2$ since the polynomial $[\sigma,\tau]=[\rho(a_1),\rho(b_1)]$ is not a factor of the polynomials $\det(\tau-\op{Id})$ and $\det(\sigma-\op{Id})$. Thus we can find such $A_{\epsilon},B_{\epsilon}$ in $H_2\backslash (H_1\cap H_2)$.	
	
	Now we can take $\rho_{\epsilon}$ by 
	$$\rho_{\epsilon}(a_1)=A_{\epsilon},\quad \rho_{\epsilon}(b_1)=B_{\epsilon},\quad \rho_{\epsilon}=\rho,\,\text{when acting on other cycles.}$$ 
	Then $\rho_{\epsilon}\in \op{Hom}(\pi_1(\Sigma),\op{Sp}(E,\Omega))$ and has no global fixed point, and 
	$\lim_{\epsilon\to 0}\rho_{\epsilon}=\rho.$
	Especially, $\{\rho_{\epsilon}\}$ is invariant when acting on boundary, i.e. $\rho_{\epsilon}(c_i)=\rho(c_i)$ for any $\epsilon>0$.
\end{proof}
For any $s\in \mathrm{H}^0(\Sigma,\mc{E})$, it can be viewed as a $\rho$-equivariant map $s=s^i(x)e_i\in E$, and 
$0=ds=ds^i(x)e_i,$
so $s^i(x)=s^i$ is constant, and $s=s^ie_i$ is a constant vector. The $\rho$-equivariant condition is 
$s^i(\gamma x)e_i=\rho(\gamma)^{-1}s^i(x)e_i,$
which follows that 
$s=\rho(\gamma)^{-1}s.$
Thus we obtain
$$\mathrm{H}^0(\Sigma,\mc{E})\cong \{s\in E:s=\rho(\gamma)s,\forall \gamma\in \pi_1(\Sigma)\}.$$	
\begin{thm}\label{thmsign}
The signature satisfies the following Milnor-Wood type inequality:
$$|\op{sign}(\mc{E},\Omega)|\leq\dim E \cdot |\chi(\Sigma)|. $$	
\end{thm}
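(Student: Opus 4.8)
The plan is to extract the inequality directly from the identity \eqref{5.6}, combined with the elementary observation that $\dim\mathrm{H}^0(\p\Sigma,\mc{E})\geq q\dim\mathrm{H}^0(\Sigma,\mc{E})$, and to treat the case of a single boundary circle separately via the density statement of Lemma \ref{lemma3}. First I would note that in \eqref{5.6} the term $\dim_{\mb{C}}\op{Ker}(d^\mp)^*\cap\mathrm{L}^2(\widehat{\Sigma},\wedge^\pm)$ is nonnegative, so both choices of sign yield
\begin{equation*}
\tfrac12\,|\op{sign}(\mc{E},\Omega)|\leq -\tfrac{\dim E}{2}\chi(\Sigma)-\tfrac12\dim\mathrm{H}^0(\p\Sigma,\mc{E})+\dim\mathrm{H}^0(\Sigma,\mc{E}).
\end{equation*}
Next, since $\p\Sigma=\sqcup_{i=1}^q c_i$ with each $c_i\cong S^1$, we have $\mathrm{H}^0(\p\Sigma,\mc{E})=\bigoplus_{i=1}^q\mathrm{H}^0(c_i,\mc{E})$, and $\mathrm{H}^0(\Sigma,\mc{E})\cong\{v\in E:\rho(\gamma)v=v\ \forall\gamma\}$ embeds into each $\mathrm{H}^0(c_i,\mc{E})$; hence $\dim\mathrm{H}^0(\p\Sigma,\mc{E})\geq q\dim\mathrm{H}^0(\Sigma,\mc{E})$.

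When $q\geq 2$, plugging this bound into the previous display gives $\tfrac12|\op{sign}(\mc{E},\Omega)|\leq -\tfrac{\dim E}{2}\chi(\Sigma)+(1-\tfrac{q}{2})\dim\mathrm{H}^0(\Sigma,\mc{E})\leq -\tfrac{\dim E}{2}\chi(\Sigma)$, and since $\chi(\Sigma)=2-2g-q\leq 0$ for $q\geq 2$ this is precisely $|\op{sign}(\mc{E},\Omega)|\leq\dim E\,|\chi(\Sigma)|$. If $q=1$ and $g=0$, then $\Sigma$ is a disk, $\pi_1(\Sigma)$ is trivial and $\mc{E}$ is the trivial bundle, so $\dim\mathrm{H}^0(\Sigma,\mc{E})=\dim\mathrm{H}^0(\p\Sigma,\mc{E})=\dim E$ and $\chi(\Sigma)=1$; the display then forces $\op{sign}(\mc{E},\Omega)=0$, which satisfies the bound. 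It remains to handle $q=1$, $g\geq 1$. Here I would apply Lemma \ref{lemma3}: there is a continuous family $\{\rho_\epsilon\}$ with $\rho_\epsilon|_{\p\Sigma}=\rho|_{\p\Sigma}$, $\rho_\epsilon\to\rho$ as $\epsilon\to 0$, and $\dim\mathrm{H}^0(\Sigma,\mc{E}_\epsilon)=0$ for $\epsilon>0$. By \eqref{sign0} one has $\op{sign}(\mc{E}_\epsilon,\Omega)=2\op{T}(\Sigma,\rho_\epsilon)+\bs{\rho}(\p\Sigma)$, where $\bs{\rho}(\p\Sigma)$ depends only on $\rho|_{\p\Sigma}$ and so is constant along the family, while $\op{T}(\Sigma,\cdot)$ is continuous; thus $\epsilon\mapsto\op{sign}(\mc{E}_\epsilon,\Omega)$ is a continuous integer-valued function, hence constant, and $\op{sign}(\mc{E},\Omega)=\op{sign}(\mc{E}_\epsilon,\Omega)$. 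Applying the display above to $\rho_\epsilon$ with $\dim\mathrm{H}^0(\Sigma,\mc{E}_\epsilon)=0$ yields $\tfrac12|\op{sign}(\mc{E},\Omega)|\leq -\tfrac{\dim E}{2}\chi(\Sigma)$, and $\chi(\Sigma)=1-2g\leq -1$ finishes the argument.

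The step I expect to require the most care is the invariance $\op{sign}(\mc{E},\Omega)=\op{sign}(\mc{E}_\epsilon,\Omega)$ under the perturbation, which rests on the continuity of the Toledo invariant in the representation together with the integrality of the signature. Alternatively one can bypass this by arguing from Theorem \ref{thmsign2}: the term $\eta(A_{\mbf{J}})$ is determined by $\rho|_{\p\Sigma}$, and $\int_\Sigma c_1(\mc{E},\Omega,\mbf{J})=\tfrac1{4\pi}\int_\Sigma\wt{\mbf{J}}^*\omega_{\op{D}^{\op{III}}_n}$ varies continuously once the equivariant maps $\wt{\mbf{J}}_\epsilon$ are chosen to depend continuously on $\epsilon$ and to be fixed near $\p\Sigma$, so again $4\int_\Sigma c_1(\mc{E},\Omega,\mbf{J})+\eta(A_{\mbf{J}})$ is a continuous $\mb{Z}$-valued function of $\epsilon$, hence locally constant.
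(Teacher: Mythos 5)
Your proof is correct and follows essentially the same route as the paper: the inequality is read off from \eqref{5.6} together with $\dim \mathrm{H}^0(\p\Sigma,\mc{E})\geq q\dim \mathrm{H}^0(\Sigma,\mc{E})$ when $q\geq 2$, and the remaining case $q\leq 1$, $g\geq 1$ is reduced to the fixed-point-free situation via the perturbation of Lemma \ref{lemma3}. The only cosmetic difference is that the paper invokes \cite[Corollary 8.11]{BIW} to conclude $\op{T}(\Sigma,\rho_\epsilon)=\op{T}(\Sigma,\rho)$ outright, whereas you deduce constancy of the signature along the family from continuity of the Toledo invariant together with integrality of the signature --- the same fact in slightly different packaging.
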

\begin{proof}
For $q\geq 2$, since 
\begin{equation}\label{5.7}
  \dim \mathrm{H}^0(\p\Sigma,\mc{E})\geq q\dim \mathrm{H}^0(\Sigma,\mc{E})\geq 2\dim \mathrm{H}^0(\p\Sigma,\mc{E}),
\end{equation}
 so by (\ref{5.6}),
$\pm\op{sign}(\mc{E},\Omega)\leq \dim E|\chi(\Sigma)|.$

For $q\leq 1$, by the above lemma, if the genus $g\geq 1$ , for any representation $\rho$, there exists a family representations $\rho_\epsilon$ with $\rho_\epsilon(\pi_1(\p\Sigma))=\rho(\pi_1(\p\Sigma))$, each $\rho_\epsilon$ has no global fixed point, and $\lim_{\epsilon\to 0}\rho_\epsilon=\rho$. Then $\dim \mathrm{H}^0(\Sigma,\mc{E}_\epsilon)=0$ for any $\epsilon>0$. 
Since the representations $\rho_\epsilon$ are fixed on the boundary, so the eta invariant  and $\int_{c_i}\wt{\mbf{J}}^*\alpha_i$ are  fixed, hence the Rho invariant is  fixed, $\bs{\rho}_{\epsilon}(\p\Sigma)=\bs{\rho}(\p\Sigma)$.
By \cite[Corollary 8.11]{BIW}, the Toledo invariant is also fixed, i.e.  $\op{T}(\Sigma,\rho_\epsilon)=\op{T}(\Sigma,\rho)$. Hence  the signature associated to $\rho_\epsilon$  is independent of $\epsilon$, and equals $\op{sign}(\mc{E},\Omega)$. Thus
\begin{align}\label{siginequality}
\begin{split}
\pm\frac{1}{2}\op{sign}(\mc{E},\Omega) &=
\pm\op{T}(\Sigma,\rho_\epsilon)\pm\frac{\bs{\rho}_{\epsilon}(\p\Sigma)}{2}\\
& =	-\frac{\dim E}{2}\chi(\Sigma)-\frac{\dim \mathrm{H}^0(\p\Sigma,\mc{E})}{2} -\dim_{\mb{C}}\op{Ker}(d^\mp)^*\cap \mathrm{L}^2(\widehat{\Sigma},\wedge^\mp)\\
&\leq 	-\frac{\dim E}{2}\chi(\Sigma), 
\end{split}
\end{align}
which follows that $|\op{sign}(\mc{E},\Omega)|\leq\dim E \cdot |\chi(\Sigma)|$.  If $g=0$ and $q\leq 1$, then $\pi_1(\Sigma)$ is trivial, and so $\op{sign}(\mc{E},\Omega)=0$.

Hence, we obtain 
\begin{equation}\label{sign4}
  |\op{sign}(\mc{E},\Omega)|\leq \dim E\cdot\max\{-\chi(\Sigma),0\}\leq \dim E\cdot |\chi(\Sigma)|.
\end{equation}
The proof is complete.
\end{proof}

If $\p\Sigma=\emptyset$, then $\op{sign}(\mc{E},\Omega)=2\op{T}(\Sigma,\rho)$. So
\begin{cor}[{Turaev \cite{Tur}}]
If $\rho\in \op{Hom}(\pi_1(\Sigma),\op{Sp}(E,\Omega))$ and $\p\Sigma=\emptyset$, then 
$$|\op{T}(\Sigma,\rho)|\leq \frac{\dim E}{2}|\chi(\Sigma)|.$$	
\end{cor}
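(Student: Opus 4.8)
The plan is to obtain the closed–surface Toledo bound as an immediate consequence of the signature inequality of Theorem~\ref{thm0.3}, once one specializes Theorem~\ref{thm0.2} to the case $\p\Sigma=\emptyset$. The first step is to check that the Rho term in \eqref{sign0} disappears for a closed surface: since $\p\Sigma=\emptyset$ we have $q=0$, so the sum $\sum_{i=1}^q\int_{c_i}\wt{\mbf{J}}^*\alpha_i$ is empty, and the operator $A_{\mbf{J}}$ acts on $A^0(\p\Sigma,\mc{E}_{\mb{C}}|_{\p\Sigma})=\{0\}$, whence $\eta(A_{\mbf{J}})=0$. Thus $\bs{\rho}(\p\Sigma)=0$ and formula~\eqref{sign0} of Theorem~\ref{thm0.2} collapses to $\op{sign}(\mc{E},\Omega)=2\,\op{T}(\Sigma,\rho)$.

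With this identity in hand the corollary follows at once: Theorem~\ref{thm0.3} gives $|\op{sign}(\mc{E},\Omega)|\le \dim E\cdot|\chi(\Sigma)|$, so
$$
|\op{T}(\Sigma,\rho)|=\tfrac12|\op{sign}(\mc{E},\Omega)|\le \tfrac{\dim E}{2}\,|\chi(\Sigma)|.
$$
As a sanity check one may re-derive $\op{sign}(\mc{E},\Omega)=2\op{T}(\Sigma,\rho)$ directly from the two companion formulas: for $\p\Sigma=\emptyset$ one has $\widehat{\mathrm{H}}^1(\Sigma,\mc{E})=\mathrm{H}^1(\Sigma,\mc{E})$ and $c_1(\mc{E},\Omega,\mbf{J})$ is the ordinary first Chern class of $(\mc{E},\mbf{J})$, so Theorem~\ref{thm0.1} reads $\op{sign}(\mc{E},\Omega)=4\int_\Sigma c_1(\mc{E},\Omega,\mbf{J})$, while Theorem~\ref{thm0.2} reads $\op{T}(\Sigma,\rho)=2\int_\Sigma c_1(\mc{E},\Omega,\mbf{J})$; comparing the two recovers the identity and, incidentally, Meyer's signature formula for closed surfaces.

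There is no genuine obstacle in this argument — all the analytic substance (the Atiyah--Patodi--Singer computation behind Theorem~\ref{thm0.1}, the comparison with the Toledo invariant in Theorem~\ref{thm0.2}, and the Milnor--Wood bound for the signature in Theorem~\ref{thm0.3}, whose proof when $q\le 1$ uses \eqref{5.6} together with the density Lemma~\ref{lemma3}) is already in place. The only point demanding a line of care is verifying that every boundary-dependent quantity entering \eqref{sign0} is identically zero when $\p\Sigma=\emptyset$, so that no spurious contribution survives; once that bookkeeping is done, the deduction of the corollary is purely formal.
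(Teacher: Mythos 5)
Your proposal is correct and is essentially the paper's own argument: the paper likewise specializes the formula $\op{sign}(\mc{E},\Omega)=2\op{T}(\Sigma,\rho)+\bs{\rho}(\p\Sigma)$ to $\p\Sigma=\emptyset$ (where the Rho term vanishes trivially) and then applies the signature Milnor--Wood inequality of Theorem~\ref{thm0.3}. Your additional cross-check via Theorems~\ref{thm0.1} and~\ref{thm0.2} is consistent with the paper but not needed.
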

\begin{rem}
	For the case $\p\Sigma\neq\emptyset$ and $G$ is a group of Hermitian type, it was proved by Burger, Iozzi and  Wienhard \cite[Theorem 1 (1)]{BIW} that 
	\begin{equation*}
  |\op{T}(\Sigma,\rho)|\leq \op{rank}(G/K)|\chi(\Sigma)|.
\end{equation*}
The above inequality was also generalized to the higher dimensional case by using the isometric isomorphism of $j_{\p\Sigma}$, see \cite[Theorem 1, Corollary 2]{BBFIPP} or \cite[Theorem 1.2]{KK1}. More precisely, 
\begin{align*}
\begin{split}
  |\op{T}(\Sigma,\rho)|&=|\langle j^{-1}_{\p\Sigma}i_\Sigma\rho^*_b(\kappa_G^b),[\Sigma,\p\Sigma]\rangle|\\
  &\leq \|j^{-1}_{\p\Sigma}i_\Sigma\rho^*_b(\kappa_G^b)\|\cdot\|[\Sigma,\p\Sigma]\|_1\\
  &=\|\rho^*_b(\kappa_G^b)\|\cdot\|[\Sigma,\p\Sigma]\|_1\\
  &\leq \|\kappa_G^b\|\cdot\|[\Sigma,\p\Sigma]\|_1\\
  &=\op{rank}(G/K)\cdot|\chi(\Sigma)|,
 \end{split}
\end{align*}
 where the third equality since $j_{\p\Sigma}$ is an isometric isomorphism and the Gromov isomorphism $i_\Sigma$ is also isometric, the fourth inequality  by the fact that the pullback is norm decreasing, and the last equality since  $\|\kappa_G^b\|=\frac{\op{rank}(G/K)}{2}$ and $\|[\Sigma,\p\Sigma]\|_1=2|\chi(\Sigma)|$.
\end{rem}

\subsection{Modified Milnor-Wood inequalities}\label{Class}

In this subsection, we will give  definitions of hyperbolic, elliptic and parabolic elements in $\op{Sp}(2n,\mb{R})$, and obtain some modified inequalities   for Toledo invariant. 

\begin{defn}
\label{defhyperell}
Let $(E,\Omega)$ be a real symplectic vector space. We define that $L\in \op{Sp}(E,\Omega)$ is \emph{hyperbolic} if all the eigenvalues $\lambda$ are $|\lambda|\neq 1$.
 We define that $L$ is \emph{elliptic} if it is semisimple and all the eigenvalues $\lambda$ are $|\lambda|=1$ and $\lambda\neq \pm 1$. If all the eigenvalues of $L$ are $\pm 1$, $L$ is called \emph{parabolic}.
\end{defn}

\begin{prop}
\label{dechyperellu}
Let $(E,\Omega)$ be a symplectic vector space and let $L\in \op{Sp}(E,\Omega)$. There exists a unique $\Omega$-orthogonal decomposition $E=E_h\oplus E_{eu}\oplus E_{u^+}\oplus E_{u^-}$ (called  hyperbolic/elliptic-unipotent/unipotent decomposition) such that $L_{|E_h}$ is hyperbolic, $L_{|E_{eu}}$ is a product of elliptic $L_{e}$  and unipotent part $L_{u}$, $L_{|E_{u^+}}$ and $-L_{|E_{u^-}}$ are unipotent.   Furthermore,  $L_{|E_h}$ commutes with an involution $I$ such that $I^*\Omega=-\Omega$ and there exists an almost complex structure $J$ compatible with $\Omega$ such that $IJI=-J$. The elliptic part $L_{e}$  commutes with an almost complex structure $J$ compatible with $\Omega$ and it fixes a point in the symmetric space $\mathcal{J}_\lambda$ of almost complex structures on $E_{eu}$ which are compatible with  the restriction of $\Omega$ to $E_{eu}$.
\end{prop}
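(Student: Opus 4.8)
The plan is to reduce the statement to the structure theory of a single element $L\in\op{Sp}(E,\Omega)$ via the multiplicative (real) Jordan decomposition $L=L_sL_u$ into commuting semisimple and unipotent parts, and then to carefully organize the generalized eigenspaces into $\Omega$-orthogonal blocks. First I would recall that, because $L$ preserves $\Omega$, its eigenvalues come in quadruples $\{\lambda,\bar\lambda,\lambda^{-1},\bar\lambda^{-1}\}$, and that for $\mu\neq\nu$ with $\mu\nu\neq 1$ the generalized eigenspaces $E_\mu$ and $E_\nu$ (over $\mb{C}$) are $\Omega$-orthogonal; the symplectic form pairs $E_\mu$ nondegenerately with $E_{\mu^{-1}}$. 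Grouping these complex generalized eigenspaces and taking real points, one gets a canonical $L$-invariant, $\Omega$-orthogonal splitting $E=E_h\oplus E_{\pm 1}\oplus E_{S^1\setminus\{\pm1\}}$, where $E_h$ collects all $|\lambda|\neq 1$ (these come in pairs $\lambda,\lambda^{-1}$ so each summand is a hyperbolic symplectic subspace), $E_{\pm1}$ collects the eigenvalues $\pm1$, and $E_{S^1\setminus\{\pm1\}}$ collects the remaining unimodular eigenvalues. On $E_{\pm 1}$, the element $-1$ acts on the generalized $(-1)$-eigenspace making $-L$ unipotent there, which produces $E_{u^+}$ (from eigenvalue $+1$) and $E_{u^-}$ (from eigenvalue $-1$); on $E_{S^1\setminus\{\pm1\}}$, writing $L=L_sL_u$ gives the elliptic part $L_e:=L_s|_{E_{eu}}$ and the commuting unipotent $L_u$, where $E_{eu}:=E_{S^1\setminus\{\pm1\}}$. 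Uniqueness of the whole decomposition follows from uniqueness of the Jordan decomposition together with the canonical nature of the eigenspace grouping.

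The second half is to produce the compatible structures. On $E_h$: a real symplectic space on which $L$ is hyperbolic splits (non-canonically, but it exists) as a direct sum of Lagrangian-type pairs $V_\lambda\oplus V_{\lambda^{-1}}$ or, over $\mb{R}$, blocks of the form $W\oplus W^*$ with $\Omega$ the canonical pairing; define $I$ to be $+1$ on the "$W$" factors and $-1$ on the "$W^*$" factors, so $I^*\Omega=-\Omega$, and choose $J$ to be any compatible complex structure sending $W$ to $W^*$ isometrically, which gives $IJI=-J$. That $I$ and $J$ can be chosen to commute with $L_{|E_h}$ uses that $L_{|E_h}$ preserves the stable/unstable decomposition; one takes $I$ to be the involution with these as $\pm1$ eigenspaces (it automatically commutes with $L$), and then averages a compatible $J$ over the compact group generated by the semisimple part's elliptic component, or simply builds $J$ block by block from a polar-type decomposition of $L_{|E_h}$. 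For the elliptic part $L_e$ on $E_{eu}$: $L_e$ is semisimple with unimodular non-real spectrum, hence generates a relatively compact subgroup $\overline{\langle L_e\rangle}\subset\op{Sp}(E_{eu},\Omega|_{E_{eu}})$; any maximal compact subgroup of $\op{Sp}$ is a unitary group stabilizing a compatible complex structure, and since $\mc{J}_\lambda$ (the space of $\Omega|_{E_{eu}}$-compatible complex structures) is the associated symmetric space — which is $\op{CAT}(0)$ — the relatively compact group $\overline{\langle L_e\rangle}$ has a fixed point in $\mc{J}_\lambda$ by the Cartan fixed point theorem. Pick such a fixed point $J$; then $L_eJ=JL_e$, as required.

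I expect the main obstacle to be bookkeeping rather than conceptual: precisely verifying $\Omega$-orthogonality of the four summands (in particular that $E_{u^+}$ and $E_{u^-}$ are mutually $\Omega$-orthogonal and each is a symplectic subspace on which the relevant $\pm L$ is unipotent), and pinning down the \emph{uniqueness} clause, since a priori the hyperbolic, $\pm1$, and genuinely-elliptic spectral parts could be entangled by nilpotent parts — this is exactly where one invokes that the Jordan decomposition is unique and that $L_s$, $L_u$ individually lie in $\op{Sp}(E,\Omega)$ (a standard fact for algebraic groups, or provable directly by noting $L_s$ is a polynomial in $L$). The compatibility claims on $E_h$ (existence of $I$ with $I^*\Omega=-\Omega$ and $J$ with $IJI=-J$ commuting with $L$) also require a small amount of care: the cleanest route is to use the $L$-invariant splitting $E_h=E_h^+\oplus E_h^-$ into the sum of generalized eigenspaces for $|\lambda|>1$ and $|\lambda|<1$, note these are Lagrangian and dually paired by $\Omega$, set $I=\mathrm{id}$ on $E_h^+$ and $I=-\mathrm{id}$ on $E_h^-$, and build $J$ from any linear isomorphism $E_h^+\to E_h^-$ realizing the duality, checking the three identities $J^2=-\mathrm{id}$, $\Omega(\cdot,J\cdot)>0$, $IJI=-J$ directly.
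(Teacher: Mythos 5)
Your proposal is correct and its skeleton coincides with the paper's: group the complex characteristic subspaces $E_\lambda$ using the key fact that $E_\lambda\perp_\Omega E_\mu$ whenever $\lambda\mu\neq 1$, take real points to get the four mutually $\Omega$-orthogonal symplectic blocks, use the (symplectic) Jordan decomposition on the unimodular non-$\pm 1$ block to extract $L_e$ and $L_u$, and get the $L_e$-invariant compatible $J$ from relative compactness of $\overline{\langle L_e\rangle}$ plus the Cartan fixed point theorem in $\mc{J}$ — exactly as in the paper. The genuine divergences are two. First, you cite the orthogonality lemma as known, whereas the paper proves it by a short growth argument ($(\lambda\mu)^{-k}\Omega(L^k v,L^k w)$ is polynomial in $k$ and $\Omega(L^kv,L^kw)$ is constant); including that two-line proof would make your argument self-contained. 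Second, on $E_h$ the paper produces $J$ with $IJI=-J$ abstractly, by observing that $J\mapsto -IJI$ is an isometric involution of the symmetric space of compatible complex structures and taking a Cartan fixed point, while you build $J$ explicitly from dual bases of the Lagrangian stable/unstable summands $E_h^{\pm}$; your route is more elementary, at the cost of actually verifying positivity of $\Omega(\cdot,J\cdot)$ (which works if the isomorphism $E_h^+\to E_h^-$ is chosen via an $\Omega$-dual basis, i.e. so that the induced bilinear form on $E_h^+$ is symmetric positive definite). Defining $I$ globally as $\pm\op{Id}$ on $E_h^{\pm}$ rather than eigenvalue-by-eigenvalue is an equivalent repackaging. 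One caution: your side remark that $J$ on $E_h$ can also be chosen to commute with $L_{|E_h}$ is false in general (already for $L=\op{diag}(\lambda,\lambda^{-1})$ in $\op{Sp}(2,\mb{R})$, a compatible $J$ commuting with $L$ would make $L$ act as a unimodular complex scalar); fortunately the proposition only asks for $I$ to commute with $L_{|E_h}$ and for $IJI=-J$, and your final construction uses nothing more.
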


\begin{proof}
The complexified vector space $E\otimes\bc$ splits into characteristic subspaces of $L$, $E\otimes\bc=\bigoplus_\lambda E_\lambda$, $E_\lambda=\mathrm{ker}(L-\lambda I_m)^N$ for $N\geq\mathrm{dim}(E)=m$. If $\lambda,\mu$ are eigenvalues and $\lambda\mu\not=1$, then $E_\lambda$ and $E_\mu$ are $\Omega$-orthogonal. Indeed, given $v\in E_\lambda$, the normalized powers $\lambda^{-k}L^k(v)=:P_v(k)$ depend polynomially on $k$. Therefore, if $w\in E_\mu$,
$$
(\lambda\mu)^{-k}\Omega(L^k(v),L^k(w))=\Omega(P_v(k),P_w(k)):=Q(k)
$$
is a scalar polynomial in $k$. Since $\Omega$ is $L$-invariant, the function $k\mapsto (\lambda\mu)^{k}Q(k)$ is constant. If $\lambda\mu\not=1$, this can happen only if $Q$ vanishes identically. In particular, $Q(0)=\Omega(v,w)=0$.

Therefore the subspaces $E_{\lambda,\br}, E_{\mu,\br}$ where
$$
E_{\lambda,\br}:=E\cap(E_\lambda +E_{\bar\lambda}+E_{1/\lambda} +E_{1/\bar\lambda})
$$
are mutually orthogonal when distinct.

Pick $\lambda$ such that $|\lambda|\not=1$. Then $F_+=E\cap(E_\lambda +E_{\bar\lambda})$ is isotropic since $\lambda^2\neq 1$. Since the restriction of $\Omega$ to $E_{\lambda,\br}$ is nondegenerate, its $\Omega$-orthogonal in $E_{\lambda,\br}$ is itself, thus $F_+$ is Lagrangian in $E_{\lambda,\br}$. So is $F_-=E\cap(E_{1/\lambda} +E_{1/\bar\lambda})$, and $E_{\lambda,\br}=F_+\oplus F_-$. Define $I_\lambda\in \op{End}(E_{\lambda,\br})$ by $I_\lambda=\pm1$ on $F_{\pm}$. Then $I_\lambda^2=1$ and $I_\lambda^*\Omega=-\Omega$. 
Since $$\Omega(v, -I_\lambda J I_\lambda v)=I_\lambda^*\Omega(I_\lambda v, -JI_\lambda v)=-\Omega(I_\lambda v, -JI_\lambda v)=\Omega(I_\lambda v, J I_\lambda v),$$
the formula $J\mapsto -I_\lambda J I_\lambda$ defines an isometric action on the space $\mathcal{J}_\lambda$ of almost complex structures on $E_{\lambda,\br}$ which are compatible with the restriction of $\Omega$ to $E_{\lambda,\br}$. Since $\mathcal{J}_\lambda$ is a symmetric space of noncompact type, and $I_\lambda^2=1$, the action has a fixed point $J_\lambda$, i.e. $I_\lambda J_\lambda I_\lambda=-J_\lambda$. Let
$$
E_h=\bigoplus_{|\lambda|\not=1} E_{\lambda,\br}, \quad I_h=\bigoplus_{|\lambda|\not=1} I_{\lambda},\quad J_h=\bigoplus_{|\lambda|\not=1} J_{\lambda}.
$$
Then $J_h$ is compatible with $\Omega_{|E_h}$, $I_h$ is an involution, it commutes with $L_{|E_h}$ and maps $\Omega$ and $J_h$ to their opposites. Thus $L_{|E_h}$ is hyperbolic with desired properties.

Let $\lambda$ be an eigenvalue of $L$ such that $|\lambda|=1$, $\lambda\not=\pm 1$. Then
$$
E_{\lambda,\br}=E\cap(E_\lambda \oplus E_{\bar\lambda} ).
$$
Let $L_{|E_{\lambda,\br}}=SU$ be the Jordan decomposition of $L_{|E_{\lambda,\br}}$. Since $|\lambda|=1$ and $S$ is semisimple, the group generated by the restriction of $S$ to $E_{\lambda,\br}$ is relatively compact. Therefore, it fixes a point in the symmetric space $\mathcal{J}_\lambda$ of almost complex structures on $E_{\lambda,\br}$ which are compatible with  the restriction of $\Omega$ to $E_{\lambda,\br}$. Whence an $S$-invariant almost complex structure $J_\lambda$ on $E_{\lambda,\br}$ is compatible with the restriction of $\Omega$. 

Let 
$$
E_{eu}=\bigoplus_{|\lambda|=1,\,\lambda\not=\pm 1} E_{\lambda,\br},\quad J_{e}=\bigoplus_{|\lambda|=1} J_{\lambda}.
$$
Then $J_{e}$ is compatible with $\Omega_{|E_{eu}}$, and it commutes with $L_e:=S_{|E_{eu}}$. Thus $L_e$ is elliptic with desired properties.

Finally, $E_{u^+}=E_1$, $E_{u^-}=E_{-1}$, $L_{u^+}=L_{|E_{u^+}}$  and $-L_{u^-}=L_{|E_{u^-}}$ are unipotent. 
\end{proof}

\subsubsection{Hyperbolic case}

Now we assume that $L\in \op{Sp}(E,\Omega)$ is hyperbolic and there exists an element $B\in \mf{s}\mf{p}(E,\Omega)$ such that $\exp(2\pi B)=\pm L$. 
From Lemma \ref{dechyperellu}, there exists an involution $I$ such that $I^*\Omega=-\Omega$, and  an almost complex structure $J$ compatible with $\Omega$ such that $IJI=-J$.

Denote 
\begin{equation}\label{ACS}
  \mbf{J}(x)=\exp(-xB)J\exp(xB)
\end{equation}
where $x\in S^1$ and 
\begin{equation}\label{ABJ}
  A_{B,J}:=A_{\mbf{J}}=\mbf{J}\frac{d}{d x}.
\end{equation}

\begin{lemma}
\label{etahyper}
If $L=\pm\exp(2\pi B)\in \op{Sp}(E,\Omega)$ is hyperbolic, then  $\eta(A_{B,J})=0$. 
\end{lemma}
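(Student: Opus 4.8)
The plan is to exhibit a $\mb{C}$-linear involution $T$ of $A^0(S^1,\mc{E}_{\mb{C}})$ that anticommutes with $A_{B,J}$. Since $A_{B,J}$ is a self-adjoint elliptic operator with discrete real spectrum (Proposition \ref{prop2.0}), such a $T$ carries the $\lambda$-eigenspace isomorphically onto the $(-\lambda)$-eigenspace for every $\lambda$, so the spectrum is symmetric about $0$ with matching multiplicities; the eta function then vanishes identically and $\eta(A_{B,J})=\eta_{A_{B,J}}(0)=0$.

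The key algebraic input is that $B$ commutes with the involution $I$ produced by Proposition \ref{dechyperellu}. Since $L$ is hyperbolic we have $E=E_h$, and in the notation of that proposition $I$ equals $+\op{Id}$ on $F_+=E\cap(E_\lambda\oplus E_{\bar\lambda})$ and $-\op{Id}$ on $F_-=E\cap(E_{1/\lambda}\oplus E_{1/\bar\lambda})$ on each block $E_{\lambda,\br}$. From $\exp(2\pi B)=\pm L$ we get $[B,L]=0$, hence $B$ preserves every characteristic subspace $E_\nu=\ker(L-\nu\op{Id})^N$ of $L$ in $E\otimes\mb{C}$; being real, $B$ then preserves $F_+$ and $F_-$, and summing over blocks gives $IB=BI$ on $E$. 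Combining this with $IJI=-J$ and $I^2=\op{Id}$, and using \eqref{ACS}, we obtain
\[
I\,\mbf{J}(x)\,I = I\exp(-xB)J\exp(xB)I = \exp(-xB)(IJI)\exp(xB) = -\,\mbf{J}(x).
\]

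Next I would extend $I$ $\mb{C}$-linearly to $\mc{E}_{\mb{C}}$ and, under the identification $A^0(S^1,\mc{E}_{\mb{C}})\cong A^0(\mb{R},E_{\mb{C}})^L$, define $(Ts)(x):=Is(x)$. Because $[I,L]=0$, applying $I$ pointwise preserves the relation $s(x+2\pi)=L^{-1}s(x)$, so $T$ is a well-defined $\mb{C}$-linear bijection with $T^2=\op{Id}$ that extends boundedly to $\mathrm{L}^2(S^1,\mc{E}_{\mb{C}})$. Using the displayed identity and $A_{B,J}=\mbf{J}\frac{d}{dx}$,
\[
T\,A_{B,J}\,T^{-1}s = I\,\mbf{J}(x)\frac{d}{dx}(Is) = I\,\mbf{J}(x)\,I\,\frac{ds}{dx} = -\,\mbf{J}(x)\frac{ds}{dx} = -\,A_{B,J}\,s ,
\]
so $T A_{B,J}=-A_{B,J}T$. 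Hence for each $\lambda$ the map $T$ is a linear isomorphism from $\ker(A_{B,J}-\lambda)$ onto $\ker(A_{B,J}+\lambda)$, so these spaces have equal dimension, and grouping the eigenvalues by absolute value gives $\eta_{A_{B,J}}(s)=\sum_{\mu>0}\big(\dim\ker(A_{B,J}-\mu)-\dim\ker(A_{B,J}+\mu)\big)\,\mu^{-s}\equiv 0$; therefore $\eta(A_{B,J})=0$.

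The main obstacle is the middle step: one must unwind the construction of $I$ in Proposition \ref{dechyperellu} far enough to see that any commutant of $L$ — in particular $B$, which commutes with $L$ because $\exp(2\pi B)=\pm L$ — respects the Lagrangian splitting $E_{\lambda,\br}=F_+\oplus F_-$ defining $I$. Once $[B,I]=0$ is in hand, the anticommutation of $T$ with $A_{B,J}$, and hence the vanishing of $\eta$, follow mechanically.
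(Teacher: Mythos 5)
Your proof is correct and follows essentially the same route as the paper: conjugating $A_{B,J}$ by the involution $I$ of Proposition \ref{dechyperellu} (which anticommutes with $J$ and commutes with $L$ and $B$) sends $A_{B,J}$ to $-A_{B,J}$, so the spectrum is symmetric and $\eta(A_{B,J})=0$. The only difference is that you explicitly verify $IB=BI$ via the characteristic-subspace argument, a step the paper asserts without detail, and that verification is valid.
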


\begin{proof}
Since $I$ commutes with $L$, it defines an automorphism, and still denoted by $I$, of the flat bundle $\mc{E}\to S^1$. Since $IJI=-J$, it induces an orthogonal transformation on the space of sections of $\mc{E}$. One can transport the differential operator $A_{B,J}$ with $I$, and get an orthogonally equivalent operator $I^*A_{B,J}$. Since $IB=BI$ and $IJ=-JI$, $I^*A_{B,J}=-A_{B,J}$. Therefore the spectrum of $A_{B,J}$ is symmetric, its eta function $\eta(s)$ vanishes identically, and $\eta(A_{B,J})=0$.
\end{proof}
From \cite[Theorem 1.1]{Gutt}, there exists a symplectic basis of $(E,\Omega)$ such that the matrix form of $L$ is the symplectic direct sum of matrices of the form 
$
\left(\begin{matrix}
A & 0\\
0 & (A^\top)^{-1}	
\end{matrix}
\right).	
$
From Remark \ref{remhyperbolic}, one has $\bs{\rho}(S^1)=0$. Hence from Theorems \ref{thm0.2}, \ref{thm0.3}
\begin{prop}\label{Hyp}
If $\rho:\pi_1(\Sigma)\ra \op{Sp}(2n,\mb{R})$ is a representation such that each boundary representation is hyperbolic and satisfies  $\pm\rho(c_i)\in \exp(\mf{s}\mf{p}(E,\Omega))\subset\op{Sp}(2n,\mb{R})$, then 
$$|\op{T}(\Sigma,\rho)|\leq n|\chi(\Sigma)|.$$
\end{prop}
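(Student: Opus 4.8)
The plan is to read off the bound directly from Theorems~\ref{thm0.1}--\ref{thm0.3} once the Rho invariant of the boundary is shown to vanish. Indeed, by \eqref{sign0} one has $\op{sign}(\mc{E},\Omega)=2\op{T}(\Sigma,\rho)+\bs{\rho}(\p\Sigma)$, and by Theorem~\ref{thm0.3}, $|\op{sign}(\mc{E},\Omega)|\le\dim E\cdot|\chi(\Sigma)|=2n|\chi(\Sigma)|$; hence it suffices to exhibit a compatible complex structure $\mbf{J}\in\mc{J}_o(\mc{E},\Omega)$ (see \eqref{J0space}) with $\bs{\rho}(\p\Sigma)=0$, after which dividing by $2$ yields $|\op{T}(\Sigma,\rho)|\le n|\chi(\Sigma)|$.

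First I would construct such a $\mbf{J}$. Write $\p\Sigma=\bigcup_{i=1}^q c_i$; by hypothesis there is, for each $i$, an element $B_i\in\mf{s}\mf{p}(E,\Omega)$ with $\exp(2\pi B_i)=\pm\rho(c_i)$. Since $\rho(c_i)$ is hyperbolic, Proposition~\ref{dechyperellu} supplies an involution $I_i$ with $I_i^*\Omega=-\Omega$ commuting with $\rho(c_i)$ (hence with $B_i$) and an $\Omega$-compatible complex structure $J_i$ with $I_iJ_iI_i=-J_i$; moreover, using the normal form of \cite[Theorem~1.1]{Gutt} for elements of $\exp(\mf{s}\mf{p}(E,\Omega))$, $J_i$ can be chosen so as to also satisfy $B_iJ_i=-J_iB_i$, i.e.\ so that $x\mapsto\mbf{J}_i(x):=\exp(-xB_i)J_i\exp(xB_i)$ is a geodesic in $\mc{J}(E,\Omega)\cong\op{D}^{\op{III}}_n$, namely the translation axis of $\rho(c_i)$. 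Setting $\mbf{J}=\mbf{J}_i(x)$ on the collar $c_i\times[0,1]$ (it depends only on the $c_i$-coordinate) and extending arbitrarily over the interior gives $\mbf{J}\in\mc{J}_o(\mc{E},\Omega)$, so Theorems~\ref{thm0.1} and~\ref{thm0.2} apply for this $\mbf{J}$.

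Next I would prove $\bs{\rho}(\p\Sigma)=\frac{1}{\pi}\sum_{i=1}^q\int_{c_i}\wt{\mbf{J}}^*\alpha_i+\eta(A_{\mbf{J}})=0$. Since $A_{\mbf{J}}$ splits over the components of $\p\Sigma$, we have $\eta(A_{\mbf{J}})=\sum_i\eta(A_{B_i,J_i})$, and each summand vanishes by Lemma~\ref{etahyper}. For the one-form terms, take $\alpha_i=d^c\psi_i$ with $\psi_i$ the $K_{W_0}$-invariant (up to a constant) K\"ahler potential of Proposition~\ref{Kahler potential} attached to a fixed point $W_0$ of $\rho(c_i)$ on the ideal boundary of $\op{D}^{\op{III}}_n$ (an endpoint of the translation axis). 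By the Domic--Toledo argument in the proof of Proposition~\ref{potential}, $d^c\psi_{W_0}$ annihilates the tangent vector of the geodesic running to $W_0$; as the boundary curve $x\mapsto\wt{\mbf{J}}(x)$ on $c_i$ is exactly this geodesic, $\wt{\mbf{J}}^*\alpha_i$ restricts to zero on $c_i$, so $\int_{c_i}\wt{\mbf{J}}^*\alpha_i=0$. Hence $\bs{\rho}(\p\Sigma)=0$, so $\op{sign}(\mc{E},\Omega)=2\op{T}(\Sigma,\rho)$ by \eqref{sign0}, and Theorem~\ref{thm0.3} gives $|\op{T}(\Sigma,\rho)|=\frac{1}{2}|\op{sign}(\mc{E},\Omega)|\le n|\chi(\Sigma)|$.

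The step I expect to be the main obstacle is the geometric input of the previous paragraph: verifying that, under the hypothesis $\pm\rho(c_i)\in\exp(\mf{s}\mf{p}(E,\Omega))$, one can choose $J_i$ simultaneously so that $I_iJ_iI_i=-J_i$ (needed for Lemma~\ref{etahyper}) and $B_iJ_i=-J_iB_i$ (needed to identify the boundary curve with the translation axis, so that the Domic--Toledo vanishing of $d^c\psi_{W_0}$ applies), using the hyperbolic/elliptic-unipotent decomposition of Proposition~\ref{dechyperellu} and the block normal form of the hyperbolic part. Granting that, the rest is a direct assembly of \eqref{sign0}, Lemma~\ref{etahyper}, and Theorem~\ref{thm0.3}.
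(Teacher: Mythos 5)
Your overall strategy is exactly the paper's: show $\bs{\rho}(\p\Sigma)=0$ for a suitable $\mbf{J}\in\mc{J}_o(\mc{E},\Omega)$ and then divide the signature inequality of Theorem \ref{thm0.3} by $2$ using \eqref{sign0}. The vanishing of $\eta(A_{\mbf{J}})$ via the involution of Proposition \ref{dechyperellu} is also how the paper argues (Lemma \ref{etahyper}, equivalently the spectral symmetry noted in Remark \ref{remhyperbolic}). The divergence, and the gap, is in your treatment of $\int_{c_i}\wt{\mbf{J}}^*\alpha_i$.

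You claim that $J_i$ can be chosen so that $B_iJ_i=-J_iB_i$, which would make $x\mapsto\exp(-xB_i)J_i\exp(xB_i)$ a geodesic asymptotic to a fixed point of $\rho(c_i)$, so that the Domic--Toledo vanishing applies. But $B_iJ_i=-J_iB_i$ forces $B_i$ to be self-adjoint for the inner product $\Omega(\cdot,J_i\cdot)$, hence diagonalizable with real eigenvalues. Hyperbolicity of $L_i=\pm\exp(2\pi B_i)$ in the sense of Definition \ref{defhyperell} only constrains the eigenvalues of $L_i$ to lie off the unit circle; it does not force $B_i$ to be semisimple with real spectrum. For instance, in $\op{Sp}(4,\mb{R})$ take $B=\left(\begin{smallmatrix}B_0&0\\0&-B_0^\top\end{smallmatrix}\right)$ with $B_0=\left(\begin{smallmatrix}1&-1\\1&1\end{smallmatrix}\right)$: the eigenvalues of $\exp(2\pi B)$ have moduli $e^{\pm2\pi}$, so $L$ is hyperbolic, yet $B$ has eigenvalues $\pm(1\pm i)$ and no compatible $J$ anticommutes with it (even in the Gutt block form with $J$ standard, $BJ=-JB$ holds only when $B_0$ is symmetric). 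In such cases the boundary curve $\wt{\mbf{J}}(x)$ is not a geodesic, and your appeal to Proposition \ref{potential} does not apply to it. The paper avoids this entirely: after reducing to the block form $\pm\left(\begin{smallmatrix}\exp(2\pi B)&0\\0&\exp(-2\pi B^\top)\end{smallmatrix}\right)$ via \cite[Theorem 1.1]{Gutt}, it verifies directly (Remark \ref{remhyperbolic}) that $Z\circ\mbf{J}(x)$ stays purely imaginary in the Siegel upper half-plane, so $\wt{\mbf{J}}^*\alpha=\frac{1}{2\op{Im}Z}(dZ+d\o{Z})$ pulls back to zero regardless of whether $B$ generates a geodesic. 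To repair your argument you would either need to restrict to semisimple hyperbolic holonomy with real $\log$, or replace the geodesic argument by this explicit computation.
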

For the case of $\dim E=2$, any element $L\in \op{Sp}(2,\mb{R})=\op{SL}(2,\mb{R})$ has the form $L=\pm\exp(2\pi B)$, see Appendix \ref{Appeta}, then 
 we obtain the following result which is contained in \cite[Theorem D]{Gold1}.
\begin{cor}[{Goldman \cite{Gold1}}]\label{MWH}
If $\rho\in\op{Hom}(\pi_1(\Sigma), \op{SL}(2,\mb{R}))$, and $\rho(c_i)\in  \op{SL}(2,\mb{R})$ is hyperbolic  for each boundary component $c_i$, then 
$$|\op{T}(\Sigma,\rho)|\leq |\chi(\Sigma)|.$$	
\end{cor}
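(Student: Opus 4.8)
The plan is to deduce this as the $n=1$ case of Proposition \ref{Hyp}, or equivalently to combine the signature formula \eqref{sign0} with the Milnor--Wood type bound of Theorem \ref{thm0.3}, specialised to $\dim E=2$. Recall that $\op{Sp}(2,\mb{R})=\op{SL}(2,\mb{R})$, so here $n=1$ and $\op{D}^{\op{III}}_1$ is the usual Poincar\'e disc. The only input beyond the general machinery is the classical fact, recorded in Appendix \ref{Appeta}, that every $L\in\op{SL}(2,\mb{R})$ can be written as $L=\pm\exp(2\pi B)$ for some $B\in\mf{s}\mf{l}(2,\mb{R})=\mf{s}\mf{p}(E,\Omega)$. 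Hence for each boundary circle $c_i$ the hypothesis that $\rho(c_i)$ is hyperbolic places us precisely in the hypotheses of Proposition \ref{Hyp} with $n=1$, the condition $\pm\rho(c_i)\in\exp(\mf{s}\mf{p}(E,\Omega))$ being automatic in $\op{SL}(2,\mb{R})$.

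First I would fix, near each component $c_i$ of $\p\Sigma$, the canonical compatible complex structure $\mbf{J}(x)=\exp(-xB_i)J\exp(xB_i)$ attached to a decomposition $\rho(c_i)=\pm\exp(2\pi B_i)$ and a chosen $J\in\mc{J}(E,\Omega)$, as in \eqref{ACS}, and extend it to an element $\mbf{J}\in\mc{J}_o(\mc{E},\Omega)$ over all of $\Sigma$ (such extensions exist because $\mc{J}(E,\Omega)$ is contractible). With this choice the boundary operator splits as $A_{\mbf{J}}=\bigoplus_i A_{B_i,J}$, and the Rho invariant splits as $\bs{\rho}(\p\Sigma)=\sum_{i=1}^q\bigl(\frac1\pi\int_{c_i}\wt{\mbf{J}}^*\alpha_i+\eta(A_{B_i,J})\bigr)$. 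For a hyperbolic $\rho(c_i)$, Lemma \ref{etahyper} gives $\eta(A_{B_i,J})=0$ via the spectral symmetry induced by the involution $I$ of Proposition \ref{dechyperellu} with $IJI=-J$ and $IB_i=B_iI$; and, as recorded in Remark \ref{remhyperbolic}, the remaining transgression term $\int_{c_i}\wt{\mbf{J}}^*\alpha_i$ also vanishes, so that the total contribution $\bs{\rho}(c_i)$ is zero. Consequently $\bs{\rho}(\p\Sigma)=0$.

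Then, inserting $\bs{\rho}(\p\Sigma)=0$ into \eqref{sign0} gives $\op{sign}(\mc{E},\Omega)=2\op{T}(\Sigma,\rho)$, and Theorem \ref{thm0.3} gives $|\op{sign}(\mc{E},\Omega)|\le\dim E\cdot|\chi(\Sigma)|=2|\chi(\Sigma)|$. Dividing by $2$ yields $|\op{T}(\Sigma,\rho)|\le|\chi(\Sigma)|$, which is the assertion. If one prefers, one simply quotes Proposition \ref{Hyp} with $n=1$ together with the observation above that its running hypothesis on the boundary holonomies is vacuous for $\op{SL}(2,\mb{R})$.

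The main obstacle is the vanishing $\bs{\rho}(c_i)=0$ for a hyperbolic boundary holonomy. The $\eta$-term is controlled by the orthogonal symmetry argument of Lemma \ref{etahyper}, but one must check that the involution $I$ really does anticommute with the chosen $J$ and commute with $B_i$, which is exactly what the hyperbolic/elliptic--unipotent decomposition of Proposition \ref{dechyperellu} provides (here all of $E$ is hyperbolic). The transgression term requires knowing that the $\rho(c_i)$-invariant primitive $\alpha_i=d^c\psi_{W_i}$ of $\omega_{\op{D}^{\op{III}}_1}$ restricts to $0$ along $c_i$; heuristically this is because $d^c\psi_{W_i}$ annihilates vectors tangent to a geodesic converging to the fixed point $W_i\in\p\op{D}^{\op{III}}_1$ of $\rho(c_i)$ (compare the proof of Proposition \ref{potential}), and $\wt{\mbf{J}}\circ c_i$ tracks such a geodesic. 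Once these two vanishings are in place, the conclusion is a one-line combination of \eqref{sign0} and Theorem \ref{thm0.3}.
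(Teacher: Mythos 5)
Your proposal is correct and follows essentially the same route as the paper: the paper also obtains this corollary by specializing Proposition \ref{Hyp} to $n=1$, using that every element of $\op{SL}(2,\mb{R})$ is $\pm\exp(2\pi B)$, with the vanishing $\bs{\rho}(\p\Sigma)=0$ supplied by Lemma \ref{etahyper} and Remark \ref{remhyperbolic} and the conclusion drawn from \eqref{sign0} together with Theorem \ref{thm0.3}.
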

\begin{rem}
Under the conditions of the above corollary,  one has $\dim \mathrm{H}^0(\Sigma,\mc{E})=\dim \mathrm{H}^0(\p\Sigma,\mc{E})=0$. Hence 	$|\op{T}(\Sigma,\rho)|= |\chi(\Sigma)|$ if and only if $\op{Ker}(d^+)^*\cap \mathrm{L}^2(\widehat{\Sigma},\wedge^+)=\{0\}$ or $\op{Ker}(d^-)^*\cap \mathrm{L}^2(\widehat{\Sigma},\wedge^-)=\{0\}$.
\end{rem}

\subsubsection{Elliptic case}

If $L\in \op{Sp}(E,\Omega)$ is elliptic, then it commutes with an almost complex structure $J$. In fact, it is conjugate to an element in the unitary group, so there always exists $B\in\mf{s}\mf{p}(E,\Omega)$ such that $ L=\exp(2\pi B)$. With respect to the complex structure and the operator defined in \eqref{ACS} and \eqref{ABJ1}, we have
\begin{lemma}
\label{etaell}
 Let $B$ be the principal determination of $\frac{1}{2\pi}\log L$ so that the eigenvalues $\frac{\theta_j}{2\pi}$ of $-iB$ belong to $[0,1)$. Then 
$$
\eta(A_{B,J})=2n-2\sum_{j=1}^{n}\frac{\theta_j}{\pi}.
$$
\end{lemma}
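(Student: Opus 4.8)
The plan is to diagonalize everything simultaneously. Since $L$ is elliptic it commutes with a compatible complex structure $J$, so $(E,J,\Omega(\cdot,J\cdot))$ becomes a Hermitian vector space of complex dimension $n$ and $L$ lies in the corresponding unitary group $\op{U}(n)$. Choose $B$ to be the principal logarithm, so that $B$ commutes with $J$ as well and, with respect to the $+i$-eigenspace decomposition $E_{\mb C}=E^+_{\mb C}\oplus E^-_{\mb C}$ of $J$, there is a unitary basis $e_1,\dots,e_n$ of $E^+_{\mb C}$ in which $-iB$ acts diagonally with entries $\theta_j/2\pi\in[0,1)$; on the conjugate space $E^-_{\mb C}$ it acts with entries $-\theta_j/2\pi$. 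The point is that the operator $A_{B,J}=\mbf J\,d/dx$, with $\mbf J(x)=\exp(-xB)J\exp(xB)=J$ (since $[B,J]=0$), decouples into a direct sum of scalar operators on the line bundles spanned by the $\rho$-equivariant sections built from $e_j$ and $\bar e_j$.

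Concretely, I would write out the eigenvalue problem explicitly. A $\rho$-equivariant section associated to $e_j$ has the form $s(x)=f(x)\exp(xB)e_j$ with the periodicity constraint coming from $\rho(\gamma_0)=L=\exp(2\pi B)$; decomposing into $\pm i$-eigenspaces of $J$, the equation $A_{B,J}s=\lambda s$ becomes a first-order scalar ODE $ \pm i f' = \lambda f$ (the sign depending on whether we are on $E^+_{\mb C}$ or $E^-_{\mb C}$) together with a quasi-periodicity condition $f(x+2\pi)=e^{-\theta_j}f(x)$ coming from twisting by $\exp(-2\pi B)$. Solving these gives the spectrum: on the $e_j$-strand the eigenvalues are $\{\,m+\tfrac{\theta_j}{2\pi}\,:\,m\in\mb Z\,\}$ (up to an overall factor and sign convention to be fixed by comparing with Proposition \ref{prop2.0} and Corollary \ref{cor4.2}), and on the $\bar e_j$-strand they are $\{\,m-\tfrac{\theta_j}{2\pi}\,:\,m\in\mb Z\,\}$. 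I expect the bookkeeping of signs — whether $A_{B,J}$ has eigenvalues $m+\theta_j/2\pi$ or $-(m+\theta_j/2\pi)$ on a given strand, and how the two strands pair up — to be the only real subtlety; everything else is an explicit translation-shifted spectrum.

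With the spectrum in hand, the eta invariant is computed strand by strand. For a single shifted strand with spectrum $\{m+a:m\in\mb Z\}$, $a\in(0,1)$, the associated eta function is, after regularization, the classical value $\eta(a)=1-2a$ (this is the standard computation $\sum_{m\ge 0}(m+a)^{-s}-\sum_{m\ge 1}(m-a)^{-s}$ evaluated at $s=0$, i.e. $\zeta(0,a)-\zeta(0,1-a)=1-2a$), while for $a=0$ the strand is symmetric and contributes $0$. Summing over $j=1,\dots,n$: the $e_j$-strands with $\theta_j\ne 0$ contribute $\sum_j(1-\theta_j/\pi)$, the $\bar e_j$-strands (shift $-\theta_j/2\pi$, equivalently $1-\theta_j/2\pi$) contribute $\sum_j(1-(1-\theta_j/\pi))=\sum_j \theta_j/\pi$ — wait, here I must be careful: carrying the signs through correctly, the two strands from index $j$ together give $2-2\theta_j/\pi$ when $\theta_j\neq 0$ and $0$ when $\theta_j=0$, and since the $\theta_j=0$ indices contribute nothing to $\sum_j\theta_j/\pi$ either, we get $\eta(A_{B,J})=\sum_{j=1}^n\bigl(2-2\theta_j/\pi\bigr)=2n-2\sum_{j=1}^n\theta_j/\pi$, as claimed. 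The main obstacle, as noted, is fixing all sign and orientation conventions so that this assembles into exactly the stated formula rather than its negative or a reflected version; I would pin these down by specializing to $n=1$ and cross-checking against the two-dimensional computation in Appendix \ref{Appeta} (equation \eqref{etadim2}).
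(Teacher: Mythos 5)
Your overall strategy --- split $E$ into $L$- and $B$-invariant pieces on which $A_{B,J}$ becomes a family of scalar operators with translated-lattice spectra, then evaluate each shifted eta function at $s=0$ via $\zeta(0,a)-\zeta(0,1-a)=1-2a$ --- is the same as the paper's, which decomposes $E$ into $J$-complex lines $F_j$ and quotes the two-dimensional computation of Appendix \ref{Appeta}. However, there is a genuine error at exactly the point you flag as ``the only real subtlety,'' and you resolve it by assertion rather than computation. On the $\o{e}_j$-strand (the $-i$-eigenspace of $J$), \emph{both} the scalar ODE and the quasi-periodicity multiplier change sign relative to the $e_j$-strand: the eigenvalue equation becomes $-if'=\lambda f$, so $f(x)=e^{i\lambda x}f(0)$, while the holonomy $L^{-1}$ acts there by $e^{+i\theta_j}$ rather than $e^{-i\theta_j}$. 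The two sign flips cancel, and the spectrum on the conjugate strand is again $\{m+\tfrac{\theta_j}{2\pi}:m\in\mb{Z}\}$, \emph{not} $\{m-\tfrac{\theta_j}{2\pi}\}$ as you claim. This matters: your claimed union $\{m+\tfrac{\theta_j}{2\pi}\}\sqcup\{m-\tfrac{\theta_j}{2\pi}\}$ is symmetric about $0$, so an honest evaluation of your spectra would give $\eta(A_{B,J})=0$, not $2n-2\sum_j\theta_j/\pi$. The sentence ``carrying the signs through correctly, the two strands from index $j$ together give $2-2\theta_j/\pi$'' is precisely the claim that needs proof, and as written it contradicts the spectra you derived two sentences earlier.

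The repair is to redo the conjugate-strand computation as above, so that each eigenvalue $m+\tfrac{\theta_j}{2\pi}$ occurs with multiplicity two; this is exactly what the paper finds for $n=1$ in Appendix \ref{Appeta}, where the fixed-point equation $R(\theta-2\pi\sigma)e=e$ forces $\sigma=\tfrac{\theta}{2\pi}+k$ and is then solved by all of $\mb{C}^2$. Each of the $2n$ strands contributes $1-\theta_j/\pi$, giving the stated total. A smaller point: your treatment of $\theta_j=0$ is internally inconsistent --- a symmetric strand contributes $0$ to $\eta$, whereas the summand $2-2\theta_j/\pi$ would contribute $2$ --- and is harmless only because ellipticity of $L$ excludes eigenvalues $\pm1$, hence excludes $\theta_j\in\{0,\pi\}$; this should be said explicitly rather than absorbed into the sum.
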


\begin{proof}
Since $L$ and $B$ are semisimple, the complex vectorspace $(E,J)$ splits as the direct sum of $1$-dimensional $J$-complex subspaces $E=\bigoplus_{j=1}^{n} F_j$, $n=\mathrm{dim}_{\bc}(E,J)=\frac{1}{2}\mathrm{dim}(E)$, and $-iB=\frac{\theta_j}{2\pi}$ on $F_j$, in other words, $L|_{F_j}$ is  $R(\theta_j)=\left(\begin{matrix}
\cos\theta_j & -\sin\theta_j\\
\sin\theta_j & \cos\theta_j	
\end{matrix}
\right)$. The differential operator decomposes as well as $A_{B,J}=\bigoplus_{j=1}^{n}A_j$, where the complex scalar operator $A_j$ is
$
A_j=i\frac{d}{d x}.
$
Therefore $\eta(A_j)=2(1-\frac{\theta_j}{\pi})$ as calculated in real 2-dimensional case in the appendix, and hence
\begin{align}\label{cone}
\eta(A_{B,J})&=\sum_{j=1}^{n}\eta(A_j)=2n-2\sum_{j=1}^{n}\frac{\theta_j}{\pi}.
\end{align}
\end{proof}
Since the complex structure defined in \eqref{ACS} is $\mbf{J}(x)=J$, so $\wt{\mbf{J}}^*\alpha_i=0$. Hence the Rho invariant is 
$\bs{\rho}(S^1)=\eta(A_{B,J})$.
In view of Theorem \ref{thmsign}, we obtain the following modified Milnor-Wood inequality.
\begin{prop}\label{propmod1}
If a representation $\rho:\pi_1(\Sigma)\ra \op{Sp}(2n,\mb{R})$ has only elliptic boundary holonomy, then
$$-n|\chi(\Sigma)|-\sum_{k=1}^q \sum_{j=1}^n \left(1-\frac{\theta_{kj}}{\pi}\right)\leq\op{T}(\Sigma,\rho) \leq n |\chi(\Sigma)| - \sum_{k=1}^q \sum_{j=1}^n \left(1-\frac{\theta_{kj}}{\pi}\right),$$
where $\theta_{kj}\in [0,2\pi)$ denotes the $j$-th rotation angle for the representation of $k$-th boundary.
\end{prop}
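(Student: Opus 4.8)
The strategy is to substitute the explicit value of the Rho invariant for elliptic boundary holonomy into the signature formula \eqref{sign0} and then invoke the Milnor-Wood inequality for the signature, Theorem \ref{thmsign}.

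First I would fix, for each boundary component $c_k$, a compatible complex structure. Since $\rho(c_k)\in\op{Sp}(2n,\mb{R})$ is elliptic it is conjugate into the unitary group, so there is $B_k\in\mf{s}\mf{p}(E,\Omega)$ with $\rho(c_k)=\exp(2\pi B_k)$; I take $B_k$ to be the principal determination of $\frac{1}{2\pi}\log\rho(c_k)$, so that the eigenvalues of $-iB_k$ are $\frac{\theta_{kj}}{2\pi}\in[0,1)$ for $j=1,\dots,n$. As in the proof of Lemma \ref{etaell}, $B_k$ then commutes with a compatible complex structure $J_k$ on $E$, and consequently $J_k$ commutes with $\rho(c_k)$, so it descends to a constant complex structure on $\mc{E}|_{c_k}$. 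Because the space of $\Omega$-compatible complex structures on a symplectic vector space is contractible, one can choose a global $\mbf{J}\in\mc{J}_o(\mc{E},\Omega)$ whose restriction to a collar of each $c_k$ is the pullback of $J_k$; in particular $\mbf{J}(x)=J_k$ is $x$-independent there.

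With this choice, the $\rho$-equivariant map $\wt{\mbf{J}}$ is constant on the lift of each $c_k$, equal to the fixed point $W_k=\mbf{J}_F^{-1}(J_k)$ of $\rho(c_k)$ in $\op{D}^{\op{III}}_n$; since the pullback of a one-form by a constant map vanishes, $\wt{\mbf{J}}^*\alpha_k=0$ along $c_k$ for any admissible $\alpha_k$. Hence Definition \ref{Rho invariant} gives $\bs{\rho}(\p\Sigma)=\eta(A_{\mbf{J}})$. As $\p\Sigma=\bigsqcup_{k=1}^q c_k$ and, on the collar of $c_k$, the operator $A_{\mbf{J}}$ restricts to $A_{B_k,J_k}=J_k\frac{d}{dx}$ (using $[B_k,J_k]=0$), the eta invariant is additive over the components, so by Lemma \ref{etaell}
\begin{equation*}
\bs{\rho}(\p\Sigma)=\sum_{k=1}^q\eta(A_{B_k,J_k})=\sum_{k=1}^q\Big(2n-2\sum_{j=1}^n\frac{\theta_{kj}}{\pi}\Big)=2\sum_{k=1}^q\sum_{j=1}^n\Big(1-\frac{\theta_{kj}}{\pi}\Big).
\end{equation*}

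Finally, \eqref{sign0} says $\op{sign}(\mc{E},\Omega)=2\op{T}(\Sigma,\rho)+\bs{\rho}(\p\Sigma)$, whence
\begin{equation*}
\op{T}(\Sigma,\rho)=\tfrac12\op{sign}(\mc{E},\Omega)-\sum_{k=1}^q\sum_{j=1}^n\Big(1-\frac{\theta_{kj}}{\pi}\Big),
\end{equation*}
and combining this with $-2n|\chi(\Sigma)|\le\op{sign}(\mc{E},\Omega)\le 2n|\chi(\Sigma)|$, which is Theorem \ref{thmsign} for $\dim E=2n$, yields both claimed inequalities at once. I expect the only delicate point to be the boundary bookkeeping: verifying that $\mbf{J}$ can indeed be taken in $\mc{J}_o(\mc{E},\Omega)$ with the prescribed constant value $J_k$ on a collar of $c_k$, that this forces $\wt{\mbf{J}}|_{c_k}$ to equal the fixed point $W_k$ so that $\wt{\mbf{J}}^*\alpha_k=0$, and that $A_{\mbf{J}}|_{c_k}$ coincides with the operator $A_{B_k,J_k}$ to which Lemma \ref{etaell} applies; the remaining steps are direct substitutions.
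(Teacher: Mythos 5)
Your proposal is correct and follows essentially the same route as the paper: choose a complex structure that is constant (equal to the $\rho(c_k)$-invariant $J_k$) on a collar of each boundary component, so that $\wt{\mbf{J}}^*\alpha_k=0$ and $\bs{\rho}(\p\Sigma)=\sum_k\eta(A_{B_k,J_k})=2\sum_{k,j}(1-\theta_{kj}/\pi)$ by Lemma \ref{etaell}, then substitute into $\op{sign}(\mc{E},\Omega)=2\op{T}(\Sigma,\rho)+\bs{\rho}(\p\Sigma)$ and apply Theorem \ref{thmsign}. The paper leaves the collar-extension and additivity-over-components bookkeeping implicit, exactly the points you flag as needing verification, so your write-up is if anything slightly more careful.
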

Following \cite[Definition 3.5]{BIW}, we call a representation $\rho:\pi_1(\Sigma)\to \op{Sp}(2n,\mb{R})$  maximal if $\op{T}(\Sigma,\rho)=n|\chi(\Sigma)|$. 
Hence when all  $\theta_{kj}<\pi$, the representation $\rho$ can not be maximal. 

Now we consider the case of $n=1$, i.e. consider the representation $\rho:\pi_1(\Sigma)\to \op{SL}(2,\mb{R})$. By Proposition \ref{propmod1}, we obtain the following modified Milnor-Wood inequality.
\begin{prop}\label{propmod2}
If a representation $\rho:\pi_1(\Sigma)\ra \op{SL}(2,\mb{R})$ has only elliptic boundary holonomy,  then 
\begin{equation}\label{TI1}
  -|\chi(\Sigma)|-1 + \sum_{k=1}^q  \frac{\theta_{k}}{\pi}\leq\op{T}(\Sigma,\rho) \leq |\chi(\Sigma)|+1 - \sum_{k=1}^q  \left(1-\frac{\theta_{k}}{\pi}\right),
\end{equation}
where $\theta_k\in (0,\pi)$ such that $[R(\theta_k)]$ is conjugate to $[\rho(c_k)]\in \op{PSL}(2,\mb{R})=\op{SL}(2,\mb{R})/\{\pm I\}$, and $[\bullet]$ denotes the class in $\op{PSL}(2,\mb{R})$, 
 $R(\theta_k)=\left(\begin{matrix}
\cos\theta_k & -\sin\theta_k\\
\sin\theta_k & \cos\theta_k	
\end{matrix}
\right)$.
\end{prop}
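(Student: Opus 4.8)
The plan is to deduce Proposition~\ref{propmod2} from the $n=1$ case of Proposition~\ref{propmod1}, using two symmetries of the Toledo invariant in $\op{SL}(2,\mb{R})$: twisting $\rho$ by a character $\pi_1(\Sigma)\to\{\pm1\}$, and reversing the orientation of $\Sigma$. Two preliminary remarks are in order. First, $\op{SL}(2,\mb{R})$ and $\op{PSL}(2,\mb{R})$ share the same symmetric space and the bounded K\"ahler class is pulled back from $\op{PSL}(2,\mb{R})$, so $\op{T}(\Sigma,\rho)$ depends only on the induced representation $\pi_1(\Sigma)\to\op{PSL}(2,\mb{R})$; in particular $\op{T}(\Sigma,\varepsilon\otimes\rho)=\op{T}(\Sigma,\rho)$ for every character $\varepsilon$, and $\varepsilon\otimes\rho$ still has elliptic boundary holonomy because $\pm L$ is elliptic whenever $L$ is. Second, an elliptic $\rho(c_k)$ is conjugate in $\op{SL}(2,\mb{R})$ to a unique $R(\psi_k)$ with $\psi_k\in(0,2\pi)\setminus\{\pi\}$, and the hypothesis that $[R(\theta_k)]$ is conjugate to $[\rho(c_k)]$ with $\theta_k\in(0,\pi)$ forces $\psi_k\in\{\theta_k,\theta_k+\pi\}$; set $s_k=+1$ if $\psi_k=\theta_k$ and $s_k=-1$ if $\psi_k=\theta_k+\pi$.

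For the upper bound I would pass to a suitable lift $\rho_1$ of the induced $\op{PSL}(2,\mb{R})$-representation (this is the ``reversing'' of boundary holonomies alluded to in the introduction). The lifts differ from $\rho$ by character twists, and on the boundary such a twist replaces $(\rho(c_k))_k$ by $(\eta_k\rho(c_k))_k$ for an arbitrary sign pattern $(\eta_k)$ subject only to $\prod_k\eta_k=1$, the single constraint coming from the surface relation $\prod_i[a_i,b_i]\prod_j c_j=e$ (a character kills the commutators). Since multiplying $\rho(c_k)$ by $-1$ exchanges the two candidates $\theta_k$ and $\theta_k+\pi$, one can choose $\rho_1$ so that $\rho_1(c_k)$ is conjugate to $R(\theta_k)$ for every $k$ when $\prod_k s_k=1$, and for every $k$ but a single exceptional index $k_0$ when $\prod_k s_k=-1$. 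Then $\op{T}(\Sigma,\rho_1)=\op{T}(\Sigma,\rho)$, and Proposition~\ref{propmod1} with $n=1$ applied to $\rho_1$, using $1-\tfrac{\theta_{k_0}+\pi}{\pi}=-\tfrac{\theta_{k_0}}{\pi}$, gives
\begin{equation*}
\op{T}(\Sigma,\rho)\le|\chi(\Sigma)|-\sum_{k\neq k_0}\Big(1-\frac{\theta_k}{\pi}\Big)+\frac{\theta_{k_0}}{\pi}=|\chi(\Sigma)|+1-\sum_{k=1}^q\Big(1-\frac{\theta_k}{\pi}\Big),
\end{equation*}
while when there is no exceptional index the right-hand side only decreases; this proves the upper inequality in \eqref{TI1}.

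For the lower bound I would apply the upper bound just obtained to the orientation-reversed surface $\o{\Sigma}$ carrying the same homomorphism $\rho$. Its coherently oriented boundary curves are the $c_k^{-1}$, with holonomies $\rho(c_k)^{-1}$ which are elliptic of $\op{PSL}(2,\mb{R})$-rotation angle $\pi-\theta_k\in(0,\pi)$; also $\chi(\o{\Sigma})=\chi(\Sigma)$ and $\op{T}(\o{\Sigma},\rho)=-\op{T}(\Sigma,\rho)$. Hence the upper bound for $(\o{\Sigma},\rho)$ reads
\begin{equation*}
-\op{T}(\Sigma,\rho)\le|\chi(\Sigma)|+1-\sum_{k=1}^q\Big(1-\frac{\pi-\theta_k}{\pi}\Big)=|\chi(\Sigma)|+1-\sum_{k=1}^q\frac{\theta_k}{\pi},
\end{equation*}
which rearranges into the asserted lower bound. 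The remaining (small) cases, such as closed surfaces and the disk, are either trivial or already contained in Corollary~\ref{cor0.4}.

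The step I expect to be the main obstacle is the bookkeeping around the parity constraint $\prod_k\eta_k=1$ on admissible character twists: it is precisely what may leave one ``unremovable'' boundary angle $\theta_{k_0}+\pi$, and is therefore responsible for the extra $\pm1$ in \eqref{TI1} compared with the bound one would naively expect. The remaining ingredients — the conjugacy behaviour of $\pm$ an elliptic element, the $\op{PSL}(2,\mb{R})$-invariance of $\op{T}$, and its change of sign under orientation reversal — are routine once stated carefully.
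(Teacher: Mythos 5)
Your argument is correct. The upper bound is obtained exactly as in the paper: you twist $\rho$ by a sign character on the boundary generators (the paper phrases this as negating an even number $2m$ of the boundary holonomies), observe that the Toledo invariant only sees the induced $\op{PSL}(2,\mb{R})$-representation, and then invoke Proposition \ref{propmod1}, with the identity $1-\frac{\theta_{k_0}+\pi}{\pi}=-\frac{\theta_{k_0}}{\pi}$ accounting for the single unremovable angle and hence for the extra $+1$; your parity bookkeeping $\prod_k\eta_k=1$ is precisely the paper's even/odd dichotomy $q_0=2m$ versus $q_0=2m+1$. Where you genuinely diverge is the lower bound: the paper reapplies the lower half of Proposition \ref{propmod1} to a sign-twisted representation whose boundary angles have been pushed into $(\pi,2\pi)$, whereas you deduce the lower bound from the already-proved upper bound applied to the orientation-reversed surface $\o{\Sigma}$, using $\op{T}(\o{\Sigma},\rho)=-\op{T}(\Sigma,\rho)$, $\chi(\o{\Sigma})=\chi(\Sigma)$, and the fact that $[\rho(c_k)^{-1}]$ has $\op{PSL}(2,\mb{R})$-angle $\pi-\theta_k\in(0,\pi)$. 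Your route is cleaner and less error-prone than the paper's rather terse angle-shifting computation, and it makes the symmetry between the two inequalities in \eqref{TI1} transparent; its only cost is the (standard, but worth stating) verification that the Toledo invariant changes sign under orientation reversal while the elliptic boundary data transform by $\theta_k\mapsto\pi-\theta_k$, both of which you check correctly.
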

\begin{proof}
If each boundary representation of $\rho$ is elliptic, then it is conjugate to a rotation matrix $R(\theta)$ with $\theta\in (0,\pi)\cup (\pi,2\pi)$. Let $q_0$ be the number of all components $c_k$ in $\p\Sigma$ such that $\rho(c_k)$ is conjugate to $R(\theta_k)$, $\theta_k\in (\pi,2\pi)$,  without loss of generality, we assume  $k\in \{1,\cdots, q_0\}$.
Denote $q_0=2m$ or $q_0=2m+1$ for $q_0$ is even or odd. We define a new representation 
\begin{equation}\label{new representation}
  \rho_1(c_i)=
  \begin{cases}
  	& -\rho(c_i),\quad \text{for } 1\leq k\leq 2m;\\
  	&\rho(c),\quad \quad  \text{for } c\in \{c_{2m+1},\cdots,c_q, a_i, b_i, 1\leq i\leq g\}.
  \end{cases}
\end{equation}
Then $\rho_1$ satisfies the relation \eqref{relation}, so it is a representation. Moreover, $\rho_1$ and $\rho$ just differ a sign on some boundaries, so they define the same action on Siegel disk $\op{D}^{\op{III}}_1$, see \eqref{Action}. For any smooth $\rho$-equivariant map $\wt{\mbf{J}}:\wt{\Sigma}\to \op{SL}(2,\mb{R})/\op{U}(1)\cong \op{D}^{\op{III}}_1$, by the construction of the new representation $\rho_1$ \eqref{new representation}, $\wt{\mbf{J}}$ is also a $\rho_1$-equivariant map. From  \eqref{Toledoexp}, one has 
\begin{equation*}
  \op{T}(\Sigma,\rho)=\frac{1}{2\pi}\int_\Sigma \wt{\mbf{J}}^*\omega_{\op{D}^{\op{III}}_1}-\frac{1}{2\pi}\sum_{i=1}^q\int_{c_i}\wt{\mbf{J}}^*\alpha_i=  \op{T}(\Sigma,\rho_1).
\end{equation*}
For the new representation $\rho_1$, $\rho_1(c_k)$ is conjugate to $R(\theta_k)$ with $(0,\pi)$, $1\leq k\leq 2m$. By Proposition \ref{propmod1}, one has
 $$\op{T}(\Sigma,\rho) \leq 
 \begin{cases}
 &	|\chi(\Sigma)| - \sum_{k=1}^{q}  \left(1-\frac{\theta_{k}}{\pi}\right),\quad q_0=2m;\\
 &|\chi(\Sigma)| - \sum_{k=1}^{2m}  \left(1-\frac{\theta_{k}}{\pi}\right)-(1-\frac{\theta_{q_0}}{\pi})- \sum_{k=q_0+1}^{q}  \left(1-\frac{\theta_{k}}{\pi}\right)\quad q_0=2m+1.
 \end{cases}
$$
Since $1-\frac{\theta_{q_0}}{\pi}=(1-\frac{\theta_{q_0}-\pi}{\pi})-1$ with $\theta_{q_0}-\pi\in (0,\pi)$, so 
$$\op{T}(\Sigma,\rho)\leq |\chi(\Sigma)|+1-\sum_{k=1}^{q}  \left(1-\frac{\theta_{k}}{\pi}\right),$$
where $\theta_k\in (0,\pi)$ such that $[R(\theta_k)]$ is conjugate to $[\rho(c_k)]\in \op{PSL}(2,\mb{R})$.
 Similarly, by changing the maximal even number of  $\rho(c_i)$ with angle $\theta_i<\pi$
 to the one with angle $\theta_i'=\theta_i+\pi$, we get $\theta_k'>\pi$, with possible $\theta_0<\pi$,
 $$-\sum(1-\frac{\theta_k'}{\pi})=\sum (\frac{\theta_k'-\pi}{\pi})+\frac{\theta_0-\pi}{\pi}$$ to get $\op{T}(\Sigma,\rho)\geq -|\chi(\Sigma)|-1+\sum_{k=1}^q (\frac{\theta_k'-\pi}{\pi})$. By taking $\theta_k'=\theta_k+\pi$ with $0<\theta_k<\pi$,
  we can get 
  $$\op{T}(\Sigma,\rho)\geq -|\chi(\Sigma)|-1 + \sum_{k=1}^q  \frac{\theta_{k}}{\pi}.$$
\end{proof}
\begin{rem}
	Note that the equalities in \eqref{TI1} can be attained. For example, we consider a cylinder $\Sigma=S^1\times [0,1]$, and the elliptic representation $\rho$ is given by $\rho(S^1\times \{0\})=R(\theta)$ and  $\rho(S^1\times \{1\})=R(2\pi-\theta)$ for some $\theta\in (0,\pi)$. In this case, $\op{T}(\Sigma,\rho)=\chi(\Sigma)=0$, $\theta_1=\theta$ and $\theta_2=\pi-\theta\in (0,\pi)$, and so 
	\begin{equation*}
  -|\chi(\Sigma)|-1+\sum_{k=1}^2\frac{\theta_k}{\pi}=\op{T}(\Sigma,\rho)=|\chi(\Sigma)|+1-\sum_{k=1}^2\left(1-\frac{\theta_k}{\pi}\right).
\end{equation*}
\end{rem}

\begin{rem}
Each element $L=R(\theta)\in \op{SL}(2,\mb{R}),\theta\in (0,\pi)$	gives an automorphism $L_{\mb{D}}=e^{i(2\pi-2\theta)}$ acting on the unit disc $\mb{D}$. In fact, 
note that $\op{D}^{\op{III}}_1=\mb{D}=\{w\in \mb{C}| |w|<1\}$ is the unit disc in the complex plane, and  
\begin{align*}
L=U\left(\begin{matrix}
e^{-i\theta}	& 0\\
0& e^{i\theta}
\end{matrix}
\right)U^{-1},	
\end{align*}
 where $U$ and $U^{-1}$ are given by \eqref{U}, and by Remark \ref{rem5.2}, so 
 $$L_{\mb{D}}(w)=e^{-i\theta}we^{-i\theta}=e^{-2i\theta}w=e^{i(2\pi-2\theta)}w.$$
 If we denote $\phi_k=2\pi-2\theta_k\in (0,2\pi)$, then $\rho(c_k)_{\mb{D}}=e^{i\phi_k}$. By Proposition \ref{propmod2}, one has
 \begin{equation}\label{TI2}
  -|\chi(\Sigma)|-1 + \sum_{k=1}^q  \left(1-\frac{\phi_k}{2\pi}\right)\leq\op{T}(\Sigma,\rho) \leq |\chi(\Sigma)|+1 - \sum_{k=1}^q  \frac{\phi_k}{2\pi}.
\end{equation}

 If the representation $\rho$ is the holonomy of a cone hyperbolic surface $S$ with cone angles $\phi_k\in (0,2\pi)$,
 one can refer to \cite{Mc, Tro} for the definition of the surfaces with conical singularities,
  then $S$ can be identified with $\mb{D}/\rho(\pi_1(\Sigma))$. To get the cone point of cone angle $0<\phi\leq 2\pi$, we need to identify the sector of angle $\phi$ by the rotation of anlge $-(2\pi-\phi)$, hence we need $R(\theta)$ such that $-(2\pi-\phi)=-2\theta$. The induced representation on the boundary is conjugate to the rotation $R(\theta_k)\in \op{SL}(2,\mb{R})$ with $\theta_k\in (0,\pi)$, where $1\leq k\leq q$.   
Then the Toledo invariant is exactly  the area of $S$ and can be given by
  \begin{equation*}
  \op{T}(\Sigma,\rho)=\frac{1}{2\pi}\text{Area}(S)=-\left(\chi(\o{\Sigma})+\frac{1}{2\pi}\sum_{k=1}^q(\phi_k-2\pi)\right)=-\chi(\Sigma)-\sum_{k=1}^q\frac{\phi_k}{2\pi},
\end{equation*}
where $\o{\Sigma}$ is a closed surface obtained by capping off the boundary of $\Sigma$.
Similarly, by conjugating $\rho$
with an orientation-reversing isometry $\tau$ of $\mb{D}$, then we obtain a representation $\rho_\tau$ with cone angles $2\pi-\phi_k$, and $\op{T}(\Sigma,\rho_\tau)\leq 0$. Hence,
\begin{align*}
	\op{T}(\Sigma,\rho_\tau)=\chi(\o{\Sigma})-\frac{1}{2\pi}\sum_{k=1}^q\phi_k=\chi(\Sigma) + \sum_{k=1}^q  \left(1-\frac{\phi_k}{2\pi}\right).
\end{align*}
\end{rem}

\subsubsection{Parabolic case} Recall that $L$ is called parabolic if all eigenvalues of $L$ are $\pm 1$. The reason is as follows.
\begin{prop}
	If $L$ is parabolic, then it fixes a point  at the Shilov boundary of $\op{D}^{\op{III}}_n$.
\end{prop}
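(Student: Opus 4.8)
The plan is: first to identify the Shilov boundary $\check{S}(\op{D}^{\op{III}}_n)$ with the Lagrangian Grassmannian $\Lambda(E,\Omega)$ of $(E,\Omega)$ in an $\op{Sp}(E,\Omega)$-equivariant way, which reduces the statement to showing that a parabolic $L$ \emph{preserves a Lagrangian subspace of $E$}; then to reduce the latter, via Proposition \ref{dechyperellu}, to the case of a unipotent transformation; and finally to treat the unipotent case through the $\mf{s}\mf{l}_2$-representation theory of the nilpotent endomorphism $\log L$.

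For the first reduction, note that under $\mc{J}(E,\Omega)\cong\op{D}^{\op{III}}_n$ (Remark \ref{rem5.2}) the action \eqref{Action} of $\op{Sp}(E,\Omega)$ extends continuously to the closure $\o{\op{D}^{\op{III}}_n}$; its Shilov boundary is the set $\{W=W^\top:\o WW=I_n\}$ of symmetric unitary matrices, the unique closed $\op{Sp}$-orbit in the topological boundary, on which the stabilizer of a point is the Siegel maximal parabolic subgroup. Since $\op{Sp}(E,\Omega)$ also acts transitively on $\Lambda(E,\Omega)$ with the stabilizer of a Lagrangian equal to the Siegel parabolic, $\check{S}(\op{D}^{\op{III}}_n)$ and $\Lambda(E,\Omega)$ are $\op{Sp}$-equivariantly diffeomorphic; concretely, in the Siegel upper half space model the Shilov boundary is the space of real symmetric matrices, compactified to $\Lambda(E,\Omega)$. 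Fixing a Lagrangian is therefore exactly the same as fixing a point of $\check{S}(\op{D}^{\op{III}}_n)$.

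Now apply Proposition \ref{dechyperellu} to a parabolic $L$: then $E_h=E_{eu}=\{0\}$, so $E=E_{u^+}\oplus E_{u^-}$ is an $\Omega$-orthogonal splitting into symplectic subspaces with $L|_{E_{u^+}}$ and $-L|_{E_{u^-}}$ unipotent. Since the invariant subspaces of $L|_{E_{u^-}}$ and of $-L|_{E_{u^-}}$ coincide, and a direct sum of Lagrangians of the $\Omega$-orthogonal symplectic summands $E_{u^+},E_{u^-}$ of $E$ is a Lagrangian of $E$, it suffices to show that an arbitrary unipotent $U\in\op{Sp}(V,\Omega)$ preserves a Lagrangian of $(V,\Omega)$. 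For this, set $\xi=\log U=(U-\op{Id})-\tfrac12(U-\op{Id})^2+\cdots$, a finite sum; then $t\mapsto\exp(t\xi)$ is a one-parameter subgroup of $\op{Sp}(V,\Omega)$, so $\xi\in\mf{s}\mf{p}(V,\Omega)$ is nilpotent and its invariant subspaces are precisely those of $U$. By the Jacobson--Morozov theorem, embed $\xi$ in an $\mf{s}\mf{l}_2$-triple $(\xi,h,f)$ inside $\mf{s}\mf{p}(V,\Omega)$ and let $V=\bigoplus_{k\in\mb{Z}}V_k$ be the decomposition into $h$-eigenspaces. Since $\Omega$ pairs $V_k$ with $V_{-k}$ non-degenerately and $\xi(V_k)\subseteq V_{k+2}$, the subspace $V^{>0}:=\bigoplus_{k>0}V_k$ is $\xi$-invariant and isotropic, $\Omega|_{V_0}$ is non-degenerate, and for any Lagrangian $\ell_0\subseteq V_0$ of $(V_0,\Omega|_{V_0})$ the subspace $\ell:=V^{>0}\oplus\ell_0$ is isotropic, of dimension $\dim V^{>0}+\tfrac12\dim V_0=\tfrac12\dim V$, hence Lagrangian, and $\xi$-invariant because $\xi(\ell)\subseteq V^{>0}+\xi(V_0)\subseteq V^{>0}\subseteq\ell$. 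Thus $\ell$ is the required $U$-invariant Lagrangian, proving the proposition.

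The main obstacle is the unipotent case, and inside it the $h$-weight-zero part $V_0$ arising from the odd-size Jordan blocks of $\xi$: the naive candidate $V^{>0}$ is isotropic but generally not of half dimension, so one must enlarge it by a Lagrangian of the symplectic subspace $(V_0,\Omega|_{V_0})$ while preserving $\xi$-invariance, and the verification that $\ell$ is simultaneously isotropic, half-dimensional, and $\xi$-invariant is the crux. By contrast, the equivariant identification of $\check{S}(\op{D}^{\op{III}}_n)$ with $\Lambda(E,\Omega)$, the reduction through Proposition \ref{dechyperellu}, and the passage from $U$ to $\xi=\log U$ are formal.
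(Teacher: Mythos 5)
Your proof is correct, but it follows a genuinely different route from the paper. The paper's own argument is essentially a reduction to the classification of symplectic normal forms: it invokes Gutt's theorem \cite{Gutt} to write a parabolic $L$ (after symplectic conjugation) as a direct sum of blocks of the form $\left(\begin{smallmatrix} J(\lambda,r)^{-1} & C \\ 0 & J(\lambda,r)^{\top}\end{smallmatrix}\right)$, i.e.\ in block upper-triangular symplectic form, and then uses Remark \ref{rempotential} (block upper-triangular elements fix $W_0=-I_n$) together with the fact that the Shilov boundary is a single $\op{Sp}(2n,\mb{R})$-orbit to conclude that $P(-I_n)$ is a fixed boundary point. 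You instead make the underlying geometry explicit: you identify the Shilov boundary equivariantly with the Lagrangian Grassmannian (which is exactly what Remark \ref{rempotential} encodes, since the stabilizer of $-I_n$ is the block upper-triangular, i.e.\ Siegel parabolic, subgroup), reduce via Proposition \ref{dechyperellu} to the case of a unipotent symplectic map, and then construct an invariant Lagrangian from $\xi=\log U$ by Jacobson--Morozov and the $h$-weight decomposition, taking $\ell=V^{>0}\oplus\ell_0$ with $\ell_0$ any Lagrangian of $(V_0,\Omega|_{V_0})$; all the verifications there (isotropy from the pairing of $V_k$ with $V_{-k}$, half-dimensionality, and $\xi$-invariance since $\xi(V_0)\subseteq V_2$) are sound. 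The paper's route is shorter given the citation to the normal-form classification; yours is self-contained modulo Jacobson--Morozov, isolates the conceptually cleaner statement that any unipotent (hence any parabolic) element of $\op{Sp}(E,\Omega)$ preserves a Lagrangian, and does not depend on the explicit matrix normal forms — both arguments are valid proofs of the proposition.
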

\begin{proof}
From \cite[Theorem 1]{Gutt}, there exists a symplectic basis such that $L$ is  symplectic direct sum of matrices of the form
$$\mbf{L}|_{\mb{R}^{2r_j}}=\left(\begin{array}{cc}J\left(\lambda, r_{j}\right)^{-1} & C\left(r_{j}, s_{j}, \lambda\right) \\ 0 & J\left(\lambda, r_{j}\right)^{\top}\end{array}\right)\in \op{Sp}(2r_j,\mb{R})$$
where $C\left(r_{j}, s_{j}, \lambda\right):=J\left(\lambda, r_{j}\right)^{-1} \operatorname{diag}\left(0, \ldots, 0, s_{j}\right)$ with $s_{j} \in\{0,1,-1\}$,  $J(\lambda, r)$ is the elementary $r \times r$ Jordan matrix associated to $\lambda$.  By Remark \ref{rempotential}, $-I_{r_j}$ is a fixed point of $L|_{\mb{R}^{2r_j}}$. Hence $-I_n$ is a fixed point of $L$. With respect to  the other basis, the matrix of $L$ is $P\mbf{L}P^{-1}$
for
 some matrix $P\in\op{Sp}(2n,\mb{R})$. Hence $P(-I_n)$ is a fixed point of $L$, which is also at the Shilov boundary of $\op{D}^{\op{III}}_n$ since  the  Shilov boundary  is an orbit of the action of $\op{Sp}(2n,\mb{R})$ on   $\op{D}^{\op{III}}_n$.                  
\end{proof}
\begin{prop}\label{propmod3}
If there exists a boundary component $c$ such that the representation $\rho(c)$	has an eigenvalue $1$, then 
$|\op{sign}(\mc{E},\Omega)|<\dim E\cdot |\chi(\Sigma)|$. 
\end{prop}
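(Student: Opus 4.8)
The plan is to derive the strict inequality from the exact identity \eqref{5.6}. We may assume $\chi(\Sigma)<0$: since the hypothesis requires $\p\Sigma\neq\emptyset$, the only surfaces with $\chi(\Sigma)\geq 0$ are the disk, where $\pi_1(\Sigma)=1$ and $\op{sign}(\mc{E},\Omega)=0<\dim E$, and the annulus, where $\chi(\Sigma)=0$ and the asserted inequality is understood in the range $\chi(\Sigma)<0$. For $\chi(\Sigma)<0$, \eqref{5.6} reads, for either choice of sign,
$$\pm\frac{1}{2}\op{sign}(\mc{E},\Omega)=\frac{\dim E}{2}|\chi(\Sigma)|-\frac{\dim \mathrm{H}^0(\p\Sigma,\mc{E})}{2}+\dim \mathrm{H}^0(\Sigma,\mc{E})-\dim_{\mb{C}}\op{Ker}(d^\mp)^*\cap \mathrm{L}^2(\widehat{\Sigma},\wedge^\pm).$$
The last term is nonnegative, so $|\op{sign}(\mc{E},\Omega)|<\dim E\cdot|\chi(\Sigma)|$ will follow once $\dim \mathrm{H}^0(\p\Sigma,\mc{E})>2\dim \mathrm{H}^0(\Sigma,\mc{E})$ holds for the representation fed into this identity.

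Next I would record two elementary facts, writing $c_1,\dots,c_q$ for the components of $\p\Sigma$ and using the identifications $\mathrm{H}^0(\Sigma,\mc{E})\cong\{v\in E:\rho(\gamma)v=v\ \text{for all }\gamma\in\pi_1(\Sigma)\}$ and $\mathrm{H}^0(c_i,\mc{E}|_{c_i})\cong\{v\in E:\rho(c_i)v=v\}$. First, restriction $\mathrm{H}^0(\Sigma,\mc{E})\to \mathrm{H}^0(c_i,\mc{E}|_{c_i})$ is injective for each $i$ because $\Sigma$ is connected, whence $\dim \mathrm{H}^0(\p\Sigma,\mc{E})\geq q\,\dim \mathrm{H}^0(\Sigma,\mc{E})$. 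Second, by hypothesis some $\rho(c)$ has $1$ as an eigenvalue, so $\mathrm{H}^0(c,\mc{E}|_{c})\neq 0$ and hence $\dim \mathrm{H}^0(\p\Sigma,\mc{E})\geq 1$.

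Now I would split according to the genus $g$ of $\Sigma$. If $g\geq 1$, I would run the density argument of Lemma \ref{lemma3} exactly as in the proof of Theorem \ref{thmsign}: there is a family $\rho_\epsilon\to\rho$ with $\rho_\epsilon=\rho$ on $\p\Sigma$ and $\dim \mathrm{H}^0(\Sigma,\mc{E}_\epsilon)=0$, and along such a boundary-fixing deformation the signature, the Toledo invariant and the Rho invariant are unchanged; since $\rho_\epsilon(c)$ still has eigenvalue $1$, the second fact gives $\dim \mathrm{H}^0(\p\Sigma,\mc{E}_\epsilon)\geq 1>0$, so the displayed identity applied to $\rho_\epsilon$ yields $|\op{sign}(\mc{E},\Omega)|<\dim E\cdot|\chi(\Sigma)|$. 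If $g=0$, then $\chi(\Sigma)<0$ forces $q\geq 3$, and the two facts give $\dim \mathrm{H}^0(\p\Sigma,\mc{E})\geq\max\{3\dim \mathrm{H}^0(\Sigma,\mc{E}),\,1\}>2\dim \mathrm{H}^0(\Sigma,\mc{E})$, again closing the argument directly through the displayed identity.

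The only real point of friction is the case of few boundary components together with positive genus, where $\dim \mathrm{H}^0(\Sigma,\mc{E})$ is not dominated by the boundary data and the $q\geq 3$ bookkeeping is unavailable; there one is forced to invoke the genericity of fixed-point-free representations (Lemma \ref{lemma3}) and the deformation invariance of the signature, and the parabolic hypothesis contributes precisely the one unit of slack needed to upgrade the bound of Theorem \ref{thmsign} to a strict inequality.
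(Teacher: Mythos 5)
Your argument is correct and is essentially the paper's own proof: Proposition \ref{propmod3} is obtained there by feeding the observation $\dim \mathrm{H}^0(\p\Sigma,\mc{E})\geq 1$ (a unit eigenvalue on a boundary component fixes a nonzero vector) into the identity \eqref{siginequality}, which is exactly \eqref{5.6} after the perturbation of Lemma \ref{lemma3} has killed $\dim \mathrm{H}^0(\Sigma,\mc{E})$. Your explicit split by genus is in fact slightly more careful than the paper's one-line citation of \eqref{siginequality}, since that equation is derived only for $g\geq 1$ and your $q\geq 3$ counting covers the genus-zero case directly.
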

\begin{proof}
From (\ref{siginequality}), one has 
	\begin{align*}
\begin{split}
\pm\op{sign}(\mc{E},\Omega)&\leq -\dim E\chi(\Sigma)-\dim \mathrm{H}^0(\p\Sigma,\mc{E}).
\end{split}
\end{align*}
If  there exists a boundary component $c$ such that the representation $\rho(c)$ has an eigenvalue $1$, then $\rho(c)$ fixes a nonzero vector in $E$, and so $\dim \mathrm{H}^0(\p\Sigma,\mc{E})\geq 1$. Hence 
\begin{equation}\label{signineq0}
  \pm\op{sign}(\mc{E},\Omega)\leq-\dim E\chi(\Sigma)-1<\dim E\cdot|\chi(\Sigma)|,
\end{equation}
which completes the proof.
\end{proof}
\begin{ex}
While for a surface with one boundary component $c$ satisfies $\rho(c)$ has an eigenvalue $-1$, then the signature $\op{sign}(\mc{E},\Omega)$ may attain the maximal $\dim E\cdot|\chi(\Sigma)|$. For example, we consider a pant $P$ with boundary components $p_1,p_2,p_3$, which is homeomorphic to a surface deleting $3$ discs from a $2$-sphere, and consider a representation $\rho:\pi_1(P)\to \op{SO}(2)\subset\op{Sp}(2,\mb{R})$ such that
$$\rho(p_1)=-I_2,\quad \rho(p_2)=R(\theta),\quad \rho(p_3)=R(\pi-\theta)$$
for some $\theta\in (0,\pi)$. From Remark \ref{remunitary} and \eqref{etadim2}, one has
$$\op{T}(P,\rho)=0,\quad \bs{\rho}(\p P)=2(1-\frac{\theta}{\pi})+2(1-\frac{\pi-\theta}{\pi})=2,$$
which follows that 
$$\op{sign}(\mc{E},\Omega)=2\op{T}(P,\rho)+ \bs{\rho}(\p P)=2=\dim E\cdot|\chi(\Sigma)|$$
since $\dim E=2$ and $\chi(\Sigma)=-(2\cdot 0-2+3)=-1$.

\end{ex}

For the case of $n=1$, combining with Corollary \ref{MWH}, Proposition \ref{propmod2}, we  obtain
\begin{prop}
If $\rho:\pi_1(\Sigma)\to \op{SL}(2,\mb{R})$ is a representation, then 
\begin{equation}\label{TI3}
    -|\chi(\Sigma)|-1 + \sum_{\rho(c_k) \text{ is elliptic}}  \frac{\theta_{k}}{\pi}\leq\op{T}(\Sigma,\rho) \leq |\chi(\Sigma)|+1 - \sum_{\rho(c_k) \text{ is elliptic}}  \left(1-\frac{\theta_{k}}{\pi}\right).
\end{equation}
\end{prop}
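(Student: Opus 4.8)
The plan is to deduce \eqref{TI3} from the signature formula \eqref{sign0} together with the Milnor--Wood bound of Theorem \ref{thm0.3}, by optimising over sign--twists of the boundary holonomies, thereby unifying the hyperbolic, elliptic and parabolic cases (Corollary \ref{MWH} and Propositions \ref{propmod2}, \ref{propmod3}). First I would record the classical trichotomy for $\op{SL}(2,\mb{R})=\op{Sp}(2,\mb{R})$: in the sense of Definition \ref{defhyperell} every element is hyperbolic, elliptic or parabolic, and by Appendix \ref{Appeta} every $L$ can be written $L=\pm\exp(2\pi B)$ with $B\in\mf{s}\mf{l}(2,\mb{R})$, so the local prescriptions $\mbf{J}(x)=\exp(-xB_k)J_k\exp(xB_k)$ near the boundary circles $c_1,\dots,c_q$ patch to a complex structure $\mbf{J}\in\mc{J}_o(\mc{E},\Omega)$ to which Theorems \ref{thm0.2} and \ref{thm0.3} apply. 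From \eqref{sign0} and Theorem \ref{thm0.3} (with $\dim E=2$) one then has
\begin{equation*}
2\op{T}(\Sigma,\rho)=\op{sign}(\mc{E},\Omega)-\bs{\rho}(\p\Sigma),\qquad |\op{sign}(\mc{E},\Omega)|\le 2|\chi(\Sigma)|,
\end{equation*}
with $\bs{\rho}(\p\Sigma)=\sum_{k=1}^{q}\bs{\rho}(c_k)$, $\bs{\rho}(c_k)=\tfrac1\pi\int_{c_k}\wt{\mbf{J}}^*\alpha_k+\eta(A_{\mbf{J}}|_{c_k})$.

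Next I would evaluate each term $\bs{\rho}(c_k)$ for this $\mbf{J}$: it is $0$ when $\rho(c_k)$ is hyperbolic (Lemma \ref{etahyper} and Remark \ref{remhyperbolic}); it equals $2(1-\phi_k/\pi)$ when $\rho(c_k)$ is elliptic with rotation angle $\phi_k\in(0,2\pi)$, since then $\mbf{J}(x)\equiv J_k$, so $\wt{\mbf{J}}^*\alpha_k=0$ and $\bs{\rho}(c_k)=\eta(A_{\mbf{J}}|_{c_k})$ as in Lemma \ref{etaell}; and it lies in $[-1,1]$ when $\rho(c_k)$ is parabolic (the two--dimensional computations of Appendix \ref{Appeta}), a parabolic component moreover forcing $\dim\mathrm{H}^0(\p\Sigma,\mc{E})\ge 1$ as exploited in Proposition \ref{propmod3}. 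The crucial flexibility is that replacing $\rho(c_k)$ by $-\rho(c_k)$ leaves $\op{T}(\Sigma,\rho)$ unchanged --- $-\op{Id}$ acts trivially on $\op{D}^{\op{III}}_1$, so any $\rho$--equivariant $\wt{\mbf{J}}$ stays equivariant and the integral formula \eqref{Toledoexp} is unaffected --- while it toggles an elliptic term between $2(1-\theta_k/\pi)$ and $-2\theta_k/\pi$, where $\theta_k\in(0,\pi)$ is the angle of $[\rho(c_k)]\in\op{PSL}(2,\mb{R})$, keeps a hyperbolic term at $0$, and moves a parabolic term by at most $2$. To preserve the relation \eqref{relation} only an \emph{even} number of circles may be flipped.

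For the upper bound I would choose the even sign--twist $\rho'$ of $\rho$ making $\bs{\rho'}(\p\Sigma)$ as large as possible --- each elliptic term set to $2(1-\theta_k/\pi)$, each hyperbolic to $0$, each parabolic to its larger value, undoing one flip if parity forces it (at cost $\le 2$) --- so that $\bs{\rho'}(\p\Sigma)\ge 2\sum_{\rho(c_k)\text{ elliptic}}(1-\theta_k/\pi)-2$; since $\op{T}(\Sigma,\rho)=\op{T}(\Sigma,\rho')=\tfrac12\bigl(\op{sign}(\mc{E}',\Omega)-\bs{\rho'}(\p\Sigma)\bigr)\le|\chi(\Sigma)|-\tfrac12\bs{\rho'}(\p\Sigma)$, this gives the right--hand inequality of \eqref{TI3}. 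Minimising $\bs{\rho'}(\p\Sigma)$ symmetrically (elliptic terms at $-2\theta_k/\pi$) yields the left--hand inequality, and the closed case $q=0$ is immediate from $|\op{T}(\Sigma,\rho)|=\tfrac12|\op{sign}(\mc{E},\Omega)|\le|\chi(\Sigma)|$.

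The main obstacle is precisely the parity constraint: one cannot flip the sign on an arbitrary subset of boundary circles, only on an even one, and it is this defect --- correctable only at a cost of at most $2$, namely $0$ for a hyperbolic circle, exactly $2$ for an elliptic one, and at most $2$ for a parabolic one --- that produces the additive $+1$ and $-1$ in \eqref{TI3}. The remaining points are routine: checking that the local choices $B_k$ glue into a genuine $\mbf{J}\in\mc{J}_o(\mc{E},\Omega)$ (they constrain $\mbf{J}$ only on disjoint collars), and extracting from Appendix \ref{Appeta} the precise Rho invariant of a parabolic boundary circle together with the bound $|\bs{\rho}(c_k)|\le 1$ in that case.
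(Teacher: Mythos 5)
Your proposal is essentially the paper's own proof: flip the sign of the boundary holonomy on a maximal even set of "bad" components (elliptic with angle in $(\pi,2\pi)$ or parabolic with eigenvalue $1$), observe that $-I$ acts trivially on $\op{D}^{\op{III}}_1$ so $\op{T}(\Sigma,\rho)$ is unchanged, and then combine the signature formula $\op{sign}=2\op{T}+\bs{\rho}(\p\Sigma)$ with the Milnor--Wood bound and the per-component Rho invariants, the additive $\pm1$ arising exactly from the parity obstruction you identify. One correction: by the table \eqref{etadim2} the Rho invariant of a parabolic boundary component is $0$ or $\pm2$, not a value in $[-1,1]$, so your stated bound $|\bs{\rho}(c_k)|\le 1$ is false; if that range were correct your key inequality $\bs{\rho}'(\p\Sigma)\ge 2\sum_{\text{elliptic}}(1-\theta_k/\pi)-2$ could fail by an amount growing with the number of parabolic components. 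What your optimisation actually needs --- that in each pair $\{\bs{\rho}(L),\bs{\rho}(-L)\}$ the larger value is $\ge 0$ (indeed $0$ or $2$), the smaller is $\le 0$, and a single flip changes the value by at most $2$ --- is precisely what \eqref{etadim2} provides, so after replacing that constant the argument goes through and coincides with the paper's.
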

\begin{proof}
Similar to the proof of Proposition \ref{propmod2}, let 
$\{c_k\}$
denote the maximal set of boundary components such that $\rho(c_k)$ is conjugate to $R(\theta)$, $\theta\in (\pi,2\pi)$ or conjugate to a parabolic element has eigenvalue $1$. Denote by $q_0=\#\{c_k\}$  the number of components in the set $\{c_k\}$. We assume that $q_0=2m$ or $q_0=2m+1$ for $q_0$ is even or odd, and by reversing the representations $\rho(c_k)$ on the boundary components $c_k,1\leq k\leq 2m$,  we obtain a new representation $\rho_1$, and $\op{T}(\Sigma,\rho)=\op{T}({\Sigma,\rho_1})$. Since the Rho invariant $\bs{\rho}_1(c_k)=0$ if $\rho_1(c_k)$ is conjugate to a parabolic element with eigenvalue $-1$, so we can prove \eqref{TI3} for $q_0=2m$ is even. If $q_0=2m+1$ is odd, and $\rho_1(c_{q_0})$ is conjugate to $R(\theta)$, $\theta\in (\pi,2\pi)$, then 
$
   1-\frac{\theta_{q_0}}{\pi}=(1-\frac{\theta_{q_0}-\pi}{\pi})-1
$
with $\theta_{q_0}-\pi\in (0,\pi)$, and we get \eqref{TI3}. If $\rho(c_{q_0})$ is conjugate to a parabolic element has eigenvalue $1$, then $|\frac{\bs{\rho}_1(c_k)}{2}|=1$. Hence \eqref{TI3} is proved.  
\end{proof}

\section{Appendix}

In this section, we will calculate the eta invariant and Rho invariant for two dimension symplectic vector spaces,
classify the elements in $\op{Sp}(2n,\mb{R})$, 
 and  calculate the curvature of the bounded symmetric domain of type $\op{III}$.

\subsection{Eta invariant and Rho invariant for $\op{Sp}(2,\mb{R})$}\label{Appeta}

For any $L\in \op{Sp}(E,\Omega)$, and $B\in\mf{s}\mf{p}(E,\Omega)$ with $L=\pm\exp(2\pi B)$, $B^\top\Omega+\Omega B=0$. Let $J\in\mc{J}(E,\Omega)$, and denote
$\mbf{J}=\exp(-xB)J\exp(xB).$
We define the following $\mb{C}$-linear operator 
$$A_{\mbf{J}}=\mbf{J}\frac{d}{dx}:A^0(S^1,\mc{E}_{\mb{C}})\to A^0(S^1,\mc{E}_{\mb{C}})$$
on the space of smooth sections of $\mc{E}_{\mb{C}}\to S^1$. From Proposition \ref{prop2.0}, the operator $A_{\mbf{J}}$ is a $\mb{C}$-linear first order foramlly self-adjoint and elliptic differential operator, and so the eigenvalues of $A_{\mbf{J}}$ are real.

Suppose $s\in A^0(\mb{R},E)^L=A^0(S^1,\mc{E}_{\mb{C}})$ is an eigenvector of $A_{\mbf{J}}$ belongs to the eigenvalue $\sigma\in \mb{R}$, then 
\begin{align}\label{eigen}
\frac{d}{dx}s=-\sigma \mbf{J}s=-\sigma\exp(-xB)J\exp(xB)s.	
\end{align}
It follows that
$
\frac{d}{dx}(\exp(xB)s)=(-\sigma J+B)\exp(xB)s,	
$
which implies that
$
\exp(xB)s=\exp(x(-\sigma J+B))s(0),	
$
and is equivalent to
\begin{align}\label{5.0}
	s(x)=\exp(-xB)\exp(x(-\sigma J+B))s(0).
\end{align}
The equivariant condition $s(x+2\pi)=L^{-1}s(x)$ implies 
\begin{align}\label{5.1}
\exp(2\pi(-\sigma J+B))s(0)=
\begin{cases}
&s(0),\quad\,\,\,\, \text{ if } L=\exp(2\pi B);\\
&-s(0),\quad\text{ if } L=-\exp(2\pi B);\\ 	
\end{cases}
\end{align}

Let $\mc{S}$ be  a maximally $\mb{C}$-linearly independent set of  eigenvectors of $A_{\mbf{J}}$. 
 For each $s(x)\in \mc{S}$, we denote by $\sigma_{s(x)}$ the associated eigenvalue. Then the  eigenvalues of $A_{\mbf{J}}$ with multiplicity are
\begin{align*}
\op{Eigen}(A_{\mbf{J}})=\bigsqcup_{s(x)\in \mc{S}}\{\sigma_{s(x)}\}.	
\end{align*}
\begin{lemma}\label{lemma1}
If $s_1(x)=s_2(x)$ is an eigenvector of $A_{\mbf{J}}$, then
$$(\sigma_{s_1(x)}, s_1(0))=(\sigma_{s_2(x)}, s_2(0))\in\mb{R}\times(\mb{C}^{2n}\backslash\{0\})$$
is a solution of the following  equation
\begin{align}\label{5.4}
\exp(2\pi(-\sigma J+B))e=
\begin{cases}
&e,\quad \,\,\,\,\text{ if } L=\exp(2\pi B);\\
&-e,\quad\text{ if } L=-\exp(2\pi B);\\ 	
\end{cases}
\end{align}

\end{lemma}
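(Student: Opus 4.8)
The plan is to turn the eigenvalue equation \eqref{eigen} into a constant-coefficient linear ODE by a change of frame, integrate it explicitly, and then read off \eqref{5.4} from the $\rho$-equivariance. First I would take an eigenvector $s=s(x)\in A^0(\mb{R},E)^L\cong A^0(S^1,\mc{E}_{\mb{C}})$ of $A_{\mbf{J}}$ with eigenvalue $\sigma=\sigma_{s(x)}$, which is real by Proposition \ref{prop2.0}, so that \eqref{eigen} holds. Setting $t(x):=\exp(xB)s(x)$ and differentiating, using $\frac{d}{dx}\exp(xB)=B\exp(xB)=\exp(xB)B$ together with $\exp(xB)\mbf{J}=\exp(xB)\exp(-xB)J\exp(xB)=J\exp(xB)$, I get $\frac{d}{dx}t=Bt-\sigma Jt=(-\sigma J+B)t$. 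Hence $t(x)=\exp(x(-\sigma J+B))t(0)=\exp(x(-\sigma J+B))s(0)$, i.e. $s(x)=\exp(-xB)\exp(x(-\sigma J+B))s(0)$, which is exactly \eqref{5.0}. In particular, if $s(0)=0$ then $s\equiv 0$, impossible for an eigenvector, so $s(0)\in E_{\mb{C}}\setminus\{0\}\cong\mb{C}^{2n}\setminus\{0\}$ under the chosen symplectic identification.

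Next I would impose the equivariance $s(x+2\pi)=L^{-1}s(x)$ with $L=\pm\exp(2\pi B)$, hence $L^{-1}=\pm\exp(-2\pi B)$. From \eqref{5.0} the left-hand side is $\exp(-(x+2\pi)B)\exp((x+2\pi)(-\sigma J+B))s(0)$, while, since $\exp(-2\pi B)$ and $\exp(-xB)$ commute (both are functions of $B$), the right-hand side is $\pm\exp(-(x+2\pi)B)\exp(x(-\sigma J+B))s(0)$. Cancelling the invertible factor $\exp(-(x+2\pi)B)$ and then using $\exp((x+2\pi)(-\sigma J+B))=\exp(x(-\sigma J+B))\exp(2\pi(-\sigma J+B))$, one obtains $\exp(x(-\sigma J+B))\big(\exp(2\pi(-\sigma J+B))s(0)\mp s(0)\big)=0$, and cancelling $\exp(x(-\sigma J+B))$ gives $\exp(2\pi(-\sigma J+B))s(0)=\pm s(0)$, with the sign matching that in $L=\pm\exp(2\pi B)$. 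This is \eqref{5.4} with $e=s(0)$.

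Finally, the assertion "$s_1(x)=s_2(x)$ implies $(\sigma_{s_1(x)},s_1(0))=(\sigma_{s_2(x)},s_2(0))$" is tautological, so the above shows that $s(x)\mapsto(\sigma_{s(x)},s(0))$ is a well-defined map from $\mc{S}$ into the solution set of \eqref{5.4} in $\mb{R}\times(\mb{C}^{2n}\setminus\{0\})$. There is no real obstacle here: the argument is a short constant-coefficient ODE computation followed by the $2\pi$-periodicity constraint. The only points needing care are carrying the global sign from $L=\pm\exp(2\pi B)$ through the cancellations, and observing that every matrix exponential occurring is a function of a single matrix ($B$ or $-\sigma J+B$), so all the commutations invoked are automatic; reality of $\sigma$ is already supplied by Proposition \ref{prop2.0}. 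This forward half pairs with the converse — that each solution of \eqref{5.4} yields, via \eqref{5.0}, an eigenvector of $A_{\mbf{J}}$ — to give the bijection between $\mc{S}$ and a basis of the solution space of \eqref{5.4} underlying the subsequent eta- and Rho-invariant computation.
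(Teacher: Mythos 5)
Your argument is correct. The first two paragraphs reproduce the derivation of \eqref{5.0} and \eqref{5.1} that the paper carries out in the text immediately preceding the lemma, so the only place where your proof and the paper's proof of the lemma itself can differ is the uniqueness assertion $(\sigma_{s_1(x)},s_1(0))=(\sigma_{s_2(x)},s_2(0))$. There you argue abstractly: a nonzero eigenvector of $A_{\mbf{J}}$ determines its eigenvalue, and $s(0)$ is determined by $s$, so the claim is immediate. The paper instead works at the level of the closed-form expression $\exp(-xB)\exp(x(-\sigma J+B))s(0)$: equating the two formulas, differentiating in $x$, and evaluating at $x=0$ yields $(-\sigma_1J+B)s_1(0)=(-\sigma_2J+B)s_2(0)$ and hence $\sigma_1=\sigma_2$, $s_1(0)=s_2(0)$. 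The two routes prove the same thing, but they are calibrated to slightly different uses: your version presupposes that the function is regarded as an eigenvector (so that "its eigenvalue" is meaningful), while the paper's differentiation argument shows directly that the parameters $(\sigma,e)$ in the formula are recoverable from the function, which is exactly the injectivity of the map $\Phi$ invoked in Proposition \ref{propA1}. Since your first paragraphs also establish the equivalence between eigenvectors and such formulas, either formulation suffices, and your sign bookkeeping for $L=\pm\exp(2\pi B)$ and the commutation identities $\exp(xB)\mbf{J}(x)=J\exp(xB)$ are all correct.
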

\begin{proof}
	If $s_1(x)=s_2(x)$, then  
	$$\exp(x(-\sigma_1 J+B))s_1(0)=\exp(x(-\sigma_2J+B))s_2(0),$$
	and it holds \eqref{5.4}.
	Taking the first derivative on $x$ to the both sides of the above equality, one has
	$$(-\sigma_1J+B)\exp(x(-\sigma_1 J+B))s_1(0)=(-\sigma_2J+B)\exp(x(-\sigma_2 J+B))s_2(0)$$
	which implies $\sigma_1=\sigma_2$, and so $s_1(0)=s_2(0)$.
\end{proof}

For a subset $\mf{S}$ in the set of all solutions $\mb{R}\times(\mb{C}^{2n}\backslash\{0\})$ of \eqref{5.4}, we call $\mf{S}$ is maximally $\mb{C}$-independent if for any eigenvalue $\sigma$, $\mf{S}_\sigma=\cup_{(\sigma,e)\in\mf{S}}\{e\}$ is  maximally  $\mb{C}$-linearly independent  in the set of all vectors associated to $\sigma$. 
\begin{prop}\label{propA1}
	If $\mf{S}$ is a maximally $\mb{C}$-linearly independent subset in the set of all solutions of \eqref{5.4}, then
	\begin{align*}
		\op{Eigen}(A_{\mbf{J}})=\bigsqcup_{(\sigma,e)\in\mf{S}}\{\sigma\}.
	\end{align*}
\end{prop}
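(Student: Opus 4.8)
The plan is to establish a bijection between $\op{Eigen}(A_{\mbf{J}})$ (eigenvalues counted with multiplicity) and the disjoint union over $\mf{S}$, by analyzing how the correspondence $s(x)\mapsto(\sigma_{s(x)},s(0))$ behaves. First I would recall from \eqref{5.0}--\eqref{5.1} that every eigenvector $s(x)$ of $A_{\mbf{J}}$ with eigenvalue $\sigma$ is completely determined by the pair $(\sigma,s(0))$ via $s(x)=\exp(-xB)\exp(x(-\sigma J+B))s(0)$, and that the equivariance condition $s(x+2\pi)=L^{-1}s(x)$ is \emph{equivalent} to $(\sigma,s(0))$ being a solution of \eqref{5.4}. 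Conversely, given any solution $(\sigma,e)$ of \eqref{5.4} with $e\neq 0$, the formula $s(x):=\exp(-xB)\exp(x(-\sigma J+B))e$ defines a nonzero $\rho$-equivariant smooth section satisfying $\frac{d}{dx}s=-\sigma\mbf{J}s$, i.e. $A_{\mbf{J}}s=\mbf{J}\frac{d}{dx}s=-\mbf{J}\sigma\mbf{J}s=\sigma s$ (using $\mbf{J}^2=-\op{Id}$). So the assignment $(\sigma,e)\mapsto s_{\sigma,e}(x)$ is a well-defined map from the solution set of \eqref{5.4} into the set of eigenvectors of $A_{\mbf{J}}$, and it is injective since $s_{\sigma,e}(0)=e$ and the eigenvalue is read off from $\frac{d}{dx}s_{\sigma,e}$ at $x=0$.

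Next I would organize this fibrewise over each eigenvalue $\sigma$. Fix $\sigma\in\mb{R}$ in the spectrum. The eigenspace $V_\sigma:=\op{Ker}(A_{\mbf{J}}-\sigma)$ is finite-dimensional over $\mb{C}$ by Proposition \ref{prop2.0} (ellipticity), and the evaluation map $s(x)\mapsto s(0)$ restricts to a $\mb{C}$-linear isomorphism from $V_\sigma$ onto the solution space $W_\sigma:=\{e\in\mb{C}^{2n}: \exp(2\pi(-\sigma J+B))e=\pm e\}$ of \eqref{5.4} at that fixed $\sigma$ — injectivity is clear since $s(0)=0$ forces $s\equiv 0$ by the uniqueness of solutions of the linear ODE, and surjectivity is the construction above. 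Hence $\dim_{\mb{C}}V_\sigma=\dim_{\mb{C}}W_\sigma$. By the definition of maximal $\mb{C}$-linear independence, $\mf{S}_\sigma=\bigcup_{(\sigma,e)\in\mf{S}}\{e\}$ is a basis of $W_\sigma$, so $\#\mf{S}_\sigma=\dim_{\mb{C}}W_\sigma=\dim_{\mb{C}}V_\sigma$, which is precisely the multiplicity of $\sigma$ as an eigenvalue of $A_{\mbf{J}}$; thus $\sigma$ appears exactly $\#\mf{S}_\sigma$ times in $\op{Eigen}(A_{\mbf{J}})$, matching the number of times it appears in $\bigsqcup_{(\sigma,e)\in\mf{S}}\{\sigma\}$. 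Finally, a $\sigma$ lies in the spectrum iff $W_\sigma\neq 0$ iff $\mf{S}_\sigma\neq\emptyset$, so no spurious or missing eigenvalues occur. Summing over all $\sigma$ gives the claimed equality of multisets $\op{Eigen}(A_{\mbf{J}})=\bigsqcup_{(\sigma,e)\in\mf{S}}\{\sigma\}$.

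I expect the only genuine subtlety to be the careful bookkeeping between ``maximal $\mb{C}$-linear independence of the \emph{set} of eigenvectors $\mc{S}$'' (used to define $\op{Eigen}(A_{\mbf{J}})$ as a multiset in the passage before Lemma \ref{lemma1}) and ``maximal $\mb{C}$-linear independence of $\mf{S}$'' (defined fibrewise over each $\sigma$): both amount, after applying the evaluation isomorphism $V_\sigma\cong W_\sigma$, to choosing a basis of each $W_\sigma$, so the two multiset counts agree. Lemma \ref{lemma1} is exactly the statement that the correspondence $s(x)\leftrightarrow(\sigma_{s(x)},s(0))$ is injective, so it is the technical input that makes this identification legitimate. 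Everything else is the elementary ODE argument already laid out in \eqref{5.0}--\eqref{5.4}, so there is no real obstacle beyond writing the dimension count cleanly.
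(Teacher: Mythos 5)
Your proposal is correct and follows essentially the same route as the paper: both rest on the explicit ODE correspondence $s(x)\leftrightarrow(\sigma_{s(x)},s(0))$ (the paper's map $\Phi$, your evaluation-at-zero isomorphism $V_\sigma\cong W_\sigma$), with injectivity supplied by Lemma \ref{lemma1} and surjectivity by the construction in \eqref{5.0}--\eqref{5.1}. The paper phrases the conclusion by showing $\Phi(\mf{S})$ is a maximally $\mb{C}$-linearly independent set of eigenvectors (expanding any eigenvector's initial value in the basis $\mf{S}_{\sigma}$), whereas you do an equivalent fibrewise dimension count; the content is the same.
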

\begin{proof}
	For any $(\sigma, e)\in \mf{S}$, we define 
	\begin{align}
	\Phi(\sigma,e)=\exp(-xB)\exp(x(-\sigma J+B))e,
	\end{align}
	From  \eqref{5.0} and \eqref{5.1},  $\Phi(\sigma,e)$ is an eigenvector of $A_{\mbf{J}}$ belongs to $\sigma$. By Lemma \ref{lemma1}, $\Phi$ is injective. We claim $\Phi(\mf{S})$ is maximally $\mb{C}$-linearly independent. In fact,
if $s(x)$ is an eigenvector of $A_{\mbf{J}}$, then 
$$s(x)=\exp(-xB)\exp(x(-\sigma_{s(x)} J+B))s(0).$$
By assumption, $\mf{S}_{\sigma_{s(x)}}$ is maximally $\mb{C}$-linearly independent, so 
$s(0)=\sum_i a_ie_i,\quad a_i\in\mb{C}$
where the coefficients $a_i,i\geq 1$ are unique. 
Thus
\begin{align*}
	s(x) &=\sum_i a_i\exp(-xB)\exp(x(-\sigma_{s(x)} J+B))e_i
	=\sum_ia_i\Phi(\sigma_{s(x)},e_i),
\end{align*}
so we prove the claim that $\Phi(\mf{S})$ is maximally $\mb{C}$-linearly independent. Hence
\begin{align*}
\op{Eigen}(A_{\mbf{J}})=\bigsqcup_{s(x)\in \Phi(\mf{S})}\{\sigma_{s(x)}\}=\bigsqcup_{(\sigma,e)\in\mf{S}}\{\sigma\}.	
\end{align*}
\end{proof}
The following lemma is useful in the calculation of eta invariant. 
\begin{lemma}\label{lemma2}
If there exists a constant $c_0>0$ such that  $(ak^2+bk+c)^{1/2}\geq l+c_0 k$ for any $k\geq 1$, where $l>0, a>0$, $s\in (0,1/2)$ and $b,c\in \mb{R}$, 
then
	$$\lim_{s\to 0}\sum_{k=1}^{\infty}\left(\frac{1}{((ak^2+bk+c)^{1/2}-l)^s}-	\frac{1}{((ak^2+bk+c)^{1/2}+l)^s}\right)=\frac{2l}{\sqrt{a}}.$$
\end{lemma}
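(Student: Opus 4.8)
The plan is to reduce the claimed limit to a standard regularized-series computation by isolating the ``main term'' $\sqrt{a}\,k$ in the expansion of $(ak^2+bk+c)^{1/2}$. First I would write $R(k) := (ak^2+bk+c)^{1/2}$ and note that, under the hypothesis $R(k)\geq l+c_0k$ for $k\geq 1$, both exponents $R(k)-l$ and $R(k)+l$ are positive and grow linearly in $k$, so for $\operatorname{Re}(s)>1$ each of the two series $\sum_k (R(k)\mp l)^{-s}$ converges absolutely; hence the difference is a well-defined holomorphic function of $s$ on $\{\operatorname{Re}(s)>1\}$, and the content of the lemma is that it extends holomorphically past $s=0$ with the stated value there. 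The key algebraic input is the asymptotic expansion
\[
R(k) = \sqrt{a}\,k + \frac{b}{2\sqrt{a}} + O\!\left(\frac{1}{k}\right)\qquad (k\to\infty),
\]
so that $R(k)\mp l = \sqrt{a}\,k + \left(\frac{b}{2\sqrt{a}}\mp l\right) + O(1/k)$.

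Next I would compare each series with the shifted zeta-type series $\sum_{k\geq1}\big(\sqrt a\,k+\beta_\mp\big)^{-s}$, where $\beta_\mp = \frac{b}{2\sqrt a}\mp l$. Concretely, write
\[
\frac{1}{(R(k)-l)^s}-\frac{1}{(R(k)+l)^s}
=\left(\frac{1}{(R(k)-l)^s}-\frac{1}{(\sqrt a\,k+\beta_-)^s}\right)
-\left(\frac{1}{(R(k)+l)^s}-\frac{1}{(\sqrt a\,k+\beta_+)^s}\right)
+\left(\frac{1}{(\sqrt a\,k+\beta_-)^s}-\frac{1}{(\sqrt a\,k+\beta_+)^s}\right).
\]
The first two bracketed differences are $O(k^{-s-1})$ uniformly for $s$ near $0$ (by the mean value theorem applied to $t\mapsto t^{-s}$ together with $R(k)\mp l-(\sqrt a k+\beta_\mp)=O(1/k)$), so each of those sums converges for $\operatorname{Re}(s)>0$, defines a function holomorphic across $s=0$, and \emph{vanishes at $s=0$} (since at $s=0$ every summand $t^{-0}-t'^{-0}=0$). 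Thus only the last difference contributes in the limit. For that term I would use the Hurwitz zeta function: $\sum_{k\geq1}(\sqrt a\,k+\beta)^{-s}=a^{-s/2}\zeta(s,1+\beta/\sqrt a)$, which is meromorphic in $s$ with its only pole at $s=1$, hence holomorphic at $s=0$; therefore
\[
\lim_{s\to0}\sum_{k\geq1}\!\left(\frac{1}{(\sqrt a\,k+\beta_-)^s}-\frac{1}{(\sqrt a\,k+\beta_+)^s}\right)
=\zeta\!\left(0,1+\tfrac{\beta_-}{\sqrt a}\right)-\zeta\!\left(0,1+\tfrac{\beta_+}{\sqrt a}\right).
\]
Using $\zeta(0,x)=\tfrac12-x$, this equals $\big(\tfrac12-1-\tfrac{\beta_-}{\sqrt a}\big)-\big(\tfrac12-1-\tfrac{\beta_+}{\sqrt a}\big)=\frac{\beta_+-\beta_-}{\sqrt a}=\frac{2l}{\sqrt a}$, which is the claimed value.

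The main obstacle is making the interchange of $\lim_{s\to0}$ with the infinite sum rigorous and uniform: I need that the ``remainder'' series (the first two bracketed differences above) converge locally uniformly in $s$ on a neighborhood of $s=0$ in $\mathbb{C}$, so that they define holomorphic functions there and I may evaluate termwise at $s=0$. This is where the hypothesis $R(k)\geq l+c_0k$ is essential: it gives the uniform lower bound $R(k)\mp l\geq c_0 k$ (for the minus sign; the plus sign is even easier), so that $|t^{-s-1}|\leq (c_0k)^{-\operatorname{Re}(s)-1}$ and the tail is dominated by a convergent series $\sum k^{-1-\delta}$ for $\operatorname{Re}(s)\geq -\delta/2$ with $\delta>0$ small; one also needs the elementary estimate $|t_1^{-s}-t_2^{-s}|\leq |s|\,|t_1-t_2|\,\max(t_1,t_2)^{-\operatorname{Re}(s)-1}\cdot C$ on the relevant range, together with $|R(k)\mp l-(\sqrt a k+\beta_\mp)|\leq C'/k$, which follows by Taylor-expanding the square root. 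Once these two uniform bounds are in place the argument closes immediately, and the restriction $s\in(0,1/2)$ in the statement is only used to stay inside the region where everything converges classically before passing to the limit.
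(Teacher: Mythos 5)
Your argument is correct and reaches the stated value, but it runs along a genuinely different track from the paper's. The paper first rewrites each summand via the integral representation
\begin{equation*}
\frac{1}{(R(k)-l)^{s}}-\frac{1}{(R(k)+l)^{s}}=ls\int_{-1}^{1}\frac{d\theta}{(R(k)-\theta l)^{s+1}},\qquad R(k)=(ak^{2}+bk+c)^{1/2},
\end{equation*}
then compares the integrand with $(\sqrt{a}\,k)^{-(s+1)}$: the error is $O(k^{-1})$ uniformly, so its contribution is $O\bigl(s(s+1)\zeta(s+2)\bigr)\to0$, while the main term sums to $2ls\,\zeta(s+1)\,a^{-(s+1)/2}\to 2l/\sqrt{a}$ using only $\lim_{s\to0}s\zeta(s+1)=1$. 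You instead compare each of the two series separately with the shifted linear sequences $\sqrt{a}\,k+\beta_{\mp}$ and evaluate the surviving piece through the analytic continuation of the Hurwitz zeta function and the special value $\zeta(0,x)=\tfrac12-x$. Both routes isolate the same two ingredients (the linear approximation of $R(k)$ with $O(1/k)$ error, and a zeta-function limit at the edge of convergence), but the paper's version is entirely elementary and self-contained, whereas yours imports the continuation of $\zeta(s,x)$ past $s=1$; in exchange your version makes transparent \emph{why} the answer is $(\beta_{+}-\beta_{-})/\sqrt{a}=2l/\sqrt{a}$, namely as a difference of regularized values. Two small points you should tidy up: for small $k$ the quantities $\sqrt{a}\,k+\beta_{-}$ need not be positive (discard finitely many terms, each of which tends to $1-1=0$ as $s\to0$), and the identification of your convergent difference series on $\{\operatorname{Re}(s)>0\}$ with the difference of the two analytically continued Hurwitz zeta functions should be stated explicitly via the identity theorem (they agree for $\operatorname{Re}(s)>1$, and the difference of the continuations is entire since the poles at $s=1$ cancel). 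Neither point is a real gap.
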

\begin{proof}
Denote
\begin{align*}
	F(k):&=\frac{1}{((ak^2+bk+c)^{1/2}-l)^s}-	\frac{1}{((ak^2+bk+c)^{1/2}+l)^s}\\
	&=ls\int_{-1}^1\frac{d\theta}{((ak^2+bk+c)^{1/2}-\theta l)^{s+1}}.
\end{align*}
Then 
\begin{align}\label{FZeta}
\begin{split}
	&\quad \sum_{k=1}^{\infty}F(k)
	=\sum_{k=1}^{\infty}ls\int^1_{-1}\frac{d\theta}{((ak^2+bk+c)^{1/2}-\theta l)^{s+1}}\\
	&=\sum_{k=1}^{\infty}\frac{ls}{k^{s+1}}\int^1_{-1}\left(\frac{k^{s+1}}{((ak^2+bk+c)^{1/2}-\theta l)^{s+1}}-(\frac{1}{\sqrt{a}})^{s+1}\right)d\theta+\sum_{k=1}^{\infty}\frac{2ls}{k^{s+1}}(\frac{1}{\sqrt{a}})^{s+1}.
	\end{split}
\end{align}
Define a continuous function $f(x)=((a+bx+cx^2)^{1/2}-\theta lx)^{-(s+1)}$, $x\in [0,1]$. By assumption, one has  $0< f(x)\leq c_0^{-(s+1)}$. Thus
\begin{align*}
|f'(x)| &=(s+1)f(x)^{\frac{s+2}{s+1}}	|\frac{1}{2}(a+bx+cx^2)^{-1/2}(b+2cx)-\theta l|\leq C(s+1),
\end{align*}
which follows that 
$|f(\frac{1}{k})-f(0)|\leq C(s+1)\frac{1}{k}$,
and so
$$\left|\frac{k^{s+1}}{((ak^2+bk+c)^{1/2}-\theta l)^{s+1}}-(\frac{1}{\sqrt{a}})^{s+1}\right|\leq C(s+1)\frac{1}{k},$$
which implies that the first term in RHS of \eqref{FZeta} vanishes since $\lim_{s\to 0} s(s+1)\zeta(s+2)=0$, where $\zeta(s)=\sum_{k=1}^{\infty}k^{-s}$ is the zeta function. 
Hence
\begin{align*}
\lim_{s\to 0}\sum_{k=1}^{\infty}F(k)=\lim_{s\to 0}\sum_{k=1}^{\infty}\frac{2ls}{k^{s+1}}(\frac{1}{\sqrt{a}})^{s+1}=2l\lim_{s\to 0}s\zeta(s+1)(\frac{1}{\sqrt{a}})^{s+1}=\frac{2l}{\sqrt{a}},
\end{align*}
where the last equality follows from the fact $\lim_{s\to 0}s\zeta(s+1)=1$.
\end{proof}

For $\dim E=2$ and any $L\in \op{Sp}(E,\Omega)\cong\op{Sp}(2,\mb{R})=\op{SL}(2,\mb{R})$, there exists a symplectic basis of $(E,\Omega)$ such that $L$ has the following matrix form:
\begin{itemize}
\item[(1)] $L$ is hyperbolic, i.e. $|\op{Tr}(L)|>2$. In this case,  $\lambda\not\in S^1$and $\lambda\in \mb{R}$, $L$ has the form: $$
\left(\begin{array}{cc}
\lambda & 0 \\
0 & \frac{1}{\lambda}
\end{array}\right);
$$	

\item[(2)] $L$ is elliptic, i.e. $|\op{Tr}(L)|<2$. In this case, the eigenvalue $\lambda\in S^1\backslash\{\pm 1\}$,  $L$ is given by
$$
R(\theta)=\left(\begin{array}{cc}\cos\theta & -\sin\theta \\
\sin\theta & \cos\theta
\end{array}\right),
$$	
where $\theta\in (0,\pi)\cup(\pi,2\pi)$.
\item[(3)] $L$ is parabolic, i.e. $|\op{Tr}(L)|=2$, and so $\lambda=\pm 1$,  $L$ has the following form: $$
\left(\begin{array}{ll}
\lambda & \mu \\
0 & \lambda
\end{array}\right),
$$	
where $\mu\in \mb{R}$.
\end{itemize}
 Fix a complex structure $J=\left(\begin{matrix}
	0 & -1\\
	1 & 0
\end{matrix}\right)
$. Now we begin to calculate the eta invariant $\eta(A_{\mbf{J}})$ and Rho invariant $\bs{\rho}(S^1)=4\int_{S^1}\mbf{J}^*\alpha+\eta(A_{\mbf{J}})$.

(1) $\lambda\not\in S^1$. For the case $\lambda>0$, we take  
$$B=\frac{1}{2\pi}\log |\lambda|\left(\begin{array}{cc}
1 & 0 \\
0 & -1
\end{array}\right),$$
such that $L=\exp(2\pi B)$,
then 
the solutions of the equation $\exp(2\pi(-\sigma J+B))e=e$ are 
$$(\sigma,e)=(\pm\frac{1}{2\pi}((\log\lambda)^2+(2\pi k)^2)^{1/2},\mb{C}^2),\quad k\in\mb{Z}.$$
Thus the set of nonzero eigenvalues and the corresponding vectors is 
$$\mf{S}=\{(\pm\frac{1}{2\pi}((\log\lambda)^2+(2\pi k)^2)^{1/2},e_1), (\pm\frac{1}{2\pi}((\log\lambda)^2+(2\pi k)^2)^{1/2},e_2),k\in\mb{Z}\},$$
where $e_1=(1,0)^{\top}$ and $e_2=(0,1)^{\top}$.
Since $\mf{S}$ is symmetric, so $\eta(A_{\mbf{J}})=0$.

If $\lambda<0$, then $\exp(2\pi B)=-L$, and the solutions of the equation $\exp(2\pi(-\sigma J+B))e=-e$ are
$$(\sigma,e)=(\pm\frac{1}{2\pi}((\log\lambda)^2+(2\pi k)^2+(1-4k)\pi^2)^{1/2},\mb{C}^2),\quad k\in\mb{Z}.$$
So the spectrum of $A_{\mbf{J}}$ is symmetric, so  $\eta(A_{\mbf{J}})=0$.

From Remark \ref{rempotential}, then 
\begin{equation}\label{potentialalpha}
  \alpha=-d^c\log\det\op{Im}Z=-d^c\log\op{Im}Z=\frac{1}{2\op{Im}Z}(dZ+d\o{Z}).
\end{equation}
The complex structure $\mbf{J}(x)$ is given by  {
\begin{equation*}
  \mbf{J}(x)=\exp(-xB)J\exp(xB)=\left(\begin{matrix}
0 & -|\lambda|^{-\frac{x}{\pi}}\\
|\lambda|^{\frac{x}{\pi}} &0	
\end{matrix}
\right),
\end{equation*}}
By \eqref{W}, one has
$$W\circ \mbf{J}(x)=(2+|\lambda|^{\frac{x}{\pi}}+|\lambda|^{-\frac{x}{\pi}})^{-1}(|\lambda|^{\frac{x}{\pi}}-|\lambda|^{-\frac{x}{\pi}}).$$
Thus
$$Z\circ\mbf{J}(x)=i(1-W\circ \mbf{J}(x))(1+W\circ \mbf{J}(x))^{-1}$$
is purely imaginary, which follows that 
\begin{align*}
\mbf{J}^*\alpha=	\frac{1}{2\op{Im}Z}(d(Z\circ \mbf{J}(x))+\o{d(Z\circ \mbf{J}(x))})=0.
\end{align*}
 Hence 
 $$\bs{\rho}(S^1)=\frac{1}{\pi}\int_{S^1}\mbf{J}^*\alpha+\eta(A_{\mbf{J}})=\eta(A_{\mbf{J}})=0.$$
\begin{rem}\label{remhyperbolic}
For any $L\in \op{Sp}(2n,\mb{R})$ with  the following matrix form
\begin{align*}
L=\pm\left(\begin{matrix}
\exp(2\pi B) & 0\\
0 & \exp(-2\pi B^\top)	
\end{matrix}
\right),	
\end{align*}
where $B\in \mf{s}\mf{p}(2n,\mb{R})$.
One can check that $Z\circ \mbf{J}(x)$ is also purely imaginary, and so $\int_{S^1}\wt{\mbf{J}}^*\alpha=0$, 
 where $\mbf{J}(x)=\exp(-xB)J\exp(xB)$ and $J$ is the standard complex structure. Moreover, the set of all solutions of  equation \eqref{5.4} is symmetric, so $\eta(A_{\mbf{J}})=0$. Hence $\bs{\rho}(S^1)=0$.
\end{rem}

(2) $\lambda\in S^1\backslash\{\pm 1\}$. In this case, $L=R(\theta)$ and $B=\frac{\theta}{2\pi}J$, 
$
	\exp(2\pi(-\sigma J+B))=R{(-2\pi\sigma+\theta)}.
$
Hence the solutions of $\exp(2\pi(-\sigma J+B))e=e$ are given by 
$$\sigma=\frac{\theta}{2\pi}+k,\quad k\in\mb{Z},$$
and for each $\sigma$, $e$ can be taken any element of $\mb{C}^2$. The set
\begin{align*}
\mf{S}=\left\{\left(\frac{\theta}{2\pi}+k,e_1\right),\left(\frac{\theta}{2\pi}+k,e_2\right),k\in\mb{Z}\right\},	
\end{align*}
is maximally $\mb{C}$-linearly independent. The set of eigenvectors is $\Phi(\mf{S})$.
The eta function 
\begin{align*}
\frac{1}{2}\eta_{A_{\mbf{J}}}(s)=\left(\frac{\theta}{2\pi}\right)^{-s}+\sum_{k=1}^{\infty}\left(\frac{1}{|k+\frac{\theta}{2\pi}|^s}-	\frac{1}{|k-\frac{\theta}{2\pi}|^s}\right).
\end{align*}
From Lemma \ref{lemma2}, one has
$\eta(A_{\mbf{J}})=2(1-\frac{\theta}{\pi})$. Since $[J,B]=0$, so
$$\mbf{J}(x)=\exp(-xB)J\exp(xB)=J,$$
and $\mbf{J}^*\alpha=0$. Hence  $\bs{\rho}(S^1)=\frac{1}{\pi}\int_{S^1}\mbf{J}^*\alpha+\eta(A_{\mbf{J}})=\eta(A_{\mbf{J}})=2(1-\frac{\theta}{\pi})$.

(3) $\lambda=\pm1$. In this case, $L$ is given by 
$$
\left(\begin{array}{ll}
\lambda & \mu \\
0 & \lambda
\end{array}\right),
$$	
where $\mu\in \mb{R}$, and $L=\lambda \exp(2\pi B)$, $B$ is given by
$$
B=\left(\begin{array}{cc}
0 & \frac{1}{2 \pi} \frac{\mu}{\lambda} \\
0 & 0
\end{array}\right).
$$
Thus
$$
2\pi(-\sigma J+B)=\left(\begin{array}{cc}
0& 2\pi\sigma+ \frac{\mu}{\lambda} \\
-2\pi\sigma & 0
\end{array}\right).
$$
Then the solution of $\exp(2\pi(-\sigma J+B))e=\lambda e$ is given by
$$\sigma=-\frac{\mu}{4\pi\lambda}\pm\frac{1}{2\pi}\left(\frac{\mu^2}{4}-(\log\lambda+2k\pi i)^2\right)^{\frac{1}{2}},$$
where $\log\lambda=0$ if $\lambda=1$; $\log\lambda=\pi i$ if $\lambda=-1$.
Hence the  maximally $\mb{C}$-linearly independent set without zero eigenvalue is 
\begin{multline*}
\mf{S}_o=\left\{\left(-\frac{\mu}{4\pi\lambda}\pm\frac{1}{2\pi}\left(\frac{\mu^2}{4}-(\log\lambda+2k\pi i)^2\right)^{\frac{1}{2}}, e_i\right),k\in\mb{Z},k\geq 0,i=1,2\right\}\\\backslash\{(0,e_i),i=1,2\}.	
\end{multline*}
Thus, if
 $\lambda=1$ and $\mu> 0$, then 
\begin{multline*}
 \frac{1}{2}\eta_{A_{\mbf{J}}}(s)
=-\left(\frac{\mu}{2\pi}\right)^{-s}\\+\sum_{k=1}^{\infty}\left[-\left(\frac{\mu}{4\pi}+\frac{1}{2\pi}\left(\frac{\mu^2}{4}+(2k\pi)^2\right)^{\frac{1}{2}}\right)^{-s}	+\left(-\frac{\mu}{4\pi}+\frac{1}{2\pi}\left(\frac{\mu^2}{4}+(2k\pi)^2\right)^{\frac{1}{2}}\right)^{-s}\right].
\end{multline*}
By Lemma \ref{lemma2}, the eta invariant is 
$\eta(A_{\mbf{J}})=2(-1+\frac{\mu}{2\pi})$.
 Similarly, one has
\begin{itemize}
	\item If  $\lambda=1$ and $\mu< 0$, then  
$\eta(A_{\mbf{J}})=2(1+\frac{\mu}{2\pi})$;
\item If $\lambda=-1$, then 
$\eta(A_{\mbf{J}})=-\frac{\mu}{\pi}$;
\item If $\mu=0$, then $\eta(A_{\mbf{J}})=0$.
\end{itemize}
In this case, the complex structure is 
\begin{align*}
	\mbf{J}(x)=\exp(-xB)J\exp(xB)=\left(\begin{matrix}
b &-b^2-1\\
1 &-b	
\end{matrix}
\right),
\end{align*}
where $b=-\frac{x}{2\pi}\frac{\mu}{\lambda}$. Then 
{$$ W\circ  \mbf{J}(x)=\frac{2ib-b^2}{4+b^2}$$}
$$Z\circ \mbf{J}(x)=i(2+bi)^{-1}(2+b^2-ib)=i+\frac{b^3+4b}{b^2+4}={ i+b}.$$
By \eqref{potentialalpha}, one has 
\begin{equation*}
 \mbf{J}^* \alpha=db=-\frac{1}{2\pi}\frac{\mu}{\lambda}dx.
\end{equation*}
Hence 
$$\frac{1}{\pi}\int_{S^1}\mbf{J}^*\alpha=-\frac{1}{\pi}\frac{\mu}{\lambda}.$$
Therefore, if $\dim E=2$,  the eta invariant and the Rho invariant  are given by the following table:
\begin{center}
\begin{align}\label{etadim2}
\begin{tabular}{| c | c | c| }
\hline
     $\lambda,\mu$  &  $\eta(A_{\mbf{J}})$  & $\bs{\rho}(S^1)$\\
\hline
   $\lambda\not\in S^1$   & $0$ & $0$\\
\hline
   $\lambda\in S^1\backslash\{\pm1\}$    & $2(1-\frac{\theta}{\pi})$&$2(1-\frac{\theta}{\pi})$\\
\hline
    $\mu=0$  & $0$ & $0$\\
\hline
$\lambda=1, \mu>0$ & $2(-1+\frac{\mu}{2\pi})$ & $-2$\\
\hline
 $\lambda=1, \mu<0$ & $2(1+\frac{\mu}{2\pi})$ & $2$\\
\hline
$\lambda=-1$&$-\frac{\mu}{\pi}$ & $0$\\
\hline
\end{tabular}	
\end{align}
\end{center}

\subsection{Curvature of the bounded symmetric domain of type $\op{III}$}\label{App2}
In this subsection, we will calculate the curvature of the bounded symmetric domain of type III. One can also refer to \cite[Chapter 4, \S 3]{Mok} and \cite[Section 8]{Siu}.

Denote by 
$$\op{D}^{\op{III}}_n:=\{W\in \mf{g}\mf{l}(n,\mb{C}):W=W^\top,I_n-\o{W}W>0\}$$
the Siegel's generalized upper half-plane, see \cite[Chapter VIII, \S 7]{Hel}.  Then
$$\omega_{\op{D}^{\op{III}}_n}:=-2i\p\b{\p}\log\det(I-\o{W}W)=\frac{i}{2}h_{a\b{b}}dw^a\wedge d\o{w}^b$$
is a K\"ahler form on $\op{D}^{\op{III}}_n$. The Hermitian metric is denoted by $h=h_{a\b{b}}dw^a\otimes d\b{w}^b$, and the holomorphic sectional curvature is 
\begin{align*}
\op{K}(V)=\frac{2R(V,\o{V},V,\o{V})}{\|V\|^4}=\frac{2R_{a\b{b}c\b{d}}V^a\o{V^b}V^c\o{V^d}}{(h_{a\b{b}}V^a\o{V^b})^2}	
\end{align*}
 for any $V=V^a\frac{\p}{\p w^a}$ and $R_{a\b{b}c\b{d}}=-\p_{c}\p_{\b{d}}h_{a\b{b}}+\mathrm{H}^{p\b{q}}\p_c h_{a\b{q}}\p_{\b{d}}h_{p\b{b}}$. Since $\omega_{\op{D}^{\op{III}}_n}$ is invariant under the group  $\op{Aut}(\op{D}^{\op{III}}_n)$ of holomorphic automorphisms, so we just need to calculate the holomorphic sectional curvature of the Bergman metric $\omega_{\op{D}^{\op{III}}_n}$ at $W=0$. If $H$ is a Hermitian matrix, then 
$$\p\b{\p}\log\det H=\op{Tr}({H}^{-1}\p\b{\p}H)-\op{Tr}({H}^{-1}\p H\wedge {H}^{-1}\b{\p} H).$$
Denote
$\frac{1}{4}h_{ab\o{cd}}=(I-\o{W}W)^{-1}_{ac}\delta_{bd}+(I-\o{W}W)^{-1}_{n m}(I-\o{W}W)^{-1}_{bc}\o{W_{am}}W_{dn}.$
Then 
\begin{align*}
\omega_{\op{D}^{\op{III}}_n}&=\frac{i}{2}\sum_{a<b,c<d}(h_{ab\o{cd}}+h_{ba\o{cd}}+h_{ab\o{dc}}+h_{ba\o{dc}})dW_{ab}\wedge d\o{W_{cd}}\\
&\quad+\frac{i}{2}\sum_{a<b,c}(h_{ab\o{cc}}+h_{ba\o{cc}})dW_{ab}\wedge d\o{W_{cc}}\\
&\quad+\frac{i}{2}\sum_{c<d,a}(h_{aa\o{cd}}+h_{aa\o{dc}})dW_{aa}\wedge d\o{W_{cd}}\\
&\quad+\frac{i}{2}h_{aa\o{cc}}dW_{aa}\wedge d\o{W_{cc}}=\frac{i}{2}\sum_{a\leq b,c\leq d}H_{ab\o{cd}}dW_{ab}\wedge d\o{W_{cd}},
\end{align*}
where 
$H_{ab\o{cd}}=(h_{ab\o{cd}}+h_{ba\o{cd}}+h_{ab\o{dc}}+h_{ba\o{dc}})(1-\frac{1}{2}\delta_{ab})(1-\frac{1}{2}\delta_{cd}).$
At the point $W=0$, one has
\begin{align*}
\frac{1}{4}h_{ab\o{cd}}=\delta_{ac}\delta_{bd},\quad \p h_{abcd}=0,
\end{align*}
and
\begin{align*}
\frac{1}{4}\frac{\p^2 h_{ab\o{cd}}}{\p W_{kl}\p\o{W_{pq}}}&=(\delta_{ap}\delta_{mq}+\delta_{aq}\delta_{mp})(\delta_{mk}\delta_{cl}+\delta_{ml}\delta_{ck})\delta_{bd}(1-\frac{1}{2}\delta_{pq})(1-\frac{1}{2}\delta_{kl})\\
&	\quad+(\delta_{ap}\delta_{mq}+\delta_{aq}\delta_{mp})(\delta_{dk}\delta_{ml}+\delta_{dl}\delta_{mk})\delta_{bc}(1-\frac{1}{2}\delta_{pq})(1-\frac{1}{2}\delta_{kl}).
\end{align*}
The curvature is 
\begin{align*}
	&\quad R_{ab\o{cd}kl\o{pq}} =-\frac{\p^2 H_{ab\o{cd}}}{\p W_{kl}\p\o{W_{pq}}}\\
	&=-8((\delta_{ap}\delta_{mq}+\delta_{aq}\delta_{mp})(\delta_{mk}\delta_{cl}+\delta_{ml}\delta_{ck})\delta_{bd}+(\delta_{ap}\delta_{mq}+\delta_{aq}\delta_{mp})(\delta_{mk}\delta_{dl}+\delta_{ml}\delta_{dk})\delta_{bc}\\
	&+(\delta_{bp}\delta_{mq}+\delta_{bq}\delta_{mp})(\delta_{mk}\delta_{cl}+\delta_{ml}\delta_{ck})\delta_{ad}+(\delta_{bp}\delta_{mq}+\delta_{bq}\delta_{mp})(\delta_{mk}\delta_{dl}+\delta_{ml}\delta_{dk})\delta_{ac})\\
	&\cdot(1-\frac{1}{2}\delta_{ab})(1-\frac{1}{2}\delta_{cd})(1-\frac{1}{2}\delta_{pq})(1-\frac{1}{2}\delta_{kl}),
\end{align*}
where  $a\leq b,c\leq d,k\leq l,p\leq q$. Since 
$$H(0)=\sum_{a\leq b,c\leq d}H_{ab\o{cd}}(0)dW_{ab}\otimes d\o{W_{cd}}=4(\sum_{a<b}2dW_{ab}\otimes d\o{W_{ab}}+\sum_adW_{aa}\otimes d\o{W_{aa}})$$
where 
$H_{ab\o{cd}}(0)=8(\delta_{ac}\delta_{bd}+\delta_{bc}\delta_{ad})(1-\frac{1}{2}\delta_{ab})(1-\frac{1}{2}\delta_{cd}).$
So the inverse matrix is 
\begin{align*}
{H}^{\o{cd}ab}(0)=\frac{1}{8}	(\delta_{ac}\delta_{bd}+\delta_{bc}\delta_{ad}).
\end{align*}
Then the Ricci curvature is 
\begin{align*}
R_{kl\o{pq}}&=	{H}^{\o{cd}ab}R_{ab\o{cd}kl\o{pq}}=\frac{1}{8}\sum_{a<b}R_{ab\o{ab}kl\o{pq}}+\frac{1}{4}\sum_a R_{aa\o{aa}kl\o{pq}}\\
&=-(n+1)\sum_{a=1}^n (\delta_{ap}\delta_{mq}+\delta_{aq}\delta_{mp})(\delta_{mk}\delta_{al}+\delta_{ml}\delta_{ak})(1-\frac{1}{2}\delta_{pq})(1-\frac{1}{2}\delta_{kl})\\
&=-2(n+1)(\delta_{pl}\delta_{qk}+\delta_{pk}\delta_{ql})(1-\frac{1}{2}\delta_{pq})(1-\frac{1}{2}\delta_{kl})\\
&=-\frac{n+1}{4}H_{kl\o{pq}}.
\end{align*}
Thus the first Chern form satisfies
\begin{align*}
\frac{i}{2\pi}R_{kl\o{pq}}dW_{kl}\wedge d\o{W_{pq}}&=	-\frac{i}{2\pi}\frac{n+1}{4}H_{kl\o{pq}}dW_{kl}\wedge d\o{W_{pq}}=-\frac{n+1}{2}\frac{1}{2\pi}\omega_{\op{D}^{\op{III}}_n}.
\end{align*}
Now we calculate the holomorphic sectional curvature. Note that
\begin{align*}
&\quad \sum_{a\leq b, c\leq d\atop k\leq l, p\leq q}R_{ab\o{cd}kl\o{pq}}dW_{kl}\wedge d\o{W_{pq}}\otimes dW_{ab}\wedge d\o{W_{cd}}\\
&=-\sum_{a\leq b, c\leq d}\p\b{\p}H_{ab\o{cd}}\otimes 	 dW_{ab}\wedge d\o{W_{cd}}\\
&=-\sum_{a,b,c,d}\p\b{\p}h_{ab\o{cd}}\otimes 	 dW_{ab}\wedge d\o{W_{cd}}\\
&=\sum_{a,b,c,d,k,l,p,q}\widetilde{R}_{ab\o{cd}kl\o{pq}}dW_{kl}\wedge d\o{W_{pq}}\otimes dW_{ab}\wedge d\o{W_{cd}},
\end{align*}
where $\widetilde{R}_{ab\o{cd}kl\o{pq}}=-4(\delta_{bd}\delta_{ck}\delta_{ap}\delta_{ql}+\delta_{bq}\delta_{ca}\delta_{pk}\delta_{dl})$. 
For any $(1,0)$-type tangent vector $V=(V^{ab})$, $a\leq b$, at $0$. We also denote by $\mathbf{V}$ the matrix associated with the vector $(V^{ab})$ by setting $V^{ab}=V^{ba}$.  Then  
\begin{align*}
	&\quad R_{ab\o{cd}kl\o{pq}}V^{ab}\o{V^{cd}}V^{kl}\o{V^{pq}}\\
	&=(\sum_{a\leq b, c\leq d\atop k\leq l, p\leq q}R_{ab\o{cd}kl\o{pq}}dW_{kl}\wedge d\o{W_{pq}}\otimes dW_{ab}\wedge d\o{W_{cd}})(V\wedge\o{V}\otimes V\wedge\o{V})\\
	&=(\sum_{a,b,c,d,k,l,p,q}\widetilde{R}_{ab\o{cd}kl\o{pq}}dW_{kl}\wedge d\o{W_{pq}}\otimes dW_{ab}\wedge d\o{W_{cd}})(\mathbf{V}\wedge\o{\mathbf{V}}\otimes \mathbf{V}\wedge\o{\mathbf{V}})\\
	&=\widetilde{R}_{ab\o{cd}kl\o{pq}}\mathbf{V}^{ab}\o{\mathbf{V}^{cd}}\mathbf{V}^{kl}\o{\mathbf{V}^{pq}}=-8\op{Tr}((\o{\mathbf{V}}\mathbf{V})^2),
\end{align*}
and $\|V\|^2=4(2\sum_{a<b}|V^{ab}|^2+\sum_a|V^{aa}|^2)=4\op{Tr}(\o{\mathbf{V}}\mathbf{V})$. Thus
the holomorphic sectional curvature is 
\begin{align*}
\op{K}(V)=-\frac{\op{Tr}((\o{\mathbf{V}}\mathbf{V})^2)}{(\op{Tr}(\o{\mathbf{V}}\mathbf{V}))^2}\in [-1,-1/n],
\end{align*}
and $\op{K}(V)=-1/n$ iff $\mathbf{V}(\det\mathbf{V})^{-1/n}$ is a unitary group. 

\subsection{Milnor-Wood inequality of Toledo invariant for $n=1$}
Let $\Sigma$ be a surface with $q$-boundary components and of genus $g$ of negative Euler number.
Considering a boundary component as a puncture, one can find an ideal triangulation $\triangle$ of $\Sigma$, which is just a maximal collection of disjoint essential arcs that are pairwise non-homotopic,
whose vertices are at punctures. If there are $F$ ideal triangles in $\triangle$, there
there are $\frac{3F}{2}$ edges since each edge is shared by the adjacent triangle.
Here by taking a  triangulation carefully, we  can assume that two adjacent triangles are distinct. Hence by the definition of the Euler number
$$F-\frac{3F}{2}=2-2g-q$$ where $q$ is the number of puctures. Hence there are $-4+4g+2q$ ideal triangles in $\triangle$.
Now when we consider the {punctures} as boundaries, these ideal triangles wrap around each boundary component infinitely many times, and still denote this triangulation by $\triangle$.
\begin{figure}[hbt]
\begin{center}
\centerline{{ \includegraphics{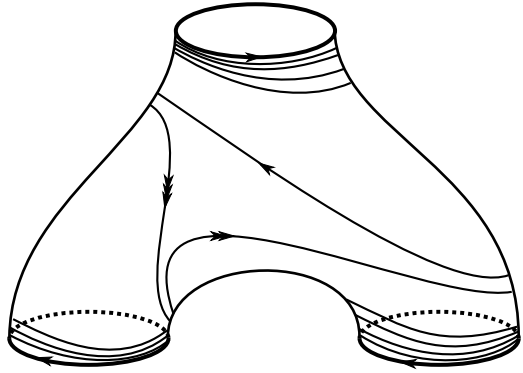}}}
\end{center}
\caption{Ideal triangulation of a pair of pants consisting of two ideal triangles }\label{infinite extreme point}
\end{figure}
Given a representation $\rho:\pi_1(\Sigma)\ra \text{SL}(2,\br)$, find a $\rho$-equivariant map ${\bf J}:\widetilde\Sigma\ra D=\op{D}^{\op{III}}_1$ induced by an equivariant complex structure. Recall that
$$ \op{T}(\Sigma,\rho)=\frac{1}{2\pi}\int_\Sigma {\mbf{J}}^*\omega_{\op{D}^{\op{III}}_1}-2\sum_{i=1}^q\int_{c_i}{\mbf{J}}^*\alpha_i.$$
For hyperbolic and elliptic boundary, the second term is zero. Now we can straighten
${\bf J}$ such that each ideal triangle $\sigma$ in $\triangle$ is mapped to an ideal geodesic triangle in $D$. Now  a new
 surface $\Sigma'=\Sigma\cup \cup C_i$ is obtained from $\Sigma$ by attaching  a cone $C_i$ to each parabolic boundary $c_i$. One can extend $\op{Str}({\bf J})$ in an obvious way on each cone to the corresponding horoball neighborhood.
 In more details, 
$$\mbf{J}:S^1\times [0,\infty)\ra  H,$$ where $H$ is a horoball  and $\mbf{J}$ maps each geodesic $x\times[0,\infty)$ to an arc-length parametrized geodesic from $\mbf{J}(x,0)$ to the base point of the horoball. 
Note here that the complex structure along the  boundary $c_i$ is an orbit
of $J$ under the one-parameter family generated by parabolic element $\rho(c_i)$, hence
the image under $\mbf{J}$ is also the orbit of the one-parameter family generated by parabolic element $\rho(c_i)$. This forces that $\mbf{J}(x \times [0,\infty))$ and $\mbf{J}(y\times [0,\infty))$ get closer exponentially fast as $t\rightarrow \infty$ for any two points $x$ and $y$ on $c_i$.

In more details,
$|\mbf{J}_*({\frac{\partial}{\partial x}}|_{(x,t)})|=e^{-t}|\mbf{J}_*({\frac{\partial}{\partial x}}|_{(x,0)})|$, and $|\mbf{J}_*({\frac{\partial}{\partial t}}|_{(x,t)})|=|\frac{\partial}{\partial t}|=1$, which makes 
$$\left|\mbf{J}^*\omega_{\op{D}^{\op{III}}_1}(\frac{\partial}{\partial x},\frac{\partial}{\partial t})\right|=e^{-t}\left|\omega_{\op{D}^{\op{III}}_1}(\mbf{J}_*({\frac{\partial}{\partial x}}|_{(x,0)}),\frac{\partial}{\partial t})\right|\leq e^{-t}C$$ for some universal constant $C$.
Hence
\begin{eqnarray}\label{fast}
\int_{C_i}|\mbf{J}^*\omega_{\op{D}^{\op{III}}_1}|\leq \int_{0}^{2\pi}\int_{0}^\infty C e^{-t}<\infty,\end{eqnarray} which makes
 $\mbf{J}^*(d\alpha_i)$ a $L^1$ form on  $C_i$. 
 
 If we decompose $C_i$ into two parts $C_i^1=S^1\times[0, t]\cup C_i^2=S^1\times [t,\infty)$, then $\mbf{J}^*(d\alpha_i)$ being a $L^1$ form on  $C_i$ implies that
 $\int_{C_i^2} \mbf{J}^*(d\alpha_i)\ra 0$ as $t\ra\infty$. Furthermore by noting that
 $|\mbf{J}_*({\frac{\partial}{\partial x}}|_{(x,t)})|=e^{-t}|\mbf{J}_*({\frac{\partial}{\partial x}}|_{(x,0)})|$, $\int_{S^1\times \{t\}} \mbf{J}^*\alpha_i=\int_{S^1\times \{0\}} e^{-t}\mbf{J}^*\alpha_i$,
   the ordinary Stokes' lemma holds to get for each parabolic boundary $c_i$
 $$\int_\Sigma d(\chi_i {Str(\bf J)}^*\alpha_i)=\int_{c_i} {Str(\bf J)}^*\alpha_i=-\int_{C_i} d(\chi_i {Str(\bf J)}^*\alpha_i).$$
This implies that
$$\op{T}(\Sigma,\rho)=\frac{1}{2\pi}\int_{\Sigma'} {Str(\mbf{J})}^*\omega_{\op{D}^{\op{III}}_1}.$$
Also note that we can deform $\triangle$ so that the triangles wrapping around the parabolic boundary $c_i$ can be straightened to the cone point of $C_i$ to include
$C_i$ in $\cup_{\sigma\in\triangle}\sigma$. We still denote the deformed triangulation by $\triangle$.
Since $\Sigma'=\sum_{\sigma_i\in\triangle} \sigma_i$,
$|\int_{\sigma_i} Str({\bf J})^* \omega_{\op{D}^{\op{III}}_1}|=| \int_{Str({\bf J})(\sigma_i)} \omega_{\op{D}^{\op{III}}_1}|\leq \pi$ and there are $-4+4g+2q$ triangles,
we get
$$|\op{T}(\Sigma,\rho)|\leq \frac{1}{2\pi}(-4+4g+2q)\pi=|\chi(\Sigma)|.$$

\subsection{Milnor-Wood inequality of Toledo invariant for general $n$}
In the arguments of the previous section, we only used the fact that $\int_{c_i} {\bf J}^*\alpha_i=0$ for elliptic and hyperbolic boundary $c_i$, and the extendability of ${\bf J}$ to parabolic cones $C_i$. For general $n$, we need an estimate $|\int_{\sigma_i} Str({\bf J})^* \omega_{\op{D}^{\op{III}}_n}|=| \int_{Str({\bf J})(\sigma_i)} \omega_{\op{D}^{\op{III}}_n}|\leq n\pi$, which follows from the fact that the Gromov norm of $\kappa^b_G\in H^2_{c,b}(G,\br)$ is $\frac{{\text{rank of}}\ G}{2}$ \cite{Cl}.

First note that by \cite[Corollary 9.4]{Dj} , for any $a\in \op{Sp}(2n,\br)$, $a^2=\exp{X}$ for some $X\in \mathfrak g$. 

Let $\pi_1(\Sigma)=\langle c_1, c_2,\cdots, c_{q-1}, a_i, b_i \rangle$ where $c_i$ are boundary components, and $c_q=\Pi c_i \Pi [a_i,b_i]$. Take an index 2 subgroup $\Gamma< \pi_1(\Sigma)$
containing $c_i^2,\ i=1,\cdots, q-1$. Let $\Sigma_1=\widetilde \Sigma/\Gamma$ and $p:\Sigma_1\ra\Sigma$ be a covering map of degree 2.  Then for each boundary $c_i\ i=1,\cdots, q-1$, there exists a corresponding boundary $B_i$ of $\Sigma_1$ such that $p(B_i)=c_i^2$.  Now it is possible that
$p^{-1}(c_q)$ might be disjoint union of two circles $B_q, B_{q+1}$ which map to $c_q$ homeomorphically.  In this case, take a double cover $\Sigma_2$ of $\Sigma_1$, which corresponds to an index two subgroup of $\pi_1(\Sigma_1)$ containing $B_q^2, B_{q+1}^2$.
Then there exist two boundary components of $\Sigma_2$, $C_q,C_{q+1}$ which project to $B_q^2,
B_{q+1}^2$. Then the covering map $f$ from $\Sigma_2$ to $\Sigma$ has the property that each boundary component of $\Sigma_2$ projects to $c_i^{2k}$ for some $i=1,\cdots,q$.

Now for  this 4-fold covering map $f:\Sigma_2\ra\Sigma$, 
consider the induced representation $\rho_2=\rho\circ f_*:\pi_1(\Sigma_2)\ra \op{Sp}(2n,\br)$. Then for any boundary component $b$ of $\Sigma_2$, $\rho_2(b)=\rho(c_i^{2k})$ for some $c_i$, hence $\rho_2(b)=\exp(2\pi B)$ for some $B$ in the Lie algebra of $\op{Sp}(2n,\br)$. This allows us to define a complex structure
$\mbf{J}(x)=\exp(-xB)J\exp(xB)$ along any boundary of $\Sigma_2$. Now
if prove the Milnor-Wood inequality for $\rho_2$, then we prove the Milnor-Wood inequality for $\rho$ since both Toledo invariant and the Euler number for $\Sigma_2$ is $4$ times those of $\Sigma$. Hence we may assume that $\rho(c_i)=\exp(2\pi B_i)$ for some $B_i$.

Now we mention the classification of isometries for a symmetric space $X$ of noncompact type with $G=\op{Iso}^0(X)$. Let $\ell(\phi)=\inf_{x\in X} d_X(x,\phi(x))$ for $\phi\in G$. We say $\phi$ is (\cite[1.9.1]{Eb} )
\begin{enumerate}
\item  axial if $\ell(\phi)>0$ and realized in $X$.
\item elliptic if $\ell(\phi)=0$ and realized in $X$, i.e., it has a fixed point in $X$.
\item parabolic if $\ell(\phi)$ is not realized in $X$.
\end{enumerate}

If $\phi$ is parabolic, it has a fixed point at $X(\infty)$ (\cite[Prop. 4.1.1]{Eb} ). Hence it stabilizes a horosphere $H$ based at a fixed point of $\phi$. 

Let $Ad:G\ra \op{End}(\mathfrak g)$ be an adjoint representation and $G=KAN$ an Iwasawa decomposition. For any element $g\in G$, $g$ has a unique Jordan decomposition $g=ehu$ where $Ad(e)$ is diagonalizable with modulus 1 (complex) eigenvalues (conjugate to an element in $K$), $Ad(h)$ is diagonalizable with positive eigenvalues (conjugate to an element in $A$), and $Ad(u)$ is unipotent, i.e. $Ad(u)-I$ is nilpotent (conjugate to an element in $N$).
Then $\phi\in G$ is axial if and only if $Ad(\phi)$ is nonelliptic and semisimple (i.e. diagonalizable) and $\phi\in G$ is parabolic if and only if $Ad(\phi)$ is not semisimple (i.e. not diagonalizable). See \cite[Prop. 2.19.18]{Eb} .

In our case, $G=\op{PSp}(2n,\br)=\op{Sp}(2n,\br)/\pm I$. In view of Definition \ref{defhyperell}, an elliptic element is the one conjugate to $U(n)$ (hence, diagonalizable with modulus 1 eigenvalues), an axial element is the one which is diagonalizable, and  some  eigenvalues of which are not of modulus 1. Hence 
by Proposition \ref{dechyperellu}, an axial element $L\in \op{Sp}(2n,\br)$ has an orthogonal symplectic decomposition into hyperbolic parts and elliptic parts.
But in this case, $\int_{c_i}{\mbf{J}}^*\alpha_i=0$. 

If $L\in\op{PSp}(2n,\br)$ is parabolic, then it has a symplectic decomposition $L=L_s\oplus L_u$ such that the nondiagonalizable part $L_u$ of $L$ is unipotent, hence 
 $L_u$ is an element of a horospherical subgroup $N_x$ for some point $x\in X(\infty)$, see \cite[Prop. 2.19.18 (5)]{Eb} . But $N_x$ has a property that  for $g\in N_x$
 $$\lim_{t\ra \infty} e^{-tX} g e^{tX}=id,$$ where $X\in \mathfrak p$ is a unit vector whose infinite end point is $x$ in the Cartan decomposition $\mathfrak g=\mathfrak t\oplus\mathfrak p$ at $p\in X$. This implies that for $g\in N_x$
 $$\lim_{t\ra\infty} d(e^{tX} p, g e^{tX}p)=0,$$ which means that any two geodesic rays starting from $p$ and $gp$ pointing forwards $x$, get closer exponentially fast.
 This shows that  the estimate (\ref{fast}) is valid for parabolic boundary in general dimension.
 
Hence it suffices to attach cones
to parabolic boundaries of $\Sigma$ and argue as in the previous section, to show that
$$\op{T}(\Sigma,\rho)=\frac{1}{2\pi}\int_{\Sigma'} {Str(\mbf{J})}^*\omega_{\op{D}^{\op{III}}_1}.$$
Then using $| \int_{Str({\bf J})(\sigma_i)} \omega_{\op{D}^{\op{III}}_n}|\leq n\pi$, we are done.


\vspace{5mm}
\noindent{\it Acknowledgements.} The third author would like to thank Dr. Xiangsheng Wang for helpful discussions on Atiyah-Patodi-Singer index theorem.

\end{document}